\numberwithin{equation}{section}
\newtheorem{thm}{Theorem}[section]
\newtheorem{lem}[thm]{Lemma}
\newtheorem{prop}[thm]{Proposition}
\theoremstyle{definition}
\newtheorem{example}[thm]{Example}
\theoremstyle{remark}
\newtheorem{remark}[thm]{Remark}
\crefname{thm}{Theorem}{Theorems}
\crefname{cor}{Corollary}{Corollaries}
\crefname{lem}{Lemma}{Lemmas}
\crefname{prop}{Proposition}{Propositions}
\crefname{definition}{Definition}{Definitions}
\crefname{example}{Example}{Examples}
\crefname{claim}{Claim}{Claims}
\crefname{conjecture}{Conjecture}{Conjectures}
\crefname{remark}{Remark}{Remarks}
\crefname{figure}{Figure}{Figures}
\crefname{section}{Section}{Sections}
\crefname{subsection}{Section}{Sections}
\crefname{introthm}{Theorem}{Theorems}
\crefname{introcor}{Corollary}{Corollaries}
\crefname{introconj}{Conjecture}{Conjectures}
\def\e{e}
\def\C{{\mathbb C}}
\def\Z{{\mathbb Z}}
\def\ve{{\varepsilon}}
\newcommand\R{{\mathbb{R}}}
\newcommand\trop{{\mathrm{trop}}}
\newcommand\mg{{\mathfrak{g}}}
\newcommand\rY{{\mathsf{Y}}}
\newcommand\Ad{{\mathrm{Ad}}}
\newcommand{\bb}{{\mathfrak b}}
\newcommand{\maru}[1]{\raise0.2ex\hbox{\textcircled{\scriptsize{#1}}}}
\newfont{\bg}{cmr9 scaled\magstep4}%BIGZERO
\newcommand{\bigzerol}{\smash{\lower1.0ex\hbox{\bg 0}}}
\newcommand\qarrow[2]{\draw[->,shorten >=2pt,shorten <=2pt] (#1) -- (#2) [thick];} %arrow
\newcommand\qsarrow[2]{\draw[->,shorten >=4pt,shorten <=4pt] (#1) -- (#2) [thick];}
\newcommand\qdarrow[2]{\draw[->,dashed,shorten >=2pt,shorten <=2pt] (#1) -- (#2) [thick];} %dashed arrow
\newcommand\qarrowsa[2]{\draw[->,shorten >=2pt,shorten <=4pt] (#1) -- (#2) [thick];} 
\newcommand\qarrowsb[2]{\draw[->,shorten >=4pt,shorten <=2pt] (#1) -- (#2) [thick];} 
\tikzset{
  % style to add an arrow in the middle of a path
  mid arrow/.style={postaction={decorate,decoration={
        markings,
        mark=at position .5 with {\arrow[#1]{stealth}}
      }}},
}
\begin{document}
\title[Quantum cluster algebras and 3D integrability]
{Quantum cluster algebras and 3D integrability: Tetrahedron and 3D reflection equations}

\author[Rei Inoue]{Rei Inoue}
\address{Rei Inoue, Department of Mathematics and Informatics,
   Faculty of Science, Chiba University,
   Chiba 263-8522, Japan.}
\email{reiiy@math.s.chiba-u.ac.jp}

\author[Atsuo Kuniba]{Atsuo Kuniba}
\address{Atsuo Kuniba, Institute of Physics, Graduate School
of Arts and Sciences, University of Tokyo, Komaba, Tokyo, Japan.}
\email{atsuo.s.kuniba@gmail.com}

\author[Yuji Terashima]{Yuji Terashima}
\address{Yuji Terashima, Graduate school of science, Tohoku University,
6-3, Aoba, Aramaki-aza, Aoba-ku, Sendai, 980-8578, Japan}
\email{yujiterashima@tohoku.ac.jp}

%\date{\today}
\date{October 21, 2023}

%%%%%%%%%%%%%%%%%%%%%%%%%%%

\begin{abstract}
We construct a new solution to the tetrahedron equation 
and the three-dimensional (3D) reflection equation
by extending the quantum cluster algebra approach by Sun and Yagi concerning the former. 
We consider the Fock-Goncharov quivers associated 
with the longest elements of the Weyl groups of type $A$ and $C$, 
and investigate the cluster transformations  
corresponding to changing a reduced expression into a `most distant' one. 
By devising a new realization of the quantum $y$-variables in terms of $q$-Weyl algebra, 
the solutions are extracted as the operators 
whose adjoint actions yield the cluster transformations of the quantum $y$-variables.
Explicit formulas of their matrix elements are also derived for some typical representations.
\end{abstract}

\keywords{}

%\subjclass[2000]{}

\maketitle

%\tableofcontents

\section{Introduction}

The tetrahedron equation and the 3D reflection equation, originally proposed in \cite{Z80} and \cite{IK97} respectively, 
are key to integrability in three dimensional (3D)  systems in the bulk and at the boundary. 
In this paper, we focus on specific formulations of these equations presented as follows:
\begin{align*}
&R_{456} R_{236} R_{135} R_{124} = R_{124} R_{135} R_{236} R_{456},
\\
&R_{457} K_{4689} K_{2379} R_{258} R_{178} K_{1356}R_{124} 
= R_{124} K_{1356} R_{178} R_{258} K_{2379} K_{4689} R_{457}.
\end{align*}
Here $R \in \mathrm{End}(V^{\otimes 3}) $ and 
$K \in \mathrm{End}(V\otimes W \otimes V \otimes W)$ for some vector spaces 
$V$ and $W$, which can be either finite or infinite-dimensional.
They are linear operators designed to encode the 
``scattering amplitudes" or ``Boltzmann weights" of integrable 3D systems. 
The indices of these operators specify the tensor components on which they have non-trivial actions. 
The above equations are to hold in $\mathrm{End}(V^{\otimes 6})$ and in 
$\mathrm{End}(V \otimes V \otimes W \otimes V \otimes V \otimes W \otimes V \otimes V \otimes W)$, 
respectively.
A solution to the 3D reflection equation means a pair $(R,K)$ in which 
$R$ is also required to satisfy the tetrahedron equation by itself.

As for the tetrahedron equation, several interesting solutions have been obtained up to now.
See for example
\cite{Z81, B83, BB92, KMS93,KV94, SMS96, MS97, BS06, S08, BMS08, BV15, KMY23} and the references therein.
On the other hand, as of today, with regard to the 3D reflection equation, 
the only solutions known are those constructed in \cite{KO12,KO13,Y21}.
Some of these solutions have originated from intriguing applications of 
the representation theory of quantum groups.
In fact, those in \cite{KV94, S08, KO12, KO13, Y21}
have been constructed systematically by 
invoking the quantized coordinate rings and the PBW basis of a nilpotent subalgebra of quantized 
universal enveloping algebras.  
Further details can be found in the survey monograph \cite{K22}. 

This paper is dedicated to another fascinating approach based on quantum cluster algebras, 
recently proposed by Sun and Yagi for the tetrahedron equation \cite{SY22}.
Quantum cluster algebras were introduced by Fock and Goncharov \cite{FG09}, 
as a $q$-deformation of cluster $y$-variables in the cluster algebras by Fomin and Zelevinsky \cite{FZ07}.
One of the remarkable features of the quantum cluster algebra is the quantum mutation and the quantum dilogarithm 
inherently incorporated within it.
In \cite{SY22}, three types of quivers, namely triangle, square, and butterfly quivers, are introduced, and 
a scheme for producing solutions to the tetrahedron equation is formulated using quantum dilogarithms. 
It poses intriguing challenges because, in practice, 
there exist nontrivial degrees of freedom 
in implementing the noncommuting quantum $y$-variables in a selected quiver, 
upon which the resulting solutions would critically depend.
Given such considerations,  in this paper, we embark on a study of the tetrahedron 
and the 3D reflection equations 
with a focus on the Fock-Goncharov quivers in place of the triangle quivers.
Fock-Goncharov quivers (FG quivers, for short) \cite{FG06} are defined for reduced expressions of elements 
in the Weyl group $W(\mg)$ for a finite dimensional simple Lie algebra $\mg$, and has a beautiful application to the higher Teichm\"uller theory introduced in \cite{FG03}. 

The first step in the cluster algebra approach is to find a sequence of mutations corresponding to the Coxeter moves 
of reduced expressions of the longest element of  $W(\mg)$.
This is done for the transformations $R_{ijk}$ and $K_{ijkl}$ with the choices $\mg =A_2$ and $C_2$, respectively.
The tetrahedron and the 3D reflection equations are associated to $\mg =A_3$ and $C_3$. 
It is known that quantum mutation is decomposed into two parts: 
the monomial part and the automorphism part \cite{FG09} in two ways \cite{Ke11},  
and, in general, only the latter part admits a description as an adjoint action by quantum dilogarithms.
The situation is similar also in the cluster transformations, i.e., those for the quantum $y$-variables
 induced by the mutation sequences corresponding to $R_{ijk}$ 
 and $K_{ijkl}$.
The essential point of our approach lies in realizing these cluster transformations entirely as adjoint actions,
including the monomial part as $\mathrm{Ad}(\mathcal{R}_{ijk})$ and $\mathrm{Ad}(\mathcal{K}_{ijkl})$.
This is achieved through an elaborate embedding of the noncommuting algebra of quantum 
 $y$-variables into the $q$-Weyl algebra.
 See (\ref{eq:phi_kappa}), (\ref{eq:phi-K}), Propositions \ref{prop:R-ad} and \ref{prop:K-ad}.
 Here $\mathcal{R}_{ijk}=\mathcal{R}(\lambda_i, \lambda_j,\lambda_k)_{ijk}$ and 
 $\mathcal{K}_{ijkl}= \mathcal{K}(\lambda_i, \lambda_j,\lambda_k,\lambda_l)_{ijkl}$ are 
 dependent on the spectral parameters $\lambda_i$'s, and are 
 expressed in terms of the quantum dilogarithm  $\Psi_q(z)$ 
 (\ref{eq:Psiq}) as follows (see \S \ref{subsec:qWeyl-R} and 
 \S \ref{sb:cK} for notations):
\begin{align*}
\mathcal{R}_{ijk} &= \Psi_q( \e^{p_i+u_i+p_k-u_k-p_j+\lambda_{ik}}) 
\rho_{jk}\, e^{\frac{1}{\hbar}p_i(u_k-u_j)} 
e^{\frac{\lambda_{jk}}{\hbar}(u_k-u_i)},
\\
\mathcal{K}_{ijkl} &= \Psi_{q^2}(e^{p_j+u_j+p_l-u_l-2p_k+ \lambda_{jl}}) 
\Psi_q(e^{p_i+u_i+p_k-u_k-p_j+ \lambda_{ik}}) 
\Psi_{q^2}(e^{p_j+u_j+p_l-u_l-2p_k+ \lambda_{jl}})^{-1}  
\\
& \qquad   \times  \rho_{jl} \,e^{\frac{1}{\hbar}p_i(u_l-u_j)}
e^{\frac{\lambda_{jl}}{2\hbar}(2u_k-2u_i+u_l-u_j)}.
\end{align*}
Our main results are Theorem \ref{thm:R-tetra} and \ref{thm:K-reflection}
stating that they satisfy the tetrahedron and the 3D reflection equations
involving the spectral parameters.
The fact that the objects $\mathcal{R}_{ijk}$ and  $\mathcal{K}_{ijkl}$ that were within $\mathrm{Ad}(\;\,)$ 
yield the solutions by themselves is quite non-trivial, and this constitutes an essential and original aspect of 
our work building upon \cite{SY22}.
A similar result related to the square quiver will be presented 
in a separate paper \cite{IKT23}\footnote{The conventions used in that paper and in this one are slightly different.}.

\bigskip

This paper is organized as follows. 
In \S 2, we review basic facts on quantum cluster algebra used in this paper. 
In \S 3, we explain a relation between wiring diagrams and the FG quivers, 
and introduce sequences of mutations corresponding to 
the transformations $R_{ijk}$ and $K_{ijkl}$ of the wiring diagrams. 
Further, the associated cluster transformations are realized as the adjoint action 
of the operators $\mathcal{R}_{ijk}$ and $\mathcal{K}_{ijkl}$
by embedding the noncommuting torus algebra of $y$-variables into $q$-Weyl algebra.
In \S 4 and \S 5, we present our main results 
that these operators yield a new solution to the tetrahedron equation and the 3D reflection equation.
Our explanation of the origin of the spectral parameters can be found in Remark \ref{re:sp}.
We study the representations of the operators $\mathcal{R}_{ijk}$ and $\mathcal{K}_{ijkl}$
 in an infinite dimensional vector space in \S 6, and in the modular double setting in \S 7.  
Our $\mathcal{R}_{ijk}$ satisfies the tetrahedron equation with spectral parameters 
as in Theorem \ref{th:teu} and \ref{th:teR}. 
Although they follow from Theorem \ref{thm:R-tetra}, 
we provide an independent proof based on direct calculations at the level of matrix elements.
It reveals an intriguing duality. See Remark \ref{re:dual}.  
In appendix \ref{s:ncq}, we recall the definition and useful formulas for 
the noncompact quantum dilogarithm used in \S 7.

\subsection*{Acknowledgement}
The authors thank Junya Yagi and Akihito Yoneyama for discussions.
RI is supported by  JSPS KAKENHI Grant Number 19K03440 and 23K03048.   
YT is supported by  JSPS KAKENHI Grant Number 21K03240 and 22H01117.

%%%%%%%%%%%%%%%%%%%%%%%%%%%%%%
\section{Basic facts on quantum cluster algebras}

\subsection{Quantum cluster mutation}\label{subsec:mutation}

We recall the definition of quantum cluster mutation by Fock and Goncharov \cite{FG06}. 

For a finite set $I$, let $B = (b_{ij})_{i,j \in I}$ be a skew-symmetrizable matrix with values in $\frac{1}{2}\Z$: there exists an integral diagonal matrix $d =\mathrm{diag}(d_j)_{j \in I}$ such that $\widehat{B} = (\widehat{b}_{ij})_{i,j \in I}:= B\,d = (b_{ij} d_j)_{i,j \in I}$ is skew-symmetric. We assume that $\gcd(d_j \mid j \in I)=1$.  Define a subset $I_0$ of $I$ by $I_0 := \{i \in I; ~b_{ij} \notin \Z \text{ or } b_{ji} \notin \Z \}$. We call $B$ the {\it exchange matrix}.
Let $\mathcal{Y}(B)$ be a skew field generated by $q$-commuting variables $Y = (Y_i)_{i \in I}$ with the relations 
\begin{align}\label{eq:q-Y}
  Y_i Y_j = q^{2 \widehat{b}_{ij}} Y_j Y_i.  
\end{align}
We write $q_i$ for $q^{d_i}$.
We call the data $(B, d, Y)$ a quantum $y$-seed, and $Y_i$ a quantum $y$-variable.    

For $(B, d, Y)$ and $k \in I \setminus I_0$, the quantum mutation $\mu_k$ transforms $(B, d, Y)$ to $(B', d',Y') := \mu_k (B, d, Y)$ as 
\begin{align}\label{eq:q-mutation}
  &b_{ij}' = 
  \begin{cases}
    -b_{ij} & i=k \text{ or } j=k,
    \\
    \displaystyle{b_{ij} + \frac{|b_{ik}| b_{kj} + b_{ik} |b_{kj}|}{2}}
    & \text{otherwise},
  \end{cases}
  \\ \label{eq:d-mutation}
  &d_i' = d_i,
  \\ \label{eq:X-mutation}
  &Y_{i}' = 
  \begin{cases}
  Y_k^{-1} & i=k,
  \\
  \displaystyle{Y_i \prod_{j=1}^{|b_{ik}|}(1 + q_k^{2j-1} Y_k^{-\mathrm{sgn}(b_{ik})})^{-\mathrm{sgn}(b_{ik})}} & i \neq k.
  \end{cases}
\end{align}
The mutations are involutive, $\mu_k \mu_k = \mathrm{id.}$, and commutative, $\mu_k \mu_j = \mu_j \mu_k$ if $b_{jk}=b_{kj}=0$.
We may abbreviate $(B,d,Y)$ to $(B,Y)$, since $d$ is uniquely determined from $B$ and invariant under mutations.
The mutation $\mu_k$ induces an isomorphism of skew fields $\mu_k^\ast: \mathcal{Y}(B') \to \mathcal{Y}(B)$, where $\mathcal{Y}(B')$ is a skew field generated by $Y'_i$ with relations $Y'_i Y'_j = q^{2 \widehat{b}'_{ij}} Y'_j Y'_i$. 

The quantum mutation is decomposed into two parts, a monomial part and an automorphism part \cite{FG09}, in two ways \cite{Ke11}. 
For $\ve \in \{+,-\}$, we define an isomorphism $\tau_{k,\ve}$ of the skew fields by 
\begin{align}\label{eq:mono-iso}
\tau_{k,\ve} : ~\mathcal{Y}(B')  \to \mathcal{Y}(B)
; ~ Y'_i \mapsto 
\begin{cases} 
  Y_k^{-1} & i= k, 
  \\ 
  q^{-b_{ik}[\ve b_{ik}]_+}Y_i Y_k^{[\ve b_{ik}]_+} & i \neq k,
\end{cases}
\end{align} 
with $[a]_+ := \max[0,a]$, and adjoint actions $\mathrm{Ad}_{k,\ve}$ on $\mathcal{Y}(B)$ by 
% := \mathrm{Ad} (\Psi_{q_k}(Y_k'))$ and $\mathrm{Ad}_{k,-}:= \mathrm{Ad}(\Psi_{q_k}(Y_k^{'-1})^{-1})$ by
\begin{align}
&\mathrm{Ad}_{k,+}(Y_i) := \mathrm{Ad}(\Psi_{q_k}(Y_k))(Y_i) =  \Psi_{q_k}(Y_k) Y_i \Psi_{q_k}(Y_k)^{-1},
\\
&\mathrm{Ad}_{k,-}(Y_i) := \mathrm{Ad}(\Psi_{q_k}(Y_k^{-1})^{-1})(Y_i) = \Psi_{q_k}(Y_k^{-1})^{-1} Y_i \Psi_{q_k}(Y_k^{-1}).
\end{align}
Here, $\Psi_{q}(U)$ denotes the quantum dilogarithm defined as
\begin{align}\label{eq:Psiq}
\Psi_q(U) = \frac{1}{(-qU; q^2)_\infty}, \quad  (z;q)_\infty = \prod_{k=0}^{\infty}(1-zq^k).
\end{align}
For $n \in \Z$ we define $(z;q)_n$ by
\begin{align}\label{eq:qq_n}
(z;q)_n = \frac{(z;q)_\infty}{(zq^n;q)_\infty},
\end{align}
and express the expansion of $\Psi_{q}(U)$ as
\begin{align}\label{eq:q-dilog-sum}
\Psi_q(U) = \sum_{n = 0}^\infty \frac{(-qU)^n}{(q^2;q^2)_n},
\qquad 
\Psi_q(U)^{-1} = \sum_{n = 0}^\infty \frac{q^{n^2} U^n}{(q^2;q^2)_n}.
\end{align}
The fundamental properties of $\Psi_q(U)$ are 
%Its fundamental property is the pentagon identity 
\begin{align}
\label{eq:reculsion}
&\Psi_q(q^2 U) \Psi_q(U)^{-1} = 1+qU,
\\
\label{eq:pentagon}
&\Psi_q(U)\Psi_q(W) = \Psi_q(W) \Psi_q(q^{-1}UW) \Psi_q(U) ~~\text{ if } UW = q^2WU,
\end{align}
where the second one is called the pentagon identity.

\begin{lem}[\cite{FG09,Ke11}]
For $k \in I \setminus I_0$ and $i \in I$ it holds that 
$$
\mathrm{Ad}_{k,+}\circ \tau_{k,+}(Y'_i)
= \mathrm{Ad}_{k,-} \circ \tau_{k,-}(Y'_i),
%\mathrm{Ad}(\Psi_{q_k}(Y_k)) \circ \tau_{k,+}(Y'_i)
%= \mathrm{Ad}(\Psi_{q_k}(Y_k^{-1})^{-1})\circ \tau_{k,-}(Y'_i),
$$
and the following diagram is commutative for $\ve = +, -$:
\begin{align*}
\xymatrix{
\mathcal{Y}(B') \ar[r]^{\mu_k^\ast} \ar[dr]_{\tau_{k,\ve}} & \mathcal{Y}(B) 
\\
& \mathcal{Y}(B) \ar[u]_{\mathrm{Ad}_{k,\ve}} 
}
\end{align*}
\end{lem}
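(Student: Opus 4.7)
Both claims reduce to showing, for each $\ve\in\{+,-\}$, that $\mu_k^* = \mathrm{Ad}_{k,\ve}\circ\tau_{k,\ve}$ as skew-field homomorphisms $\mathcal{Y}(B')\to\mathcal{Y}(B)$; this is precisely the commutativity of the diagram, and applying it for both signs then yields $\mathrm{Ad}_{k,+}\circ\tau_{k,+}(Y'_i) = \mu_k^*(Y'_i) = \mathrm{Ad}_{k,-}\circ\tau_{k,-}(Y'_i)$. Since $\mathcal{Y}(B')$ is generated by the $Y'_i$, the plan is to verify the identity on each generator.

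For $i=k$, the definition gives $\tau_{k,\ve}(Y'_k)=Y_k^{-1}$, and since $Y_k^{-1}$ commutes with $\Psi_{q_k}(Y_k^{\pm 1})$ we obtain $\mathrm{Ad}_{k,\ve}\circ\tau_{k,\ve}(Y'_k)=Y_k^{-1}=\mu_k^*(Y'_k)$. For $i\ne k$ with $b_{ik}=0$, the quasi-commutation trivializes, $\mathrm{Ad}_{k,\ve}$ fixes $Y_i$, and both $\tau_{k,\ve}(Y'_i)$ and $\mu_k^*(Y'_i)$ coincide with $Y_i$.

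The substantive case is $i\ne k$ with $b_{ik}\ne 0$ (integer, since $k\notin I_0$). I treat $\ve=+$, $b_{ik}=m>0$ as representative. Then $\tau_{k,+}(Y'_i)=q^{-b_{ik}m}\,Y_iY_k^m$. The quasi-commutation $Y_iY_k=q_k^{2m}Y_kY_i$ yields $Y_i f(Y_k)=f(q_k^{2m}Y_k)Y_i$, and iterating (\ref{eq:reculsion}) $m$ times gives
\[
\Psi_{q_k}(q_k^{2m}Y_k)=\prod_{j=1}^m\bigl(1+q_k^{2j-1}Y_k\bigr)\cdot\Psi_{q_k}(Y_k),
\]
hence $\Psi_{q_k}(Y_k)\,Y_i\,\Psi_{q_k}(Y_k)^{-1}=\prod_{j=1}^m(1+q_k^{2j-1}Y_k)^{-1}\,Y_i$. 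Substituting into $\mathrm{Ad}_{k,+}(\tau_{k,+}(Y'_i))$, applying the flip $1+q_k^{2j-1}Y_k=q_k^{2j-1}Y_k\,(1+q_k^{-(2j-1)}Y_k^{-1})$ to invert the exponent of $Y_k$, then re-commuting through $Y_i$ via $Y_iY_k^{-1}=q_k^{-2m}Y_k^{-1}Y_i$ and reindexing $j\mapsto m-j+1$ so that $q_k^{-(2j-1)}$ becomes $q_k^{2j-1}$, all accumulated $q_k$-powers collapse into a scalar that cancels against the prefactor in $\tau_{k,+}$. What remains is $Y_i\prod_{j=1}^m(1+q_k^{2j-1}Y_k^{-1})^{-1}=\mu_k^*(Y'_i)$ by (\ref{eq:X-mutation}) with $\mathrm{sgn}(b_{ik})=+1$.

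The three other sign combinations go through in the same manner: for $\ve=-$, the conjugating element is $\Psi_{q_k}(Y_k^{-1})^{-1}$ and one applies (\ref{eq:reculsion}) with $U\mapsto Y_k^{-1}$; for the sign pairs that make $[\ve b_{ik}]_+=0$, the monomial part of $\tau_{k,\ve}$ is trivial and the adjoint action alone delivers the required product $(1+q_k^{2j-1}Y_k^{-\mathrm{sgn}(b_{ik})})^{-\mathrm{sgn}(b_{ik})}$ directly. The only obstacle is not conceptual but careful $q_k$-power bookkeeping: one must check that the scalar prefactor $q^{-b_{ik}[\ve b_{ik}]_+}$ in $\tau_{k,\ve}$ is exactly what is needed to absorb the $q_k$-shifts picked up by commuting $Y_i$ past $Y_k$-powers and by reindexing the product of $(1+q_k^{2j-1}Y_k^{\pm 1})$-factors, so that the monomial-plus-adjoint decomposition lands on the uniform formula (\ref{eq:X-mutation}).
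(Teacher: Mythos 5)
The paper does not actually prove this lemma---it is quoted from \cite{FG09,Ke11} with no argument supplied---so your generator-by-generator verification is a genuine addition rather than a parallel of the paper's text. Your argument is the standard one and is correct: the reduction of both claims to the single identity $\mu_k^\ast=\mathrm{Ad}_{k,\ve}\circ\tau_{k,\ve}$ on generators is the right frame, the cases $i=k$ and $b_{ik}=0$ are handled correctly, and the core computation (commuting $Y_i$ past $\Psi_{q_k}(Y_k^{\pm1})$, iterating \eqref{eq:reculsion} $m=|b_{ik}|$ times to produce $\prod_{j=1}^{m}(1+q_k^{2j-1}Y_k^{\pm1})^{\mp1}$, then flipping $1+aY_k=aY_k(1+a^{-1}Y_k^{-1})$ and reindexing $j\mapsto m-j+1$) reproduces \eqref{eq:X-mutation} exactly. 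I checked all four sign cases; in the two cases with $[\ve b_{ik}]_+=0$ the adjoint action alone gives the answer with no scalar left over, as you say.

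One caveat on the ``$q_k$-power bookkeeping'' you flag as the only obstacle: with the prefactor $q^{-b_{ik}[\ve b_{ik}]_+}$ exactly as printed in \eqref{eq:mono-iso}, the accumulated scalar from the flip-and-reindex step is $q_k^{\pm b_{ik}^2}=q^{\pm d_k b_{ik}^2}$, so the cancellation you assert only closes when $d_k=1$. The cancellation is exact for all $d_k$ once the prefactor is read as $q_k^{-b_{ik}[\ve b_{ik}]_+}$ (equivalently $q^{-\widehat{b}_{ik}[\ve b_{ik}]_+}$), which is what the paper actually uses in its computations---e.g.\ $\tau_{2,+}(\rY_1(2))=q^{-2}Y_1Y_2$ in the $C_2$ case, where $b_{12}=1$ and $d_2=2$. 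So your proof is right, but you should state explicitly that you are using the $q_k$-normalized prefactor; as literally written, \eqref{eq:mono-iso} and the lemma are incompatible in the skew-symmetrizable case, and silently ``cancelling'' hides that discrepancy.
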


We introduce the quantum torus algebra $\mathcal{T}(B)$ associated to $B$ for later use. This is the $\mathbb{Q}(q)$-algebra generated by noncommutative variables $\rY^\alpha~(\alpha \in \Z^{I})$ with the relations
\begin{align}
q^{\langle \alpha,\beta \rangle} \rY^\alpha \rY^\beta 
= \rY^{\alpha + \beta},
\end{align}
where $\langle ~~,~~ \rangle$ is a skewsymmetric form given by 
$\langle \alpha,\beta \rangle = - \langle \beta,\alpha \rangle =  \alpha \cdot \widehat{B} \beta$.
Let $e_i$ be the standard unit vector of $\Z^I$, and write $\rY_i$ for $\rY^{e_i}$. Then we have $\rY_i \rY_j = q^{2 \widehat{b}_{ij}} \rY_j \rY_i$.
We identify $\rY_i$ with $Y_i$, and obtain \eqref{eq:q-Y}.
The monomial part \eqref{eq:mono-iso} of $\mu_k^\ast$ naturally induces the morphisms of the quantum torus algebras, which is written as
\begin{align}\label{eq:torus-iso} 
\tau_{k,\ve} : \mathcal{T}(B') \to \mathcal{T}(B); ~
\rY'_i \mapsto \begin{cases} 
\rY_k^{-1} & i= k, 
\\ 
\rY^{e_i + e_k [\ve b_{ik}]_+} & i \neq k.
\end{cases}
\end{align}
Following \cite{Ke11} we consider the subalgebra $\mathbb{A}(B)$ of $\mathcal{T}(B)$ generated by $\rY^\alpha~(\alpha \in \Z_{\geq 0}^{I})$, and the completion $\hat{\mathbb{A}}(B)$ of $\mathbb{A}(B)$ with respect to the ideal generated by $\rY_i ~(i \in I)$\footnote{In \cite{Ke11}, the algebras $\mathbb{A}(B)$ and $\hat{\mathbb{A}}(B)$ are respectively called the quantum affine space and the formal quantum affine space.}. 
 Further, let $\hat{\mathbb{A}}^{\! \times}(B)$ be a subset of $\hat{\mathbb{A}}(B)$ generated by all invertible elements in $\hat{\mathbb{A}}(B)$.
Note that the quantum dilogarithm $\Psi_q(\rY^\alpha)$ with $\alpha \in \Z_{\geq 0}^{I}$ belongs to $\hat{\mathbb{A}}^{\! \times}(B)$.
 
\subsection{Weighted quiver}
For the pair $(B,d)$, we define the skew-symmetric matrix $\sigma = (\sigma_{ij})_{i,j \in I}$ by $\sigma_{ij} = b_{ij} \gcd(d_i,d_j) / d_i$. In this paper, we only have the cases that $\sigma_{ij}$ is integral or $\pm 1/2$.  
We determine the weighted quiver $Q =(\sigma,d)$ without one-loop or two cycle as follows. The vertex set of $Q$ is $I$, where each vertex $i \in I$ has a weight $d_i$. When $\sigma_{ij}$ is integral, we draw ordinary arrows $\longrightarrow$ in such a way that $\sigma_{ij} = \#\{\text{arrows from $i$ to $j$}\}  - \#\{\text{arrows from $j$ to $i$}\}$ is satisfied. When $\sigma_{ij} = 1/2$ (resp. $\sigma_{ij} = -1/2$), we draw a dashed arrow $\dashrightarrow$ from $i$ to $j$ (resp. $j$ to $i$). 

The following lemma on the rule of the quiver mutation is useful (cf. \cite{IIO21}):

\begin{lem}
The mutation of the weighted quiver $Q = (\sigma,d)$ by $\mu_k$, $(\sigma',d') := \mu_k(\sigma,d)$, is given by
\eqref{eq:d-mutation} and 
$$
  \sigma_{ij}' = 
  \begin{cases}
    -\sigma_{ij} & i=k \text{ or } j=k,
    \\
    \displaystyle{\sigma_{ij} 
    + \frac{|\sigma_{ik}| \sigma_{kj} + \sigma_{ik} |\sigma_{kj}|}{2} 
      \,\alpha_{ij}^k }
    & \text{otherwise},
  \end{cases} 
$$
where 
$$
  \alpha_{ij}^k 
  = 
  d_k \frac{\gcd(d_i, d_j)}{\gcd(d_k, d_i)\gcd(d_k, d_j)}.
$$
In particular, if $d_k \in \{d_i,d_j\}$, then we have $\alpha_{ij}^k = 1$. 
If $d_i = d_j$ and $\gcd(d_k,d_i)=1$, then we have $\alpha_{ij}^k = d_k d_i$.
\end{lem}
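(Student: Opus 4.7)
The plan is a direct verification: starting from the definition $\sigma_{ij} = b_{ij}\gcd(d_i,d_j)/d_i$ and the known mutation rule \eqref{eq:q-mutation} for $B$, I would translate the latter into a rule for $\sigma$, using also that $d'_i = d_i$ from \eqref{eq:d-mutation}. The first preparatory step is to invert the defining relation, so that $b_{ij} = \sigma_{ij}\, d_i/\gcd(d_i,d_j)$, and to record the ``reverse'' formula $\sigma'_{ij} = b'_{ij}\,\gcd(d_i,d_j)/d_i$ that we will use to read off $\sigma'_{ij}$ from $b'_{ij}$.

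The case $i=k$ or $j=k$ is immediate, since $b'_{ij} = -b_{ij}$ and the factor $\gcd(d_i,d_j)/d_i$ is unchanged by the mutation. So the main content is the generic case $i,j\neq k$. There I would substitute
\[
b_{ij} = \frac{\sigma_{ij}\,d_i}{\gcd(d_i,d_j)},\qquad
b_{ik} = \frac{\sigma_{ik}\,d_i}{\gcd(d_i,d_k)},\qquad
b_{kj} = \frac{\sigma_{kj}\,d_k}{\gcd(d_k,d_j)}
\]
into the formula $b'_{ij} = b_{ij} + (|b_{ik}|b_{kj} + b_{ik}|b_{kj}|)/2$. The factor $d_i$ appears linearly in $b_{ik}$, and $|b_{ik}|b_{kj}+b_{ik}|b_{kj}|$ therefore factors as $\frac{d_i d_k}{\gcd(d_i,d_k)\gcd(d_k,d_j)}\bigl(|\sigma_{ik}|\sigma_{kj} + \sigma_{ik}|\sigma_{kj}|\bigr)$. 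Multiplying through by $\gcd(d_i,d_j)/d_i$ to convert to $\sigma'_{ij}$ collects this into the factor
\[
\frac{d_k\,\gcd(d_i,d_j)}{\gcd(d_i,d_k)\gcd(d_k,d_j)} \;=\;\alpha^k_{ij},
\]
which recovers the stated formula.

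For the two special cases, the verification is by direct inspection: if $d_k=d_i$, then $\gcd(d_k,d_i)=d_i$ and $\gcd(d_k,d_j)=\gcd(d_i,d_j)$, so $\alpha^k_{ij}=1$, and the case $d_k=d_j$ is symmetric. If $d_i=d_j$ and $\gcd(d_k,d_i)=1$, then $\gcd(d_i,d_j)=d_i$ and $\gcd(d_k,d_j)=\gcd(d_k,d_i)=1$, giving $\alpha^k_{ij} = d_k d_i$.

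No serious obstacle is expected; the argument is essentially algebraic bookkeeping. The only point that requires mild care is the asymmetric dependence of $\sigma$ on $d_i$ versus $d_j$ in its definition: one must check that, after multiplying $b'_{ij}$ by $\gcd(d_i,d_j)/d_i$, the $d_i$ cancels cleanly and the resulting coefficient $\alpha^k_{ij}$ is in fact symmetric in $i$ and $j$ (as it must be, since the formula for $\sigma'_{ij}$ should be consistent with the skew-symmetrizability of $B'$). This is visible from the final expression for $\alpha^k_{ij}$ and serves as a useful consistency check on the calculation.
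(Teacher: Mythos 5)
Your proof is correct and complete: the substitution $b_{ij}=\sigma_{ij}d_i/\gcd(d_i,d_j)$ into the mutation rule \eqref{eq:q-mutation}, together with $d_i'=d_i$, is exactly the computation needed, and your factorization of $|b_{ik}|b_{kj}+b_{ik}|b_{kj}|$ and the resulting coefficient $\alpha^k_{ij}$ check out, as do both special cases. The paper itself gives no proof of this lemma (it only cites [IIO21]), so there is nothing to compare against; your direct verification, including the remark that the symmetry of $\alpha^k_{ij}$ in $i$ and $j$ is forced by skew-symmetry of $\sigma'$, is the natural argument.
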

\begin{example}
In Figure~\ref{fig:mutation-ex}, we denote a vertex of weight one by a circle, and a vertex of weight two by 
$\small{\raise0.2ex\hbox{\textcircled{\scriptsize{2}}}}$. 
By applying $\mu_3$ to the left quiver, we get the right one, 
where the arrow from $1$ to $2$ appears due to $\alpha^1_{2,2} = 2$. 
If all the three vertices have the same weight, this arrow does not appear.

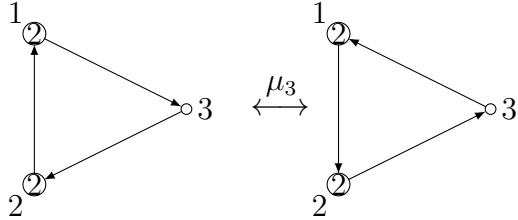
\begin{figure}[ht]
\[
\begin{tikzpicture}
\begin{scope}[>=latex]
\path (0,1) node[circle]{2} coordinate(C1) node[above left]{$1$};
\draw (0,1) circle[radius=0.15];
\path (0,-1) node[circle]{2} coordinate(C2) node[below left]{$2$};
\draw (0,-1) circle[radius=0.15];
\draw(2,0) circle(2pt) coordinate (C3) node[right]{$3$};

\draw[->,shorten >=4pt,shorten <=4pt] (C2)--(C1);
\draw[->,shorten >=2pt,shorten <=4pt] (C1)--(C3);
\draw[->,shorten >=4pt,shorten <=2pt] (C3)--(C2);

\draw(2.7,0) node[right]{$\longleftrightarrow$};
\draw(2.9,0.3) node[right]{$\mu_3$};

\path (4,1) node[circle]{2} coordinate(D1) node[above left]{$1$};
\draw (4,1) circle[radius=0.15];
\path (4,-1) node[circle]{2} coordinate(D2) node[below left]{$2$};
\draw (4,-1) circle[radius=0.15];
\draw(6,0) circle(2pt) coordinate (D3) node[right]{$3$};

\draw[->,shorten >=4pt,shorten <=4pt] (D1)--(D2);
\draw[->,shorten >=4pt,shorten <=2pt] (D3)--(D1);
\draw[->,shorten >=2pt,shorten <=4pt] (D2)--(D3);

\end{scope}
\end{tikzpicture}
\]
\caption{Mutation of weighted quiver}
\label{fig:mutation-ex}
\end{figure}

\end{example}

\subsection{Tropical $y$-variables and tropical sign}

Let $\mathbb{P}(u)=\mathbb{P}_\trop(u_1,u_2,\ldots,u_p) := \{\prod_{i=1}^{p} u_i^{a_i}; ~a_i \in \Z \}$ be the tropical semifield of rank $p$, equipped with the addition $\oplus$ and multiplication $\cdot$ as
$$
  \prod_{i=1}^{p} u_i^{a_i} \oplus \prod_{i=1}^{p} u_i^{b_i}
  = 
  \prod_{i=1}^{p} u_i^{\min(a_i,b_i)}, 
  \qquad
  \prod_{i=1}^{p} u_i^{a_i} \cdot \prod_{i=1}^{p} u_i^{b_i}
  =  
  \prod_{i=1}^{p} u_i^{a_i+b_i}.
$$
For $v = \prod_{i \in I} u_i^{a_i} \in \mathbb{P}(u)$, we write $v = u^\alpha$
with $\alpha = (a_i)_{i \in I} \in \Z^{I}$.
If $\alpha \in \Z_{\geq 0}^{I}$ (resp. $\alpha \in \Z_{\leq 0}^{I}$), we say $v$ is positive (resp. negative). 

For a quiver $Q$ with a vertex set $I$, let $\mathbb{P}(u)$ be a tropical semifield of rank $|I|$.
For a tropical $y$-seed $(B,d,y); y = (y_i)_{i \in I} \in \mathbb{P}(u)^{I}$, and $k \in I \setminus I_0$, the mutation $\mu_k (B,d, y) =: (B',d',y')$ is given by \eqref{eq:q-mutation}, \eqref{eq:d-mutation} and 
\begin{equation}\label{eq:trop-mutation}
y_{i}' = 
  \begin{cases}
  y_k^{-1} & i=k,
  \\
  y_i \cdot (1 \oplus y_k^{-\mathrm{sgn}(b_{ik})})^{-b_{ik}} & i \neq k.
  \end{cases}
\end{equation}
For a tropical $y$-variable $y_i' = u^{\alpha'}$, the vector $\alpha'$ is called the {\it $c$-vector} of $y_i'$.
The following theorem states the {\em sign coherence} of the $c$-vectors.

\begin{thm}[\cite{FZ07,GHKK14}]
\label{thm:sign-coherence}
For any sequence of mutations which transforms a tropical $y$-seed $(B,u)$ into $(B',y')$, 
it holds that each $y'_i \in \mathbb{P}(u)$ is either positive or negative.
\end{thm}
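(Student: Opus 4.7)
The plan is to prove the statement by induction on the length $\ell$ of the mutation sequence, lifting the tropical claim to a quantum one via the decomposition $\mu_k^\ast = \mathrm{Ad}_{k,\varepsilon}\circ \tau_{k,\varepsilon}$ recalled in the preceding lemma. The base case $\ell=0$ is trivial since $y_i=u_i$ is positive. For the inductive step, given a sequence $(k_1,\ldots,k_\ell)$ of mutations, one defines inductively the \emph{tropical sign} $\varepsilon_j\in\{+,-\}$ at the $j$-th step to be the common sign of all exponents of the tropical variable $y^{(j-1)}_{k_j}$, which is well-defined precisely by the inductive hypothesis applied at stage $j-1$.

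With these signs in hand, iterated application of the decomposition lemma yields a factorization
\[
\mu_{k_1}^\ast\circ\cdots\circ\mu_{k_\ell}^\ast \;=\; \mathrm{Ad}(\Phi_\ell)\circ\tau_\ell,
\]
where $\tau_\ell:=\tau_{k_1,\varepsilon_1}\circ\cdots\circ\tau_{k_\ell,\varepsilon_\ell}$ is a monomial isomorphism of quantum tori and $\Phi_\ell$ is an ordered product of quantum dilogarithms of the form $\Psi_{q_{k_j}}(\rY^{\beta_j})^{\varepsilon_j}$, each $\beta_j\in\Z^{I}$ being the accumulated image of $\pm e_{k_j}$ under the preceding $\tau_{k_i,\varepsilon_i}$'s; the pentagon identity \eqref{eq:pentagon} keeps this product well-organized. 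Passing to the tropical semifield, the expansion \eqref{eq:q-dilog-sum} shows that $\mathrm{Ad}(\Phi_\ell)$ contributes only higher-order corrections whose exponents are componentwise larger than those of the leading monomial, so in the tropical limit the adjoint action is trivial. Hence the $c$-vector of $y'_i$ equals $\tau_\ell(e_i)\in\Z^{I}$, a single integer vector. To close the induction, one then has to verify that each $\beta_j$ lies in $\Z_{\geq 0}^{I}$, so that every factor of $\Phi_\ell$ sits in the formal quantum affine space $\hat{\mathbb{A}}^{\!\times}(B_0)$; granting this, $\tau_\ell(e_i)$ is forced to be either entirely positive or entirely negative by the combinatorics of the monomial maps $\tau_{k_i,\varepsilon_i}$.

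The main obstacle is exactly this last positivity claim for the $\beta_j$'s, which is the genuine content of the theorem. The tropical dynamics \eqref{eq:trop-mutation} on its own does not prevent mixed signs, and ruling them out requires a structural input that cannot be extracted from the inductive skeleton alone. In the skew-symmetric case one invokes the categorification of Derksen--Weyman--Zelevinsky via decorated representations of quivers with potential and their $\mathbf{g}$-vectors, whereas the general skew-symmetrizable case, as cited from \cite{GHKK14}, requires the cluster scattering diagram machinery of Gross--Hacking--Keel--Kontsevich, where positivity of the wall-crossing automorphisms translates directly into sign coherence of the $c$-vectors. Either of these deeper inputs closes the induction; the formal bookkeeping above is otherwise routine.
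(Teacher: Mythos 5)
The paper does not prove this theorem at all: it is imported verbatim from the literature, with the skew-symmetric case attributed to Fomin--Zelevinsky and Derksen--Weyman--Zelevinsky's categorification, and the general skew-symmetrizable case to the scattering-diagram work of Gross--Hacking--Keel--Kontsevich \cite{GHKK14}. So there is no in-paper argument to compare yours against; the only honest benchmark is whether your proposal constitutes an independent proof. It does not, and you say so yourself: the step ``verify that each $\beta_j$ lies in $\Z_{\geq 0}^{I}$'' is, once the signs $\delta_j$ are chosen to be the tropical signs $\ve_j$, literally the statement that the $c$-vector $\alpha_j$ of $y^{(j-1)}_{k_j}$ is sign-coherent ($\beta_j = \ve_j\alpha_j$). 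Your induction therefore reduces the theorem to itself, and the external input from \cite{GHKK14} is not a finishing touch but the entire content. To your credit, you identify this honestly and point at the correct sources, which is exactly what the paper does by citing rather than proving.

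One structural remark: the detour through quantum tori, the decomposition $\mu_k^\ast = \mathrm{Ad}_{k,\ve}\circ\tau_{k,\ve}$, and the product of quantum dilogarithms $\Phi_\ell$ is unnecessary for this statement. Sign coherence is a purely tropical assertion about the dynamics \eqref{eq:trop-mutation}, and the tropical $y$-variables are Laurent monomials $u^{\alpha}$ by definition, so the $c$-vector is always well defined without any ``tropical limit of the adjoint action'' argument; the only question is whether $\alpha$ has components all of one sign. The quantum scaffolding you erect is the machinery the paper uses later, for Theorem \ref{thm:id-mono-qdilog}, where sign coherence is an \emph{input} (it guarantees $\rY^{\ve_t\alpha_t}\in\mathbb{A}(B)$ so that $\Psi_{q_{i_t}}(\rY^{\ve_t\alpha_t})$ lives in $\hat{\mathbb{A}}^{\!\times}(B)$), not an output. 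Presenting it as part of a proof of sign coherence inverts the logical order of the two theorems. If the intent is merely to record that the theorem is quoted from \cite{FZ07,GHKK14}, your last paragraph alone suffices; the preceding two paragraphs add bookkeeping but no probative force.
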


Based on this theorem, for any tropical $y$-seed $(B',y')$ obtained from $(B,u)$ by applying mutations, we define the {\it tropical sign} $\ve_i'$ of $y_i'$ to be $+1$ (resp. $-1$) if $y_i'$ is positive (resp. $y_i$ is negative).  We also write $\ve_i'= \pm$ for $\ve_i'=\pm 1$ for simplicity.

\begin{remark}
For the mutation $\mu_k (B,y) = (B',y')$ of $y$-seeds, let $c_i$, $c_i'$ and $c_k$ be the $c$-vectors of $y_i$, $y_i'$ and $y_k$, and $\ve_k$ be the tropical sign of $y_k$. 
Then the  tropical mutation \eqref{eq:trop-mutation} is written in terms of $c$-vectors as 
$$
  c_i' = 
  \begin{cases}
  -c_k & i=k,
  \\
  c_i + c_k [\ve_k b_{ik}]_+ & i \neq k.
\end{cases}
$$
This coincides with the transformation of quantum torus \eqref{eq:torus-iso} on $\Z^I$ (i.e. the power of \eqref{eq:torus-iso}), when $\ve = \ve_k$. 
\end{remark}

\subsection{Periodicity and quantum dilogarithm identity}

The symmetric group $\mathfrak{S}_I$ naturally acts on $y$-seeds.
% by permuting the variables and the rows and columns of the exchange matrix. 
The action of $\sigma \in \mathfrak{S}_I$ is given by
$$
  \sigma : (b_{ij},d_i,y_i) \mapsto (b_{\sigma^{-1}(i), \sigma^{-1}(j)}, d_{\sigma^{-1}(i)}, y_{\sigma^{-1}(i)}). 
$$
For a sequence $\mathbf{i} = (i_1, i_2, \ldots,i_L) \in I^L$, define a sequence of mutations $\mu_{\mathbf{i}}:= \mu_{i_L} \mu_{i_{L-1}} \cdots \mu_{i_2} \mu_{i_1}$, and consider the sequences of tropical $y$-seeds starting with $(B,u)$ and of quantum $y$-seeds starting with $(B,Y)$:
\begin{align}
\label{eq:seq-tropy}
&(B,u) =:(B(1),y(1)) \stackrel{\mu_{i_1}}{\longleftrightarrow}
(B(2),y(2)) \stackrel{\mu_{i_2}}{\longleftrightarrow}
\cdots
\stackrel{\mu_{i_L}}{\longleftrightarrow} (B(L+1),y(L+1)),
\\
\label{eq:seq-Y}
&(B,Y) =:(B(1),Y(1)) \stackrel{\mu_{i_1}}{\longleftrightarrow}
(B(2),Y(2)) \stackrel{\mu_{i_2}}{\longleftrightarrow}
\cdots
\stackrel{\mu_{i_L}}{\longleftrightarrow} (B(L+1),Y(L+1)).
\end{align}
For an element $\sigma \in \mathfrak{S}_I$, we say that the sequence $\mathbf{i}$ is a $\sigma$-period of $(B,u)$ if it holds that
$$
  b_{\sigma(i),\sigma(j)}(L+1) = b_{ij}(1), \qquad y_{\sigma(i)}(L+1) = y_i(1).
$$  
The $\sigma$-period of $(B,Y)$ is defined in the same manner.
For an exchange matrix $B$, a mutation sequence is a sequence of mutations for $B$ and permutations of $I$.
  
The following theorem is obtained by combining the synchronicity \cite{N21} among $x$-seeds, $y$-seeds and tropical $y$-seeds, and the synchronicity between classical and quantum seeds \cite[Lemma 2.22]{FG09b}, \cite[Proposition 3.4]{KN11}.
 
\begin{thm}\label{thm:period}
For an exchange matrix $B$ and a mutation sequence $\nu$ for $B$, the following two statements are equivalent:
\begin{itemize}

\item[(1)]
For a tropical $y$-seed $(B,y)$, it holds that $\nu (B,y) = (B,y)$. 

\item[(2)]
For a quantum $y$-seed $(B,Y)$, it holds that $\nu (B,Y) = (B,Y)$. 

\end{itemize}
\end{thm}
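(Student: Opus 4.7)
The plan is to chain together two known synchronicity principles rather than attempt a direct algebraic manipulation: the mutation rules \eqref{eq:q-mutation}, \eqref{eq:X-mutation}, \eqref{eq:trop-mutation} differ in complexity, so the cleanest route is to reduce the statement to results already in the literature.

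First, I would reformulate (1) in terms of classical $y$-seeds. Recall that a classical $y$-seed $(B,y^{\mathrm{cl}})$ with $y^{\mathrm{cl}}_i$ belonging to a semifield mutates by the same formula as \eqref{eq:trop-mutation} but with $\oplus$ replaced by ordinary addition $+$. The synchronicity theorem of Nakanishi \cite{N21} asserts that for a mutation sequence $\nu$, the identity $\nu(B,y) = (B,y)$ at the tropical level is equivalent to $\nu(B,y^{\mathrm{cl}}) = (B,y^{\mathrm{cl}})$ at the classical level (and equivalently to the periodicity of the associated $x$-seed). This step is essential because tropical periodicity alone controls only $c$-vectors, while classical periodicity involves the genuine rational mutation formulas which are much closer in structure to the quantum ones.

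Next, I would invoke the synchronicity between classical and quantum $y$-seeds established in \cite[Lemma 2.22]{FG09b} and \cite[Proposition 3.4]{KN11}. The key input is the quantum separation formula which expresses $Y_i(L+1)$ after a mutation sequence as a product of the $F$-polynomials (evaluated at a suitable quantum monomial) with a monomial factor controlled by the $c$-vectors. From this separation formula one reads off that $\nu(B,Y) = (B,Y)$ if and only if the corresponding classical identity $\nu(B, y^{\mathrm{cl}}) = (B, y^{\mathrm{cl}})$ holds together with the matching of $B$-matrices (which is already part of either periodicity condition, and in fact follows from tropical periodicity via the $c$-vector characterization of $B(L+1)$). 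Composing the two equivalences yields (1) $\Leftrightarrow$ (2).

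The main obstacle I expect is conceptual rather than computational: one must verify that the permutation part of the mutation sequence $\nu$ is treated consistently across all three frameworks (tropical, classical, quantum), so that the notion of $\sigma$-period transports correctly through both synchronicity statements. The cited references handle pure mutation sequences, and the extension to sequences that also include permutations of the index set $I$ is routine, since $\mathfrak{S}_I$ acts compatibly on all three types of seeds and commutes with the identification of classical/quantum/tropical periodicities via the separation formula. Once this compatibility is noted, the theorem follows immediately by citing \cite{N21}, \cite[Lemma 2.22]{FG09b}, and \cite[Proposition 3.4]{KN11}.
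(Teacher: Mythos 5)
Your proposal follows exactly the route the paper takes: the paper does not give a separate proof but states immediately before the theorem that it "is obtained by combining the synchronicity \cite{N21} among $x$-seeds, $y$-seeds and tropical $y$-seeds, and the synchronicity between classical and quantum seeds \cite[Lemma 2.22]{FG09b}, \cite[Proposition 3.4]{KN11}," which is precisely your chain tropical $\Leftrightarrow$ classical $\Leftrightarrow$ quantum. Your additional remarks on the separation formula and on transporting the permutation part of $\nu$ are a reasonable elaboration of the same argument, not a different approach.
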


Proving 
the periodicity of a tropical $y$-seed is much easier than the quantum one. In this paper we use this theorem in such a way that (2) follows from (1).

For $t=1,\ldots, L+1$, let $\rY^{e_i}(t)(= \rY_i(t)) ~(i \in I)$ be the generators of the quantum torus $\mathcal{T}(B(t))$.
The quantum $y$-variables in \eqref{eq:seq-Y} are expressed as 
\begin{align}\label{eq:ad-tau-decomp}
\begin{split}
Y_i(t+1) 
&= \Ad(\Psi_{q_{i_1}}(\rY_{i_1}(1)^{\delta_1})^{\delta_1}) \tau_{i_1,\delta_1}\cdots \Ad(\Psi_{q_{i_{t}}}(\rY_{i_{t}}(t)^{\delta_{t}})^{\delta_{t}})  \tau_{i_{t},\delta_{t}}(\rY_i(t+1))
\\
&=\Ad(\Psi_{q_{i_1}}(\rY^{\delta_1\beta_1})^{\delta_1}) \cdots 
\Ad(\Psi_{q_{i_{t}}}(\rY^{\delta_{t} \beta_{t}})^{\delta_{t}}) \tau_{i_1,\delta_1}\cdots \tau_{i_{t},\delta_{t}}(\rY_i(t+1)),
\end{split}
\end{align}
where $\beta_r \in \Z^I$ is determined by $\rY^{\beta_r} = \tau_{i_1,\delta_1}\cdots \tau_{i_{r-1},\delta_{r-1}}(\rY_{i_r}(r))$. Especially we have $\beta_1 = e_{i_1}$.
Suppose that $\mathbf{i}=(i_1, i_2, \ldots,i_L)$ is a $\sigma$-period of $(B,Y)$. For any sign sequence $(\delta_t)_{t=1,\ldots, L}$ of $\delta_t \in \{ +, -\}$, we have 
\begin{align}
\begin{split}
\Ad(\Psi_{q_{i_1}}(\rY^{\delta_1 \beta_1})^{\delta_1}) \Ad(\Psi_{q_{i_2}}(\rY^{\delta_2 \beta_2})^{\delta_2}) \cdots & \Ad(\Psi_{q_{i_L}}(\rY^{\delta_L \beta_L})^{\delta_L}) 
\\
&\circ \tau_{i_1,\delta_1} \tau_{i_1,\delta_1} \cdots \tau_{i_L,\delta_L} \sigma = \mathrm{id}.
\end{split}
\end{align} 

Let $\alpha_t \in \Z^{I}$ be the $c$-vector of $y_{i_t}(t)$ in the sequence \eqref{eq:seq-tropy}. (Of course $\alpha_1 = e_{i_1}$.) From Theorem \ref{thm:sign-coherence}, for each $t = 1,2,\ldots,L$ the vector $\alpha_t$ is
either in $\Z_{\geq 0}^{I}$ or in $\Z_{\leq 0}^{I}$.
We write $\ve_{t}$ for the tropical sign of $y_{i_t}(t)$ in \eqref{eq:seq-tropy}, and call $(\ve_1,\ve_2,\ldots,\ve_L)$ the {\it tropical sign-sequence} of \eqref{eq:seq-tropy}. 
The following theorem is a generalization of \cite[Theorem 3.5]{KN11} for skew-symmetric exchange matrices to that for skew-symmetrizable ones. (We remark that \cite[Theorem 3.5]{KN11} is due to \cite[Theorem 5.16]{Ke11}. See also \cite{R09}.)

\begin{thm}\label{thm:id-mono-qdilog}
Suppose $\mathbf{i}=(i_1, i_2, \ldots,i_L)$ is a $\sigma$-period for $(B,Y)$,
and let $(\ve_1,\ldots,\ve_L)$ be the tropical sign-sequence of \eqref{eq:seq-tropy}. Then the following identities hold, 
\begin{align}
\label{eq:tau-id}
&\tau_{i_1,\ve_1} \tau_{i_1,\ve_1} \cdots \tau_{i_L,\ve_L} \sigma = \mathrm{id},\end{align}
as morphism of $\mathcal{Y}(B)$,  
and 
\begin{align}
\label{eq:dilog-id}
&\Psi_{q_{i_1}}(\rY^{\ve_1 \alpha_1})^{\ve_1} \Psi_{q_{i_2}}(\rY^{\ve_2 \alpha_2})^{\ve_2} \cdots \Psi_{q_{i_L}}(\rY^{\ve_L \alpha_L})^{\ve_L} = 1. 
\end{align}
in $\mathbb{A}^{\! \times}(B)$.
\end{thm}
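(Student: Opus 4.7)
The plan is to deduce both identities from the two decompositions $\mu_k^* = \Ad_{k,\ve}\circ \tau_{k,\ve}$, specialized to the tropical sign sequence $(\ve_1,\ldots,\ve_L)$. By the $\sigma$-period hypothesis together with Theorem \ref{thm:period}, the composition $\mu_{\mathbf{i}}^* \circ \sigma^*$ acts as the identity on $\mathcal{Y}(B)$. Expanding each $\mu_{i_r}^*$ and using $\tau \circ \Ad(X) = \Ad(\tau(X)) \circ \tau$ to move all $\Ad$ factors to the left---exactly as in \eqref{eq:ad-tau-decomp}---will yield
\[
\Ad\!\Bigl(\prod_{r=1}^{L} \Psi_{q_{i_r}}(\rY^{\ve_r \beta_r})^{\ve_r}\Bigr) \circ \bigl(\tau_{i_1,\ve_1} \cdots \tau_{i_L,\ve_L} \circ \sigma^*\bigr) = \mathrm{id}.
\]

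I would first establish \eqref{eq:tau-id} on the purely tropical side. A direct comparison between the $c$-vector evolution $c'_k=-c_k$, $c'_i = c_i + c_k[\ve_k b_{ik}]_+$ (for $i \ne k$) and the monomial formula \eqref{eq:torus-iso} for $\tau_{k,\ve_k}$ shows that the two dynamics coincide. Consequently $\tau_{i_1,\ve_1}\cdots \tau_{i_L,\ve_L}$ will send each standard generator $\rY_i$ of $\mathcal{T}(B(L{+}1))$ to $\rY^{c_i(L+1)}$, where $c_i(L+1)$ is the $c$-vector of $y_i(L+1)$ in \eqref{eq:seq-tropy}. By Theorem \ref{thm:period}, $\sigma$-periodicity lifts to the tropical $y$-seed, so $c_{\sigma(i)}(L+1) = e_i$ and \eqref{eq:tau-id} follows; the same comparison also gives $\beta_r = \alpha_r$ for every $r$.

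Substituting these two facts into the displayed identity will reduce it to $\Ad(\Phi) = \mathrm{id}$ with $\Phi := \Psi_{q_{i_1}}(\rY^{\ve_1\alpha_1})^{\ve_1}\cdots \Psi_{q_{i_L}}(\rY^{\ve_L\alpha_L})^{\ve_L}$. By sign coherence each factor, and hence $\Phi$, lies in $\hat{\mathbb{A}}^{\!\times}(B)$ with expansion beginning at $1$. Centrality of such an element in the formal quantum affine space then forces $\Phi=1$ via a filtration/leading-term argument, giving \eqref{eq:dilog-id}. The hardest part will be this final step: in the skew-symmetric case it is the content of Keller's theorem \cite{Ke11}, and extending it to the skew-symmetrizable setting will require carefully tracking the weights $d_k$ and the correction factors $\alpha_{ij}^k$ appearing in the quiver mutation rule, ensuring that the pentagon identity $\Psi_{q_k}(U)\Psi_{q_k}(W)=\Psi_{q_k}(W)\Psi_{q_k}(q_k^{-1}UW)\Psi_{q_k}(U)$ for $UW = q_k^2 WU$ is applied with the proper rescaling $q_k = q^{d_k}$, and verifying that no new obstruction to the inductive reduction of $\Phi$ to $1$ appears.
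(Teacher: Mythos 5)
Your overall route is the same as the paper's: the authors dispose of \eqref{eq:tau-id} by citing Theorem \ref{thm:period} (together with the remark identifying the $c$-vector recursion with \eqref{eq:torus-iso} at $\ve=\ve_k$), and for \eqref{eq:dilog-id} they write only that it ``is proved in the same manner as [KN11, Theorem 3.5]'' --- which is precisely the $\Ad$--$\tau$ decomposition you set up. Your identification $\beta_r=\alpha_r$ at the tropical signs and the reduction of the problem to $\Ad(\Phi)=\mathrm{id}$ with $\Phi=\Psi_{q_{i_1}}(\rY^{\ve_1\alpha_1})^{\ve_1}\cdots\Psi_{q_{i_L}}(\rY^{\ve_L\alpha_L})^{\ve_L}$ are correct.

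The genuine gap is the last implication, ``$\Ad(\Phi)=\mathrm{id}$ and $\Phi\equiv 1$ forces $\Phi=1$.'' A leading-term argument only shows that the lowest-degree nonzero term $c\,\rY^{\alpha}$ of $\Phi-1$ must commute with every $\rY_i$, i.e.\ that $\langle\alpha,e_i\rangle=0$ for all $i$, i.e.\ that $\alpha$ lies in the kernel of $\widehat{B}$. That kernel is typically nontrivial for the quivers at hand (e.g.\ $\widehat{B}$ for $J_{123121}$ is a $9\times 9$ skew-symmetric matrix, hence of rank at most $8$), so $\hat{\mathbb{A}}(B)$ contains central monomials $\neq 1$ and centrality alone does not close the induction. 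The standard repair, which is what \cite{Ke11} and \cite{KN11} actually rely on, is to pass to the principal extension of $B$ (adjoin frozen indices so that the extended skew form is non-degenerate), observe that the same mutation sequence is a $\sigma$-period there with the same tropical signs and with $c$-vectors projecting onto the $\alpha_t$, prove $\Phi=1$ in that non-degenerate completed quantum affine space, and then specialize the frozen variables. Without this (or an equivalent device) your filtration argument stalls exactly at the central elements. By contrast, your worry about the skew-symmetrizable bookkeeping ($q_{i_t}=q^{d_{i_t}}$ in the pentagon identity, the weights $\alpha_{ij}^k$) is a matter of care rather than a new obstruction, and the paper likewise treats it as routine.
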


\begin{proof}
Eq.~\eqref{eq:tau-id} follows from Theorem \ref{thm:period}.
Eq.~\eqref{eq:dilog-id} is proved in the same manner as \cite[Theorem 3.5]{KN11}.
\end{proof}

%%%%%%%%%%%%%%%%%%%%%%%%%%%%%%%%%%%%%%%%%%%%%%
\section{Wiring diagrams and Fock-Goncharov quivers}
%%%%%%%%%%%%%%%%%%%%%%%%%%%%%%%%%%%%%%%%%%%%%%

Let $\mathfrak{g}$ be a finite dimensional simple Lie algebra of rank $\ell$, 
and $W(\mathfrak{g})$ be the Weyl group for $\mg$ generated by 
the simple reflections $r_s ~(s = 1,2,\ldots,\ell)$. We write $w_0$ for the longest element in $W(\mathfrak{g})$.
A reduced expression $r_{s_1} r_{s_2}\cdots r_{s_p}$ of $w_0$ will be denoted by $s_1 s_2 \cdots s_p$ for simplicity.
This paper discusses cases involving $\mathfrak{g} = A_2, A_3, C_2,$ and $C_3$.
We first consider the quivers associated to the reduced expressions of $w_0$ 
for $W(A_2)$ and $W(C_2)$ introduced by Fock and Goncharov \cite{FG06}. 
(For the detail, see \cite[Section 4]{IIO21} for instance.)

\subsection{Pictorial transformation $R_{123}$ associated with $W(A_2)$}

For $W(A_2)$, we have the two reduced expressions $121$ and $212$ of $w_0$. 
Let $J_{121}$ and $J_{212}$ denote the corresponding Fock-Goncharov quivers (FG quivers, for short).
They are depicted as (\ref{quiver:A2}),
where the weight of all vertices are one, thus $B = \widehat{B} = \sigma$. 
The two quivers are transformed to each other by a single mutation $\mu_4$.
 
\begin{align}\label{quiver:A2}
\begin{tikzpicture}
\begin{scope}[>=latex,xshift=0pt]
\draw (1,0) circle(2pt) coordinate(B) node[below]{$3$};%{$\bar v^t_1$};
\draw (3,0) circle(2pt) coordinate(C) node[below]{$4$};%{$\bar v^t_2$};
\draw (5,0) circle(2pt) coordinate(F) node[below]{$5$};%{$\bar v^t_3$};
\draw (2,1) circle(2pt) coordinate(D) node[above]{$1$};%{$\bar v^s_1$};
\draw (4,1) circle(2pt) coordinate(E) node[above]{$2$};%{$\bar v^s_2$};
\qarrow{B}{C}
\qarrow{C}{D}
\qarrow{D}{E}
\qarrow{E}{C}
\qarrow{C}{F}
\qdarrow{D}{B}
\qdarrow{F}{E}
%
%{\color{red}
%\fill (2,0) circle(2pt) node[below]{$A$};
%\fill (3,1) circle(2pt) node[above]{$B$};
%\fill (4,0) circle(2pt) node[below]{$C$};
%}
%
\coordinate (P1) at (5.5,0.5);
\coordinate (P2) at (6.5,0.5);
\draw[<->] (P1) -- (P2);
\draw (6,0.5) circle(0pt) node[below]{$\mu_4$};
\draw (3,-1) node{$J_{121}$};  
\end{scope}
\begin{scope}[>=latex,xshift=170pt]
\draw (1,1) circle(2pt) coordinate(B) node[above]{$1$};%{$v^s_1$};
\draw (3,1) circle(2pt) coordinate(C) node[above]{$4$};%{$v^s_2$};
\draw (5,1) circle(2pt) coordinate(D) node[above]{$2$};%{$v^s_3$};
\draw (2,0) circle(2pt) coordinate(E) node[below]{$3$};%{$v^t_1$};
\draw (4,0) circle(2pt) coordinate(F) node[below]{$5$};%{$v^t_2$};
\qarrow{B}{C}
\qarrow{C}{D}
\qarrow{E}{F}
\qarrow{F}{C}
\qarrow{C}{E}
\qdarrow{E}{B}
\qdarrow{D}{F}
%{\color{red}
%\fill (2,1) circle(2pt) node[above]{$C$};
%\fill (3,0) circle(2pt) node[below]{$B$};
%\fill (4,1) circle(2pt) node[above]{$A$};
%}
\draw (3,-1) node{$J_{212}$};  
\end{scope}
\end{tikzpicture}
\end{align}

Elements of $W(A_2)$ are depicted by wiring diagram with three wires:  
$r_s \in W(A_2)$ is denoted by a crossing of the $s$th and the $s+1$st wires from the bottom.
The above quivers are related to the wiring diagrams \cite{BFZ96} in the following way: 
the quivers are `dual' to the wiring diagrams, where the vertices of the quivers correspond to the domains of the diagrams. 
We let $R_{123}$ denote the transformation of the wiring diagrams as follows:

\begin{align}
\label{quiver-d:A2}
\begin{tikzpicture}
\begin{scope}[>=latex,xshift=0pt]
{\color{red}
\fill (1,0.5) circle(2pt) coordinate(A) node[below]{$1$};
\fill (2,1.5) circle(2pt) coordinate(B) node[above]{$2$};
\fill (3,0.5) circle(2pt) coordinate(C) node[below]{$3$};
\draw [-] (0,2) node[left=2pt] {\footnotesize $3$} to [out = 0, in = 135] (B);
\draw [-] (B) -- (C); 
\draw [->] (C) to [out = -45, in = 180] (4,0);
\draw [-] (0,1) node[left=2pt] {\footnotesize $2$} to [out = 0, in = 135] (A); 
\draw [-] (A) to [out = -45, in = -135] (C);
\draw [->] (C) to [out = 45, in = 180] (4,1);
\draw [-] (0,0) node[left=2pt] {\footnotesize $1$} to [out = 0, in = -135] (A); 
\draw [-] (A) -- (B);
\draw [->] (B) to [out = 45, in = 180] (4,2);
}
\coordinate (P1) at (4.5,1);
\coordinate (P2) at (5.5,1);
\draw[->] (4.5,1) -- (5.5,1);
\draw (5,1) circle(0pt) node[below]{$R_{123}$};
%
%
%{\color{blue}
\draw (0,0.5) circle(2pt) coordinate(B1) node[below]{$3$};
\draw (2,0.5) circle(2pt) coordinate(C1) node[below]{$4$};
\draw (4,0.5) circle(2pt) coordinate(F1) node[below]{$5$};
\draw (1,1.5) circle(2pt) coordinate(D1) node[above]{$1$};
\draw (3,1.5) circle(2pt) coordinate(E1) node[above]{$2$};
\qarrow{B1}{C1}
\qarrow{C1}{D1}
\qarrow{D1}{E1}
\qarrow{E1}{C1}
\qarrow{C1}{F1}
\qdarrow{D1}{B1}
\qdarrow{F1}{E1}
%}
\end{scope}
\begin{scope}[>=latex,xshift=175pt]
{\color{red}
\fill (3,1.5) circle(2pt) coordinate(A) node[above]{$1$};
\fill (2,0.5) circle(2pt) coordinate(B) node[below]{$2$};
\fill (1,1.5) circle(2pt) coordinate(C) node[above]{$3$};
\draw [-] (0,0) node[left=2pt] {\footnotesize $1$} to [out = 0, in = -135] (B);
\draw [-] (B) -- (A); 
\draw [->] (A) to [out = 45, in = 180] (4,2);
\draw [-] (0,1) node[left=2pt] {\footnotesize $2$} to [out = 0, in = -135] (C); 
\draw [-] (C) to [out = 45, in = 135] (A);
\draw [->] (A) to [out = -45, in = 180] (4,1);
\draw [-] (0,2) node[left=2pt] {\footnotesize $3$} to [out = 0, in = 135] (C); 
\draw [-] (C) -- (B);
\draw [->] (B) to [out = -45, in = 180] (4,0);
}
%
%
%{\color{blue}
\draw (0,1.5) circle(2pt) coordinate(B1) node[above]{$1$};%{$v^s_1$};
\draw (2,1.5) circle(2pt) coordinate(C1) node[above]{$4$};%{$v^s_2$};
\draw (4,1.5) circle(2pt) coordinate(D1) node[above]{$2$};%{$v^s_3$};
\draw (1,0.5) circle(2pt) coordinate(E1) node[below]{$3$};%{$v^t_1$};
\draw (3,0.5) circle(2pt) coordinate(F1) node[below]{$5$};%{$v^t_2$};
\qarrow{B1}{C1}
\qarrow{C1}{D1}
\qarrow{E1}{F1}
\qarrow{F1}{C1}
\qarrow{C1}{E1}
\qdarrow{E1}{B1}
\qdarrow{D1}{F1}
%}
\end{scope}
\end{tikzpicture}
\end{align}

\subsection{Cluster transformation for $R_{123}$}
\label{subsec:qWeyl-R}

Corresponding to the transformation \eqref{quiver:A2} of the quivers, the quantum $y$-variables are transformed by $\mu_4^{\ast} = \Ad (\Psi_q(Y_4)) \circ \tau_{4,+}$ as 
\begin{align}\label{eq:R-decomp}
\mu_4^{\ast}:
\begin{pmatrix}
Y_1' \\ Y_2' \\ Y_3' \\ Y_4' \\ Y_5'
\end{pmatrix}
\stackrel{\tau_{4,+}}{\mapsto} 
\begin{pmatrix}
Y_1 \\ q^{-1} Y_2 Y_4 \\ q^{-1} Y_3 Y_4 \\ Y_4^{-1} \\ Y_5
\end{pmatrix}
\stackrel{\Ad(\Psi_q(Y_4))}{\mapsto}
\begin{pmatrix}
Y_1(1+q Y_4) \\ Y_2(1+q Y_4^{-1})^{-1} \\ Y_3(1+q Y_4^{-1})^{-1} \\ Y_4^{-1} \\ Y_5(1+q Y_4)
\end{pmatrix}.   
\end{align}

Let $p_i, u_i ~(i=1,2,3)$ be noncommuting variables satisfying
\begin{align}\label{pu}
[p_i,u_j] = \hbar \delta_{ij}, \qquad  
[p_i,p_j] = [u_i,u_j] = 0.
\end{align} 
We assign a pair of $(p_i,u_i)$ to the crossing $i$ of the wiring diagram \eqref{quiver-d:A2}.
Let $\mathcal{W}(A_2)$ be the algebra over $\C$ generated by the 
$q$-Weyl pairs $e^{\pm p_i}, e^{\pm u_i} ~(i=1,2,3)$ which 
satisfy the $q$-commutativity $e^{p_i} e^{u_j} = q^{\delta_{ij}} e^{u_i} e^{p_j}$.
Here and in what follows, we use the notation and the non-zero complex parameters 
$\kappa_i$ given as
\begin{align}\label{para}
q=e^\hbar, \quad 
\kappa_j = e^{\lambda_j}, \quad \lambda_{jk}= \lambda_j-\lambda_k.
\end{align}
Let Frac$\mathcal{W}(A_2)$ be the noncommuting fractional field of $\mathcal{W}(A_2)$.   
We write $Y_i$ and $Y'_i$ for the generators of $\mathcal{Y}(J_{121})$ and $\mathcal{Y}(J_{212})$ respectively.
We define the embeddings $\phi: \mathcal{Y}(J_{121}) \hookrightarrow \mathrm{Frac}\mathcal{W}(A_2)$ and 
$\phi': \mathcal{Y}(J_{212}) \hookrightarrow \mathrm{Frac}\mathcal{W}(A_2)$ by
\begin{align}\label{eq:phi_kappa}
\phi:
\begin{cases}
Y_1 \mapsto \kappa_2^{-1}e^{p_2-u_2-p_1},
\\
Y_2 \mapsto \kappa_2 e^{p_2+u_2-p_3},
\\
Y_3 \mapsto \kappa_1^{-1}e^{p_1-u_1},
\\
Y_4 \mapsto \kappa_1\kappa_3^{-1} e^{p_1+u_1+p_3-u_3-p_2},
\\
Y_5 \mapsto \kappa_3 e^{p_3+u_3},
\end{cases}
\qquad 
\phi':
\begin{cases}
Y'_1 \mapsto \kappa_3^{-1}e^{p_3-u_3},
\\
Y'_2 \mapsto \kappa_1 e^{p_1+u_1},
\\
Y'_3 \mapsto \kappa_2^{-1} e^{p_2-u_2-p_3},
\\
Y'_4 \mapsto \kappa_1^{-1}\kappa_3 e^{p_3+u_3+p_1-u_1-p_2},
\\
Y'_5 \mapsto \kappa_2 e^{p_2+u_2-p_1}.
\end{cases}
\end{align}
We also define an isomorphism $\pi_{123}$ of $\mathcal{W}(A_2)$ by
\begin{align}\label{eq:R-pi}
\pi_{123}:
\begin{cases}
p_1 \mapsto p_1+  \lambda_{23},
\quad
p_2  \mapsto p_1+p_3,
\quad
p_3 \mapsto p_2-p_1- \lambda_{23}, 
\\
u_1  \mapsto u_1+u_2-u_3,
\quad 
u_2 \mapsto u_3,
\quad 
u_3  \mapsto u_2,
\end{cases}
\end{align} 
in the sense of exponentials.
It induces an isomorphism of Frac$\mathcal{W}(A_2)$. 
  
\begin{prop}\label{prop:R-ad}
(i) $\phi$ and $\phi'$ are morphisms of skew fields.
Moreover, the following diagram is commutative:
\begin{align*}
\xymatrix{
\mathcal{Y}(J_{212}) \ar[r]^{\phi'} \ar[d]_{\tau_{4,+}} & \mathrm{Frac}\mathcal{W}(A_2) \ar[d]_{\pi_{123}}
\\
\mathcal{Y}(J_{121}) \ar[r]^{\phi}& \mathrm{Frac}\mathcal{W}(A_2)
}
\end{align*}
(ii) The isomorphism $\pi_{123}$ is realized by an adjoint action $\Ad(P_{123})$ of 
\begin{align}\label{R-Pop}
P_{123} = \rho_{23}\, e^{\frac{1}{\hbar}p_1(u_3-u_2)} 
e^{\frac{\lambda_{23}}{\hbar}(u_3-u_1)},
\end{align}   
where $\rho_{23} \in \mathfrak{S}_3$ is a permutation of indices $2$ and $3$ of the generators of $\mathcal{W}(A_2)$. (It does not act on the indices of the parameters $\lambda_i, \kappa_i$.)  
\end{prop}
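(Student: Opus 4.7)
The plan is to verify all three assertions by direct computation in the $q$-Weyl algebra, using only the canonical commutation relations (\ref{pu}) and the Baker--Campbell--Hausdorff identity.

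For the morphism claim in part (i), write each image as $\phi(Y_i) = c_i\, e^{L_i}$ with $L_i$ a linear combination of the $p_j$'s and $u_j$'s and $c_i$ a $\kappa$-monomial. Because $[L_i, L_j]$ is then a scalar, one has $e^{L_i} e^{L_j} = e^{[L_i, L_j]} e^{L_j} e^{L_i}$, so the $q$-commutation $\phi(Y_i)\phi(Y_j) = q^{2\widehat{b}_{ij}} \phi(Y_j)\phi(Y_i)$ becomes equivalent to $[L_i, L_j] = 2\hbar\, \widehat{b}_{ij}$. This is verified case by case against the entries of $\widehat{B}$ read off from the quiver $J_{121}$ in (\ref{quiver:A2}); for example one finds $[L_1, L_4] = -2\hbar$, $[L_1, L_2] = 2\hbar$, and $[L_1, L_3] = \hbar$, matching $\widehat{b}_{14} = -1$, $\widehat{b}_{12} = 1$, and $\widehat{b}_{13} = 1/2$ (the last corresponding to the dashed arrow) respectively. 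The verification for $\phi'$ relative to $J_{212}$ is identical in form.

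For the commutativity of the square, apply both $\phi\circ\tau_{4,+}$ and $\pi_{123}\circ\phi'$ to each generator $Y'_i$. The monomial part $\tau_{4,+}(Y'_i)$ is read off from (\ref{eq:R-decomp}), and $\phi$ turns it into an explicit exponential in the $p_j$'s and $u_j$'s. Substituting (\ref{eq:R-pi}) into the exponent of $\phi'(Y'_i)$ produces the same linear combination; the $\kappa$-prefactors match because $\kappa_i\kappa_j^{-1} = e^{\lambda_{ij}}$ absorbs the constant shifts introduced by $p_1 \mapsto p_1 + \lambda_{23}$ and $p_3 \mapsto p_2 - p_1 - \lambda_{23}$. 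This amounts to five short linear checks.

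For part (ii), factor $P_{123} = \rho_{23}\, e^B\, e^A$ with $B = \tfrac{1}{\hbar} p_1(u_3 - u_2)$ and $A = \tfrac{\lambda_{23}}{\hbar}(u_3 - u_1)$, so that $\Ad(P_{123}) = \Ad(\rho_{23}) \circ \Ad(e^B) \circ \Ad(e^A)$, and compute this on each of $p_1, p_2, p_3, u_1, u_2, u_3$. The nontrivial brackets
\[
[A, p_1] = \lambda_{23},\quad [A, p_3] = -\lambda_{23},\quad [B, p_2] = p_1,\quad [B, p_3] = -p_1,\quad [B, u_1] = u_3 - u_2,
\]
each yield scalars or quantities that commute with $A$ and $B$, so the BCH series truncates after one bracket and one obtains $e^A X e^{-A} = X + [A,X]$ and $e^B X e^{-B} = X + [B,X]$. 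Conjugating finally by $\rho_{23}$, which swaps $p_2 \leftrightarrow p_3$ and $u_2 \leftrightarrow u_3$ while fixing $p_1, u_1$, reproduces exactly the formulas (\ref{eq:R-pi}) defining $\pi_{123}$.

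The main obstacle is simply bookkeeping: one must align the sign conventions in $\tau_{k,+}$ with the entries of $\widehat{B}$ including the half-integer entries from the dashed arrows, choose the operator ordering in $P_{123}$ so that the nested BCH expansion terminates at first order, and track the effect of $\rho_{23}$ on the output of the inner conjugations. Once set up, no step requires more than elementary algebra.
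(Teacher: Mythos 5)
Your proposal is correct and follows essentially the same route as the paper: part (i) is the same generator-by-generator direct calculation (your reformulation of the $q$-commutation as $[L_i,L_j]=2\hbar\,\widehat b_{ij}$ is just a clean way of organizing the check the paper omits, and your bracket values are right), and part (ii) is the same BCH-based computation of $\mathrm{Ad}(P_{123})$ on the canonical generators, with the correct observation that all nested brackets vanish so the expansion truncates at first order.
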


\begin{proof}
(i) Let $B(J_{121})=(b_{ij})_{i,j}$ and $B(J_{212})=(b'_{ij})_{i,j}$ be the exchange matrices.
The first claim is proved by checking 
$\phi(Y_i) \phi(Y_j) = q^{2 b_{ij}} \phi(Y_j) \phi(Y_i)$ and $\phi'(Y_i') \phi'(Y_j') = q^{2 b'_{ij}} \phi'(Y'_j) \phi'(Y'_i)$ by direct calculation. We omit the detail.   
The commutativity of the diagram is proved by checking
$\pi_{123} \circ \phi' (Y_i') = \phi \circ \tau_{4,+}(Y_i')$ for $i=1,\ldots,5$.
We illustrate the cases $i=1,2$. 
\begin{align*}
&\begin{cases}
Y_1' \stackrel{\phi'}{\longmapsto} \kappa_3^{-1} e^{p_3-u_3} \stackrel{\pi_{123}}{\longmapsto}  \kappa_3^{-1} e^{p_2-p_1-\lambda_{23}-u_2} = \kappa_2^{-1} e^{p_2-p_1-u_2},
\\
Y_1' \stackrel{\tau_{4,+}}{\longmapsto} Y_1  \stackrel{\phi}{\longmapsto} \kappa_2^{-1} e^{p_2-u_2-p_1},
\end{cases}
\\
&\begin{cases}
Y_2' \stackrel{\phi'}{\longmapsto} \kappa_1 e^{p_1+u_1} \stackrel{\pi_{123}}{\longmapsto} \kappa_1 e^{p_1
+ \lambda_{23} + u_1 + u_2-u_3}
= \kappa_1 \kappa_2 \kappa_3^{-1} e^{p_1+u_1+u_2-u_3},
\\
Y_2' \stackrel{\tau_{4,+}}{\longmapsto} q^{-1} Y_2 Y_4 \stackrel{\phi}{\longmapsto} q^{-1} \kappa_2 e^{p_2+u_2-p_3} \kappa_1\kappa_3^{-1} e^{p_1+u_1+p_3-u_3-p_2} = \kappa_1 \kappa_2 \kappa_3^{-1} e^{p_1+u_1+u_2-u_3}.
\end{cases}
\end{align*}
(ii) Recall a special case of the Baker-Campbell-Hausdorff formula (BCH formula, for short) as follows.
For noncommutative variables $a$ and $b$, which satisfy $[a,b] = c$ and $[a,c]=[b,c]=0$, 
the equalities $e^{a} e^{b} = e^\frac{c}{2} e^{a +b} = e^c e^b e^a$ hold.
By using this, one can verify the claim by computing the adjoint action of $P_{123}$ on $e^{p_i}$ and $e^{u_i}$, for example,
\begin{align*}
\Ad (P_{123})(e^{p_3}) 
&= \rho_{23}\, e^{\frac{1}{\hbar}p_1(u_3-u_2)} 
\underline{e^{\frac{\lambda_{23}}{\hbar}(u_3-u_1)} e^{p_3} 
e^{-\frac{\lambda_{23}}{\hbar}(u_3-u_1)}} e^{-\frac{1}{\hbar}p_1(u_3-u_2)} 
\rho_{23}
\\
&\stackrel{\mathrm{BCH}}{=} \rho_{23}\, \underline{e^{\frac{1}{\hbar}p_1(u_3-u_2)}} e^{-\lambda_{23}} \underline{e^{p_3} e^{-\frac{1}{\hbar}p_1(u_3-u_2)}} \rho_{23}
\\
&\stackrel{\mathrm{BCH}}{=} \rho_{23}\, e^{-p_1-\lambda_{23}} e^{p_3} \rho_{23} 
=  e^{p_2-p_1-\lambda_{23}}. 
\end{align*}
\end{proof}

A proper framework accommodating the object $P_{123}$ in (\ref{R-Pop})
will be provided in (\ref{N(A3)gen})--(\ref{eq:G-A3}).

\begin{remark}
When $\kappa_i = 1$ hence $\lambda_i = 0$, the transformations $\mu_4^\ast$ \eqref{eq:R-decomp} 
and $\pi_{123}$ \eqref{eq:R-pi} are related to those in the literature. 
The transformation $\pi_{123}$ is essentially 
the same as that of the operation $\hat{P}_{123}$ in \cite[Eq.(12)]{MS97}.
The transformation $\mu_4^\ast$ corresponds to $\mathcal{R}$ 
with $r=1$ and $\lambda=1$ in \cite[Prop.1]{BV15},
and $\pi_{123}$ coincides with $\mathcal{F}$ in \cite[Prop.2]{BV15}. 
\end{remark}

\subsection{Pictorial transformation $K_{1234}$  associated with $W(C_2)$}

In the case of $W(C_2)$, we have the two reduced expressions $1212$ and $2121$ of $w_0$. 
The corresponding FG quivers $J_{1212}$ and $J_{2121}$ are depicted  in the fist line of (\ref{quiver:C2}).
The vertices \maru{2} have weight two, and the others have weight one, hence $d = \mathrm{diag}(2,2,2,1,1,1)$.
For $J_{1212}$ we have 
\begin{align*}
&B = \begin{pmatrix}
0 & 1 & 0 & 1 & -2 & 0 \\
-1 & 0 & 1 & 0 & 2 & -2 \\
0 & -1 & 0 & 0 & 0 & 1\\
-\frac{1}{2} & 0 & 0 & 0 & 1 & 0\\
1 & -1 & 0 & -1 & 0 & 1\\
0 & 1 & -\frac{1}{2} & 0 & -1 & 0
\end{pmatrix},
\quad 
\widehat{B} = \begin{pmatrix}
0 & 2 & 0 & 1 & -2 & 0 \\
-2 & 0 & 2 & 0 & 2 & -2 \\
0 & -2 & 0 & 0 & 0 & 1\\
-1 & 0 & 0 & 0 & 1 & 0\\
2 & -2 & 0 & -1 & 0 & 1\\
0 & 2 & -1 & 0 & -1 & 0
\end{pmatrix},
\\
&\sigma = \begin{pmatrix}
0 & 1 & 0 & \frac{1}{2} & -1 & 0 \\
-1 & 0 & 1 & 0 & 1 & -1 \\
0 & -1 & 0 & 0 & 0 & \frac{1}{2}\\
-\frac{1}{2} & 0 & 0 & 0 & 1 & 0\\
1 & -1 & 0 & -1 & 0 & 1\\
0 & 1 & -\frac{1}{2} & 0 & -1 & 0
\end{pmatrix}.
\end{align*}
It is known that the two quivers are transformed 
to each other by a sequence of mutations $\mu_{2,5,2} := \mu_2 \mu_5 \mu_2$.

\begin{align}
\label{quiver:C2}
\begin{split}
\begin{tikzpicture}
\begin{scope}[>=latex]
\path (2,1) node[circle]{2} coordinate(E) node[above=0.2em]{$1$};
\draw (2,1) circle[radius=0.15];
\path (4,1) node[circle]{2} coordinate(F) node[above=0.2em]{$2$};
\draw (4,1) circle[radius=0.15];
\path (6,1) node[circle]{2} coordinate(G) node[above=0.2em]{$3$};
\draw (6,1) circle[radius=0.15];
\draw (1,0) circle(2pt) coordinate(B) node[below]{$4$};
\draw (3,0) circle(2pt) coordinate(C) node[below]{$5$};
\draw (5,0) circle(2pt) coordinate(D) node[below]{$6$};
\qarrow{B}{C}
\qarrow{C}{D}
\qsarrow{E}{F}
\qsarrow{F}{G}
\draw[->,shorten >=2pt,shorten <=4pt] (F) -- (C) [thick];
\draw[->,shorten >=4pt,shorten <=2pt] (C) -- (E) [thick];
\draw[->,shorten >=4pt,shorten <=2pt] (D) -- (F) [thick];
\draw[->,dashed,shorten >=2pt,shorten <=4pt] (E) -- (B) [thick];
\draw[->,dashed,shorten >=2pt,shorten <=4pt] (G) -- (D) [thick];
\coordinate (P1) at (6.6,0.5);
\coordinate (P2) at (7.6,0.5);
\draw[<->] (P1) -- (P2);
\draw (7.1,0) node{$\mu_{2,5,2}$};
\draw (3.5,-0.7) node{$J_{1212}$};  
\draw[<->] (3.5,-1.1) -- (3.5,-1.7);
\draw (3.5,-1.4) node[right]{$\mu_2$};
\end{scope}
\begin{scope}[>=latex,xshift=205pt]
\path (1,1) node[circle]{2} coordinate(E) node[above=0.2em]{$1$};
\draw (1,1) circle[radius=0.15];
\path (3,1) node[circle]{2} coordinate(F) node[above=0.2em]{$2$};
\draw (3,1) circle[radius=0.15];
\path (5,1) node[circle]{2} coordinate(G) node[above=0.2em]{$3$};
\draw (5,1) circle[radius=0.15];
\draw (2,0) circle(2pt) coordinate(B) node[below]{$4$};
\draw (4,0) circle(2pt) coordinate(C) node[below]{$5$};
\draw (6,0) circle(2pt) coordinate(D) node[below]{$6$};
\qarrow{B}{C}
\qarrow{C}{D}
\qsarrow{E}{F}
\qsarrow{F}{G}
%\draw[->,shorten >=4pt,shorten <=4pt] (E) -- (F) [thick];
%\draw[->,shorten >=4pt,shorten <=4pt] (F) -- (G) [thick];
\draw[->,shorten >=4pt,shorten <=2pt] (C) -- (F) [thick];
\draw[->,shorten >=2pt,shorten <=4pt] (F) -- (B) [thick];
\draw[->,shorten >=2pt,shorten <=4pt] (G) -- (C) [thick];
\draw[->,dashed,shorten >=4pt,shorten <=2pt] (B) -- (E) [thick];
\draw[->,dashed,shorten >=4pt,shorten <=2pt] (D) -- (G) [thick];
%{\color{red}
%\fill (3,0) circle(2pt) node[below]{$C$};
%\fill (5,0) circle(2pt) node[below]{$A$};
%\fill (2,1) circle(2pt) node[above]{$D$};
%\fill (4,1) circle(2pt) node[above]{$B$};
%}
\draw (3.5,-0.7) node{$J_{2121}$};  
\draw[<->] (3.5,-1.1) -- (3.5,-1.7);
\draw (3.5,-1.4) node[right]{$\mu_2$};
\end{scope}
\begin{scope}[>=latex,yshift=-100pt]
\path (2,1) node[circle]{2} coordinate(A) node[above=0.2em]{$1$};
\draw (2,1) circle[radius=0.15];
\path (4,1) node[circle]{2} coordinate(B) node[above=0.2em]{$2$};
\draw (4,1) circle[radius=0.15];
\path (6,1) node[circle]{2} coordinate(C) node[above=0.2em]{$3$};
\draw (6,1) circle[radius=0.15];
\draw (1,0) circle(2pt) coordinate(D) node[below]{$4$};
\draw (3,0) circle(2pt) coordinate(E) node[below]{$5$};
\draw (5,0) circle(2pt) coordinate(F) node[below]{$6$};
\qarrow{D}{E}
\qarrow{F}{E}
\qsarrow{B}{A}
\qsarrow{C}{B}
\draw[->,shorten >=4pt,shorten <=4pt] (A) to [out=30, in=150] (C) [thick];
\draw[->,shorten >=2pt,shorten <=4pt] (B) -- (F) [thick];
\draw[->,shorten >=4pt,shorten <=2pt] (E) -- (B) [thick];
%\draw[->,shorten >=4pt,shorten <=2pt] (D) -- (F) [thick];
\draw[->,dashed,shorten >=2pt,shorten <=4pt] (A) -- (D) [thick];
\draw[->,dashed,shorten >=4pt,shorten <=2pt] (F) -- (C) [thick];
%\qdarrow{A}{D}
%\qdarrow{F}{C} 
\draw[<->] (6.6,0.5) -- (7.6,0.5);
\draw (7.1,0) node{$\mu_5$};
\draw (3.5,-0.7) node{$B(2)$};
\end{scope}
\begin{scope}[>=latex,xshift=205pt,yshift=-100pt]
\path (2,1) node[circle]{2} coordinate(A) node[above=0.2em]{$1$};
\draw (2,1) circle[radius=0.15];
\path (4,1) node[circle]{2} coordinate(B) node[above=0.2em]{$2$};
\draw (4,1) circle[radius=0.15];
\path (6,1) node[circle]{2} coordinate(C) node[above=0.2em]{$3$};
\draw (6,1) circle[radius=0.15];
\draw (1,0) circle(2pt) coordinate(D) node[below]{$4$};
\draw (3,0) circle(2pt) coordinate(E) node[below]{$5$};
\draw (5,0) circle(2pt) coordinate(F) node[below]{$6$};
\qarrow{E}{D}
\qarrow{E}{F}
\qsarrow{B}{A}
\qsarrow{C}{B}
\draw[->,shorten >=4pt,shorten <=4pt] (A) to [out=30, in=150] (C) [thick];
\draw[->,shorten >=2pt,shorten <=4pt] (B) -- (E) [thick];
\draw[->,shorten >=4pt,shorten <=2pt] (D) -- (B) [thick];
\draw[->,dashed,shorten >=2pt,shorten <=4pt] (A) -- (D) [thick];
\draw[->,dashed,shorten >=4pt,shorten <=2pt] (F) -- (C) [thick];
%\draw[<->] (6.6,0.5) -- (7.6,0.5);
%\draw (7.1,0) node{$\mu_5$};
\draw (3.5,-0.7) node{$B(3)$};
\end{scope}
\end{tikzpicture}
\end{split}
\end{align}

Elements of $W(C_2)$ are depicted by wiring diagrams with two wires with a `wall' : 
$r_1$ has the same rule as $r_1$ for $W(A_2)$, while $r_2$ reflects a wire at the wall.
The above quivers are related to the wiring diagrams involving such reflections,
where the vertices of weight two are arranged along the wall.
We let $K_{1234}$ denote the transformation of the wiring diagrams as follows:

\begin{align}
\label{quiver-d:C2}
\begin{split}
\begin{tikzpicture}
\begin{scope}[>=latex,xshift=0pt]
{\color{red}
\fill (2,0) circle(2pt) coordinate(A) node[below]{$1$};
\fill (4,0) circle(2pt) coordinate(C) node[below]{$3$};
\fill (3,1) circle(2pt) coordinate(B) node[above]{$2$};
\fill (5,1) circle(2pt) coordinate(D) node[above]{$4$};
\draw [-] (1,0.5) node[left=2pt] {\footnotesize $2$} to [out = 0, in = 135] (A);
\draw [-] (A) to [out = -45, in = -135] (C); 
\draw [-] (C) -- (D);
\draw [->] (D) to [out = -45, in = 180] (6,0.5);
\draw [-] (1,-0.5) node[left=2pt] {\footnotesize $1$} to [out = 0, in = -135] (A); 
\draw [-] (A) -- (B);
\draw [-] (B) -- (C);
\draw [->] (C) to [out = -45, in = 180] (6,-0.5);
}
\draw[->] (6.5,0.5) -- (7.5,0.5);
\draw (7,0.5) circle(0pt) node[below]{$K_{1234}$};
%
%
%{\color{blue}
\path (2,1) node[circle]{2} coordinate(E) node[above=0.2em]{$1$};
\draw (2,1) circle[radius=0.15];
\path (4,1) node[circle]{2} coordinate(F) node[above=0.2em]{$2$};
\draw (4,1) circle[radius=0.15];
\path (6,1) node[circle]{2} coordinate(G) node[above=0.2em]{$3$};
\draw (6,1) circle[radius=0.15];
\draw (1,0) circle(2pt) coordinate(B) node[below]{$4$};%{$v^s_1$};
\draw (3,0) circle(2pt) coordinate(C) node[below]{$5$};%{$v^s_2$};
\draw (5,0) circle(2pt) coordinate(D) node[below]{$6$};%{$v^s_3$};
\qarrow{B}{C}
\qarrow{C}{D}
\draw[->,shorten >=4pt,shorten <=4pt] (E) -- (F) [thick];
\draw[->,shorten >=4pt,shorten <=4pt] (F) -- (G) [thick];
\draw[->,shorten >=2pt,shorten <=4pt] (F) -- (C) [thick];
\draw[->,shorten >=4pt,shorten <=2pt] (C) -- (E) [thick];
\draw[->,shorten >=4pt,shorten <=2pt] (D) -- (F) [thick];
\qdarrow{E}{B}
\qdarrow{G}{D} 
%\coordinate (P1) at (6.6,0.5);
%\coordinate (P2) at (7.6,0.5);
%\draw[<->] (P1) -- (P2);
%\draw (7.1,0) node{$\mu_2 \mu_5 \mu_2$};
%}
\end{scope}
\begin{scope}[>=latex,xshift=205pt]
{\color{red}
\fill (5,0) circle(2pt) coordinate(A) node[below]{$1$};
\fill (3,0) circle(2pt) coordinate(C) node[below]{$3$};
\fill (4,1) circle(2pt) coordinate(B) node[above]{$2$};
\fill (2,1) circle(2pt) coordinate(D) node[above]{$4$};
\draw [-] (1,0.5) node[left=2pt] {\footnotesize $2$} to [out = 0, in = -135] (D);
\draw [-] (D) -- (C); 
\draw [-] (C) to [out = -45, in = -135] (A);
\draw [->] (A) to [out = 45, in = 180] (6,0.5);
\draw [-] (1,-0.5) node[left=2pt] {\footnotesize $1$} to [out = 0, in = -135] (C); 
\draw [-] (C) -- (B);
\draw [-] (B) -- (A);
\draw [->] (A) to [out = -45, in = 180] (6,-0.5);
}
%
%
%{\color{blue}
\path (1,1) node[circle]{2} coordinate(E) node[above=0.2em]{$1$};%{$v_1^t$};
\draw (1,1) circle[radius=0.15];
\path (3,1) node[circle]{2} coordinate(F) node[above=0.2em]{$2$};%{$v_2^t$};
\draw (3,1) circle[radius=0.15];
\path (5,1) node[circle]{2} coordinate(G) node[above=0.2em]{$3$};%{$v_3^t$};
\draw (5,1) circle[radius=0.15];
\draw (2,0) circle(2pt) coordinate(B) node[below]{$4$};%{$v^s_1$};
\draw (4,0) circle(2pt) coordinate(C) node[below]{$5$};%{$v^s_2$};
\draw (6,0) circle(2pt) coordinate(D) node[below]{$6$};%{$v^s_3$};
\qarrow{B}{C}
\qarrow{C}{D}
\draw[->,shorten >=4pt,shorten <=4pt] (E) -- (F) [thick];
\draw[->,shorten >=4pt,shorten <=4pt] (F) -- (G) [thick];
\draw[->,shorten >=4pt,shorten <=2pt] (C) -- (F) [thick];
\draw[->,shorten >=2pt,shorten <=4pt] (F) -- (B) [thick];
\draw[->,shorten >=2pt,shorten <=4pt] (G) -- (C) [thick];
\draw[->,dashed,shorten >=4pt,shorten <=2pt] (B) -- (E) [thick];
\draw[->,dashed,shorten >=4pt,shorten <=2pt] (D) -- (G) [thick];
%}
\end{scope}
\end{tikzpicture}
\end{split}
\end{align}

\subsection{Cluster transformation for $K_{1234}$}\label{sb:cK}

We consider the sequence of quantum $y$-seeds associated to $\mu_{2,5,2}$:
$$
(B(1),Y(1)):=(J_{1212},Y)  \stackrel{\mu_{2}}{\longleftrightarrow}
(B(2),Y(2)) \stackrel{\mu_{5}}{\longleftrightarrow}
(B(3),Y(3)) \stackrel{\mu_{2}}{\longleftrightarrow} (B(4),Y(4)),
$$
where $B(4)=J_{2121}$ and $B(2), B(3)$ are found in \eqref{quiver:C2}. 
The quantum $y$-variables are given as follows:
\begin{align}\label{eq:K-full}
\begin{split}
&Y(1) = 
\begin{pmatrix}
Y_1 \\ Y_2 \\ Y_3 \\ Y_4 \\ Y_5 \\ Y_6
\end{pmatrix},
~~Y(2) = 
\begin{pmatrix}
Y_1(1+q^2Y_2^{-1})^{-1} \\ Y_2^{-1} \\ Y_3(1+q^2Y_2) \\ Y_4 \\
Y_5(1+q^2Y_2) \\ Y_6(1+q^2Y_2^{-1})^{-1}
\end{pmatrix},
~~Y(3) = 
\begin{pmatrix}
Y_1(2)\\ Y_2^{-1}(1+qY_5(2))(1+q^3Y_5(2)) \\ Y_3(2) \\ Y_4(1+qY_5(2)^{-1})^{-1} 
\\ Y_5(2)^{-1} \\ Y_6(2)(1+qY_5(2)^{-1})^{-1}
\end{pmatrix}
\\ 
&Y(4) =
\begin{pmatrix}
Y_1(2)(1+q^2Y_2(3))\\ Y_2(3)^{-1} \\ Y_3(2)(1+q^2Y_2(3)^{-1})^{-1} \\ Y_4(3)(1+q^2Y_2(3)^{-1})^{-1} \\ Y_5(3)(1+q^2Y_2(3)) \\ Y_6(3)
\end{pmatrix}
=
\begin{pmatrix}
Y_1 \,\Lambda
\\ 
(1+q^3Y_5(1+q^2Y_2))^{-1} (1+qY_5(1+q^2Y_2))^{-1} Y_2
\\
\Lambda^{-1} Y_3 (1+q^3Y_5(1+q^2Y_2)) (1+qY_5(1+q^2Y_2)) 
\\
\Lambda^{-1} q^{-1}Y_4 Y_5 (1+qY_5(1+q^2Y_2))
\\
q^2 Y_5^{-1} Y_2^{-1} \Lambda
\\
(1+qY_5(1+q^2Y_2))^{-1} q^{-3} Y_6 Y_2 Y_5   
\end{pmatrix}.
\end{split}
\end{align}
Here we have set $\Lambda := 1+(q+q^3)Y_5 + q^4Y_5^2(1+q^2Y_2)$.

\begin{lem}\label{lem:K-tropsign}
The tropical sign-sequence of $\mu_{2,5,2}$ for a tropical $y$-seed $(J_{1212},y)$ is $(+,+,-)$, if the tropical signs of $y_2$ and $y_5$ in $y = (y_1,y_2,y_3,y_4,y_5,y_6)$ are positive.
\end{lem}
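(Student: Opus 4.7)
The plan is to compute the tropical signs $\varepsilon_1,\varepsilon_2,\varepsilon_3$ for the mutation sequence $\mathbf{i}=(2,5,2)$ step by step, using only the tropical mutation rule \eqref{eq:trop-mutation}, the exchange matrix mutation rule \eqref{eq:q-mutation}, and the two entries $b_{52}=-1$ and $b_{25}=2$ of $B=B(J_{1212})$ that are explicitly displayed. The positivity hypothesis on $y_2$ and $y_5$ will be used to trivialise the two tropical corrections $(1\oplus y_2)$ and $(1\oplus y_5)$ that will appear.

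First, $\varepsilon_1=+$ is immediate from $y_2(1)=y_2$ being positive by hypothesis. Next, apply $\mu_2$ and compute $y_5(2)$ via \eqref{eq:trop-mutation} with $k=2$, $i=5$; since $b_{52}=-1$, this reads $y_5(2)=y_5\cdot(1\oplus y_2^{+1})^{1}=y_5\cdot(1\oplus y_2)$, and positivity of $y_2$ forces $1\oplus y_2=1$ in $\mathbb{P}(u)$. Hence $y_5(2)=y_5$ is still positive and $\varepsilon_2=+$.

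For the third sign, the key point is that $\mu_2$ flips the entry $b_{25}$ to $b_{25}(2)=-2$ by \eqref{eq:q-mutation}. Applying \eqref{eq:trop-mutation} now with $k=5$, $i=2$ yields
$$ y_2(3) \;=\; y_2(2)\cdot(1\oplus y_5(2))^{2} \;=\; y_2^{-1}\cdot(1\oplus y_5)^{2}, $$
where $y_2(2)=y_2^{-1}$ follows from the $k=i$ case of \eqref{eq:trop-mutation}, and $(1\oplus y_5)^{2}=1$ by positivity of $y_5$. Thus $y_2(3)=y_2^{-1}$ is negative and $\varepsilon_3=-$, giving the sign-sequence $(+,+,-)$.

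There is no conceptual obstacle; the argument is a short bookkeeping exercise. The only subtlety that I would flag is to remember to apply the sign-flip of $b_{25}$ under $\mu_2$ before invoking the tropical rule at the second step, and to recognise that the precise role of the hypothesis on $y_2$ and $y_5$ is exactly to collapse the two otherwise-nontrivial factors $(1\oplus y_2)$ and $(1\oplus y_5)$ to $1$.
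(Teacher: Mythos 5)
Your argument is correct and is essentially the paper's own proof: a direct step-by-step computation of the tropical mutations, using positivity of $y_2$ and $y_5$ to collapse the factors $1\oplus y_2$ and $1\oplus y_5$ to $1$, and reading off the signs from $y_2(1)=y_2$, $y_5(2)=y_5$, $y_2(3)=y_2^{-1}$. The only difference is cosmetic — the paper records the full transformation of all six tropical variables (which it reuses later), whereas you track only the three entries needed for the sign-sequence.
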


\begin{proof}
When the tropical signs of $y_2$ and $y_5$ are positive, 
$\mu_{2,5,2}$ transforms $(J_{1212},y)$ as follows:
\begin{align}\label{eq:K-trop-mutation}
\begin{split}
&y=(y_1,\underline{y_2},y_3,y_4,y_5,y_6) \stackrel{\mu_2}{\mapsto}
(y_1 y_2, y_2^{-1},y_3,y_4,\underline{y_5}, y_2 y_6)
\\ &\qquad 
\stackrel{\mu_5}{\mapsto} (y_1 y_2, \underline{y_2^{-1}},y_3,y_4 y_5,y_5^{-1}, y_2 y_5 y_6)\stackrel{\mu_2}{\mapsto} (y_1, y_2,y_3,y_4 y_5,(y_2 y_5)^{-1}, y_2 y_5 y_6).
\end{split}
\end{align}
Thus the tropical sign sequence is determined by the underlined tropical $y$-variables as claimed.
\end{proof}

By applying \eqref{eq:ad-tau-decomp} to this mutation sequence with  
$(\delta_1,\delta_2,\delta_3)=(+,+,-)$, we decompose the transformation as
\begin{align}\label{eq:K-decomp}
\mu^\ast_{2,5,2} := 
\mu_2^\ast \,\mu_5^\ast \,\mu_2^\ast = \Ad(\Psi_{q^2}(Y_2) \Psi_q(Y_5) \Psi_{q^2}(Y_2)^{-1}) \tau_{2,+} \tau_{5,+} \tau_{2,-}.
\end{align}
One sees that the image of the generators $\rY_i(t) ~(t=1,2,3,4)$ of the quantum tori $\mathcal{T}(B(t))$ 
in the initial torus $\mathcal{T}(J_{1212})$ is  
\begin{equation}
\begin{split}
&(\rY_i(1)=\rY_i)_i = (Y_1,Y_2,Y_3,Y_4, Y_5,Y_6),
\\
&(\tau_{2,+}(\rY_i(2)))_i = (q^{-2}Y_1Y_2, Y_2^{-1},Y_3,Y_4,Y_5,q^{-2} Y_6 Y_2),
\\
&(\tau_{2,+} \tau_{5,+}(\rY_i(3)))_i = (q^{-2} Y_1 Y_2, Y_2^{-1}, Y_3, q^{-1} Y_4 Y_5, Y_5^{-1}, q^{-3} Y_6 Y_2 Y_5),
\\
&(\tau_{2,+} \tau_{5,+} \tau_{2,-}(\rY_i(4)))_i =
(Y_1, Y_2, Y_3, q^{-1} Y_4 Y_5, q^2 Y_5^{-1} Y_2^{-1},q^{-3}Y_6 Y_2 Y_5).  
\end{split}
\end{equation}

Assign a pair of noncommuting variables $(p_i,u_i)$ to the crossing $i$ of the wiring diagram \eqref{quiver-d:C2} for $i=1,2,3,4$, satisfying 
\begin{align}\label{pu2}
[p_i,u_j] 
= \begin{cases}
\hbar & i=j=1,3, \\
2 \hbar & i=j=2,4, \\
0 & \text{otherwise},
\end{cases} 
\qquad 
[p_i,p_j] = [u_i,u_j] = 0.
\end{align}
Let $\mathcal{W}(C_2)$ be the algebra over $\C$ generated by the $q$-commuting symbols
$e^{\pm p_i}, e^{\pm u_i} ~(i=1,2,3,4)$. 
We use non-zero complex parameters $\kappa_i ~(i=1,2,3,4)$ which are parameterized with 
$\lambda_i$ as in \eqref{para}.
Let $Y_i$ and $Y'_i$ denote the generators of $\mathcal{Y}(J_{1212})$ and $\mathcal{Y}(J_{2121})$ respectively.  
Let Frac$\mathcal{W}(C_2)$ be the noncommuting fractional field of $\mathcal{W}(C_2)$. 
We define the embeddings $\phi: \mathcal{Y}(J_{1212}) \hookrightarrow \mathrm{Frac}\mathcal{W}(C_2)$ and $\phi': \mathcal{Y}(J_{2121}) \hookrightarrow \mathrm{Frac}\mathcal{W}(C_2)$ by
\begin{align}\label{eq:phi-K}
\phi: 
\begin{cases}
Y_1 \mapsto \kappa_2^{-1}e^{p_2-u_2-2 p_1},
\\
Y_2 \mapsto \kappa_2 \kappa_4^{-1} e^{p_2+u_2+p_4-u_4-2 p_3},
\\
Y_3 \mapsto \kappa_4 \,e^{p_4+u_4},
\\
Y_4 \mapsto \kappa_1^{-1} e^{p_1-u_1},
\\
Y_5 \mapsto \kappa_1 \kappa_3^{-1} e^{p_1+u_1+p_3-u_3-p_2},
\\
Y_6 \mapsto \kappa_3 \,e^{p_3+u_3-p_4},
\end{cases}
\qquad
\phi':
\begin{cases}
Y'_1 \mapsto \kappa_4^{-1} e^{p_4-u_4},
\\
Y'_2 \mapsto \kappa_4 \kappa_2^{-1} e^{p_4+u_4+p_2-u_2-2 p_3},
\\
Y'_3 \mapsto \kappa_2\, e^{p_2+u_2-2 p_1},
\\
Y'_4 \mapsto \kappa_3^{-1} e^{p_3-u_3-p_4},
\\
Y'_5 \mapsto \kappa_3 \kappa_1^{-1} e^{p_3+u_3+p_1-u_1-p_2},
\\
Y'_6 \mapsto \kappa_1 \,e^{p_1+u_1}. 
\end{cases}
\end{align} 
We also define an isomorphism $\pi^K_{1234}$ of $\mathcal{W}(C_2)$ by
\begin{align}\label{eq:K-pi}
\pi^K_{1234}:
\begin{cases}
\;p_1 \mapsto p_1+ \lambda_{24}, 
\quad 
p_2 \mapsto p_4+ 2p_1+ \lambda_{24},
\\
\;p_3  \mapsto p_3-\lambda_{24},
\quad
p_4 \mapsto p_2-2 p_1- \lambda_{24},
\\
\;u_1 \mapsto u_1+u_2-u_4,
\quad 
u_2  \mapsto u_4,
\\
\; u_3  \mapsto u_3,
\quad 
u_4 \mapsto u_2,
\end{cases}
\end{align}
in the sense of exponentials, which induces an isomorphism of Frac$\mathcal{W}(C_2)$.
The following proposition is proved in the same manner as Proposition \ref{prop:R-ad}.

\begin{prop}\label{prop:K-ad}
(i) The maps $\phi$ and $\phi'$ are morphisms of skew fields.
Moreover, the following diagram is commutative:
\begin{align*}
\xymatrix{
\mathcal{Y}(J_{2121}) \ar[r]^{\phi'} \ar[d]_{\tau_{2,+}\tau_{5,+}\tau_{2,-}} & \mathrm{Frac}\mathcal{W}(C_2) \ar[d]_{\pi^K_{1234}}
\\
\mathcal{Y}(J_{1212}) \ar[r]^{\phi}& \mathrm{Frac}\mathcal{W}(C_2)
}
\end{align*}
(ii) The isomorphism $\pi^K_{1234}$ is realized by an adjoint action $\Ad(P^K_{1234})$ of 
\begin{align}\label{K-Pop}
P^K_{1234} = \rho_{24}\, e^{\frac{1}{\hbar}p_1(u_4-u_2)} 
e^{\frac{\lambda_{24}}{2\hbar}(2u_3-2u_1+u_4-u_2)}.
\end{align}   
Here $\rho_{24} \in \mathfrak{S}_4$ is a permutation of the indices $2$ and $4$ 
of the generators of $\mathcal{W}(C_2)$.
\end{prop}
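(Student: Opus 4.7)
The plan is to follow the template of the proof of Proposition~\ref{prop:R-ad} verbatim, but keeping track of the fact that the commutators in \eqref{pu2} are doubled for the indices $i=2,4$ and that the monomial part now involves three steps with the mixed sign sequence $(+,+,-)$ dictated by Lemma~\ref{lem:K-tropsign}.

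For part (i), the fact that $\phi$ and $\phi'$ are morphisms of skew fields reduces to checking, for each pair $i,j$, that
\[
\phi(Y_i)\phi(Y_j) = q^{2\widehat{b}_{ij}}\phi(Y_j)\phi(Y_i),
\qquad
\phi'(Y'_i)\phi'(Y'_j) = q^{2\widehat{b}'_{ij}}\phi'(Y'_j)\phi'(Y'_i),
\]
where $\widehat{B}$ is the displayed matrix for $J_{1212}$ and $\widehat{B}'$ is the analogous matrix for $J_{2121}$. Each relation is a one-line BCH computation using \eqref{pu2}; the doubled commutator is precisely what produces the entries $\pm 2$ in $\widehat{B}$. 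To check commutativity of the diagram we need
$\pi^K_{1234}\circ\phi'(Y'_i) = \phi\circ(\tau_{2,+}\tau_{5,+}\tau_{2,-})(Y'_i)$ for $i=1,\dots,6$. The right-hand sides have already been computed in the table preceding Lemma~\ref{lem:K-tropsign}, giving $(Y_1,Y_2,Y_3,q^{-1}Y_4Y_5,q^2Y_5^{-1}Y_2^{-1},q^{-3}Y_6Y_2Y_5)$; substituting \eqref{eq:phi-K} yields explicit $q$-Weyl monomials, and one then verifies case by case that $\pi^K_{1234}$ applied to the corresponding $\phi'(Y'_i)$ in \eqref{eq:phi-K} produces the same monomial. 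The key sanity checks are the $\lambda$-bookkeeping (the shift $p_1\mapsto p_1+\lambda_{24}$ and $p_2\mapsto p_4+2p_1+\lambda_{24}$ must reproduce the factors $\kappa_2/\kappa_4$ etc.\ that appear when translating the $Y_i$'s into $\phi(Y_i)$'s) and matching of the weighted mutation rule at the two $\mu_2$'s on the weight-$2$ vertex.

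For part (ii), I would verify $\Ad(P^K_{1234})(e^{p_i}) = \pi^K_{1234}(e^{p_i})$ and $\Ad(P^K_{1234})(e^{u_i}) = \pi^K_{1234}(e^{u_i})$ for $i=1,2,3,4$ directly, using the same BCH identities $e^{A}e^{B}=e^{B}e^{A+[A,B]}$ as in the proof of Proposition~\ref{prop:R-ad}. The permutation $\rho_{24}$ acts last (after conjugation by the two exponentials), so for each generator one first conjugates by $e^{\frac{\lambda_{24}}{2\hbar}(2u_3-2u_1+u_4-u_2)}$ — which shifts only $p_1,p_2,p_3,p_4$ by constants, with the factor $\tfrac{1}{2\hbar}$ compensating the doubled commutators $[p_{2,4},u_{2,4}]=2\hbar$ — then by $e^{\frac{1}{\hbar}p_1(u_4-u_2)}$, which implements the mixing $p_2\mapsto p_2-p_1$ style shift of $p_4$ into $p_2-2p_1$ and $u_1\mapsto u_1+u_2-u_4$, and finally applies $\rho_{24}$ to obtain the transpositions $u_2\leftrightarrow u_4$ and the asymmetric image of $p_2$ and $p_4$ listed in \eqref{eq:K-pi}.

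The only genuinely new input compared with Proposition~\ref{prop:R-ad} is the correct handling of the factor $\tfrac{1}{2\hbar}$ in the exponent of $P^K_{1234}$ versus $\tfrac{1}{\hbar}$ in $P_{123}$; this is forced by \eqref{pu2}, and once it is in place every computation is formally identical to the $A_2$ case. I expect the bookkeeping of spectral-parameter shifts in the commutativity of the diagram in (i) to be the most tedious step, since both $\pi^K_{1234}$ and the monomial string $\tau_{2,+}\tau_{5,+}\tau_{2,-}$ carry nontrivial $\kappa$-dependence that must cancel exactly; but there is no structural obstacle, and no step should exceed the length of the sample case worked out in the proof of Proposition~\ref{prop:R-ad}.
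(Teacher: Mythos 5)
Your proposal is correct and takes essentially the same approach as the paper: the paper's entire proof of Proposition~\ref{prop:K-ad} is the single sentence that it ``is proved in the same manner as Proposition~\ref{prop:R-ad}'', i.e.\ by direct verification of the $q$-commutation relations for $\phi,\phi'$, case-by-case comparison of $\pi^K_{1234}\circ\phi'$ with $\phi\circ\tau_{2,+}\tau_{5,+}\tau_{2,-}$, and a BCH computation of $\mathrm{Ad}(P^K_{1234})$ on the generators, which is exactly what you outline (including the correct handling of the doubled commutators $[p_i,u_i]=2\hbar$ for $i=2,4$). One trivial slip: the monomial images $(Y_1,Y_2,Y_3,q^{-1}Y_4Y_5,q^2Y_5^{-1}Y_2^{-1},q^{-3}Y_6Y_2Y_5)$ that you cite appear in the display following \eqref{eq:K-decomp}, not in a table preceding Lemma~\ref{lem:K-tropsign}, but the values you quote are the correct ones.
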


%%%%%%%%%%%%%%%%%%%%%%%%%%%%%%
\section{Tetrahedron equation}\label{sec:tetra}
%%%%%%%%%%%%%%%%%%%%%%%%%%%%%%

We consider the transformation of reduced expressions of $w_0 \in W(A_3)$,
from $123121$ to $321323$, in two ways as follows:
$$
\begin{matrix}
\underline{121}321 & \to & 21\underline{232}1 & \to & 2\underline{1}3\underline{2}3\underline{1} & \to & \underline{232}123 & \to & 323123  
\\
\rotatebox{90}{=} & & & & & & & & \rotatebox{90}{=}
\\
123\underline{121} & \to & 1\underline{232}12 & \to & \underline{1}3\underline{2}3\underline{1}2 & \to & 321\underline{232} & \to & 321323
\end{matrix}
$$ 
This corresponds to the transformation of wiring diagrams shown in Figure \ref{fig:tetrahedron-eq}.
It follows that $R_{ijk}$ in \eqref{quiver-d:A2} 
satisfies the tetrahedron relation for the wiring diagrams:
\begin{align}\label{eq:tetra-R-eq}
R_{456} R_{236} R_{135} R_{124} = R_{124} R_{135} R_{236} R_{456}.
\end{align}

\begin{figure}[h]
\[
\scalebox{0.9}{
\begin{tikzpicture}
\begin{scope}[>=latex,xshift=0pt]
{\color{red}
\fill (1,0.5) circle(2pt) coordinate(A) node[below]{$1$};
\fill (2,1.5) circle(2pt) coordinate(B) node[above]{$2$};
\fill (3,0.5) circle(2pt) coordinate(C) node[below]{$4$};
\fill (3,2.5) circle(2pt) coordinate(D) node[above]{$3$};
\fill (4,1.5) circle(2pt) coordinate(E) node[above]{$5$};
\fill (5,0.5) circle(2pt) coordinate(F) node[below]{$6$};
\draw [-] (0,3) to [out = 0, in = 135] (D);
\draw [-] (D) -- (F); 
\draw [->] (F) to [out = -45, in = 180] (6,0);
\draw [-] (0,2) to [out = 0, in = 135] (B);
\draw [-] (B) -- (C); 
\draw [-] (C) to [out = -45, in = -135] (F);
\draw [->] (F) to [out = 45, in = 180] (6,1);
\draw [-] (0,1) to [out = 0, in = 135] (A); 
\draw [-] (A) to [out = -45, in = -135] (C);
\draw [-] (C) -- (E);
\draw [->] (E) to [out = 45, in = 180] (6,2);
\draw [-] (0,0) to [out = 0, in = -135] (A); 
\draw [-] (A) -- (D);
\draw [->] (D) to [out = 45, in = 180] (6,3);
}
\coordinate (P1) at (3,-0.2);
\coordinate (P2) at (3,-0.7);
\draw[->] (P1) -- (P2);
\draw (3,-0.5) node[right]{$\mu_7$};
\draw (3,-0.5) node[left]{$R_{124}$};
\draw[->] (6.5,1.5) -- (7.5,1.5);
\draw (7,1.5) node[below]{$\mu_8$};
\draw (7,1.5) node[above]{$R_{456}$};
%
%{\color{blue}
\draw (0,0.5) circle(2pt) coordinate(6) node[below]{$6$};
\draw (2,0.5) circle(2pt) coordinate(7) node[below]{$7$};
\draw (4,0.5) circle(2pt) coordinate(8) node[below]{$8$};
\draw (6,0.5) circle(2pt) coordinate(9) node[below]{$9$};
\draw (1,1.5) circle(2pt) coordinate(3) node[above]{$3$};
\draw (3,1.5) circle(2pt) coordinate(4) node[above]{$4$};
\draw (5,1.5) circle(2pt) coordinate(5) node[above]{$5$};
\draw (2,2.5) circle(2pt) coordinate(1) node[above]{$1$};
\draw (4,2.5) circle(2pt) coordinate(2) node[above]{$2$};
\qarrow{1}{2}
\qarrow{3}{4}
\qarrow{4}{5}
\qarrow{6}{7}
\qarrow{7}{8}
\qarrow{8}{9}
\qdarrow{9}{5}
\qdarrow{5}{2}
\qdarrow{1}{3}
\qdarrow{3}{6}
\qarrow{5}{8}
\qarrow{8}{4}
\qarrow{4}{7}
\qarrow{7}{3}
\qarrow{2}{4}
\qarrow{4}{1}
%}
\end{scope}
\begin{scope}[>=latex,yshift=-115pt]
{\color{red}
\fill (3,1.5) circle(2pt) coordinate(A) node[below]{$1$};
\fill (2,0.5) circle(2pt) coordinate(B) node[above]{$2$};
\fill (1,1.5) circle(2pt) coordinate(C) node[below]{$4$};
\fill (4,2.5) circle(2pt) coordinate(D) node[above]{$3$};
\fill (5,1.5) circle(2pt) coordinate(E) node[above]{$5$};
\fill (6,0.5) circle(2pt) coordinate(F) node[below]{$6$};
\draw [-] (0,3) to [out = 0, in = 135] (D);
\draw [-] (D) -- (F); 
\draw [->] (F) to [out = -45, in = 180] (7,0);
\draw [-] (0,2) to [out = 0, in = 135] (C);
\draw [-] (C) -- (B); 
\draw [-] (B) to [out = -45, in = -135] (F);
\draw [->] (F) to [out = 45, in = 180] (7,1);
\draw [-] (0,1) to [out = 0, in = -135] (C); 
\draw [-] (C) to [out = 45, in = 135] (A);
\draw [-] (A) to [out = -45, in = -135] (E);
\draw [->] (E) to [out = 45, in = 180] (7,2);
\draw [-] (0,0) to [out = 0, in = -135] (B); 
\draw [-] (B) -- (D);
\draw [->] (D) to [out = 45, in = 180] (7,3);
}
\coordinate (P1) at (3,-0.2);
\coordinate (P2) at (3,-0.7);
\draw[->] (P1) -- (P2);
\draw (3,-0.5) node[right]{$\mu_4$};
\draw (3,-0.5) node[left]{$R_{135}$};
%
%{\color{blue}
\draw (1,0.5) circle(2pt) coordinate(6) node[below]{$6$};
\draw (2,1.5) circle(2pt) coordinate(7) node[below]{$7$};
\draw (4,0.5) circle(2pt) coordinate(8) node[below]{$8$};
\draw (7,0.5) circle(2pt) coordinate(9) node[below]{$9$};
\draw (0,1.5) circle(2pt) coordinate(3) node[above]{$3$};
\draw (4,1.5) circle(2pt) coordinate(4) node[above]{$4$};
\draw (6,1.5) circle(2pt) coordinate(5) node[above]{$5$};
\draw (3,2.5) circle(2pt) coordinate(1) node[above]{$1$};
\draw (5,2.5) circle(2pt) coordinate(2) node[above]{$2$};
\qarrow{1}{2}
\qarrow{3}{7}
\qarrow{7}{4}
\qarrow{4}{5}
\qarrow{6}{8}
\qarrow{8}{9}
\qdarrow{5}{2}
\qarrow{2}{4}
\qarrow{4}{1}
\qdarrow{1}{3}
\qdarrow{9}{5}
\qarrow{5}{8}
\qarrow{8}{7}
\qarrow{7}{6}
\qdarrow{6}{3}
%}
\end{scope}
\begin{scope}[>=latex,yshift=-230pt,xshift=28pt]
{\color{red}
\fill (4,2.5) circle(2pt) coordinate(A) node[below]{$1$};
\fill (2,0.5) circle(2pt) coordinate(B) node[above]{$2$};
\fill (1,1.5) circle(2pt) coordinate(C) node[below]{$4$};
\fill (3,1.5) circle(2pt) coordinate(D) node[above]{$3$};
\fill (2,2.5) circle(2pt) coordinate(E) node[above]{$5$};
\fill (4,0.5) circle(2pt) coordinate(F) node[below]{$6$};
\draw [-] (0,3) to [out = 0, in = 135] (E);
\draw [-] (E) -- (F); 
\draw [->] (F) to [out = -45, in = 180] (5,0);
\draw [-] (0,2) to [out = 0, in = 135] (C);
\draw [-] (C) -- (B); 
\draw [-] (B) to [out = -45, in = -135] (F);
\draw [->] (F) to [out = 45, in = 180] (5,1);
\draw [-] (0,1) to [out = 0, in = -135] (C); 
\draw [-] (C) -- (E);
\draw [-] (E) to [out = 45, in = 135] (A);
\draw [->] (A) to [out = -45, in = 180] (5,2);
\draw [-] (0,0) to [out = 0, in = -135] (B); 
\draw [-] (B) -- (A);
\draw [->] (A) to [out = 45, in = 180] (5,3);
}
\draw[->] (2,-0.2) -- (2,-0.8);
\draw (2,-0.5) node[right]{$\mu_8$};
\draw (2,-0.5) node[left]{$R_{236}$};
%
%{\color{blue}
\draw (1,0.5) circle(2pt) coordinate(6) node[below]{$6$};
\draw (2,1.5) circle(2pt) coordinate(7) node[below]{$7$};
\draw (3,0.5) circle(2pt) coordinate(8) node[below]{$8$};
\draw (5,0.5) circle(2pt) coordinate(9) node[below]{$9$};
\draw (0,1.5) circle(2pt) coordinate(3) node[above]{$3$};
\draw (3,2.5) circle(2pt) coordinate(4) node[above]{$4$};
\draw (4,1.5) circle(2pt) coordinate(5) node[above]{$5$};
\draw (1,2.5) circle(2pt) coordinate(1) node[above]{$1$};
\draw (5,2.5) circle(2pt) coordinate(2) node[above]{$2$};
\qarrow{1}{4}
\qarrow{4}{2}
\qarrow{3}{7}
\qarrow{7}{5}
\qarrow{6}{8}
\qarrow{8}{9}
\qdarrow{2}{5}
\qarrow{5}{4}
\qarrow{4}{7}
\qarrow{7}{1}
\qdarrow{1}{3}
\qdarrow{9}{5}
\qarrow{5}{8}
\qarrow{8}{7}
\qarrow{7}{6}
\qdarrow{6}{3}
%}
\end{scope}
\begin{scope}[>=latex,yshift=-345pt]
{\color{red}
\fill (6,2.5) circle(2pt) coordinate(A) node[below]{$1$};
\fill (5,1.5) circle(2pt) coordinate(B) node[above]{$2$};
\fill (1,1.5) circle(2pt) coordinate(C) node[below]{$4$};
\fill (4,0.5) circle(2pt) coordinate(D) node[above]{$3$};
\fill (2,2.5) circle(2pt) coordinate(E) node[above]{$5$};
\fill (3,1.5) circle(2pt) coordinate(F) node[below]{$6$};
\draw [-] (0,3) to [out = 0, in = 135] (E);
\draw [-] (E) -- (D); 
\draw [->] (D) to [out = -45, in = 180] (7,0);
\draw [-] (0,2) to [out = 0, in = 135] (C);
\draw [-] (C) to [out = -45, in = -135] (F); 
\draw [-] (F) to [out = 45, in = 135] (B);
\draw [->] (B) to [out = -45, in = 180] (7,1);
\draw [-] (0,1) to [out = 0, in = -135] (C); 
\draw [-] (C) -- (E);
\draw [->] (E) to [out = 45, in = 135] (A);
\draw [->] (A) to [out = -45, in = 180] (7,2);
\draw [-] (0,0) to [out = 0, in = -135] (D); 
\draw [-] (D) -- (A);
\draw [->] (A) to [out = 45, in = 180] (7,3);
}
\coordinate (P1) at (3,-0.2);
\coordinate (P2) at (3,-0.7);
\draw[->] (P1) -- (P2);
\draw (3,-0.5) node[right]{$\mu_7$};
\draw (3,-0.5) node[left]{$R_{456}$};
%
%\coordinate (P3) at (6.5,1.5);
%\coordinate (P4) at (7.5,1.5);
%
%{\color{blue}
\draw (2,0.5) circle(2pt) coordinate(6) node[below]{$6$};
\draw (2,1.5) circle(2pt) coordinate(7) node[below]{$7$};
\draw (4,1.5) circle(2pt) coordinate(8) node[below]{$8$};
\draw (6,0.5) circle(2pt) coordinate(9) node[below]{$9$};
\draw (0,1.5) circle(2pt) coordinate(3) node[above]{$3$};
\draw (4,2.5) circle(2pt) coordinate(4) node[above]{$4$};
\draw (6,1.5) circle(2pt) coordinate(5) node[above]{$5$};
\draw (1,2.5) circle(2pt) coordinate(1) node[above]{$1$};
\draw (7,2.5) circle(2pt) coordinate(2) node[above]{$2$};
\qarrow{1}{4}
\qarrow{4}{2}
\qarrow{3}{7}
\qarrow{7}{8}
\qarrow{8}{5}
\qarrow{6}{9}
\qdarrow{2}{5}
\qarrow{5}{4}
\qarrow{4}{7}
\qarrow{7}{1}
\qdarrow{1}{3}
\qdarrow{5}{9}
\qarrow{9}{8}
\qarrow{8}{6}
\qdarrow{6}{3}
%}
\end{scope}
\begin{scope}[>=latex,yshift=-460pt]
{\color{red}
\fill (5,2.5) circle(2pt) coordinate(A) node[below]{$1$};
\fill (4,1.5) circle(2pt) coordinate(B) node[above]{$2$};
\fill (3,2.5) circle(2pt) coordinate(C) node[below]{$4$};
\fill (3,0.5) circle(2pt) coordinate(D) node[above]{$3$};
\fill (2,1.5) circle(2pt) coordinate(E) node[above]{$5$};
\fill (1,2.5) circle(2pt) coordinate(F) node[below]{$6$};
\draw [-] (0,3) to [out = 0, in = 135] (F);
\draw [-] (F) -- (D); 
\draw [->] (D) to [out = -45, in = 180] (6,0);
\draw [-] (0,2) to [out = 0, in = -135] (F);
\draw [-] (F) to [out = 45, in = 135] (C); 
\draw [-] (C) -- (B);
\draw [->] (B) to [out = -45, in = 180] (6,1);
\draw [-] (0,1) to [out = 0, in = -135] (E); 
\draw [-] (E) -- (C);
\draw [->] (C) to [out = 45, in = 135] (A);
\draw [->] (A) to [out = -45, in = 180] (6,2);
\draw [-] (0,0) to [out = 0, in = -135] (D); 
\draw [-] (D) -- (A);
\draw [->] (A) to [out = 45, in = 180] (6,3);
}
\draw[->] (6.5,1.5) -- (7.5,1.5);
\draw (7,1.5) node[below]{$\sigma_{4,8}$};
%
%{\color{blue}
\draw (2,0.5) circle(2pt) coordinate(6) node[below]{$6$};
\draw (2,2.5) circle(2pt) coordinate(7) node[above]{$7$};
\draw (3,1.5) circle(2pt) coordinate(8) node[below]{$8$};
\draw (4,0.5) circle(2pt) coordinate(9) node[below]{$9$};
\draw (1,1.5) circle(2pt) coordinate(3) node[above]{$3$};
\draw (4,2.5) circle(2pt) coordinate(4) node[above]{$4$};
\draw (5,1.5) circle(2pt) coordinate(5) node[above]{$5$};
\draw (0,2.5) circle(2pt) coordinate(1) node[above]{$1$};
\draw (6,2.5) circle(2pt) coordinate(2) node[above]{$2$};
\qarrow{1}{7}
\qarrow{7}{4}
\qarrow{4}{2}
\qarrow{3}{8}
\qarrow{8}{5}
\qarrow{6}{9}
\qdarrow{2}{5}
\qarrow{5}{4}
\qarrow{4}{8}
\qarrow{8}{7}
\qarrow{7}{3}
\qdarrow{3}{1}
\qdarrow{5}{9}
\qarrow{9}{8}
\qarrow{8}{6}
\qdarrow{6}{3}
%}
\end{scope}
\begin{scope}[>=latex,xshift=230pt]
{\color{red}
\fill (1,0.5) circle(2pt) coordinate(A) node[below]{$1$};
\fill (2,1.5) circle(2pt) coordinate(B) node[above]{$2$};
\fill (6,1.5) circle(2pt) coordinate(C) node[below]{$4$};
\fill (3,2.5) circle(2pt) coordinate(D) node[above]{$3$};
\fill (5,0.5) circle(2pt) coordinate(E) node[above]{$5$};
\fill (4,1.5) circle(2pt) coordinate(F) node[below]{$6$};
\draw [-] (0,3) to [out = 0, in = 135] (D);
\draw [-] (D) -- (E); 
\draw [->] (E) to [out = -45, in = 180] (7,0);
\draw [-] (0,2) to [out = 0, in = 135] (B);
\draw [-] (B) to [out = -45, in = -135] (F); 
\draw [-] (F) to [out = 45, in = 135] (C);
\draw [->] (C) to [out = -45, in = 180] (7,1);
\draw [-] (0,1) to [out = 0, in = 135] (A); 
\draw [-] (A) to [out = -45, in = -135] (E);
\draw [-] (E) -- (C);
\draw [->] (C) to [out = 45, in = 180] (7,2);
\draw [-] (0,0) to [out = 0, in = -135] (A); 
\draw [-] (A) -- (D);
\draw [->] (D) to [out = 45, in = 180] (7,3);
}
\coordinate (P1) at (3,-0.2);
\coordinate (P2) at (3,-0.7);
\draw[->] (P1) -- (P2);
\draw (3,-0.5) node[right]{$\mu_4$};
\draw (3,-0.5) node[left]{$R_{236}$};
%
%
%{\color{blue}
\draw (0,0.5) circle(2pt) coordinate(6) node[below]{$6$};%{$v_1^1$};
\draw (3,0.5) circle(2pt) coordinate(7) node[below]{$7$};%{$v_2^1$}; %B1
\draw (5,1.5) circle(2pt) coordinate(8) node[below]{$8$};%{$v_3^1$}; %C1
\draw (6,0.5) circle(2pt) coordinate(9) node[below]{$9$};%{$v_4^1$};%C1'
\draw (1,1.5) circle(2pt) coordinate(3) node[above]{$3$};%{$v_1^2$};%D1
\draw (3,1.5) circle(2pt) coordinate(4) node[above]{$4$};%{$v_2^2$};
\draw (7,1.5) circle(2pt) coordinate(5) node[above]{$5$};%{$v_3^2$};
\draw (2,2.5) circle(2pt) coordinate(1) node[above]{$1$};%{$v_1^3$};
\draw (4,2.5) circle(2pt) coordinate(2) node[above]{$2$};%{$v_2^3$};%H1
\qarrow{1}{2}
\qarrow{3}{4}
\qarrow{4}{8}
\qarrow{8}{5}
\qarrow{6}{7}
\qarrow{7}{9}
\qdarrow{5}{2}
\qarrow{2}{4}
\qarrow{4}{1}
\qdarrow{1}{3}
\qdarrow{5}{9}
\qarrow{9}{8}
\qarrow{8}{7}
\qarrow{7}{3}
\qdarrow{3}{6}
%}
\end{scope}
\begin{scope}[>=latex,xshift=258pt,yshift=-115pt]
{\color{red}
\fill (1,0.5) circle(2pt) coordinate(A) node[below]{$1$};
\fill (3,2.5) circle(2pt) coordinate(B) node[above]{$2$};
\fill (4,1.5) circle(2pt) coordinate(C) node[below]{$4$};
\fill (2,1.5) circle(2pt) coordinate(D) node[above]{$3$};
\fill (3,0.5) circle(2pt) coordinate(E) node[above]{$5$};
\fill (1,2.5) circle(2pt) coordinate(F) node[below]{$6$};
\draw [-] (0,3) to [out = 0, in = 135] (F);
\draw [-] (F) -- (E); 
\draw [->] (E) to [out = -45, in = 180] (5,0);
\draw [-] (0,2) to [out = 0, in = 45] (F);
\draw [-] (F) to [out = 45, in = 135] (B); 
\draw [-] (B) -- (C);
\draw [->] (C) to [out = -45, in = 180] (5,1);
\draw [-] (0,1) to [out = 0, in = -45] (A); 
\draw [-] (A) to [out = -45, in = -135] (E);
\draw [-] (E) -- (C);
\draw [->] (C) to [out = 45, in = 180] (5,2);
\draw [-] (0,0) to [out = 0, in = -135] (A); 
\draw [-] (A) -- (B);
\draw [->] (B) to [out = 45, in = 180] (5,3);
}
\draw[->] (2,-0.2) -- (2,-0.7);
\draw (2,-0.5) node[right]{$\mu_7$};
\draw (2,-0.5) node[left]{$R_{135}$};
%
%{\color{blue}
\draw (0,0.5) circle(2pt) coordinate(6) node[below]{$6$};%{$v_1^1$};
\draw (2,0.5) circle(2pt) coordinate(7) node[below]{$7$};%{$v_2^1$}; %B1
\draw (3,1.5) circle(2pt) coordinate(8) node[below]{$8$};%{$v_3^1$}; %C1
\draw (4,0.5) circle(2pt) coordinate(9) node[below]{$9$};%{$v_4^1$};%C1'
\draw (1,1.5) circle(2pt) coordinate(3) node[above]{$3$};%{$v_1^2$};%D1
\draw (2,2.5) circle(2pt) coordinate(4) node[above]{$4$};%{$v_2^2$};
\draw (5,1.5) circle(2pt) coordinate(5) node[above]{$5$};%{$v_3^2$};
\draw (0,2.5) circle(2pt) coordinate(1) node[above]{$1$};%{$v_1^3$};
\draw (4,2.5) circle(2pt) coordinate(2) node[above]{$2$};%{$v_2^3$};%H1
\qarrow{1}{4}
\qarrow{4}{2}
\qarrow{3}{8}
\qarrow{8}{5}
\qarrow{6}{7}
\qarrow{7}{9}
\qdarrow{5}{2}
\qarrow{2}{8}
\qarrow{8}{4}
\qarrow{4}{3}
\qdarrow{3}{1}
\qdarrow{5}{9}
\qarrow{9}{8}
\qarrow{8}{7}
\qarrow{7}{3}
\qdarrow{3}{6}
%}
\end{scope}
\begin{scope}[>=latex,xshift=230pt,yshift=-230pt]
{\color{red}
\fill (4,1.5) circle(2pt) coordinate(A) node[below]{$1$};
\fill (5,2.5) circle(2pt) coordinate(B) node[above]{$2$};
\fill (6,1.5) circle(2pt) coordinate(C) node[below]{$4$};
\fill (3,0.5) circle(2pt) coordinate(D) node[above]{$3$};
\fill (2,1.5) circle(2pt) coordinate(E) node[above]{$5$};
\fill (1,2.5) circle(2pt) coordinate(F) node[below]{$6$};
\draw [-] (0,3) to [out = 0, in = 135] (F);
\draw [-] (F) -- (D); 
\draw [->] (D) to [out = -45, in = 180] (7,0);
\draw [-] (0,2) to [out = 0, in = 45] (F);
\draw [-] (F) to [out = 45, in = 135] (B); 
\draw [-] (B) -- (C);
\draw [->] (C) to [out = -45, in = 180] (7,1);
\draw [-] (0,1) to [out = 0, in = -135] (E); 
\draw [-] (E) to [out = 45, in = 135] (A);
\draw [-] (A) to [out = -45, in = -135] (C);
\draw [->] (C) to [out = 45, in = 180] (7,2);
\draw [-] (0,0) to [out = 0, in = -135] (D); 
\draw [-] (D) -- (B);
\draw [->] (B) to [out = 45, in = 180] (7,3);
}
\coordinate (P1) at (3,-0.2);
\coordinate (P2) at (3,-0.7);
\draw[->] (P1) -- (P2);
\draw (3,-0.5) node[right]{$\mu_8$};
\draw (3,-0.5) node[left]{$R_{124}$};
%
%{\color{blue}
\draw (2,0.5) circle(2pt) coordinate(6) node[below]{$6$};%{$v_1^1$};
\draw (3,1.5) circle(2pt) coordinate(7) node[below]{$7$};%{$v_2^1$}; %B1
\draw (5,1.5) circle(2pt) coordinate(8) node[below]{$8$};%{$v_3^1$}; %C1
\draw (4,0.5) circle(2pt) coordinate(9) node[below]{$9$};%{$v_4^1$};%C1'
\draw (1,1.5) circle(2pt) coordinate(3) node[above]{$3$};%{$v_1^2$};%D1
\draw (3,2.5) circle(2pt) coordinate(4) node[above]{$4$};%{$v_2^2$};
\draw (7,1.5) circle(2pt) coordinate(5) node[above]{$5$};%{$v_3^2$};
\draw (0,2.5) circle(2pt) coordinate(1) node[above]{$1$};%{$v_1^3$};
\draw (6,2.5) circle(2pt) coordinate(2) node[above]{$2$};%{$v_2^3$};%H1
\qarrow{1}{4}
\qarrow{4}{2}
\qarrow{3}{7}
\qarrow{7}{8}
\qarrow{8}{5}
\qarrow{6}{9}
\qdarrow{5}{2}
\qarrow{2}{8}
\qarrow{8}{4}
\qarrow{4}{3}
\qdarrow{3}{1}
\qdarrow{5}{9}
\qarrow{9}{7}
\qarrow{7}{6}
\qdarrow{6}{3}
%}
\end{scope}
\begin{scope}[>=latex,xshift=230pt,yshift=-345pt]
{\color{red}
\fill (5,2.5) circle(2pt) coordinate(A) node[below]{$1$};
\fill (4,1.5) circle(2pt) coordinate(B) node[above]{$2$};
\fill (3,2.5) circle(2pt) coordinate(C) node[below]{$4$};
\fill (3,0.5) circle(2pt) coordinate(D) node[above]{$3$};
\fill (2,1.5) circle(2pt) coordinate(E) node[above]{$5$};
\fill (1,2.5) circle(2pt) coordinate(F) node[below]{$6$};
\draw [-] (0,3) to [out = 0, in = 135] (F);
\draw [-] (F) -- (D); 
\draw [->] (D) to [out = -45, in = 180] (6,0);
\draw [-] (0,2) to [out = 0, in = -135] (F);
\draw [-] (F) to [out = 45, in = 135] (C); 
\draw [-] (C) -- (B);
\draw [->] (B) to [out = -45, in = 180] (6,1);
\draw [-] (0,1) to [out = 0, in = -135] (E); 
\draw [-] (E) -- (C);
\draw [-] (C) to [out = 45, in = 135] (A);
\draw [->] (A) to [out = -45, in = 180] (6,2);
\draw [-] (0,0) to [out = 0, in = -135] (D); 
\draw [-] (D) -- (A);
\draw [->] (A) to [out = 45, in = 180] (6,3);
}
\coordinate (P1) at (3,-0.2);
\coordinate (P2) at (3,-0.7);
\draw[->] (P1) -- (P2);
\draw (3,-0.5) node[right]{$\sigma_{4,7}$};
%
%{\color{blue}
\draw (2,0.5) circle(2pt) coordinate(6) node[below]{$6$};%{$v_1^1$};
\draw (3,1.5) circle(2pt) coordinate(7) node[below]{$7$};%{$v_2^1$}; %B1
\draw (4,2.5) circle(2pt) coordinate(8) node[above]{$8$};%{$v_3^1$}; %C1
\draw (4,0.5) circle(2pt) coordinate(9) node[below]{$9$};%{$v_4^1$};%C1'
\draw (1,1.5) circle(2pt) coordinate(3) node[above]{$3$};%{$v_1^2$};%D1
\draw (2,2.5) circle(2pt) coordinate(4) node[above]{$4$};%{$v_2^2$};
\draw (5,1.5) circle(2pt) coordinate(5) node[above]{$5$};%{$v_3^2$};
\draw (0,2.5) circle(2pt) coordinate(1) node[above]{$1$};%{$v_1^3$};
\draw (6,2.5) circle(2pt) coordinate(2) node[above]{$2$};%{$v_2^3$};%H1
\qarrow{1}{4}
\qarrow{4}{8}
\qarrow{8}{2}
\qarrow{3}{7}
\qarrow{7}{5}
\qarrow{6}{9}
\qdarrow{2}{5}
\qarrow{5}{8}
\qarrow{8}{7}
\qarrow{7}{4}
\qarrow{4}{3}
\qdarrow{3}{1}
\qdarrow{5}{9}
\qarrow{9}{7}
\qarrow{7}{6}
\qdarrow{6}{3}
%}
\end{scope}
\begin{scope}[>=latex,xshift=230pt,yshift=-460pt]
{\color{red}
\fill (5,2.5) circle(2pt) coordinate(A) node[below]{$1$};
\fill (4,1.5) circle(2pt) coordinate(B) node[above]{$2$};
\fill (3,2.5) circle(2pt) coordinate(C) node[below]{$4$};
\fill (3,0.5) circle(2pt) coordinate(D) node[above]{$3$};
\fill (2,1.5) circle(2pt) coordinate(E) node[above]{$5$};
\fill (1,2.5) circle(2pt) coordinate(F) node[below]{$6$};
\draw [-] (0,3) to [out = 0, in = 135] (F);
\draw [-] (F) -- (D); 
\draw [->] (D) to [out = -45, in = 180] (6,0);
\draw [-] (0,2) to [out = 0, in = -135] (F);
\draw [-] (F) to [out = 45, in = 135] (C); 
\draw [-] (C) -- (B);
\draw [->] (B) to [out = -45, in = 180] (6,1);
\draw [-] (0,1) to [out = 0, in = -135] (E); 
\draw [-] (E) -- (C);
\draw [-] (C) to [out = 45, in = 135] (A);
\draw [->] (A) to [out = -45, in = 180] (6,2);
\draw [-] (0,0) to [out = 0, in = -135] (D); 
\draw [-] (D) -- (A);
\draw [->] (A) to [out = 45, in = 180] (6,3);
}
%
%{\color{blue}
\draw (2,0.5) circle(2pt) coordinate(6) node[below]{$6$};%{$v_1^1$};
\draw (3,1.5) circle(2pt) coordinate(7) node[below]{$4$};%{$v_2^1$}; 
\draw (4,2.5) circle(2pt) coordinate(8) node[above]{$8$};%{$v_3^1$}; 
\draw (4,0.5) circle(2pt) coordinate(9) node[below]{$9$};%{$v_4^1$};
\draw (1,1.5) circle(2pt) coordinate(3) node[above]{$3$};%{$v_1^2$};
\draw (2,2.5) circle(2pt) coordinate(4) node[above]{$7$};%{$v_2^2$};
\draw (5,1.5) circle(2pt) coordinate(5) node[above]{$5$};%{$v_3^2$};
\draw (0,2.5) circle(2pt) coordinate(1) node[above]{$1$};%{$v_1^3$};
\draw (6,2.5) circle(2pt) coordinate(2) node[above]{$2$};%{$v_2^3$};
\qarrow{1}{4}
\qarrow{4}{8}
\qarrow{8}{2}
\qarrow{3}{7}
\qarrow{7}{5}
\qarrow{6}{9}
\qdarrow{2}{5}
\qdarrow{5}{9}
\qdarrow{6}{3}
\qdarrow{3}{1}
\qarrow{9}{7}
\qarrow{7}{6}
\qarrow{5}{8}
\qarrow{8}{7}
\qarrow{7}{4}
\qarrow{4}{3}
%}
\end{scope}
\end{tikzpicture}
}
\]
\caption{Tetrahedron transformation. The $R_{ijk}$ transforms the wiring diagrams, 
and the $\mu_i$ and the $\sigma_{ij}$ transform the 
quivers. (Note that the $\sigma_{ij}$ act on wiring diagrams trivially.)}
\label{fig:tetrahedron-eq}
\end{figure}
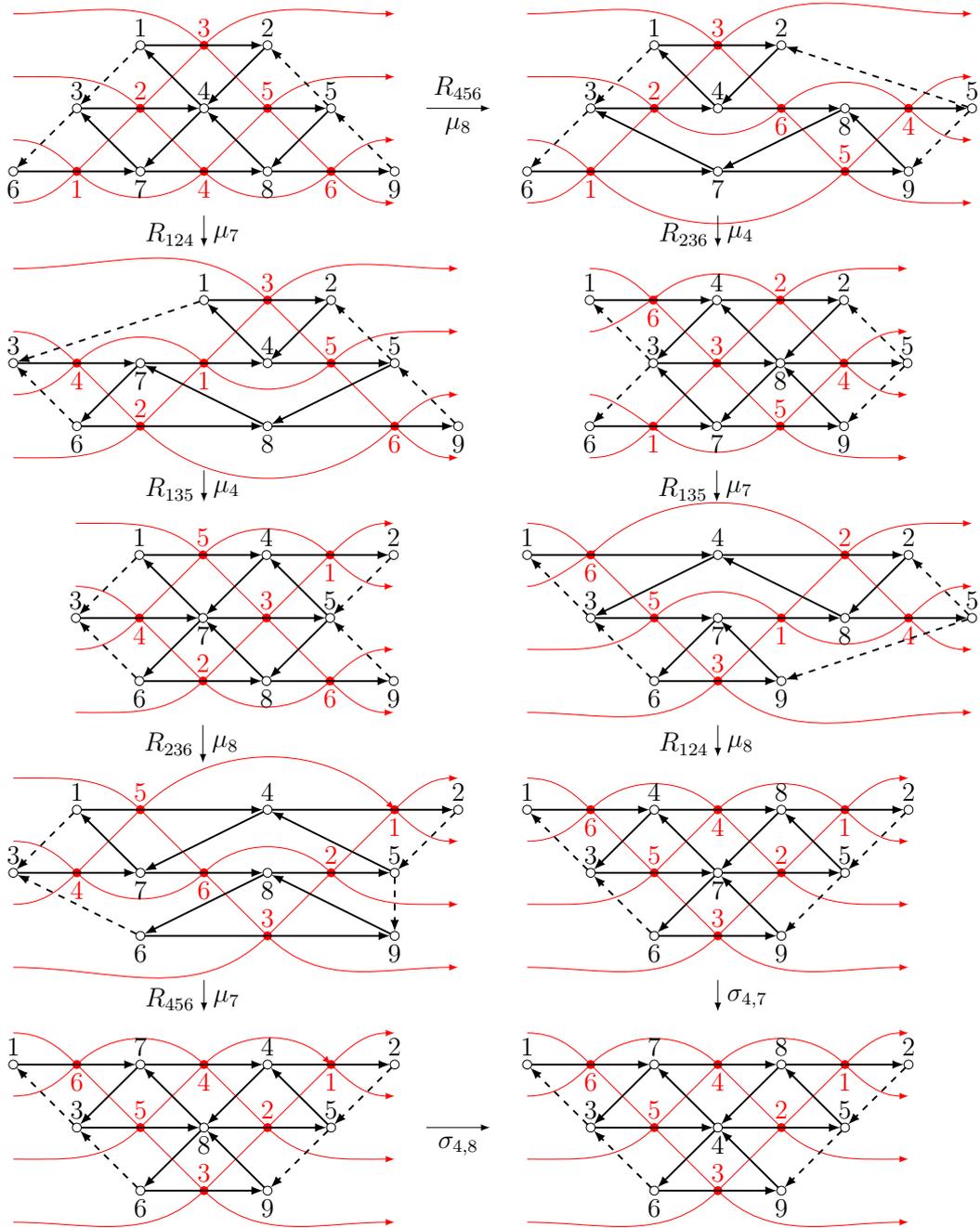

As studied in \cite{SY22}, this transformation is accompanied by cluster mutation sequences. 
We associate the FG quivers $J_{123121}$ and $J_{321323}$ to the initial and the final reduced words as follows:
\[
\scalebox{0.9}{
\begin{tikzpicture}
\begin{scope}[>=latex]
\draw (0,0.5) circle(2pt) coordinate(6) node[below]{$6$};
\draw (2,0.5) circle(2pt) coordinate(7) node[below]{$7$};
\draw (4,0.5) circle(2pt) coordinate(8) node[below]{$8$};
\draw (6,0.5) circle(2pt) coordinate(9) node[below]{$9$};
\draw (1,1.5) circle(2pt) coordinate(3) node[above]{$3$};
\draw (3,1.5) circle(2pt) coordinate(4) node[above]{$4$};
\draw (5,1.5) circle(2pt) coordinate(5) node[above]{$5$};
\draw (2,2.5) circle(2pt) coordinate(1) node[above]{$1$};
\draw (4,2.5) circle(2pt) coordinate(2) node[above]{$2$};
\qarrow{1}{2}
\qarrow{3}{4}
\qarrow{4}{5}
\qarrow{6}{7}
\qarrow{7}{8}
\qarrow{8}{9}
\qdarrow{9}{5}
\qdarrow{5}{2}
\qdarrow{1}{3}
\qdarrow{3}{6}
\qarrow{5}{8}
\qarrow{8}{4}
\qarrow{4}{7}
\qarrow{7}{3}
\qarrow{2}{4}
\qarrow{4}{1}
\draw (3,-0.3) node{$J_{123121}$};
\end{scope}
\begin{scope}[>=latex,xshift=200pt]
\draw (2,0.5) circle(2pt) coordinate(6) node[below]{$6$};%{$v_1^1$};
\draw (3,1.5) circle(2pt) coordinate(7) node[above]{$4$};%{$v_2^1$}; 
\draw (4,2.5) circle(2pt) coordinate(8) node[above]{$8$};%{$v_3^1$}; 
\draw (4,0.5) circle(2pt) coordinate(9) node[below]{$9$};%{$v_4^1$};
\draw (1,1.5) circle(2pt) coordinate(3) node[above]{$3$};%{$v_1^2$};
\draw (2,2.5) circle(2pt) coordinate(4) node[above]{$7$};%{$v_2^2$};
\draw (5,1.5) circle(2pt) coordinate(5) node[above]{$5$};%{$v_3^2$};
\draw (0,2.5) circle(2pt) coordinate(1) node[above]{$1$};%{$v_1^3$};
\draw (6,2.5) circle(2pt) coordinate(2) node[above]{$2$};%{$v_2^3$};
\qarrow{1}{4}
\qarrow{4}{8}
\qarrow{8}{2}
\qarrow{3}{7}
\qarrow{7}{5}
\qarrow{6}{9}
\qdarrow{2}{5}
\qdarrow{5}{9}
\qdarrow{6}{3}
\qdarrow{3}{1}
\qarrow{9}{7}
\qarrow{7}{6}
\qarrow{5}{8}
\qarrow{8}{7}
\qarrow{7}{4}
\qarrow{4}{3}
\draw (3,-0.3) node{$J_{321323}$};
\end{scope}
\end{tikzpicture}
}
\] 
Figure \ref{fig:tetrahedron-eq} shows the correspondence between the quivers and the wiring diagrams. One sees that the transformation \eqref{quiver-d:A2} is embedded therein, where a pair of dashed arrows $\dashrightarrow$ with 
the same (resp. opposite) direction between two vertices 
is regarded as an ordinary arrow $\longrightarrow$ (resp. no arrow).
For the quiver $J_{123121}$ we set $I = \{1,2,\ldots,9 \}$.
% and $I_0 = \{1,2,3,5,6,9\}$. 

\begin{lem}(cf. \cite[Proposition 3.7]{SY22})\label{lem:R-trop}
Let $(J_{123121},y)$ be a tropical $y$-seed for $J_{123121}$.
It holds that 
\begin{align}
%  \label{eq:R-J-id}
%  &\sigma_{4,7}\mu_8 \mu_7 \mu_4 \mu_8 (J_{123121}) =  \sigma_{4,8} \mu_7 \mu_8% \mu_4 \mu_7(J_{123121}) = J_{321323},
%\\ 
  \label{eq:R-trop-id}
  \sigma_{4,7}\mu_8 \mu_7 \mu_4 \mu_8 (J_{123121},y) 
  = \sigma_{4,8} \mu_7 \mu_8 \mu_4 \mu_7 (J_{123121},y),
\end{align}
where $\sigma_{4,7}, \sigma_{4,8} \in \mathfrak{S}_9$ act on the vertex set of the quiver. 
%and $y'$ is a tuple of $y$-variables.
The tropical sign-sequences of the two mutation sequences $\mu_8 \mu_7 \mu_4 \mu_8$ and $\mu_7 \mu_8 \mu_4 \mu_7$ 
are all positive, i.e., $(+,+,+,+)$.
We also have $\sigma_{4,7}\mu_8 \mu_7 \mu_4 \mu_8 (J_{123121}) =  \sigma_{4,8} \mu_7 \mu_8 \mu_4 \mu_7(J_{123121}) = J_{321323}$.
\end{lem}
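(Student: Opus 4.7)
The plan is to verify (\ref{eq:R-trop-id}) by direct computation at the level of tropical $y$-variables, using Figure~\ref{fig:tetrahedron-eq} as a combinatorial guide. The key structural observation is that every mutation in either sequence takes place at a vertex whose local neighborhood in the current quiver is isomorphic to the $A_2$-type sub-quiver $J_{121}$ from (\ref{quiver:A2}), with the pivot vertex playing the role of vertex $4$ in $J_{121}$. Consequently, each individual mutation is locally an instance of the single-step transformation of Section~\ref{subsec:qWeyl-R} (cf.\ (\ref{eq:R-decomp})). Since the $A_2$ mutation $\mu_4$ applied to the initial cluster has tropical sign $+$, the same holds for each step here, yielding the sign sequence $(+,+,+,+)$ on both sides.

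Concretely, I would first read off the exchange matrix $B(J_{123121})$ from the quiver and initialize $y_i = u_i$ for $i\in\{1,\dots,9\}$. Then I would apply the tropical mutation rule (\ref{eq:trop-mutation}) step by step along $\mu_8 \circ \mu_7 \circ \mu_4 \circ \mu_8$, updating the exchange matrix via (\ref{eq:q-mutation}) at each stage and recording the tropical sign of the pivot variable. The analogous computation for $\mu_7 \circ \mu_8 \circ \mu_4 \circ \mu_7$ is performed in parallel. Finally I would apply $\sigma_{4,7}$ to the first output and $\sigma_{4,8}$ to the second, and match the resulting $(B,y)$ data entry by entry. A comparison with the bottom-right diagram of Figure~\ref{fig:tetrahedron-eq} identifies the common terminal quiver as $J_{321323}$.

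The main obstacle is the sheer volume of bookkeeping: nine tropical variables and an evolving $9\times 9$ exchange matrix must be propagated through four mutations, twice. Two observations mitigate this. First, vertices in $\{1,2,3,5,6,9\}$ are never mutated, so for those vertices only the entries $b_{ij}$ with $j\in\{4,7,8\}$ change non-trivially, and the corresponding $y_i$'s pick up explicit monomial factors governed by the arrows incident to the current pivot. Second, since each of the four steps is locally an $A_2$ calculation of the form (\ref{eq:R-decomp}), one can write down the outcome one step at a time by reading off the relevant local sub-quiver, rather than redoing the full mutation formula from scratch. With these simplifications in place, the two sides of (\ref{eq:R-trop-id}) can be matched explicitly, the positivity of all four pivot variables is immediate, and the terminal quiver is identified with $J_{321323}$, completing the verification.
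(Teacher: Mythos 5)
Your proposal follows essentially the same route as the paper: a direct step-by-step computation of the tropical $y$-variables along both mutation sequences, reading off the tropical signs from the pivot variables and matching the final seeds entry by entry (the paper records exactly this computation, with the pivots underlined, and identifies the terminal quiver from Figure~\ref{fig:tetrahedron-eq}). One caution: the a priori inference that every step has positive sign merely because its local neighborhood looks like the $A_2$ case is not valid on its own — the sign of a later pivot is governed by its accumulated $c$-vector (e.g.\ the pivot $y_7y_8$ at the third step of the left-hand sequence), not by the local quiver shape — but your stated plan of recording the sign of each pivot during the explicit computation supplies exactly what is needed.
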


\begin{proof}
The tropical $y$-variables are transformed as follows.
\begin{align*}
\mathrm{LHS: ~}&(y_1,y_2,y_3,y_4,y_5,y_6,y_7,\underline{y_8},y_9) 
%\displaybreak[0] 
\\ 
&\stackrel{\mu_8}{\mapsto}
(y_1,y_2,y_3,\underline{y_4},y_5 y_8,y_6,y_7 y_8,y_8^{-1},y_9)
%\displaybreak[0] 
\\
&\stackrel{\mu_4}{\mapsto}
(y_1,y_2 y_4,y_3 y_4,y_4^{-1},y_5 y_8,y_6,\underline{y_7 y_8},y_8^{-1},y_9)
%\displaybreak[0] 
\\
&\stackrel{\mu_7}{\mapsto}
(y_1,y_2 y_4,y_3 y_4,y_4^{-1},y_5 y_8,y_6 y_7 y_8,(y_7 y_8)^{-1},\underline{y_7},y_9)
\displaybreak[0] 
\\
&\stackrel{\mu_8}{\mapsto}
(y_1,y_2 y_4 y_7,y_3 y_4,y_4^{-1},y_5 y_8,y_6 y_7 y_8,y_8^{-1},y_7^{-1}, y_9)
\displaybreak[0] 
\\
&\stackrel{\sigma_{4,7}}{\mapsto}
(y_1,y_2 y_4 y_7,y_3 y_4,y_8^{-1},y_5 y_8,y_6 y_7 y_8,y_4^{-1},y_7^{-1},y_9)
%\displaybreak[0]
\\[1mm]
\mathrm{RHS: ~}&(y_1,y_2,y_3,y_4,y_5,y_6,\underline{y_7},y_8,y_9) 
\displaybreak[0] 
\\
&\stackrel{\mu_7}{\mapsto}
(y_1,y_2,y_3,\underline{y_4 y_7},y_5,y_6 y_7,y_7^{-1},y_8,y_9)
\displaybreak[0] \\
&\stackrel{\mu_4}{\mapsto}(y_1 ,y_2 y_4 y_7,y_3,(y_4 y_7)^{-1},y_5,y_6 y_7,y_4,\underline{y_8},y_9)
\displaybreak[0] \\
&\stackrel{\mu_8}{\mapsto}(y_1,y_2 y_4 y_7,y_3,(y_4 y_7)^{-1},y_5 y_8,y_6 y_7 y_8,\underline{y_4},y_8^{-1},y_9)
\displaybreak[0] \\
&\stackrel{\mu_7}{\mapsto}(y_1,y_2 y_4 y_7,y_3 y_4,y_7^{-1},y_5 y_8,y_6 y_7 y_8,y_4^{-1},y_8^{-1},y_9)
\displaybreak[0] \\
&\stackrel{\sigma_{4,8}}{\mapsto}(y_1,y_2 y_4 y_7,y_3 y_4,y_8^{-1},y_5 y_8,y_6 y_7 y_8,y_4^{-1},y_7^{-1},y_9),
\end{align*}
which proves \eqref{eq:R-trop-id}. The tropical sign-sequences of these mutation sequences are obtained from the underlined $y$-variables.
One sees that the final quiver is $J_{321323}$ from Figure \ref{fig:tetrahedron-eq}.
\end{proof}

\begin{remark}
The second claim of the above lemma follows from the fact that 
the two mutation sequences are known as the {\em maximal green sequences} \cite{Ke11} for a directed closed path $4 \to 7 \to 8 \to 4$. 
\end{remark}

Due to Theorem \ref{thm:period} and Theorem \ref{thm:id-mono-qdilog} we obtain the following proposition as a corollary of Lemma \ref{lem:R-trop}.
\begin{prop}
Let $(J_{123121},Y)$ be a quantum $y$-seed for $J_{123121}$. It holds that
\begin{align}\label{eq:R-q-id}
  \sigma_{4,7}\mu_8 \mu_7 \mu_4 \mu_8 (J_{123121},Y) 
  = \sigma_{4,8} \mu_7 \mu_8 \mu_4 \mu_7 (J_{123121},Y).
\end{align}
In particular, we have identities
\begin{align}
\label{R-mono-id}
\tau_{8,+} \tau_{4,+} \tau_{7,+} \tau_{8,+} \sigma_{4,7}
= \tau_{7,+} \tau_{4,+} \tau_{8,+} \tau_{7,+} \sigma_{4,8}
\end{align}
as a morphism from $\mathcal{T}(J_{321323})$ to $\mathcal{T}(J_{123121})$,
and  
\begin{align}
\label{R-qdilog-id}
&\Psi_q(\rY^{e_8}) \Psi_q(\rY^{e_4}) \Psi_q(\rY^{e_7+e_8}) \Psi_q(\rY^{e_7})
= \Psi_q(\rY^{e_7}) \Psi_q(\rY^{e_4+e_7}) \Psi_q(\rY^{e_8}) \Psi_q(\rY^{e_4})
\end{align}
in $\hat{\mathbb{A}}^{\! \times}(J_{123121})$.
\end{prop}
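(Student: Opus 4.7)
The strategy is to deduce all three identities from Lemma~\ref{lem:R-trop} by invoking the synchronicity theorems of Section~2.

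For \eqref{eq:R-q-id}, introduce the composite mutation sequence
$$\nu := (\sigma_{4,8}\mu_7\mu_8\mu_4\mu_7)^{-1}\circ(\sigma_{4,7}\mu_8\mu_7\mu_4\mu_8),$$
which (using that mutations and transpositions are involutive) can be rewritten as a genuine mutation sequence on $J_{123121}$ of eight mutations followed by the 3-cycle $(4\,7\,8)$. By Lemma~\ref{lem:R-trop}, $\nu$ fixes the tropical $y$-seed $(J_{123121}, y)$. Theorem~\ref{thm:period} then lifts this tropical period to the quantum level, i.e.\ $\nu(J_{123121}, Y) = (J_{123121}, Y)$, which is precisely the content of \eqref{eq:R-q-id}.

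For \eqref{R-mono-id} and \eqref{R-qdilog-id}, I apply Theorem~\ref{thm:id-mono-qdilog} to the period $\nu$. The tropical sign-sequences of both halves $\mu_8\mu_7\mu_4\mu_8$ and $\mu_7\mu_8\mu_4\mu_7$ are $(+,+,+,+)$ by Lemma~\ref{lem:R-trop}, and the underlined tropical $y$-variables in the proof of that lemma directly yield the $c$-vectors $(e_8, e_4, e_7+e_8, e_7)$ along the first half and $(e_7, e_4+e_7, e_8, e_4)$ along the second. Substituting these into \eqref{eq:tau-id} and \eqref{eq:dilog-id} and then isolating the contributions of the two halves of $\nu$ produces \eqref{R-mono-id} and \eqref{R-qdilog-id}. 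In particular, the $\Psi_q$ product on the LHS of \eqref{R-qdilog-id} is $\Psi_q(\rY^{e_8})\Psi_q(\rY^{e_4})\Psi_q(\rY^{e_7+e_8})\Psi_q(\rY^{e_7})$ and on the RHS is $\Psi_q(\rY^{e_7})\Psi_q(\rY^{e_4+e_7})\Psi_q(\rY^{e_8})\Psi_q(\rY^{e_4})$, matching the stated identity.

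The main obstacle is the bookkeeping involved in splitting the period $\nu$ into its two halves: one must carefully track how the permutations $\sigma_{4,7},\sigma_{4,8}$ commute through the mutations via $\sigma\,\mu_k=\mu_{\sigma(k)}\,\sigma$, and verify that factors originating from the inverted half end up on the opposite side of \eqref{R-mono-id} and \eqref{R-qdilog-id}. Fortunately, since both forward sign-sequences are entirely positive, no $\Psi_{q}^{-1}$ factors or negative $\tau$-branches intervene, so the separation of the two halves is clean and the identities emerge without sign subtleties once the $c$-vectors are in hand.
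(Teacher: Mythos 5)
Your proposal is correct and follows essentially the same route as the paper, which derives the proposition as a corollary of Lemma \ref{lem:R-trop} via Theorem \ref{thm:period} (tropical-to-quantum synchronicity) and Theorem \ref{thm:id-mono-qdilog}, with the $c$-vectors $(e_8,e_4,e_7+e_8,e_7)$ and $(e_7,e_4+e_7,e_8,e_4)$ read off from the underlined tropical variables exactly as you do. The only loose phrase is the claim that no $\Psi_q^{-1}$ factors intervene: applying Theorem \ref{thm:id-mono-qdilog} to the composite loop $\nu$ does produce inverse dilogarithms with negative tropical signs along the reversed second half, but these are precisely the factors that, upon rearranging the resulting ``product $=1$'' identity, become the positive product on the right-hand side of \eqref{R-qdilog-id}, so the conclusion stands.
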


Let $S := \{1,2,\ldots,6 \}$ be the set of crossings in the wiring diagrams in Figure \ref{fig:tetrahedron-eq}.
Extending the setting in \S \ref{subsec:qWeyl-R}, 
we assign a pair $(p_i,u_i)$ satisfying (\ref{pu}) to the crossing $i \in S$ of the wiring diagrams.
Let $\mathcal{W}(A_3)$ be the algebra over $\C$ generated by the $q$-commuting $q$-Weyl pairs 
$e^{\pm p_i}, e^{\pm u_i}~(i \in S)$.
Let $N(A_3)$ be a group generated by 
\begin{align}\label{N(A3)gen}
e^{\pm \frac{1}{\hbar}p_i u_j}, ~ 
e^{\pm \frac{a}{\hbar}u_i},~ 
(a \in \C, ~i,j \in S; i \neq j), ~b \in \C^\times,
\end{align}
where the multiplication is defined in terms of (generalized) BCH formula. 
It is well defined thanks to the grading by $\hbar^{-1}$ in \eqref{N(A3)gen}.
The symmetric group $\mathfrak{S}_6$ generated by permutations $\rho_{ij} ~(i,j \in S)$ acts on the indices of $q$-Weyl pairs. This acts on $N(A_3)$ by adjoint action, and a semidirect product $N(A_3) \rtimes \mathfrak{S}_6$ acts on $\mathcal{W}(A_3)$ by adjoint action. 
We use the complex parameters $\kappa_i~(i \in S)$ and the notation \eqref{para}.

Corresponding to the transformation $R_{ijk}$ of the wiring diagrams, the morphism $\pi_{123}$ \eqref{eq:R-pi} is naturally extended to the isomorphism $\pi_{ijk}$ of $\mathcal{W}(A_3)$ given by
\begin{align}\label{eq:R-pi-general}
\pi_{ijk}:
\begin{cases}
p_i \mapsto p_i+ \lambda_{jk},
\quad
p_j  \mapsto p_k+p_i,
\quad
p_k \mapsto p_j-p_i-\lambda_{jk}, 
\\
u_i  \mapsto u_i+u_j-u_k,
\quad 
u_j \mapsto u_k,
\quad 
u_k  \mapsto u_j,
\end{cases}
\end{align} 
in the sense of exponentials. We also define 
\begin{align}\label{eq:R-Pop-general}
P_{ijk}: = \rho_{jk}\, e^{\frac{1}{\hbar}p_i(u_k-u_j)} 
e^{\frac{\lambda_{jk}}{\hbar}(u_k-u_i)}
\in N(A_3) \rtimes \mathfrak{S}_6
\end{align} 
from \eqref{R-Pop}. 
The embedding $\phi$ \eqref{eq:phi_kappa} is also naturally extended to that from 
$\mathcal{Y}(J_{123121})$ to Frac$\mathcal{W}(A_3)$ by the following rule, where red vertices are crossings in the wiring diagram. 
\begin{align}\label{fig:A_3-phi}
\begin{tikzpicture}
\begin{scope}[>=latex,xshift=0pt]
\draw (2,1) circle(2pt) coordinate(A) node[above]{$a$};
\draw (4,1) circle(2pt) coordinate(B) node[above]{$b$};
\draw (1,0) circle(2pt) coordinate(C) node[below]{$c$};
\draw (3,0) circle(2pt) coordinate(D) node[below]{$d$};
\draw (5,0) circle(2pt) coordinate(E) node[below]{$e$};
\draw (2,-1) circle(2pt) coordinate(F) node[below]{$f$};
\draw (4,-1) circle(2pt) coordinate(G) node[below]{$g$};
\qarrow{A}{B}
\qarrow{C}{D}
\qarrow{D}{E}
\qarrow{F}{G}
\qarrow{B}{D}
\qarrow{D}{A}
\qarrow{G}{D}
\qarrow{D}{F}
\draw (0.3,-2) node[right]{$\phi: Y_d \mapsto \kappa_j \kappa_k^{-1} e^{p_j+u_j+p_k-u_k-p_i-p_l}$};
{\color{red}
\fill (3,1) circle(2pt) node[above]{$i$};
\fill (2,0) circle(2pt) node[above]{$j$};
\fill (4,0) circle(2pt) node[above]{$k$};
\fill (3,-1) circle(2pt) node[below]{$l$};
}
\end{scope}
\end{tikzpicture}
\end{align}
Explicitly, from the top left diagram in Figure \ref{fig:tetrahedron-eq} we have 
\begin{align}\label{eq:phi-A3}
\phi: 
\begin{cases}
Y_1 \mapsto \kappa_3^{-1} e^{p_3-u_3-p_2}, ~~ 
Y_2 \mapsto \kappa_3 e^{p_3+u_3-p_5}, ~~
Y_3 \mapsto \kappa_2^{-1} e^{p_2-u_2-p_1}, 
\\
Y_4 \mapsto \kappa_2 \kappa_5^{-1} e^{p_2+u_2+p_5-u_5-p_3-p_4}, ~~ 
Y_5 \mapsto \kappa_5 e^{p_5+u_5-p_6}, ~~
Y_6 \mapsto \kappa_1^{-1} e^{p_1-u_1},
\\ 
Y_7 \mapsto \kappa_1 \kappa_4^{-1} e^{p_1+u_1+p_4-u_4-p_2}, ~~
Y_8 \mapsto \kappa_4 \kappa_6^{-1}e^{p_4+u_4+p_6-u_6-p_5}, ~~
Y_9 \mapsto \kappa_6 e^{p_6+u_6}.
\end{cases}
\end{align}
These formulas can be seen as arising informally from the application of the rule in \eqref{fig:A_3-phi} 
when some surrounding vertices are absent or when certain arrows are dashed.

Let $\mathcal{W}_{\mathbb{A}}(J_{123121}) \subset \mathcal{W}(A_3)$ be the image of $\mathbb{A}(J_{123121})$ by $\phi$, and $\hat{\mathcal{W}}_{\mathbb{A}}(J_{123121})$ be the completion of $\mathcal{W}_{\mathbb{A}}(J_{123121})$ with respect to the ideal generated by $\phi(Y_i) ~(i \in I)$. 
The map $\phi$ induces the map from $\hat{\mathbb{A}}(J_{123121})$ to $\hat{\mathcal{W}}_{\mathbb{A}}(J_{123121})$. We also denote this induced map as $\phi$.
Let $\hat{\mathcal{W}}_{\mathbb{A}}^\times(J_{123121})$ be the image of $\hat{\mathbb{A}}^{\! \times}(J_{123121}) \subset \hat{\mathbb{A}}(J_{123121})$ by $\phi$. Recall the definitions in the last paragraph of \S \ref{subsec:mutation}. 
In the remainder of this section, for simplicity we abbreviate the dependence on $J_{123121}$.
The relations among the noncommuting algebras (including fields and groups) are summarized as follows:
\begin{align}\label{A-W-relation}
\xymatrix{
\mathcal{Y} \ar[d]^{\phi} & \supset & \mathbb{A} \ar[d]^{\phi|_{\mathbb{A}}} & \subset & \hat{\mathbb{A}}  \ar[d]^{\phi} & \supset & \hat{\mathbb{A}}^{\! \times} \ar[d]^{\phi|_{\hat{\mathbb{A}}^{\! \times}}}
\\
\mathrm{Frac}\mathcal{W}(A_3) & \supset & \mathcal{W}_{\mathbb{A}} & \subset & \hat{\mathcal{W}}_{\mathbb{A}} & \supset & \hat{\mathcal{W}}_{\mathbb{A}}^\times
}
\end{align}

Let $G = G(J_{123121})$ be a subgroup of $N(A_3) \rtimes \mathfrak{S}_6$ defined by
\begin{align}\label{eq:G-A3}
G(J_{123121}) = \{ w \in N(A_3) \rtimes \mathfrak{S}_6; ~ \mathrm{Ad}(w)(\phi(\mathcal{Y})) \in \phi(\mathcal{Y})\}.
\end{align}
This group acts on $\hat{\mathcal{W}}_{\mathbb{A}}$ by adjoint action,
and this action is restricted to that on $\mathcal{W}_{\mathbb{A}}$
and on $\hat{\mathcal{W}}_{\mathbb{A}}^\times$.

\begin{example}
For $Y_1 \in \mathbb{A}$, one has $\Psi_q(Y_1) \in \hat{\mathbb{A}}^{\! \times}$ and $\phi(\Psi_q(Y_1)) = \Psi_q(\phi(Y_1)) = \Psi_q(\kappa_3^{-1} \,e^{p_3-u_3-p_2}) \in \hat{\mathcal{W}}_\mathbb{A}^{\times}$.
By the definition, $P_{ijk}$ \eqref{eq:R-Pop-general} belongs to $G$ when $(i,j,k)$ 
is $(4,5,6)$, $(2,3,6)$, $(1,3,5)$ or $(1,2,4)$ appearing in the tetrahedron relation \eqref{eq:tetra-R-eq}.
On the other hand, $e^{\frac{1}{\hbar}p_1 u_2} \in N(A_3)$ does not belong to $G$, since Ad$(e^{\frac{1}{\hbar}p_1 u_2})(\phi(Y_1)) = e^{p_1} \phi(Y_1)$ does not belong to $\phi(\mathcal{Y})$ because 
there is no $\alpha \in \Z^I$ such that $\phi(\rY^\alpha) = a \,e^{p_3-u_3-p_2+p_1}$ for any $a \in \C^\times$.    
\end{example}

The following lemma is easily proved by direct calculation.
    
\begin{lem}\label{lem:pi-tetra}
The morphisms $\pi_{ijk}$ \eqref{eq:R-pi-general} on $\mathcal{W}(A_3)$ satisfy the tetrahedron equation:
\begin{equation}\label{pite}
\pi_{456} \pi_{236} \pi_{135} \pi_{124} = \pi_{124} \pi_{135} \pi_{236} \pi_{456}.
\end{equation}
\end{lem}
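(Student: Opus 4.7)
The plan is to verify the identity by checking equality on generators, exploiting the fact that each $\pi_{ijk}$ is a $\C$-algebra automorphism of $\mathcal{W}(A_3)$, so it suffices to confirm that both sides of \eqref{pite} send $p_a$ (resp.\ $u_a$) to the same element for every $a \in S=\{1,\dots,6\}$. A convenient simplification is that the $p$-substitutions and $u$-substitutions in \eqref{eq:R-pi-general} decouple: each $\pi_{ijk}$ acts on $u_a$'s only among themselves (with no $\lambda$ or $p$ entering), and on $p_a$'s only among themselves plus a constant shift by $\lambda_{jk}$. Hence the $u$-part and $p$-part of the tetrahedron identity can be checked independently.

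For the $u$-part, I would track the images of the six generators $u_1,\dots,u_6$ under the two compositions. Each $\pi_{ijk}$ swaps $u_j \leftrightarrow u_k$ and sends $u_i \mapsto u_i+u_j-u_k$, fixing the other three $u$-indices. Thus the action is an integer-valued linear endomorphism of $\bigoplus_{a\in S}\C u_a$, so the claim on the $u$-side reduces to equality of two $6\times 6$ matrices. The computation is short, for instance tracing $u_1$ through $\pi_{456}\pi_{236}\pi_{135}\pi_{124}$ yields $u_1+u_2+u_3-u_4-u_5$, and one gets the same value through $\pi_{124}\pi_{135}\pi_{236}\pi_{456}$; similarly for the other five generators.

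For the $p$-part, the maps are affine in the $p_a$'s, with the linear part handled exactly as above and the constant (affine) part being a $\Z$-linear combination of the $\lambda_{jk}$'s. I would track the linear coefficients and the $\lambda$-shifts separately. For example, $p_1 \mapsto p_1 + \lambda_{24} + \lambda_{35}$ through the LHS, and the same through the RHS since $\pi_{456}$ and $\pi_{236}$ do not touch $p_1$ in either order. The only generators where the two sides mix $p$-variables nontrivially are those with index in $\{2,3,4,5\}$, and for those one must check that the cross terms and the $\lambda$-shifts, accumulated along the two mutation orderings corresponding to the two sides of Figure \ref{fig:tetrahedron-eq}, coincide.

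The computation is purely mechanical; the only real obstacle is bookkeeping. To minimize chance of error, I would either carry out the twelve generator checks in a fixed order (say $p_1,p_2,\dots,p_6,u_1,\dots,u_6$) or invoke the abstract reason behind the coincidence: the monomial substitutions $\pi_{ijk}$ arise as the $p,u$-lift of the cluster monomial part $\tau_{k,+}$ via the embedding $\phi$ of Proposition \ref{prop:R-ad}, so the $p$- and $u$-coefficients are forced by the combinatorics of the quiver transformations displayed in Figure \ref{fig:tetrahedron-eq}, and the $\lambda$-shifts are forced by requiring compatibility of $\phi$ with $\pi_{ijk}$ at the level of the spectral parameters. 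Either route confirms \eqref{pite}.
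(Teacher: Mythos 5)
Your proposal is correct and matches the paper's approach exactly: the paper states only that the lemma is ``easily proved by direct calculation,'' and your generator-by-generator check, with the observation that the $u$-part is a linear problem and the $p$-part an affine one over the $\lambda_{jk}$'s, is precisely that calculation made explicit. One small correction to your bookkeeping: $p_6$ is also transformed nontrivially (it is the $k$-index of both $\pi_{236}$ and $\pi_{456}$, and the two sides reach $p_3-p_2-\lambda_{36}$ by different routes, the right-hand side only after the cancellation $\lambda_{35}+\lambda_{56}=\lambda_{36}$), so index $6$ cannot be excluded from the list of generators requiring a genuine check.
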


In view of \eqref{eq:R-decomp}, \eqref{eq:phi_kappa} and (\ref{R-Pop}), we
introduce $\mathcal{R}_{ijk} = \mathcal{R}(\lambda_i,\lambda_j,\lambda_k)_{ijk}$ as
\begin{align}\label{eq:R-ad}
\mathcal{R}_{ijk} = \Psi_q( \e^{p_i+u_i+p_k-u_k-p_j+\lambda_{ik}}) 
\rho_{jk}\, e^{\frac{1}{\hbar}p_i(u_k-u_j)} 
e^{\frac{\lambda_{jk}}{\hbar}(u_k-u_i)},
\end{align}
where the latter part is $P_{ijk}$ in \eqref{eq:R-Pop-general}.
Note that $\mathcal{R}_{ijk}$ depends on the parameters $\lambda_i$ only through 
their differences $\lambda_{ik}$ and $\lambda_{jk}$.  
Now  we state the main result of this section.
\begin{thm}\label{thm:R-tetra}
The operator $P_{ijk}$  \eqref{eq:R-Pop-general} 
satisfies the tetrahedron equation in $N(A_3) \rtimes \mathfrak{S}_6$:
\begin{align}\label{eq:R-monoad-id}
P_{456} P_{236} P_{135} P_{124} = P_{124} P_{135} P_{236} P_{456}.
\end{align}
The operator $\mathcal{R}_{ijk}$ 
\eqref{eq:R-ad} satisfies the tetrahedron equation in $\hat{\mathcal{W}}_{\mathbb{A}}^\times \rtimes G$:
\begin{align}\label{eq:R-full-id}
\begin{split}
&\mathcal{R}(\lambda_4,\lambda_5,\lambda_6)_{456} \mathcal{R}(\lambda_2,\lambda_3,\lambda_6)_{236} \mathcal{R}(\lambda_1,\lambda_3,\lambda_5)_{135} \mathcal{R}(\lambda_1,\lambda_2,\lambda_4)_{124} 
\\
& \quad = \mathcal{R}(\lambda_1,\lambda_2,\lambda_4)_{124} \mathcal{R}(\lambda_1,\lambda_3,\lambda_5)_{135} \mathcal{R}(\lambda_2,\lambda_3,\lambda_6)_{236} \mathcal{R}(\lambda_4,\lambda_5,\lambda_6)_{456}.
\end{split}
\end{align}
\end{thm}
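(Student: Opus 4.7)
The plan is to prove the two identities in stages: first \eqref{eq:R-monoad-id} for the monomial part $P_{ijk}$, and then \eqref{eq:R-full-id} for $\mathcal{R}_{ijk}$ by combining it with a cluster-algebraic quantum dilogarithm identity.

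For \eqref{eq:R-monoad-id}, my strategy is to combine Lemma \ref{lem:pi-tetra} with the fact that the adjoint action $\mathrm{Ad}\colon N(A_3)\rtimes\mathfrak{S}_6\to\mathrm{Aut}(\mathcal{W}(A_3))$ has kernel equal to the central $\mathbb{C}^\times$. Proposition \ref{prop:R-ad}(ii) gives $\mathrm{Ad}(P_{ijk})=\pi_{ijk}$, so \eqref{pite} forces the two sides of \eqref{eq:R-monoad-id} to coincide up to an overall scalar. To pin down that the scalar is $1$, I would write $P_{ijk}=\rho_{jk}\cdot N_{ijk}$ with $N_{ijk}\in N(A_3)$, commute all the transpositions past the $N$'s via $\rho\cdot N\cdot\rho^{-1}=\rho(N)$, and check that the accumulated permutations on both sides coincide (both equal $(2\,4)(3\,6)$). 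The remaining identity in $N(A_3)$ is then a finite BCH calculation.

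For \eqref{eq:R-full-id}, I would write $\mathcal{R}_{ijk}=\Psi_q(\Omega_{ijk})\cdot P_{ijk}$ with $\Omega_{ijk}=e^{p_i+u_i+p_k-u_k-p_j+\lambda_{ik}}$, and push each $P$-factor to the right on both sides. Using $P\cdot\Psi_q(X)=\Psi_q(\mathrm{Ad}(P)(X))\cdot P$, the LHS becomes
\[
\Psi_q(\Omega_{456})\,\Psi_q(\pi_{456}(\Omega_{236}))\,\Psi_q((\pi_{456}\pi_{236})(\Omega_{135}))\,\Psi_q((\pi_{456}\pi_{236}\pi_{135})(\Omega_{124}))\cdot P_{456}P_{236}P_{135}P_{124},
\]
with an analogous expression for the RHS. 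By the already-proven \eqref{eq:R-monoad-id}, the trailing $P$-products cancel, reducing the theorem to an identity among two four-fold products of quantum dilogarithms. I would then match each $\Psi_q$-argument with the $\phi$-image of the quantum $y$-variable mutated at the corresponding step of the mutation sequences of Lemma \ref{lem:R-trop}: iterating the commutative diagram of Proposition \ref{prop:R-ad}(i) along each sequence converts the Ad-images of the $\Omega$'s into $\phi$-images of the iterated $\tau$-monomials $\rY^{\alpha_r}=\tau_{i_1,+}\cdots\tau_{i_{r-1},+}(\rY_{i_r}(r))$ appearing in \eqref{eq:ad-tau-decomp}. Since Lemma \ref{lem:R-trop} yields the all-positive tropical sign sequence, each $c$-vector $\alpha_r$ lies in $\Z_{\geq 0}^{I}$, so the remaining identity is precisely the $\phi$-image of \eqref{R-qdilog-id} in $\hat{\mathbb{A}}^{\!\times}(J_{123121})$, transported to $\hat{\mathcal{W}}_{\mathbb{A}}^\times$.

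The main obstacle I anticipate is the combinatorial bookkeeping in this last identification: for each $r$, checking that $\Psi_q((\pi_{S_1}\cdots\pi_{S_{r-1}})(\Omega_{S_r}))$ coincides with $\Psi_q(\phi(\rY^{\alpha_r}))$, where $S_r$ is the triple of wires being transformed by the $r$-th $R$-move. Proposition \ref{prop:R-ad}(i) certifies the one-step case; iterating it along the mutation sequence requires tracking how each intermediate quiver $B(t)$ embeds into $J_{123121}$ so that the local $R$-move at step $r$ selects the central vertex and its four neighbours in the pattern of \eqref{fig:A_3-phi}. Once this dictionary is established, the monomial and dilogarithm parts of the argument fit together and yield \eqref{eq:R-full-id}.
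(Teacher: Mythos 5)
Your proposal is correct and follows essentially the same route as the paper: the monomial identity \eqref{eq:R-monoad-id} is established by a BCH/braid-relation computation (the paper does this directly rather than via injectivity of $\mathrm{Ad}$ modulo the center, but you fall back on the same calculation to fix the scalar), and \eqref{eq:R-full-id} is obtained by combining it with the cluster dilogarithm identity \eqref{R-qdilog-id}, with the conjugated $\Psi_q$-arguments matched to $\phi(Y_8)$, $\phi(Y_4)$, $\phi(q^{-1}Y_7Y_8)$, $\phi(Y_7)$ exactly as in the paper's steps (a)--(c). The only difference is cosmetic: you push the $P$-factors to the right and cancel, whereas the paper multiplies \eqref{R-qdilog-id} and \eqref{eq:R-monoad-id} side by side and pulls the $P$-factors back to the left.
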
 

\begin{proof}
We prove \eqref{eq:R-monoad-id} by direct calculations.
For simplicity we demonstrate the calculation when $\lambda_j = 0$ for all $j$.
By using the action of $\mathfrak{S}_3$ on $N(A_3)$, each side of \eqref{eq:R-monoad-id} becomes 
\begin{align}
\begin{split}\label{eq:P-right}
P_{124} P_{135} P_{236} P_{456}
&= \rho_{24}\, e^{\frac{1}{\hbar}p_1(u_4-u_2)}
\rho_{35}\, e^{\frac{1}{\hbar}p_1(u_5-u_3)}
\rho_{36}\, e^{\frac{1}{\hbar}p_2(u_6-u_3)}
\rho_{56}\, e^{\frac{1}{\hbar}p_4(u_6-u_5)}
\\ 
&= e^{\frac{1}{\hbar}p_1(u_2-u_4)} e^{\frac{1}{\hbar}p_1(u_3-u_5)}
 e^{\frac{1}{\hbar}p_4(u_5-u_6)} e^{\frac{1}{\hbar}p_2(u_3-u_5)}
\rho_{24} \rho_{35} \rho_{36} \rho_{56},
\end{split}
\\
\begin{split}\label{eq:P-left}
P_{456} P_{236} P_{135} P_{124} 
&= \rho_{56}\, e^{\frac{1}{\hbar}p_4(u_6-u_5)}
\rho_{36}\, e^{\frac{1}{\hbar}p_2(u_6-u_3)}
\rho_{35}\, e^{\frac{1}{\hbar}p_1(u_5-u_3)}
\rho_{24}\, e^{\frac{1}{\hbar}p_1(u_4-u_2)}
\\ 
&= e^{\frac{1}{\hbar}p_4(u_5-u_6)} e^{\frac{1}{\hbar}p_2(u_3-u_5)}
e^{\frac{1}{\hbar}p_1(u_5-u_6)} e^{\frac{1}{\hbar}p_1(u_2-u_4)}
\rho_{56} \rho_{36} \rho_{35} \rho_{24}.
\end{split}
\end{align}
The exponential part of \eqref{eq:P-right} is calculated by applying the BCH formula as
\begin{align*}
&e^{\frac{1}{\hbar}p_1(u_2-u_4)} e^{\frac{1}{\hbar}p_1(u_3-u_5)}
 e^{\frac{1}{\hbar}p_4(u_5-u_6)} e^{\frac{1}{\hbar}p_2(u_3-u_5)}
\\ \quad
&= e^{\frac{1}{\hbar}p_1(u_3-u_5)} \underline{e^{\frac{1}{\hbar}p_1(u_2-u_4)} 
 e^{\frac{1}{\hbar}p_4(u_5-u_6)}} e^{\frac{1}{\hbar}p_2(u_3-u_5)}
\\ \quad
&\stackrel{\mathrm{BCH}}{=} e^{\frac{1}{\hbar}p_1(u_3-u_5)} e^{\frac{1}{\hbar}p_1(u_5-u_6)}
e^{\frac{1}{\hbar}p_4(u_5-u_6)} \underline{e^{\frac{1}{\hbar}p_1(u_2-u_4)}
e^{\frac{1}{\hbar}p_2(u_3-u_5)}}
\\ \quad
&\stackrel{\mathrm{BCH}}{=}
\underline{e^{\frac{1}{\hbar}p_1(u_3-u_5)} e^{\frac{1}{\hbar}p_1(u_5-u_6)}}
e^{\frac{1}{\hbar}p_4(u_5-u_6)}
\underline{e^{\frac{1}{\hbar}p_1(u_5-u_3)}}e^{\frac{1}{\hbar}p_2(u_3-u_5)}
e^{\frac{1}{\hbar}p_1(u_2-u_4)}
\\ \quad
&= e^{\frac{1}{\hbar}p_4(u_5-u_6)} e^{\frac{1}{\hbar}p_2(u_3-u_5)}
e^{\frac{1}{\hbar}p_1(u_5-u_6)} e^{\frac{1}{\hbar}p_1(u_2-u_4)},
\end{align*}
which coincides with the exponential part of \eqref{eq:P-left}.
It is easily checked that 
$$
  \rho_{24} \rho_{35} \rho_{36} \rho_{56}
  = \rho_{56} \rho_{36} \rho_{35} \rho_{24},
$$
by using the braid relation $\rho_{ij} \rho_{ik} \rho_{jk} = \rho_{jk} \rho_{ik} \rho_{ij}$,
hence \eqref{eq:R-monoad-id} is proved. 

Multiplying \eqref{R-qdilog-id} and \eqref{eq:R-monoad-id} side by side, we obtain an identity in 
$\hat{\mathcal{W}}_{\mathbb{A}}^\times \rtimes G$,
\begin{align}\label{eq:Psi-P-before}
\begin{split}
&\Psi_q(Y_8) \Psi_q(Y_4) \Psi_q(q^{-1} \,Y_7 Y_8) \Psi_q(Y_7) 
P_{456} P_{236} P_{135} P_{124}
\\
&\quad =
\Psi_q(Y_7) \Psi_q(q^{-1} \,Y_4 Y_7) \Psi_q(Y_8) \Psi_q(Y_4)
P_{124} P_{135} P_{236} P_{456},
\end{split}
\end{align}
where all $Y_i$ are understood to be $\phi(Y_i)$.
The equality \eqref{eq:R-full-id} is derived from this by moving $P_{ijk}$ to the left appropriately, using the action of $G$ on $\hat{\mathcal{W}}_{\mathbb{A}}^{\times}$.
Let us demonstrate this computation for the LHS of \eqref{eq:Psi-P-before}. 
We rewrite the LHS as follows.
\begin{align*}
\Psi_q(Y_8) P_{456} &\cdot \underline{P_{456}^{-1} \Psi_q(Y_4)P_{456} P_{236} }_{(a)}
\\
&\cdot \underline{P_{236}^{-1} P_{456}^{-1} \Psi_q(q^{-1} \,Y_7 Y_8)P_{456} P_{236} P_{135}}_{(b)} \cdot \underline{P_{135}^{-1} P_{236}^{-1} P_{456}^{-1} \Psi_q(Y_7) 
P_{456} P_{236} P_{135} P_{124}}_{(c)},
\end{align*}
where the first component $\Psi_q(Y_8) P_{456}$ is $\mathcal{R}_{456}$ \eqref{eq:R-ad}.
By using the map $\pi_{ijk}^{-1} = \text{Ad}(P_{ijk}^{-1})$ \eqref{eq:R-pi}\footnote{We obtain $\pi_{ijk}^{-1}$ by replacing $\lambda_{jk}$ with $\lambda_{kj}$ in \eqref{eq:R-pi-general}.}, components (a)--(c) are computed as
\begin{align*}
&(a): P_{456}^{-1} \Psi_q(Y_4)P_{456} P_{236} = P_{456}^{-1} \Psi_q(\kappa_2 \kappa_5^{-1} e^{p_2+u_2+p_5-u_5-p_3-p_4})P_{456}P_{236} 
\displaybreak[0]
\\
& \qquad = \Psi_q(\kappa_2 \kappa_6^{-1}e^{p_2+u_2+p_6-u_6-p_3}) P_{236} = \mathcal{R}_{236}, 
\displaybreak[0]
\\
&(b): P_{236}^{-1} P_{456}^{-1} \Psi_q(q^{-1} \,Y_7 Y_8)P_{456} P_{236} P_{135}
\displaybreak[0]
\\
&\qquad = P_{236}^{-1} P_{456}^{-1} \Psi_q(q^{-1}\kappa_1 \kappa_6^{-1}e^{p_1+u_1+p_4-u_4-p_2} e^{p_4+u_4+p_6-u_6-p_5})P_{456} P_{236} P_{135}
\displaybreak[0]
\\
&\qquad = P_{236}^{-1} \Psi_q(\kappa_1 \kappa_5^{-1}e^{p_1+u_1+p_5-u_5-p_2-p_6}) P_{236}P_{135}
= \Psi_q(\kappa_1 \kappa_5^{-1}e^{p_1+u_1+p_5-u_5-p_3})P_{135} = \mathcal{R}_{135},
\displaybreak[0]
\\
&(c): P_{135}^{-1} P_{236}^{-1} P_{456}^{-1} \Psi_q(Y_7) 
P_{456} P_{236} P_{135} P_{124}
\displaybreak[0]
\\
&\qquad = P_{135}^{-1} P_{236}^{-1} P_{456}^{-1} \Psi_q(\kappa_1 \kappa_4^{-1}e^{p_1+u_1+p_4-u_4-p_2}) P_{456} P_{236} P_{135} P_{124}
\displaybreak[0]
\\
&\qquad = P_{135}^{-1} P_{236}^{-1} \Psi_q(\kappa_1 \kappa_4^{-1} \kappa_5 \kappa_6^{-1}e^{p_1+u_1+p_4-u_4-u_5+u_6-p_2})
P_{236} P_{135} P_{124} 
\displaybreak[0]
\\
&\qquad = \cdots = \Psi_q(\kappa_1 \kappa_4^{-1} e^{p_1+u_1+p_4-u_4-p_2}) P_{124}= \mathcal{R}_{124},
\end{align*}
hence the LHS of \eqref{eq:R-full-id} is obtained.
\end{proof}

Our solution \eqref{eq:R-ad} is different from the other single
dilogarithm solutions in \cite[(ii)]{MS97} and \cite{BV15}.

\begin{remark}\label{re:Rcd}
Consider the generalizations of $\pi_{ijk}$ (\ref{eq:R-pi-general}),
$P_{ijk}$ (\ref{eq:R-Pop-general}) and $\mathcal{R}_{ijk}$ (\ref{eq:R-ad}) 
including a parameter $\alpha \in \C$ as follows:
\begin{align}
\pi_{ijk}:&
\begin{cases}
p_i \mapsto p_i+ (1-\alpha) \lambda_{jk},
\quad
p_j  \mapsto p_k+p_i,
\quad
p_k \mapsto p_j-p_i-(1-\alpha)  \lambda_{jk}, 
\\
u_i  \mapsto u_i+u_j-u_k+\alpha \lambda_{jk},
\quad 
u_j \mapsto u_k -\alpha \lambda_{jk},
\quad 
u_k  \mapsto u_j +\alpha  \lambda_{jk},
\end{cases}
\label{pal}
\\
P_{ijk} &= \rho_{jk}\, e^{\frac{1}{\hbar}p_i(u_k-u_j)} 
e^{\frac{(1-\alpha)\lambda_{jk}}{\hbar}(u_k-u_i)}
e^{\frac{\alpha \lambda_{jk}}{\hbar}(p_i-p_j+p_k)},
\label{P_general}
\\
\mathcal{R}_{ijk} &= \Psi_q( \e^{p_i+u_i+p_k-u_k-p_j+\lambda_{ik}})P_{ijk}.
\end{align}
Then Proposition \ref{prop:R-ad}, Lemma \ref{lem:pi-tetra} 
and Theorem \ref{thm:R-tetra} remain valid\footnote{The list \eqref{N(A3)gen} 
should be supplemented with $e^{\pm \frac{a}{\hbar}p_i}$ in this generalization.
A similar caution also applies to Remark \ref{re:ezk}.}.
\end{remark}

\begin{remark}\label{re:sp}
If one employs $\alpha=1$ in the map $\pi_{ijk}$ in (\ref{pal}), 
then the parameters $\kappa_i= e^{\lambda_i}$'s originally introduced 
in (\ref{eq:phi_kappa}) can be removed from everywhere by shifting the canonical variables $u_i$ by $\lambda_i$.
Precisely, let $\mathcal{W}' := \mathcal{W}'(A_3)$ be the $q$-Weyl algebra generated by $e^{\pm p_i'}, e^{\pm u_i'} ~(i \in S)$, and define an isomorphism $\omega: \mathcal{W} := \mathcal{W}(A_3) \to \mathcal{W}'$ of $q$-Weyl algebras by $u_i \mapsto u_i' - \lambda_i$ and $p_i \mapsto p_i'$ in the sense of exponentials. 
Let $P_{ijk} = \rho_{jk}\, e^{\frac{1}{\hbar}p_i(u_k-u_j)} 
e^{\frac{\lambda_{jk}}{\hbar}(p_i-p_j+p_k)}$ be the one  in \eqref{P_general} with $\alpha=1$.
Then, the adjoint action of $P_{ijk}$ on $\mathcal{W}$ induces the adjoint action of 
the parameter-independent operator $P_{ijk}' := \rho_{jk}\, e^{\frac{1}{\hbar}p'_i(u'_k-u'_j)}$ 
on $\mathcal{W}'$ that is compatible with $\omega$, namely the commutativity $\mathrm{Ad}P_{ijk}' \circ \omega = \omega \circ \mathrm{Ad}P_{ijk}$ holds.  
We also remark that the adjoint action of $\rho_{jk} \in \mathfrak{S}_6$ on $\mathcal{W}$ induces the adjoint action of $\rho_{jk}' = \rho_{jk} e^{\frac{\lambda_{jk}}{\hbar}(p'_j-p'_k)}$ on $\mathcal{W}'$ compatible with $\omega$.

In this sense, our spectral parameters $\lambda_i$'s can be regarded as `survivors' thanks to 
the degree of freedom of $\alpha$ in $\pi_{ijk}$ (\ref{pal}) and the choice $\alpha=0 \neq 1$.
A similar remark applies to the spectral parameters entering 
$\mathcal{K}_{ijkl}(\lambda_i,\lambda_j,\lambda_k, \lambda_l)$  in Remark \ref{re:ezk}.
\end{remark}

%%%%%%%%%%%%%%%%%%%%%%%%%%%%%
\section{3D reflection equation}
%%%%%%%%%%%%%%%%%%%%%%%%%%%%%

Now we consider the transformation of the reduced expressions of $w_0 \in W(C_3)$,
from $123123123$ to $321321321$, in two ways:
$$
\begin{matrix}
\underline{12}3\underline{1}23123 & \to & 21\underline{2323}123 & \to & 21323\underline{212}3 & \to & 2\underline{1}3\underline{2}3\underline{1}213
& \to & 2321\underline{232}1\underline{3}
\\
\rotatebox{90}{=} & & & & & & & & \downarrow
\\
123\underline{121}323 & & & & & & & & \underline{232}1\underline{3}2321
\\
\downarrow & & & & & & & & \downarrow
\\
12321\underline{2323} & & & & & & & & 323\underline{212}321
\\
\downarrow & & & & & & & & \downarrow
\\
1\underline{232}1\underline{3}232 & & & & & & & & 323121321
\\
\downarrow & & & & & & & & \rotatebox{90}{=}
\\
1323\underline{212}32 & \to & \underline{1}3\underline{2}3\underline{1}2132 & \to & 321\underline{232}1\underline{3}2 & \to & 321323\underline{212} & \to & 321323121
\end{matrix}
$$ 
This corresponds to the transformation of the wiring diagrams shown 
in Figure \ref{fig:reflection-eq} and \ref{fig:reflection-eq2}.
It implies the 3D reflection relation for the wiring diagrams:
\begin{align}\label{eq:reflection-RK-eq}
R_{457} K_{4689} K_{2379} R_{258} R_{178} K_{1356}R_{124} 
= R_{124} K_{1356} R_{178} R_{258} K_{2379} K_{4689} R_{457},
\end{align}
where $R_{ijk}$ and $K_{ijkl}$ are respectively given by \eqref{quiver-d:A2} and \eqref{quiver-d:C2}.

This transformation is accompanied by cluster mutation sequences. 
We associate the FG quivers 
$J_{123123123}$ and $J_{321321321}$ to the initial and the final reduced words as follows:
\begin{align*}
%\label{quiver:m=4}
\scalebox{0.85}{
\begin{tikzpicture}
\begin{scope}[>=latex]
\path (2,2) node[circle]{2} coordinate(1) node[above=0.2em]{$1$};
\draw (2,2) circle[radius=0.15];
\path (4,2) node[circle]{2} coordinate(2) node[above=0.2em]{$2$};
\draw (4,2) circle[radius=0.15];
\path (6,2) node[circle]{2} coordinate(3) node[above=0.2em]{$3$};
\draw (6,2) circle[radius=0.15];
\path (8,2) node[circle]{2} coordinate(4) node[above=0.2em]{$4$};
\draw (8,2) circle[radius=0.15];
\draw (1,1) circle(2pt) coordinate(5) node[below]{$5$};
\draw (3,1) circle(2pt) coordinate(6) node[below]{$6$};
\draw (5,1) circle(2pt) coordinate(7) node[below]{$7$};
\draw (7,1) circle(2pt) coordinate(8) node[below]{$8$};
\draw (0,0) circle(2pt) coordinate(9) node[below]{$9$};
\draw (2,0) circle(2pt) coordinate(10) node[below]{$10$};
\draw (4,0) circle(2pt) coordinate(11) node[below]{$11$};
\draw (6,0) circle(2pt) coordinate(12) node[below]{$12$};
\qsarrow{1}{2}
\qsarrow{2}{3}
\qsarrow{3}{4}
\qarrow{5}{6}
\qarrow{6}{7}
\qarrow{7}{8}
\qarrow{9}{10}
\qarrow{10}{11}
\qarrow{11}{12}
\draw[->,shorten >=4pt,shorten <=2pt] (8) -- (3) [thick];
\draw[->,shorten >=2pt,shorten <=4pt] (3) -- (7) [thick];
\draw[->,shorten >=4pt,shorten <=2pt] (7) -- (2) [thick];
\draw[->,shorten >=2pt,shorten <=4pt] (2) -- (6) [thick];
\draw[->,shorten >=4pt,shorten <=2pt] (6) -- (1) [thick];
\qarrow{12}{7}
\qarrow{7}{11}
\qarrow{11}{6}
\qarrow{6}{10}
\qarrow{10}{5}
\draw[->,dashed,shorten >=2pt,shorten <=4pt] (4) -- (8) [thick];
\draw[->,dashed,shorten >=2pt,shorten <=4pt] (1) -- (5) [thick];
\qdarrow{5}{9}
\qdarrow{8}{12} 
%\coordinate (P1) at (6.6,0.5);
%\coordinate (P2) at (7.6,0.5);
%\draw[<->] (P1) -- (P2);
%\draw (7.1,0) node{$\mu_2 \mu_5 \mu_2$};
%{\color{red}
%\fill (3,1) circle(2pt) node[above]{$B$};
%\fill (5,1) circle(2pt) node[above]{$D$};
%\fill (2,0) circle(2pt) node[below]{$A$};
%\fill (4,0) circle(2pt) node[below]{$C$};
%}
\draw (4,-1) node{$J_{123123123}$};  
\end{scope}
\begin{scope}[>=latex,xshift=250pt]
\path (0,2) node[circle]{2} coordinate(1) node[above=0.2em]{$1$};
\draw (0,2) circle[radius=0.15];
\path (2,2) node[circle]{2} coordinate(2) node[above=0.2em]{$2$};
\draw (2,2) circle[radius=0.15];
\path (4,2) node[circle]{2} coordinate(3) node[above=0.2em]{$3$};
\draw (4,2) circle[radius=0.15];
\path (6,2) node[circle]{2} coordinate(4) node[above=0.2em]{$4$};
\draw (6,2) circle[radius=0.15];
\draw (1,1) circle(2pt) coordinate(5) node[below]{$5$};
\draw (3,1) circle(2pt) coordinate(6) node[below]{$6$};
\draw (5,1) circle(2pt) coordinate(7) node[below]{$7$};
\draw (7,1) circle(2pt) coordinate(8) node[below]{$8$};
\draw (2,0) circle(2pt) coordinate(9) node[below]{$9$};
\draw (4,0) circle(2pt) coordinate(10) node[below]{$10$};
\draw (6,0) circle(2pt) coordinate(11) node[below]{$11$};
\draw (8,0) circle(2pt) coordinate(12) node[below]{$12$};
\qsarrow{1}{2}
\qsarrow{2}{3}
\qsarrow{3}{4}
\qarrow{5}{6}
\qarrow{6}{7}
\qarrow{7}{8}
\qarrow{9}{10}
\qarrow{10}{11}
\qarrow{11}{12}
\draw[->,shorten >=2pt,shorten <=4pt] (4) -- (7) [thick];
\draw[->,shorten >=4pt,shorten <=2pt] (7) -- (3) [thick];
\draw[->,shorten >=2pt,shorten <=4pt] (3) -- (6) [thick];
\draw[->,shorten >=4pt,shorten <=2pt] (6) -- (2) [thick];
\draw[->,shorten >=2pt,shorten <=4pt] (2) -- (5) [thick];
\qarrow{8}{11}
\qarrow{11}{7}
\qarrow{7}{10}
\qarrow{10}{6}
\qarrow{6}{9}
\draw[->,dashed,shorten >=4pt,shorten <=2pt] (8) -- (4) [thick];
\draw[->,dashed,shorten >=4pt,shorten <=2pt] (5) -- (1) [thick];
\qdarrow{9}{5}
\qdarrow{12}{8} 
%{\color{red}
%\fill (3,1) circle(2pt) node[above]{$B$};
%\fill (5,1) circle(2pt) node[above]{$D$};
%\fill (2,0) circle(2pt) node[below]{$A$};
%\fill (4,0) circle(2pt) node[below]{$C$};
%}
\draw (4,-1) node{$J_{321321321}$};  
\end{scope}
\end{tikzpicture}
}
\end{align*}
For the quiver $J_{123123123}$ we set $I = \{1,2,\ldots,12 \}$.
% and $I_0 = \{1,4,5,8,9,12\}$. 
The correspondence between these quivers and the wiring diagrams 
is shown in Figures \ref{fig:reflection-eq} and \ref{fig:reflection-eq2}.
One sees that the transformations \eqref{quiver-d:A2} and \eqref{quiver-d:C2} are embedded therein, in the same manner as the tetrahedron transformation.

\begin{figure}[h]
\[
\scalebox{0.85}{
\begin{tikzpicture}
\begin{scope}[>=latex]
\path (2,2) node[circle]{2} coordinate(1) node[above=0.2em]{$1$};
\draw (2,2) circle[radius=0.15];
\path (4,2) node[circle]{2} coordinate(2) node[above=0.2em]{$2$};
\draw (4,2) circle[radius=0.15];
\path (6,2) node[circle]{2} coordinate(3) node[above=0.2em]{$3$};
\draw (6,2) circle[radius=0.15];
\path (8,2) node[circle]{2} coordinate(4) node[above=0.2em]{$4$};
\draw (8,2) circle[radius=0.15];
\draw (1,1) circle(2pt) coordinate(5) node[below]{$5$};
\draw (3,1) circle(2pt) coordinate(6) node[below]{$6$};
\draw (5,1) circle(2pt) coordinate(7) node[below]{$7$};
\draw (7,1) circle(2pt) coordinate(8) node[below]{$8$};
\draw (0,0) circle(2pt) coordinate(9) node[below]{$9$};
\draw (2,0) circle(2pt) coordinate(10) node[below]{$10$};
\draw (4,0) circle(2pt) coordinate(11) node[below]{$11$};
\draw (6,0) circle(2pt) coordinate(12) node[below]{$12$};
\qsarrow{1}{2}
\qsarrow{2}{3}
\qsarrow{3}{4}
\qarrow{5}{6}
\qarrow{6}{7}
\qarrow{7}{8}
\qarrow{9}{10}
\qarrow{10}{11}
\qarrow{11}{12}
\draw[->,shorten >=4pt,shorten <=2pt] (8) -- (3) [thick];
\draw[->,shorten >=2pt,shorten <=4pt] (3) -- (7) [thick];
\draw[->,shorten >=4pt,shorten <=2pt] (7) -- (2) [thick];
\draw[->,shorten >=2pt,shorten <=4pt] (2) -- (6) [thick];
\draw[->,shorten >=4pt,shorten <=2pt] (6) -- (1) [thick];
\qarrow{12}{7}
\qarrow{7}{11}
\qarrow{11}{6}
\qarrow{6}{10}
\qarrow{10}{5}
\draw[->,dashed,shorten >=2pt,shorten <=4pt] (4) -- (8) [thick];
\draw[->,dashed,shorten >=2pt,shorten <=4pt] (1) -- (5) [thick];
\qdarrow{5}{9}
\qdarrow{8}{12} 
{\color{red}
\fill (1,0) circle(2pt) coordinate(A) node[below]{$1$};
\fill (2,1) circle(2pt) coordinate(B) node[above]{$2$};
\fill (3,2) circle(2pt) coordinate(C) node[below]{$3$};
\fill (3,0) circle(2pt) coordinate(D) node[above]{$4$};
\fill (4,1) circle(2pt) coordinate(E) node[above]{$5$};
\fill (5,2) circle(2pt) coordinate(F) node[above]{$6$};
\fill (5,0) circle(2pt) coordinate(G) node[above]{$7$};
\fill (6,1) circle(2pt) coordinate(H) node[above]{$8$};
\fill (7,2) circle(2pt) coordinate(I) node[above]{$9$};
\draw [-] (0,1.5) to [out = 0, in = 135] (B);
\draw [-] (B) -- (D);
\draw [-] (D) to [out = -45, in = -135] (G); 
\draw [-] (G) -- (I);
\draw [->] (I) to [out = -45, in = 180] (8,1.5);
\draw [-] (0,0.5) to [out = 0, in = 135] (A); 
\draw [-] (A) to [out = -45, in = -135] (D);
\draw [-] (D) -- (F);
\draw [-] (F) -- (H);
\draw [->] (H) to [out = -45, in = 180] (8,0.5);
\draw [-] (0,-0.5) to [out = 0, in = -135] (A); 
\draw [-] (A) -- (C);
\draw [-] (C) -- (G);
\draw [->] (G) to [out = -45, in = 180] (8,-0.5);
}
\draw[->] (4,-0.8) -- (4,-1.3);
\draw (4,-1.1) circle(0pt) node[left]{$R_{124}$} node[right]{$\mu_{10}$};
%
%\draw (4,-1) node{$J_{123123123}$};  
\end{scope}
\begin{scope}[>=latex,yshift=-115pt]
\path (2.7,2) node[circle]{2} coordinate(1) node[above=0.2em]{$1$};
\draw (2.7,2) circle[radius=0.15];
\path (4.5,2) node[circle]{2} coordinate(2) node[above=0.2em]{$2$};
\draw (4.5,2) circle[radius=0.15];
\path (6.3,2) node[circle]{2} coordinate(3) node[above=0.2em]{$3$};
\draw (6.3,2) circle[radius=0.15];
\path (8.1,2) node[circle]{2} coordinate(4) node[above=0.2em]{$4$};
\draw (8.1,2) circle[radius=0.15];
\draw (0,1) circle(2pt) coordinate(5) node[below]{$5$};
\draw (3.6,1) circle(2pt) coordinate(6) node[below]{$6$};
\draw (5.4,1) circle(2pt) coordinate(7) node[below]{$7$};
\draw (7.2,1) circle(2pt) coordinate(8) node[below]{$8$};
\draw (0.9,0) circle(2pt) coordinate(9) node[below]{$9$};
\draw (1.8,1) circle(2pt) coordinate(10) node[below]{$10$};
\draw (3.6,0) circle(2pt) coordinate(11) node[below]{$11$};
\draw (6.3,0) circle(2pt) coordinate(12) node[below]{$12$};
\qsarrow{1}{2}
\qsarrow{2}{3}
\qsarrow{3}{4}
\qarrow{5}{10}
\qarrow{10}{6}
\qarrow{6}{7}
\qarrow{7}{8}
\qarrow{9}{11}
\qarrow{11}{12}
\draw[->,dashed,shorten >=2pt,shorten <=4pt] (4) -- (8) [thick];
\qarrowsb{8}{3}
\qarrowsa{3}{7}
\qarrowsb{7}{2}
\qarrowsa{2}{6}
\qarrowsb{6}{1}
\draw[->,dashed,shorten >=2pt,shorten <=4pt] (1) -- (5) [thick];
\qdarrow{8}{12}
\qarrow{12}{7}
\qarrow{7}{11}
\qarrow{11}{10}
\qarrow{10}{9}
\qdarrow{9}{5}
{\color{red}
\fill (2.7,1) circle(2pt) coordinate(A) node[below]{$1$};
\fill (1.8,0) circle(2pt) coordinate(B) node[above]{$2$};
\fill (3.6,2) circle(2pt) coordinate(C) node[below]{$3$};
\fill (0.9,1) circle(2pt) coordinate(D) node[above]{$4$};
\fill (4.5,1) circle(2pt) coordinate(E) node[above]{$5$};
\fill (5.4,2) circle(2pt) coordinate(F) node[above]{$6$};
\fill (5.4,0) circle(2pt) coordinate(G) node[above]{$7$};
\fill (6.3,1) circle(2pt) coordinate(H) node[above]{$8$};
\fill (7.2,2) circle(2pt) coordinate(I) node[above]{$9$};
\draw [-] (0,1.5) to [out = 0, in = 135] (D);
\draw [-] (D) -- (B);
\draw [-] (B) to [out = -45, in = -135] (G); 
\draw [-] (G) -- (I);
\draw [->] (I) to [out = -45, in = 180] (8.1,1.5);
\draw [-] (0,0.5) to [out = 0, in = -135] (D); 
\draw [-] (D) to [out = 45, in = 135] (A);
\draw [-] (A) to [out = -45, in = -135] (E);
\draw [-] (E) -- (F);
\draw [-] (F) -- (H);
\draw [->] (H) to [out = -45, in = 180] (8.1,0.5);
\draw [-] (0,-0.5) to [out = 0, in = -135] (B); 
\draw [-] (B) -- (C);
\draw [-] (C) -- (G);
\draw [->] (G) to [out = -45, in = 180] (8.1,-0.5);
}
\draw[->] (4,-0.9) -- (4,-1.4);
\draw (4,-1.2) circle(0pt) node[left]{$K_{1356}$} node[right]{$\mu_{2,6,2}$};
\end{scope}
\begin{scope}[>=latex,yshift=-230pt]
\path (0.9,2) node[circle]{2} coordinate(1) node[above=0.2em]{$1$};
\draw (0.9,2) circle[radius=0.15];
\path (2.7,2) node[circle]{2} coordinate(2) node[above=0.2em]{$2$};
\draw (2.7,2) circle[radius=0.15];
\path (5.4,2) node[circle]{2} coordinate(3) node[above=0.2em]{$3$};
\draw (5.4,2) circle[radius=0.15];
\path (8.1,2) node[circle]{2} coordinate(4) node[above=0.2em]{$4$};
\draw (8.1,2) circle[radius=0.15];
\draw (0,1) circle(2pt) coordinate(5) node[below]{$5$};
\draw (3.6,1) circle(2pt) coordinate(6) node[below]{$6$};
\draw (5.4,1) circle(2pt) coordinate(7) node[below]{$7$};
\draw (7.2,1) circle(2pt) coordinate(8) node[below]{$8$};
\draw (0.9,0) circle(2pt) coordinate(9) node[below]{$9$};
\draw (1.8,1) circle(2pt) coordinate(10) node[below]{$10$};
\draw (3.6,0) circle(2pt) coordinate(11) node[below]{$11$};
\draw (6.3,0) circle(2pt) coordinate(12) node[below]{$12$};
\qsarrow{1}{2}
\qsarrow{2}{3}
\qsarrow{3}{4}
\qarrow{5}{10}
\qarrow{10}{6}
\qarrow{6}{7}
\qarrow{7}{8}
\qarrow{9}{11}
\qarrow{11}{12}
\draw[->,dashed,shorten >=2pt,shorten <=4pt] (4) -- (8) [thick];
\qarrowsb{8}{3}
\qarrowsa{3}{6}
\qarrowsb{6}{2}
\qarrowsa{2}{10}
\qarrowsb{10}{1}
\draw[->,dashed,shorten >=2pt,shorten <=4pt] (1) -- (5) [thick];
\qdarrow{8}{12}
\qarrow{12}{7}
\qarrow{7}{11}
\qarrow{11}{10}
\qarrow{10}{9}
\qdarrow{9}{5}
{\color{red}
\fill (4.5,1) circle(2pt) coordinate(A) node[below]{$1$};
\fill (1.8,0) circle(2pt) coordinate(B) node[above]{$2$};
\fill (3.6,2) circle(2pt) coordinate(C) node[below]{$3$};
\fill (0.9,1) circle(2pt) coordinate(D) node[above]{$4$};
\fill (2.7,1) circle(2pt) coordinate(E) node[above]{$5$};
\fill (1.8,2) circle(2pt) coordinate(F) node[above]{$6$};
\fill (5.4,0) circle(2pt) coordinate(G) node[above]{$7$};
\fill (6.3,1) circle(2pt) coordinate(H) node[above]{$8$};
\fill (7.2,2) circle(2pt) coordinate(I) node[above]{$9$};
\draw [-] (0,1.5) to [out = 0, in = 135] (D);
\draw [-] (D) -- (B);
\draw [-] (B) to [out = -45, in = -135] (G); 
\draw [-] (G) -- (I);
\draw [->] (I) to [out = -45, in = 180] (8.1,1.5);
\draw [-] (0,0.5) to [out = 0, in = -135] (D); 
\draw [-] (D) to [out = 45, in = -135] (F);
\draw [-] (F) to [out = -45, in = 135] (E);
\draw [-] (E) to [out = -45, in = -135] (A);
\draw [-] (A) to [out = 45, in = 135] (H);
\draw [->] (H) to [out = -45, in = 180] (8.1,0.5);
\draw [-] (0,-0.5) to [out = 0, in = -135] (B); 
\draw [-] (B) -- (C);
\draw [-] (C) -- (G);
\draw [->] (G) to [out = -45, in = 180] (8.1,-0.5);
}
\draw[->] (4,-0.9) -- (4,-1.4);
\draw (4,-1.2) circle(0pt) node[left]{$R_{178}$} node[right]{$\mu_7$};
\end{scope}
\begin{scope}[>=latex,yshift=-345pt]
\path (1,2) node[circle]{2} coordinate(1) node[above=0.2em]{$1$};
\draw (1,2) circle[radius=0.15];
\path (3,2) node[circle]{2} coordinate(2) node[above=0.2em]{$2$};
\draw (3,2) circle[radius=0.15];
\path (5,2) node[circle]{2} coordinate(3) node[above=0.2em]{$3$};
\draw (5,2) circle[radius=0.15];
\path (7,2) node[circle]{2} coordinate(4) node[above=0.2em]{$4$};
\draw (7,2) circle[radius=0.15];
\draw (0,1) circle(2pt) coordinate(5) node[below]{$5$};
\draw (4,1) circle(2pt) coordinate(6) node[below]{$6$};
\draw (5,0) circle(2pt) coordinate(7) node[below]{$7$};
\draw (6,1) circle(2pt) coordinate(8) node[below]{$8$};
\draw (1,0) circle(2pt) coordinate(9) node[below]{$9$};
\draw (2,1) circle(2pt) coordinate(10) node[below]{$10$};
\draw (3,0) circle(2pt) coordinate(11) node[below]{$11$};
\draw (7,0) circle(2pt) coordinate(12) node[below]{$12$};
\qsarrow{1}{2}
\qsarrow{2}{3}
\qsarrow{3}{4}
\qarrow{5}{10}
\qarrow{10}{6}
\qarrow{6}{8}
\qarrow{9}{11}
\qarrow{11}{7}
\qarrow{7}{12}
\draw[->,dashed,shorten >=2pt,shorten <=4pt] (4) -- (8) [thick];
\qarrowsb{8}{3}
\qarrowsa{3}{6}
\qarrowsb{6}{2}
\qarrowsa{2}{10}
\qarrowsb{10}{1}
\draw[->,dashed,shorten >=2pt,shorten <=4pt] (1) -- (5) [thick];
\qdarrow{12}{8}
\qarrow{8}{7}
\qarrow{7}{6}
\qarrow{6}{11}
\qarrow{11}{10}
\qarrow{10}{9}
\qdarrow{9}{5}
{\color{red}
\fill (6,0) circle(2pt) coordinate(A) node[below]{$1$};
\fill (2,0) circle(2pt) coordinate(B) node[above]{$2$};
\fill (4,2) circle(2pt) coordinate(C) node[below]{$3$};
\fill (1,1) circle(2pt) coordinate(D) node[above]{$4$};
\fill (3,1) circle(2pt) coordinate(E) node[above]{$5$};
\fill (2,2) circle(2pt) coordinate(F) node[above]{$6$};
\fill (5,1) circle(2pt) coordinate(G) node[above]{$7$};
\fill (4,0) circle(2pt) coordinate(H) node[above]{$8$};
\fill (6,2) circle(2pt) coordinate(I) node[above]{$9$};
\draw [-] (0,1.5) to [out = 0, in = 135] (D);
\draw [-] (D) -- (B);
\draw [-] (B) to [out = -45, in = -135] (H); 
\draw [-] (H) -- (I);
\draw [->] (I) to [out = -45, in = 180] (7,1.5);
\draw [-] (0,0.5) to [out = 0, in = -135] (D); 
\draw [-] (D) to [out = 45, in = -135] (F);
\draw [-] (F) -- (H);
\draw [-] (H) to [out = -45, in = -135] (A);
\draw [->] (A) to [out = 45, in = 180] (7,0.5);
\draw [-] (0,-0.5) to [out = 0, in = -135] (B); 
\draw [-] (B) -- (C);
\draw [-] (C) -- (A);
\draw [->] (A) to [out = -45, in = 180] (7,-0.5);
}
\draw[->] (7.7,1) -- (8.7,1);
\draw (8.2,1) circle(0pt) node[below]{$R_{258}$} node[above]{$\mu_{11}$};
\end{scope}
\begin{scope}[>=latex,xshift=270,yshift=-345pt]
\path (0.9,2) node[circle]{2} coordinate(1) node[above=0.2em]{$1$};
\draw (0.9,2) circle[radius=0.15];
\path (3.6,2) node[circle]{2} coordinate(2) node[above=0.2em]{$2$};
\draw (3.6,2) circle[radius=0.15];
\path (6.3,2) node[circle]{2} coordinate(3) node[above=0.2em]{$3$};
\draw (6.3,2) circle[radius=0.15];
\path (8.1,2) node[circle]{2} coordinate(4) node[above=0.2em]{$4$};
\draw (8.1,2) circle[radius=0.15];
\draw (0,1) circle(2pt) coordinate(5) node[below]{$5$};
\draw (5.4,1) circle(2pt) coordinate(6) node[below]{$6$};
\draw (5.4,0) circle(2pt) coordinate(7) node[below]{$7$};
\draw (7.2,1) circle(2pt) coordinate(8) node[below]{$8$};
\draw (1.8,0) circle(2pt) coordinate(9) node[below]{$9$};
\draw (1.8,1) circle(2pt) coordinate(10) node[below]{$10$};
\draw (3.6,1) circle(2pt) coordinate(11) node[below]{$11$};
\draw (8.1,0) circle(2pt) coordinate(12) node[below]{$12$};
\qsarrow{1}{2}
\qsarrow{2}{3}
\qsarrow{3}{4}
\qarrow{5}{10}
\qarrow{10}{11}
\qarrow{11}{6}
\qarrow{6}{8}
\qarrow{9}{7}
\qarrow{7}{12}
\draw[->,dashed,shorten >=2pt,shorten <=4pt] (4) -- (8) [thick];
\qarrowsb{8}{3}
\qarrowsa{3}{6}
\qarrowsb{6}{2}
\qarrowsa{2}{10}
\qarrowsb{10}{1}
\draw[->,dashed,shorten >=2pt,shorten <=4pt] (1) -- (5) [thick];
\qdarrow{12}{8}
\qarrow{8}{7}
\qarrow{7}{11}
\qarrow{11}{9}
\qdarrow{9}{5}
{\color{red}
\fill (7.2,0) circle(2pt) coordinate(A) node[below]{$1$};
\fill (4.5,1) circle(2pt) coordinate(B) node[above]{$2$};
\fill (5.4,2) circle(2pt) coordinate(C) node[below]{$3$};
\fill (0.8,1) circle(2pt) coordinate(D) node[above]{$4$};
\fill (3.6,0) circle(2pt) coordinate(E) node[above]{$5$};
\fill (1.8,2) circle(2pt) coordinate(F) node[above]{$6$};
\fill (6.3,1) circle(2pt) coordinate(G) node[above]{$7$};
\fill (2.7,1) circle(2pt) coordinate(H) node[above]{$8$};
\fill (7.2,2) circle(2pt) coordinate(I) node[above]{$9$};
\draw [-] (0,1.5) to [out = 0, in = 135] (D);
\draw [-] (D) to [out = -45, in = -135] (H);
\draw [-] (H) to [out = 45, in = 135] (B); 
\draw [-] (B) to [out = -45, in = -135] (G);
\draw [-] (G) -- (I);
\draw [->] (I) to [out = -45, in = 180] (8.1,1.5);
\draw [-] (0,0.5) to [out = 0, in = -135] (D); 
\draw [-] (D) to [out = 45, in = -135] (F);
\draw [-] (F) -- (E);
\draw [-] (E) to [out = -45, in = -135] (A);
\draw [->] (A) to [out = 45, in = 180] (8.1,0.5);
\draw [-] (0,-0.5) to [out = 0, in = -135] (E); 
\draw [-] (E) -- (C);
\draw [-] (C) -- (A);
\draw [->] (A) to [out = -45, in = 180] (8.1,-0.5);
}
\draw[->] (4,2.7) -- (4,3.2);
\draw (4,2.9) circle(0pt) node[left]{$K_{2379}$} node[right]{$\mu_{3,6,3}$};
\end{scope}
\begin{scope}[>=latex,xshift=270,yshift=-230pt]
\path (0.9,2) node[circle]{2} coordinate(1) node[above=0.2em]{$1$};
\draw (0.9,2) circle[radius=0.15];
\path (2.7,2) node[circle]{2} coordinate(2) node[above=0.2em]{$2$};
\draw (2.7,2) circle[radius=0.15];
\path (4.5,2) node[circle]{2} coordinate(3) node[above=0.2em]{$3$};
\draw (4.5,2) circle[radius=0.15];
\path (6.3,2) node[circle]{2} coordinate(4) node[above=0.2em]{$4$};
\draw (6.3,2) circle[radius=0.15];
\draw (0,1) circle(2pt) coordinate(5) node[below]{$5$};
\draw (5.4,1) circle(2pt) coordinate(6) node[below]{$6$};
\draw (5.4,0) circle(2pt) coordinate(7) node[below]{$7$};
\draw (7.2,1) circle(2pt) coordinate(8) node[below]{$8$};
\draw (1.6,0) circle(2pt) coordinate(9) node[below]{$9$};
\draw (1.6,1) circle(2pt) coordinate(10) node[below]{$10$};
\draw (3.6,1) circle(2pt) coordinate(11) node[below]{$11$};
\draw (8.1,0) circle(2pt) coordinate(12) node[below]{$12$};
\qsarrow{1}{2}
\qsarrow{2}{3}
\qsarrow{3}{4}
\qarrow{5}{10}
\qarrow{10}{11}
\qarrow{11}{6}
\qarrow{6}{8}
\qarrow{9}{7}
\qarrow{7}{12}
\draw[->,dashed,shorten >=4pt,shorten <=2pt] (8) -- (4) [thick];
\qarrowsa{4}{6}
\qarrowsb{6}{3}
\qarrowsa{3}{11}
\qarrowsb{11}{2}
\qarrowsa{2}{10}
\qarrowsb{10}{1}
\draw[->,dashed,shorten >=2pt,shorten <=4pt] (1) -- (5) [thick];
\qdarrow{12}{8}
\qarrow{8}{7}
\qarrow{7}{11}
\qarrow{11}{9}
\qdarrow{9}{5}
{\color{red}
\fill (7.2,0) circle(2pt) coordinate(A) node[below]{$1$};
\fill (6.3,1) circle(2pt) coordinate(B) node[above]{$2$};
\fill (5.4,2) circle(2pt) coordinate(C) node[below]{$3$};
\fill (0.9,1) circle(2pt) coordinate(D) node[above]{$4$};
\fill (3.6,0) circle(2pt) coordinate(E) node[above]{$5$};
\fill (1.8,2) circle(2pt) coordinate(F) node[above]{$6$};
\fill (4.5,1) circle(2pt) coordinate(G) node[above]{$7$};
\fill (2.7,1) circle(2pt) coordinate(H) node[above]{$8$};
\fill (3.6,2) circle(2pt) coordinate(I) node[above]{$9$};
\draw [-] (0,1.5) to [out = 0, in = 135] (D);
\draw [-] (D) to [out = -45, in = -135] (H);
\draw [-] (H) -- (I);
\draw [-] (I) -- (G);
\draw [-] (G) to [out = -45, in = -135] (B);
\draw [->] (B) to [out = 45, in = 180] (8.1,1.5);
\draw [-] (0,0.5) to [out = 0, in = -135] (D); 
\draw [-] (D) to [out = 45, in = -135] (F);
\draw [-] (F) -- (E);
\draw [-] (E) to [out = -45, in = -135] (A);
\draw [->] (A) to [out = 45, in = 180] (8.1,0.5);
\draw [-] (0,-0.5) to [out = 0, in = -135] (E); 
\draw [-] (E) -- (C);
\draw [-] (C) -- (A);
\draw [->] (A) to [out = -45, in = 180] (8.1,-0.5);
}
\draw[->] (4,2.7) -- (4,3.2);
\draw (4,2.9) circle(0pt) node[left]{$K_{4689}$} node[right]{$\mu_{2,10,2}$};
\end{scope}
\begin{scope}[>=latex,xshift=270,yshift=-115pt]
\path (0,2) node[circle]{2} coordinate(1) node[above=0.2em]{$1$};
\draw (0,2) circle[radius=0.15];
\path (1.6,2) node[circle]{2} coordinate(2) node[above=0.2em]{$2$};
\draw (1.6,2) circle[radius=0.15];
\path (4,2) node[circle]{2} coordinate(3) node[above=0.2em]{$3$};
\draw (4,2) circle[radius=0.15];
\path (7.2,2) node[circle]{2} coordinate(4) node[above=0.2em]{$4$};
\draw (7.2,2) circle[radius=0.15];
\draw (0.8,1) circle(2pt) coordinate(5) node[below]{$5$};
\draw (5.6,1) circle(2pt) coordinate(6) node[below]{$6$};
\draw (5.6,0) circle(2pt) coordinate(7) node[below]{$7$};
\draw (7.2,1) circle(2pt) coordinate(8) node[below]{$8$};
\draw (1.6,0) circle(2pt) coordinate(9) node[below]{$9$};
\draw (2.4,1) circle(2pt) coordinate(10) node[below]{$10$};
\draw (4,1) circle(2pt) coordinate(11) node[below]{$11$};
\draw (8,0) circle(2pt) coordinate(12) node[below]{$12$};
\qsarrow{1}{2}
\qsarrow{2}{3}
\qsarrow{3}{4}
\qarrow{5}{10}
\qarrow{10}{11}
\qarrow{11}{6}
\qarrow{6}{8}
\qarrow{9}{7}
\qarrow{7}{12}
\draw[->,dashed,shorten >=4pt,shorten <=2pt] (8) -- (4) [thick];
\qarrowsa{4}{6}
\qarrowsb{6}{3}
\qarrowsa{3}{10}
\qarrowsb{10}{2}
\qarrowsa{2}{5}
\draw[->,dashed,shorten >=4pt,shorten <=2pt] (5) -- (1) [thick];
\qdarrow{12}{8}
\qarrow{8}{7}
\qarrow{7}{11}
\qarrow{11}{9}
\qdarrow{9}{5}
{\color{red}
\fill (7.2,0) circle(2pt) coordinate(A) node[below]{$1$};
\fill (6.4,1) circle(2pt) coordinate(B) node[above]{$2$};
\fill (5.6,2) circle(2pt) coordinate(C) node[below]{$3$};
\fill (3.2,1) circle(2pt) coordinate(D) node[above]{$4$};
\fill (4,0) circle(2pt) coordinate(E) node[above]{$5$};
\fill (2.4,2) circle(2pt) coordinate(F) node[above]{$6$};
\fill (4.8,1) circle(2pt) coordinate(G) node[above]{$7$};
\fill (1.6,1) circle(2pt) coordinate(H) node[above]{$8$};
\fill (0.8,2) circle(2pt) coordinate(I) node[above]{$9$};
\draw [-] (0,1.5) to [out = 0, in = -135] (I);
\draw [-] (I) -- (H);
\draw [-] (H) to [out = -45, in = -135] (D);
\draw [-] (D) to [out = 45, in = 135] (G);
\draw [-] (G) to [out = -45, in = -135] (B);
\draw [->] (B) to [out = 45, in = 180] (8,1.5);
\draw [-] (0,0.5) to [out = 0, in = -135] (H);
\draw [-] (H) -- (F); 
\draw [-] (F) to [out = 45, in = -135] (F);
\draw [-] (F) -- (E);
\draw [-] (E) to [out = -45, in = -135] (A);
\draw [->] (A) to [out = 45, in = 180] (8,0.5);
\draw [-] (0,-0.5) to [out = 0, in = -135] (E); 
\draw [-] (E) -- (C);
\draw [-] (C) -- (A);
\draw [->] (A) to [out = -45, in = 180] (8,-0.5);
}
\draw[->] (4,2.7) -- (4,3.2);
\draw (4,2.9) circle(0pt) node[left]{$R_{457}$} node[right]{$\mu_{11}$};
\end{scope}
\begin{scope}[>=latex,xshift=270pt]
\path (0,2) node[circle]{2} coordinate(1) node[above=0.2em]{$1$};
\draw (0,2) circle[radius=0.15];
\path (2,2) node[circle]{2} coordinate(2) node[above=0.2em]{$2$};
\draw (2,2) circle[radius=0.15];
\path (4,2) node[circle]{2} coordinate(3) node[above=0.2em]{$3$};
\draw (4,2) circle[radius=0.15];
\path (6,2) node[circle]{2} coordinate(4) node[above=0.2em]{$4$};
\draw (6,2) circle[radius=0.15];
\draw (1,1) circle(2pt) coordinate(5) node[below]{$5$};
\draw (3,1) circle(2pt) coordinate(6) node[below]{$6$};
\draw (5,1) circle(2pt) coordinate(7) node[below]{$7$};
\draw (7,1) circle(2pt) coordinate(8) node[below]{$8$};
\draw (2,0) circle(2pt) coordinate(9) node[below]{$9$};
\draw (4,0) circle(2pt) coordinate(10) node[below]{$10$};
\draw (6,0) circle(2pt) coordinate(11) node[below]{$11$};
\draw (8,0) circle(2pt) coordinate(12) node[below]{$12$};
\qsarrow{1}{2}
\qsarrow{2}{3}
\qsarrow{3}{4}
\qarrow{5}{6}
\qarrow{6}{7}
\qarrow{7}{8}
\qarrow{9}{10}
\qarrow{10}{11}
\qarrow{11}{12}
\draw[->,shorten >=2pt,shorten <=4pt] (4) -- (7) [thick];
\draw[->,shorten >=4pt,shorten <=2pt] (7) -- (3) [thick];
\draw[->,shorten >=2pt,shorten <=4pt] (3) -- (6) [thick];
\draw[->,shorten >=4pt,shorten <=2pt] (6) -- (2) [thick];
\draw[->,shorten >=2pt,shorten <=4pt] (2) -- (5) [thick];
\qarrow{8}{11}
\qarrow{11}{7}
\qarrow{7}{10}
\qarrow{10}{6}
\qarrow{6}{9}
\draw[->,dashed,shorten >=4pt,shorten <=2pt] (8) -- (4) [thick];
\draw[->,dashed,shorten >=4pt,shorten <=2pt] (5) -- (1) [thick];
\qdarrow{9}{5}
\qdarrow{12}{8} 
{\color{red}
\fill (7,0) circle(2pt) coordinate(A) node[below]{$1$};
\fill (6,1) circle(2pt) coordinate(B) node[above]{$2$};
\fill (5,2) circle(2pt) coordinate(C) node[below]{$3$};
\fill (5,0) circle(2pt) coordinate(D) node[above]{$4$};
\fill (4,1) circle(2pt) coordinate(E) node[above]{$5$};
\fill (3,2) circle(2pt) coordinate(F) node[above]{$6$};
\fill (3,0) circle(2pt) coordinate(G) node[above]{$7$};
\fill (2,1) circle(2pt) coordinate(H) node[above]{$8$};
\fill (1,2) circle(2pt) coordinate(I) node[above]{$9$};
\draw [-] (0,1.5) to [out = 0, in = -135] (I);
\draw [-] (I) -- (G);
\draw [-] (G) to [out = -45, in = -135] (D);
\draw [-] (D) -- (B);
\draw [->] (B) to [out = 45, in = 180] (8,1.5);
\draw [-] (0,0.5) to [out = 0, in = -135] (H);
\draw [-] (H) -- (F); 
\draw [-] (F) to [out = 45, in = -135] (F);
\draw [-] (F) -- (D);
\draw [-] (D) to [out = -45, in = -135] (A);
\draw [->] (A) to [out = 45, in = 180] (8,0.5);
\draw [-] (0,-0.5) to [out = 0, in = -135] (G); 
\draw [-] (G) -- (C);
\draw [-] (C) -- (A);
\draw [->] (A) to [out = -45, in = 180] (8,-0.5);
}
\end{scope}
\end{tikzpicture}
}
\]
\caption{3D reflection transformation (LHS of \eqref{eq:reflection-RK-eq}).}
\label{fig:reflection-eq}
\end{figure}
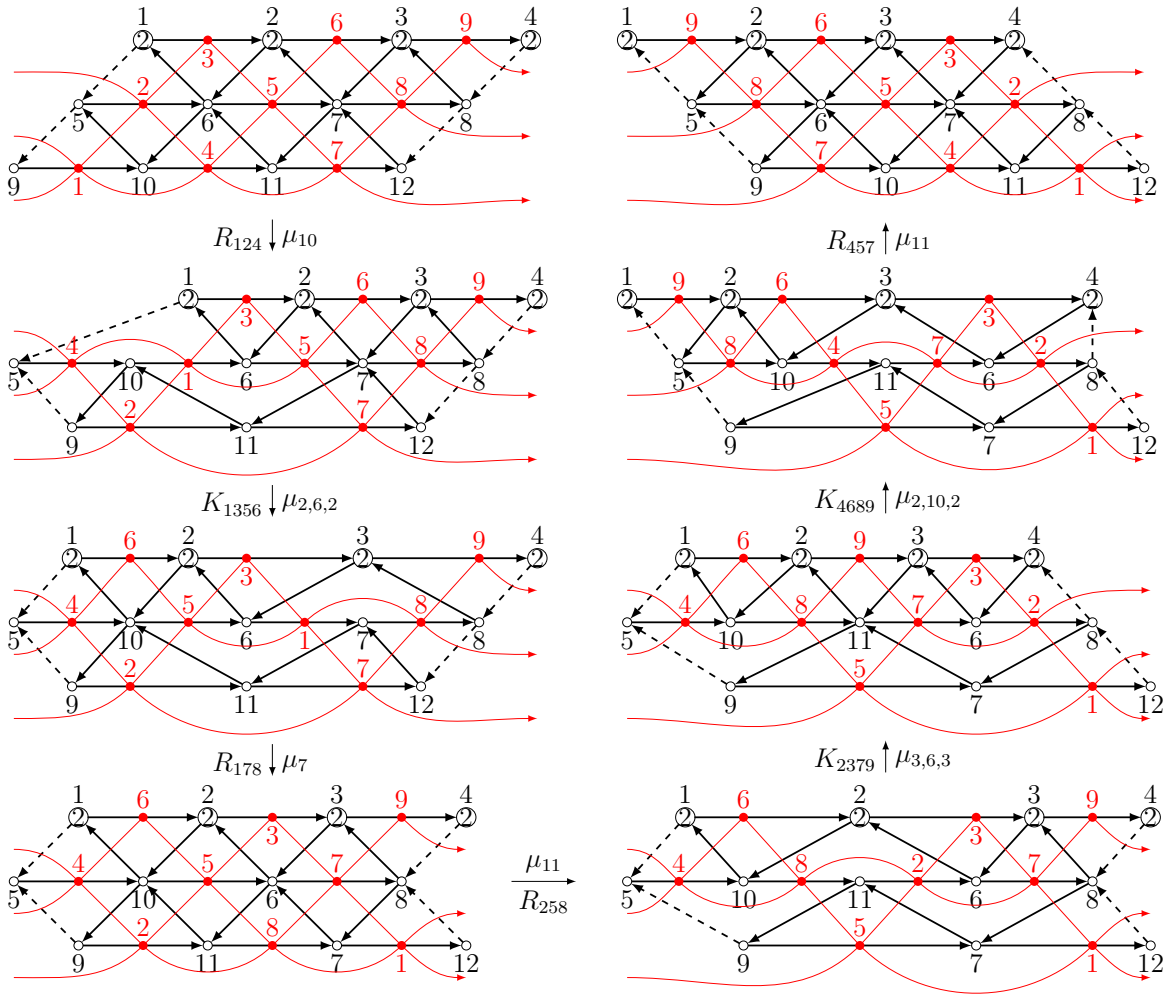

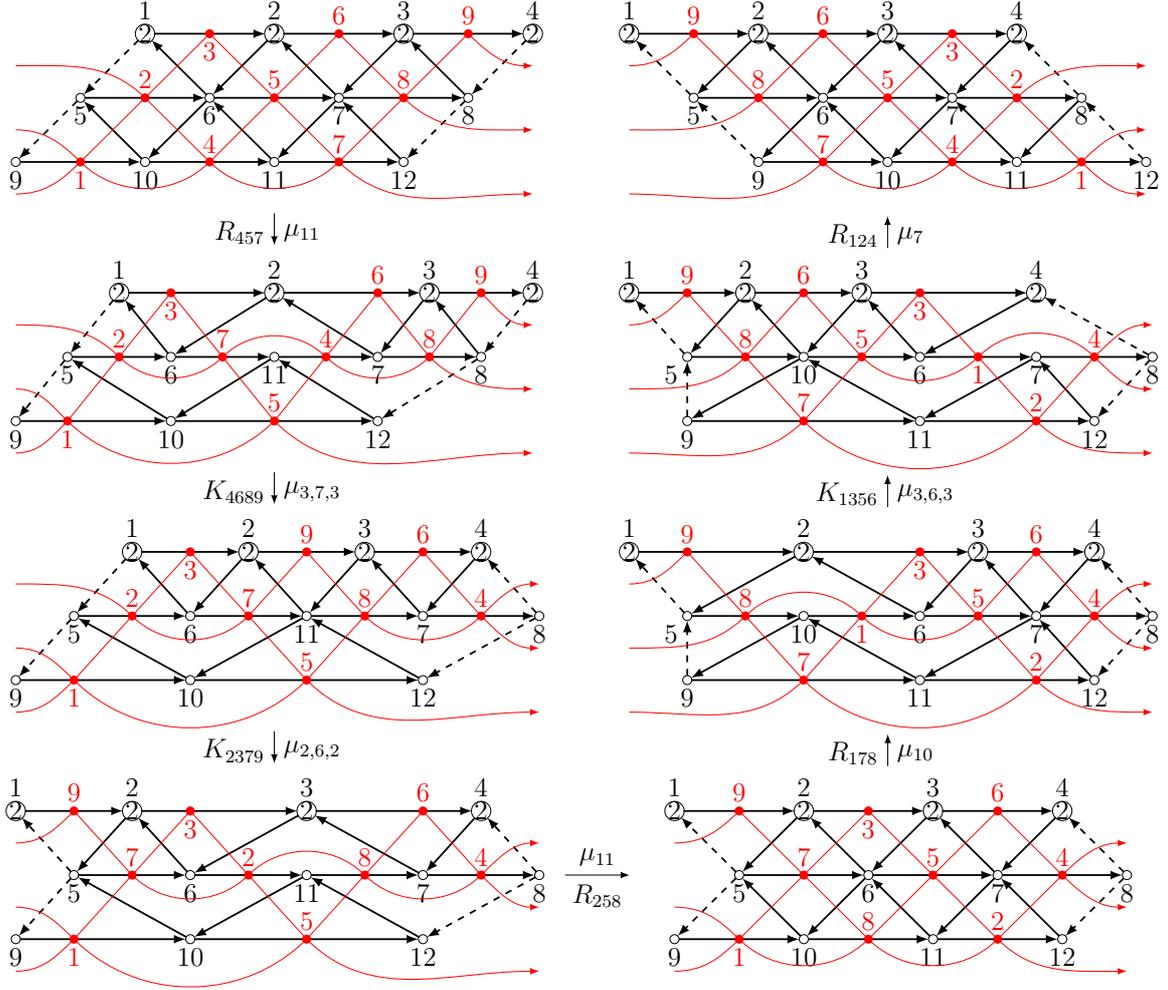
\begin{figure}[h]
\[
\scalebox{0.85}{
\begin{tikzpicture}
\begin{scope}[>=latex]
\path (2,2) node[circle]{2} coordinate(1) node[above=0.2em]{$1$};
\draw (2,2) circle[radius=0.15];
\path (4,2) node[circle]{2} coordinate(2) node[above=0.2em]{$2$};
\draw (4,2) circle[radius=0.15];
\path (6,2) node[circle]{2} coordinate(3) node[above=0.2em]{$3$};
\draw (6,2) circle[radius=0.15];
\path (8,2) node[circle]{2} coordinate(4) node[above=0.2em]{$4$};
\draw (8,2) circle[radius=0.15];
\draw (1,1) circle(2pt) coordinate(5) node[below]{$5$};
\draw (3,1) circle(2pt) coordinate(6) node[below]{$6$};
\draw (5,1) circle(2pt) coordinate(7) node[below]{$7$};
\draw (7,1) circle(2pt) coordinate(8) node[below]{$8$};
\draw (0,0) circle(2pt) coordinate(9) node[below]{$9$};
\draw (2,0) circle(2pt) coordinate(10) node[below]{$10$};
\draw (4,0) circle(2pt) coordinate(11) node[below]{$11$};
\draw (6,0) circle(2pt) coordinate(12) node[below]{$12$};
\qsarrow{1}{2}
\qsarrow{2}{3}
\qsarrow{3}{4}
\qarrow{5}{6}
\qarrow{6}{7}
\qarrow{7}{8}
\qarrow{9}{10}
\qarrow{10}{11}
\qarrow{11}{12}
\draw[->,shorten >=4pt,shorten <=2pt] (8) -- (3) [thick];
\draw[->,shorten >=2pt,shorten <=4pt] (3) -- (7) [thick];
\draw[->,shorten >=4pt,shorten <=2pt] (7) -- (2) [thick];
\draw[->,shorten >=2pt,shorten <=4pt] (2) -- (6) [thick];
\draw[->,shorten >=4pt,shorten <=2pt] (6) -- (1) [thick];
\qarrow{12}{7}
\qarrow{7}{11}
\qarrow{11}{6}
\qarrow{6}{10}
\qarrow{10}{5}
\draw[->,dashed,shorten >=2pt,shorten <=4pt] (4) -- (8) [thick];
\draw[->,dashed,shorten >=2pt,shorten <=4pt] (1) -- (5) [thick];
\qdarrow{5}{9}
\qdarrow{8}{12} 
{\color{red}
\fill (1,0) circle(2pt) coordinate(A) node[below]{$1$};
\fill (2,1) circle(2pt) coordinate(B) node[above]{$2$};
\fill (3,2) circle(2pt) coordinate(C) node[below]{$3$};
\fill (3,0) circle(2pt) coordinate(D) node[above]{$4$};
\fill (4,1) circle(2pt) coordinate(E) node[above]{$5$};
\fill (5,2) circle(2pt) coordinate(F) node[above]{$6$};
\fill (5,0) circle(2pt) coordinate(G) node[above]{$7$};
\fill (6,1) circle(2pt) coordinate(H) node[above]{$8$};
\fill (7,2) circle(2pt) coordinate(I) node[above]{$9$};
\draw [-] (0,1.5) to [out = 0, in = 135] (B);
\draw [-] (B) -- (D);
\draw [-] (D) to [out = -45, in = -135] (G); 
\draw [-] (G) -- (I);
\draw [->] (I) to [out = -45, in = 180] (8,1.5);
\draw [-] (0,0.5) to [out = 0, in = 135] (A); 
\draw [-] (A) to [out = -45, in = -135] (D);
\draw [-] (D) -- (F);
\draw [-] (F) -- (H);
\draw [->] (H) to [out = -45, in = 180] (8,0.5);
\draw [-] (0,-0.5) to [out = 0, in = -135] (A); 
\draw [-] (A) -- (C);
\draw [-] (C) -- (G);
\draw [->] (G) to [out = -45, in = 180] (8,-0.5);
}
\draw[->] (4,-0.8) -- (4,-1.3);
\draw (4,-1.1) circle(0pt) node[left]{$R_{457}$} node[right]{$\mu_{11}$};
\end{scope}
\begin{scope}[>=latex,yshift=-115pt]
\path (1.6,2) node[circle]{2} coordinate(1) node[above=0.2em]{$1$};
\draw (1.6,2) circle[radius=0.15];
\path (4,2) node[circle]{2} coordinate(2) node[above=0.2em]{$2$};
\draw (4,2) circle[radius=0.15];
\path (6.4,2) node[circle]{2} coordinate(3) node[above=0.2em]{$3$};
\draw (6.4,2) circle[radius=0.15];
\path (8,2) node[circle]{2} coordinate(4) node[above=0.2em]{$4$};
\draw (8,2) circle[radius=0.15];
\draw (0.8,1) circle(2pt) coordinate(5) node[below]{$5$};
\draw (2.4,1) circle(2pt) coordinate(6) node[below]{$6$};
\draw (5.6,1) circle(2pt) coordinate(7) node[below]{$7$};
\draw (7.2,1) circle(2pt) coordinate(8) node[below]{$8$};
\draw (0,0) circle(2pt) coordinate(9) node[below]{$9$};
\draw (2.4,0) circle(2pt) coordinate(10) node[below]{$10$};
\draw (4,1) circle(2pt) coordinate(11) node[below]{$11$};
\draw (5.6,0) circle(2pt) coordinate(12) node[below]{$12$};
\qsarrow{1}{2}
\qsarrow{2}{3}
\qsarrow{3}{4}
\qarrow{5}{6}
\qarrow{6}{11}
\qarrow{11}{7}
\qarrow{7}{8}
\qarrow{9}{10}
\qarrow{10}{12}
\draw[->,dashed,shorten >=2pt,shorten <=4pt] (4) -- (8) [thick];
\qarrowsb{8}{3}
\qarrowsa{3}{7}
\qarrowsb{7}{2}
\qarrowsa{2}{6}
\qarrowsb{6}{1}
\draw[->,dashed,shorten >=2pt,shorten <=4pt] (1) -- (5) [thick];
\qdarrow{8}{12} 
\qarrow{12}{11}
\qarrow{11}{10}
\qarrow{10}{5}
\qdarrow{5}{9}
{\color{red}
\fill (0.8,0) circle(2pt) coordinate(A) node[below]{$1$};
\fill (1.6,1) circle(2pt) coordinate(B) node[above]{$2$};
\fill (2.4,2) circle(2pt) coordinate(C) node[below]{$3$};
\fill (4.8,1) circle(2pt) coordinate(D) node[above]{$4$};
\fill (4,0) circle(2pt) coordinate(E) node[above]{$5$};
\fill (5.6,2) circle(2pt) coordinate(F) node[above]{$6$};
\fill (3.2,1) circle(2pt) coordinate(G) node[above]{$7$};
\fill (6.4,1) circle(2pt) coordinate(H) node[above]{$8$};
\fill (7.2,2) circle(2pt) coordinate(I) node[above]{$9$};
\draw [-] (0,1.5) to [out = 0, in = 135] (B);
\draw [-] (B) to [out = -45, in = -135] (G);
\draw [-] (G) to [out = 45, in = 135] (D); 
\draw [-] (D) to [out = -45, in = -135] (H);
\draw [-] (H) -- (I);
\draw [->] (I) to [out = -45, in = 180] (8,1.5);
\draw [-] (0,0.5) to [out = 0, in = 135] (A); 
\draw [-] (A) to [out = -45, in = -135] (E);
\draw [-] (E) -- (F);
\draw [-] (F) -- (H);
\draw [->] (H) to [out = -45, in = 180] (8,0.5);
\draw [-] (0,-0.5) to [out = 0, in = -135] (A); 
\draw [-] (A) -- (C);
\draw [-] (C) -- (E);
\draw [->] (E) to [out = -45, in = 180] (8,-0.5);
}
\draw[->] (4,-0.8) -- (4,-1.3);
\draw (4,-1.1) circle(0pt) node[left]{$K_{4689}$} node[right]{$\mu_{3,7,3}$};
\end{scope}
\begin{scope}[>=latex,yshift=-230pt]
\path (1.8,2) node[circle]{2} coordinate(1) node[above=0.2em]{$1$};
\draw (1.8,2) circle[radius=0.15];
\path (3.6,2) node[circle]{2} coordinate(2) node[above=0.2em]{$2$};
\draw (3.6,2) circle[radius=0.15];
\path (5.4,2) node[circle]{2} coordinate(3) node[above=0.2em]{$3$};
\draw (5.4,2) circle[radius=0.15];
\path (7.2,2) node[circle]{2} coordinate(4) node[above=0.2em]{$4$};
\draw (7.2,2) circle[radius=0.15];
\draw (0.9,1) circle(2pt) coordinate(5) node[below]{$5$};
\draw (2.7,1) circle(2pt) coordinate(6) node[below]{$6$};
\draw (6.3,1) circle(2pt) coordinate(7) node[below]{$7$};
\draw (8.1,1) circle(2pt) coordinate(8) node[below]{$8$};
\draw (0,0) circle(2pt) coordinate(9) node[below]{$9$};
\draw (2.7,0) circle(2pt) coordinate(10) node[below]{$10$};
\draw (4.5,1) circle(2pt) coordinate(11) node[below]{$11$};
\draw (6.3,0) circle(2pt) coordinate(12) node[below]{$12$};
\qsarrow{1}{2}
\qsarrow{2}{3}
\qsarrow{3}{4}
\qarrow{5}{6}
\qarrow{6}{11}
\qarrow{11}{7}
\qarrow{7}{8}
\qarrow{9}{10}
\qarrow{10}{12}
\draw[->,dashed,shorten >=4pt,shorten <=2pt] (8) -- (4) [thick];
\qarrowsa{4}{7}
\qarrowsb{7}{3}
\qarrowsa{3}{11}
\qarrowsb{11}{2}
\qarrowsa{2}{6}
\qarrowsb{6}{1}
\draw[->,dashed,shorten >=2pt,shorten <=4pt] (1) -- (5) [thick];
\qdarrow{8}{12} 
\qarrow{12}{11}
\qarrow{11}{10}
\qarrow{10}{5}
\qdarrow{5}{9}
{\color{red}
\fill (0.9,0) circle(2pt) coordinate(A) node[below]{$1$};
\fill (1.8,1) circle(2pt) coordinate(B) node[above]{$2$};
\fill (2.7,2) circle(2pt) coordinate(C) node[below]{$3$};
\fill (7.2,1) circle(2pt) coordinate(D) node[above]{$4$};
\fill (4.5,0) circle(2pt) coordinate(E) node[above]{$5$};
\fill (6.3,2) circle(2pt) coordinate(F) node[above]{$6$};
\fill (3.6,1) circle(2pt) coordinate(G) node[above]{$7$};
\fill (5.4,1) circle(2pt) coordinate(H) node[above]{$8$};
\fill (4.5,2) circle(2pt) coordinate(I) node[above]{$9$};
\draw [-] (0,1.5) to [out = 0, in = 135] (B);
\draw [-] (B) to [out = -45, in = -135] (G);
\draw [-] (G) -- (I);
\draw [-] (I) -- (H);
\draw [-] (H) to [out = -45, in = -135] (D);
\draw [->] (D) to [out = 45, in = 180] (8.1,1.5);
\draw [-] (0,0.5) to [out = 0, in = 135] (A); 
\draw [-] (A) to [out = -45, in = -135] (E);
\draw [-] (E) -- (F);
\draw [-] (F) -- (D);
\draw [->] (D) to [out = -45, in = 180] (8.1,0.5);
\draw [-] (0,-0.5) to [out = 0, in = -135] (A); 
\draw [-] (A) -- (C);
\draw [-] (C) -- (E);
\draw [->] (E) to [out = -45, in = 180] (8.1,-0.5);
}
\draw[->] (4,-0.8) -- (4,-1.3);
\draw (4,-1.1) circle(0pt) node[left]{$K_{2379}$} node[right]{$\mu_{2,6,2}$};
\end{scope}
\begin{scope}[>=latex,yshift=-345pt]
\path (0,2) node[circle]{2} coordinate(1) node[above=0.2em]{$1$};
\draw (0,2) circle[radius=0.15];
\path (1.8,2) node[circle]{2} coordinate(2) node[above=0.2em]{$2$};
\draw (1.8,2) circle[radius=0.15];
\path (4.5,2) node[circle]{2} coordinate(3) node[above=0.2em]{$3$};
\draw (4.5,2) circle[radius=0.15];
\path (7.2,2) node[circle]{2} coordinate(4) node[above=0.2em]{$4$};
\draw (7.2,2) circle[radius=0.15];
\draw (0.9,1) circle(2pt) coordinate(5) node[below]{$5$};
\draw (2.7,1) circle(2pt) coordinate(6) node[below]{$6$};
\draw (6.3,1) circle(2pt) coordinate(7) node[below]{$7$};
\draw (8.1,1) circle(2pt) coordinate(8) node[below]{$8$};
\draw (0,0) circle(2pt) coordinate(9) node[below]{$9$};
\draw (2.7,0) circle(2pt) coordinate(10) node[below]{$10$};
\draw (4.5,1) circle(2pt) coordinate(11) node[below]{$11$};
\draw (6.3,0) circle(2pt) coordinate(12) node[below]{$12$};
\qsarrow{1}{2}
\qsarrow{2}{3}
\qsarrow{3}{4}
\qarrow{5}{6}
\qarrow{6}{11}
\qarrow{11}{7}
\qarrow{7}{8}
\qarrow{9}{10}
\qarrow{10}{12}
\draw[->,dashed,shorten >=4pt,shorten <=2pt] (8) -- (4) [thick];
\qarrowsa{4}{7}
\qarrowsb{7}{3}
\qarrowsa{3}{6}
\qarrowsb{6}{2}
\qarrowsa{2}{5}
\draw[->,dashed,shorten >=4pt,shorten <=2pt] (5) -- (1) [thick];
\qdarrow{8}{12} 
\qarrow{12}{11}
\qarrow{11}{10}
\qarrow{10}{5}
\qdarrow{5}{9}
{\color{red}
\fill (0.9,0) circle(2pt) coordinate(A) node[below]{$1$};
\fill (3.6,1) circle(2pt) coordinate(B) node[above]{$2$};
\fill (2.7,2) circle(2pt) coordinate(C) node[below]{$3$};
\fill (7.2,1) circle(2pt) coordinate(D) node[above]{$4$};
\fill (4.5,0) circle(2pt) coordinate(E) node[above]{$5$};
\fill (6.3,2) circle(2pt) coordinate(F) node[above]{$6$};
\fill (1.8,1) circle(2pt) coordinate(G) node[above]{$7$};
\fill (5.4,1) circle(2pt) coordinate(H) node[above]{$8$};
\fill (0.9,2) circle(2pt) coordinate(I) node[above]{$9$};
\draw [-] (0,1.5) to [out = 0, in = -135] (I);
\draw [-] (I) -- (G);
\draw [-] (G) to [out = -45, in = -135] (B);
\draw [-] (B) to [out = 45, in = 135] (H);
\draw [-] (H) to [out = -45, in = -135] (D);
\draw [->] (D) to [out = 45, in = 180] (8.1,1.5);
\draw [-] (0,0.5) to [out = 0, in = 135] (A); 
\draw [-] (A) to [out = -45, in = -135] (E);
\draw [-] (E) -- (F);
\draw [-] (F) -- (D);
\draw [->] (D) to [out = -45, in = 180] (8.1,0.5);
\draw [-] (0,-0.5) to [out = 0, in = -135] (A); 
\draw [-] (A) -- (C);
\draw [-] (C) -- (E);
\draw [->] (E) to [out = -45, in = 180] (8.1,-0.5);
}
\draw[->] (8.5,1) -- (9.5,1);
\draw (9,1) circle(0pt) node[below]{$R_{258}$} node[above]{$\mu_{11}$};
%\draw[->] (4,-0.8) -- (4,-1.3);
%\draw (4,-0.9) circle(0pt) node[right]{$R_{258}$};
\end{scope}
\begin{scope}[>=latex,xshift=290pt,yshift=-345pt]
\path (0,2) node[circle]{2} coordinate(1) node[above=0.2em]{$1$};
\draw (0,2) circle[radius=0.15];
\path (2,2) node[circle]{2} coordinate(2) node[above=0.2em]{$2$};
\draw (2,2) circle[radius=0.15];
\path (4,2) node[circle]{2} coordinate(3) node[above=0.2em]{$3$};
\draw (4,2) circle[radius=0.15];
\path (6,2) node[circle]{2} coordinate(4) node[above=0.2em]{$4$};
\draw (6,2) circle[radius=0.15];
\draw (1,1) circle(2pt) coordinate(5) node[below]{$5$};
\draw (3,1) circle(2pt) coordinate(6) node[below]{$6$};
\draw (5,1) circle(2pt) coordinate(7) node[below]{$7$};
\draw (7,1) circle(2pt) coordinate(8) node[below]{$8$};
\draw (0,0) circle(2pt) coordinate(9) node[below]{$9$};
\draw (2,0) circle(2pt) coordinate(10) node[below]{$10$};
\draw (4,0) circle(2pt) coordinate(11) node[below]{$11$};
\draw (6,0) circle(2pt) coordinate(12) node[below]{$12$};
\qsarrow{1}{2}
\qsarrow{2}{3}
\qsarrow{3}{4}
\qarrow{5}{6}
\qarrow{6}{7}
\qarrow{7}{8}
\qarrow{9}{10}
\qarrow{10}{11}
\qarrow{11}{12}
\draw[->,dashed,shorten >=4pt,shorten <=2pt] (8) -- (4) [thick];
\qarrowsa{4}{7}
\qarrowsb{7}{3}
\qarrowsa{3}{6}
\qarrowsb{6}{2}
\qarrowsa{2}{5}
\draw[->,dashed,shorten >=4pt,shorten <=2pt] (5) -- (1) [thick];
\qdarrow{8}{12} 
\qarrow{12}{7}
\qarrow{7}{11}
\qarrow{11}{6}
\qarrow{6}{10}
\qarrow{10}{5}
\qdarrow{5}{9}
{\color{red}
\fill (1,0) circle(2pt) coordinate(A) node[below]{$1$};
\fill (5,0) circle(2pt) coordinate(B) node[above]{$2$};
\fill (3,2) circle(2pt) coordinate(C) node[below]{$3$};
\fill (6,1) circle(2pt) coordinate(D) node[above]{$4$};
\fill (4,1) circle(2pt) coordinate(E) node[above]{$5$};
\fill (5,2) circle(2pt) coordinate(F) node[above]{$6$};
\fill (2,1) circle(2pt) coordinate(G) node[above]{$7$};
\fill (3,0) circle(2pt) coordinate(H) node[above]{$8$};
\fill (1,2) circle(2pt) coordinate(I) node[above]{$9$};
\draw [-] (0,1.5) to [out = 0, in = -135] (I);
\draw [-] (I) -- (H);
\draw [-] (H) to [out = -45, in = -135] (B);
\draw [-] (B) -- (D);
\draw [->] (D) to [out = 45, in = 180] (7,1.5);
\draw [-] (0,0.5) to [out = 0, in = 135] (A); 
\draw [-] (A) to [out = -45, in = -135] (H);
\draw [-] (H) -- (F);
\draw [-] (F) -- (D);
\draw [->] (D) to [out = -45, in = 180] (7,0.5);
\draw [-] (0,-0.5) to [out = 0, in = -135] (A); 
\draw [-] (A) -- (C);
\draw [-] (C) -- (B);
\draw [->] (B) to [out = -45, in = 180] (7,-0.5);
}
\draw[->] (3.3,2.7) -- (3.3,3.2);
\draw (3.3,2.9) circle(0pt) node[left]{$R_{178}$} node[right]{$\mu_{10}$};
\end{scope}
\begin{scope}[>=latex,xshift=270pt,yshift=-230pt]
\path (0,2) node[circle]{2} coordinate(1) node[above=0.2em]{$1$};
\draw (0,2) circle[radius=0.15];
\path (2.7,2) node[circle]{2} coordinate(2) node[above=0.2em]{$2$};
\draw (2.7,2) circle[radius=0.15];
\path (5.4,2) node[circle]{2} coordinate(3) node[above=0.2em]{$3$};
\draw (5.4,2) circle[radius=0.15];
\path (7.2,2) node[circle]{2} coordinate(4) node[above=0.2em]{$4$};
\draw (7.2,2) circle[radius=0.15];
\draw (0.9,1) circle(2pt) coordinate(5) node[below left]{$5$};
\draw (4.5,1) circle(2pt) coordinate(6) node[below]{$6$};
\draw (6.3,1) circle(2pt) coordinate(7) node[below]{$7$};
\draw (8.1,1) circle(2pt) coordinate(8) node[below]{$8$};
\draw (0.9,0) circle(2pt) coordinate(9) node[below]{$9$};
\draw (2.7,1) circle(2pt) coordinate(10) node[below]{$10$};
\draw (4.5,0) circle(2pt) coordinate(11) node[below]{$11$};
\draw (7.2,0) circle(2pt) coordinate(12) node[below]{$12$};
\qsarrow{1}{2}
\qsarrow{2}{3}
\qsarrow{3}{4}
\qarrow{5}{10}
\qarrow{10}{6}
\qarrow{6}{7}
\qarrow{7}{8}
\qarrow{9}{11}
\qarrow{11}{12}
\draw[->,dashed,shorten >=4pt,shorten <=2pt] (8) -- (4) [thick];
\qarrowsa{4}{7}
\qarrowsb{7}{3}
\qarrowsa{3}{6}
\qarrowsb{6}{2}
\qarrowsa{2}{5}
\draw[->,dashed,shorten >=4pt,shorten <=2pt] (5) -- (1) [thick];
\qdarrow{8}{12} 
\qarrow{12}{7}
\qarrow{7}{11}
\qarrow{11}{10}
\qarrow{10}{9}
\qdarrow{9}{5}
{\color{red}
\fill (3.6,1) circle(2pt) coordinate(A) node[below]{$1$};
\fill (6.3,0) circle(2pt) coordinate(B) node[above]{$2$};
\fill (4.5,2) circle(2pt) coordinate(C) node[below]{$3$};
\fill (7.2,1) circle(2pt) coordinate(D) node[above]{$4$};
\fill (5.4,1) circle(2pt) coordinate(E) node[above]{$5$};
\fill (6.3,2) circle(2pt) coordinate(F) node[above]{$6$};
\fill (2.7,0) circle(2pt) coordinate(G) node[above]{$7$};
\fill (1.8,1) circle(2pt) coordinate(H) node[above]{$8$};
\fill (0.9,2) circle(2pt) coordinate(I) node[above]{$9$};
\draw [-] (0,1.5) to [out = 0, in = -135] (I);
\draw [-] (I) -- (G);
\draw [-] (G) to [out = -45, in = -135] (B);
\draw [-] (B) -- (D);
\draw [->] (D) to [out = 45, in = 180] (8.1,1.5);
\draw [-] (0,0.5) to [out = 0, in = -135] (H); 
\draw [-] (H) to [out = 45, in = 135] (A);
\draw [-] (A) to [out = -45, in = -135] (E);
\draw [-] (E) -- (F);
\draw [-] (F) -- (D);
\draw [->] (D) to [out = -45, in = 180] (8.1,0.5);
\draw [-] (0,-0.5) to [out = 0, in = -135] (G); 
\draw [-] (G) -- (C);
\draw [-] (C) -- (B);
\draw [->] (B) to [out = -45, in = 180] (8.1,-0.5);
}
\draw[->] (4,2.7) -- (4,3.2);
\draw (4,2.9) circle(0pt) node[left]{$K_{1356}$} node[right]{$\mu_{3,6,3}$};
\end{scope}
\begin{scope}[>=latex,xshift=270pt,yshift=-115pt]
\path (0,2) node[circle]{2} coordinate(1) node[above=0.2em]{$1$};
\draw (0,2) circle[radius=0.15];
\path (1.8,2) node[circle]{2} coordinate(2) node[above=0.2em]{$2$};
\draw (1.8,2) circle[radius=0.15];
\path (3.6,2) node[circle]{2} coordinate(3) node[above=0.2em]{$3$};
\draw (3.6,2) circle[radius=0.15];
\path (6.3,2) node[circle]{2} coordinate(4) node[above=0.2em]{$4$};
\draw (6.3,2) circle[radius=0.15];
\draw (0.9,1) circle(2pt) coordinate(5) node[below left]{$5$};
\draw (4.5,1) circle(2pt) coordinate(6) node[below]{$6$};
\draw (6.3,1) circle(2pt) coordinate(7) node[below]{$7$};
\draw (8.1,1) circle(2pt) coordinate(8) node[below]{$8$};
\draw (0.9,0) circle(2pt) coordinate(9) node[below]{$9$};
\draw (2.7,1) circle(2pt) coordinate(10) node[below]{$10$};
\draw (4.5,0) circle(2pt) coordinate(11) node[below]{$11$};
\draw (7.2,0) circle(2pt) coordinate(12) node[below]{$12$};
\qsarrow{1}{2}
\qsarrow{2}{3}
\qsarrow{3}{4}
\qarrow{5}{10}
\qarrow{10}{6}
\qarrow{6}{7}
\qarrow{7}{8}
\qarrow{9}{11}
\qarrow{11}{12}
\draw[->,dashed,shorten >=4pt,shorten <=2pt] (8) -- (4) [thick];
\qarrowsa{4}{6}
\qarrowsb{6}{3}
\qarrowsa{3}{10}
\qarrowsb{10}{2}
\qarrowsa{2}{5}
\draw[->,dashed,shorten >=4pt,shorten <=2pt] (5) -- (1) [thick];
\qdarrow{8}{12} 
\qarrow{12}{7}
\qarrow{7}{11}
\qarrow{11}{10}
\qarrow{10}{9}
\qdarrow{9}{5}
{\color{red}
\fill (5.4,1) circle(2pt) coordinate(A) node[below]{$1$};
\fill (6.3,0) circle(2pt) coordinate(B) node[above]{$2$};
\fill (4.5,2) circle(2pt) coordinate(C) node[below]{$3$};
\fill (7.2,1) circle(2pt) coordinate(D) node[above]{$4$};
\fill (3.6,1) circle(2pt) coordinate(E) node[above]{$5$};
\fill (2.7,2) circle(2pt) coordinate(F) node[above]{$6$};
\fill (2.7,0) circle(2pt) coordinate(G) node[above]{$7$};
\fill (1.8,1) circle(2pt) coordinate(H) node[above]{$8$};
\fill (0.9,2) circle(2pt) coordinate(I) node[above]{$9$};
\draw [-] (0,1.5) to [out = 0, in = -135] (I);
\draw [-] (I) -- (G);
\draw [-] (G) to [out = -45, in = -135] (B);
\draw [-] (B) -- (D);
\draw [->] (D) to [out = 45, in = 180] (8.1,1.5);
\draw [-] (0,0.5) to [out = 0, in = -135] (H); 
\draw [-] (H) -- (F);
\draw [-] (F) -- (E);
\draw [-] (E) to [out = -45, in = -135] (A);
\draw [-] (A) to [out = 45, in = 135] (D);
\draw [->] (D) to [out = -45, in = 180] (8.1,0.5);
\draw [-] (0,-0.5) to [out = 0, in = -135] (G); 
\draw [-] (G) -- (C);
\draw [-] (C) -- (B);
\draw [->] (B) to [out = -45, in = 180] (8.1,-0.5);
}
\draw[->] (4,2.7) -- (4,3.2);
\draw (4,2.9) circle(0pt) node[left]{$R_{124}$} node[right]{$\mu_7$};
\end{scope}
\begin{scope}[>=latex,xshift=270pt]
\path (0,2) node[circle]{2} coordinate(1) node[above=0.2em]{$1$};
\draw (0,2) circle[radius=0.15];
\path (2,2) node[circle]{2} coordinate(2) node[above=0.2em]{$2$};
\draw (2,2) circle[radius=0.15];
\path (4,2) node[circle]{2} coordinate(3) node[above=0.2em]{$3$};
\draw (4,2) circle[radius=0.15];
\path (6,2) node[circle]{2} coordinate(4) node[above=0.2em]{$4$};
\draw (6,2) circle[radius=0.15];
\draw (1,1) circle(2pt) coordinate(5) node[below]{$5$};
\draw (3,1) circle(2pt) coordinate(6) node[below]{$6$};
\draw (5,1) circle(2pt) coordinate(7) node[below]{$7$};
\draw (7,1) circle(2pt) coordinate(8) node[below]{$8$};
\draw (2,0) circle(2pt) coordinate(9) node[below]{$9$};
\draw (4,0) circle(2pt) coordinate(10) node[below]{$10$};
\draw (6,0) circle(2pt) coordinate(11) node[below]{$11$};
\draw (8,0) circle(2pt) coordinate(12) node[below]{$12$};
\qsarrow{1}{2}
\qsarrow{2}{3}
\qsarrow{3}{4}
\qarrow{5}{6}
\qarrow{6}{7}
\qarrow{7}{8}
\qarrow{9}{10}
\qarrow{10}{11}
\qarrow{11}{12}
\draw[->,shorten >=2pt,shorten <=4pt] (4) -- (7) [thick];
\draw[->,shorten >=4pt,shorten <=2pt] (7) -- (3) [thick];
\draw[->,shorten >=2pt,shorten <=4pt] (3) -- (6) [thick];
\draw[->,shorten >=4pt,shorten <=2pt] (6) -- (2) [thick];
\draw[->,shorten >=2pt,shorten <=4pt] (2) -- (5) [thick];
\qarrow{8}{11}
\qarrow{11}{7}
\qarrow{7}{10}
\qarrow{10}{6}
\qarrow{6}{9}
\draw[->,dashed,shorten >=4pt,shorten <=2pt] (8) -- (4) [thick];
\draw[->,dashed,shorten >=4pt,shorten <=2pt] (5) -- (1) [thick];
\qdarrow{9}{5}
\qdarrow{12}{8} 
{\color{red}
\fill (7,0) circle(2pt) coordinate(A) node[below]{$1$};
\fill (6,1) circle(2pt) coordinate(B) node[above]{$2$};
\fill (5,2) circle(2pt) coordinate(C) node[below]{$3$};
\fill (5,0) circle(2pt) coordinate(D) node[above]{$4$};
\fill (4,1) circle(2pt) coordinate(E) node[above]{$5$};
\fill (3,2) circle(2pt) coordinate(F) node[above]{$6$};
\fill (3,0) circle(2pt) coordinate(G) node[above]{$7$};
\fill (2,1) circle(2pt) coordinate(H) node[above]{$8$};
\fill (1,2) circle(2pt) coordinate(I) node[above]{$9$};
\draw [-] (0,1.5) to [out = 0, in = -135] (I);
\draw [-] (I) -- (G);
\draw [-] (G) to [out = -45, in = -135] (D);
\draw [-] (D) -- (B);
\draw [->] (B) to [out = 45, in = 180] (8,1.5);
\draw [-] (0,0.5) to [out = 0, in = -135] (H);
\draw [-] (H) -- (F); 
\draw [-] (F) to [out = 45, in = -135] (F);
\draw [-] (F) -- (D);
\draw [-] (D) to [out = -45, in = -135] (A);
\draw [->] (A) to [out = 45, in = 180] (8,0.5);
\draw [-] (0,-0.5) to [out = 0, in = -135] (G); 
\draw [-] (G) -- (C);
\draw [-] (C) -- (A);
\draw [->] (A) to [out = -45, in = 180] (8,-0.5);
}
\end{scope}
\end{tikzpicture}
}
\]
\caption{3D reflection transformation (RHS of \eqref{eq:reflection-RK-eq}).}
\label{fig:reflection-eq2}
\end{figure}

\begin{lem}\label{lem:K-trop}
Let $(J_{123123123},y)$ be a tropical $y$-seed for $J_{123123123}$.
It holds that 
\begin{align}
\label{eq:K-trop-id}
\begin{split}
&\mu_{11} \, \mu_{2,10,2} \, \mu_{3,6,3}\, \mu_{11}\, \mu_7 \,\mu_{2,6,2}\, \mu_{10} (J_{123123123},y) 
\\ &\quad 
= \mu_7 \,\mu_{3,6,3} \,\mu_{10} \,\mu_{11} \,\mu_{2,6,2}\, \mu_{3,7,3} \,\mu_{11}(J_{123123123},y).
\end{split}
\end{align}
The tropical signs corresponding to $\mu_i$ are all positive, and those corresponding to $\mu_{i,j,i}$ are all $(+,+,-)$.
In particular, the final quiver after mutations is $J_{321321321}$.
\end{lem}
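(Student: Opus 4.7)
The plan is to follow closely the strategy of Lemma \ref{lem:R-trop}, namely direct computation of the tropical $y$-seeds on both sides, with the composite mutations $\mu_{i,j,i}$ handled via Lemma \ref{lem:K-tropsign}. Starting from $(J_{123123123},y)$ with $y=(y_1,\ldots,y_{12})$, I would apply to both sides of \eqref{eq:K-trop-id} the mutations one at a time using the tropical mutation formula \eqref{eq:trop-mutation}, recording at each step the tropical $y$-variable being mutated so that its sign can be read off. For the LHS, the order is $\mu_{10},\ \mu_{2,6,2},\ \mu_{7},\ \mu_{11},\ \mu_{3,6,3},\ \mu_{2,10,2},\ \mu_{11}$; for the RHS it is $\mu_{11},\ \mu_{3,7,3},\ \mu_{2,6,2},\ \mu_{11},\ \mu_{10},\ \mu_{3,6,3},\ \mu_{7}$.

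The key organizing principle is that each $\mu_i$ and each $\mu_{i,j,i}$ appearing here is precisely the mutation sequence attached in Section 3 to an $R$-move or a $K$-move of the wiring diagram, as illustrated in Figures~\ref{fig:reflection-eq} and \ref{fig:reflection-eq2}. Thus each step may be verified locally: for an $R$-move I reuse the tropical computation already carried out in the proof of Lemma~\ref{lem:R-trop} (transplanted to the appropriate four-vertex subquiver and six-vertex neighborhood), and for a $K$-move I reuse the explicit tropical transformation \eqref{eq:K-trop-mutation} from the proof of Lemma~\ref{lem:K-tropsign}. In particular, the tropical sign statement --- all single $\mu_i$ positive and all $\mu_{i,j,i}$ giving $(+,+,-)$ --- reduces to checking that at each such step the mutated variable (respectively the two variables underlined in \eqref{eq:K-trop-mutation}) is a positive monomial in $y_1,\ldots,y_{12}$. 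These positivity checks are straightforward once one observes that no cancellation among the $y_i$ arises along either path, so the $c$-vectors stay in $\Z_{\geq 0}^I$ (as predicted by the maximal-green-sequence remark that follows Lemma~\ref{lem:R-trop}).

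Having established that both sides perform legal mutations with the asserted sign sequences, what remains is to verify that they produce the same tropical $y$-seed. I would tabulate the intermediate seeds side by side, using the wiring diagrams of Figures~\ref{fig:reflection-eq} and \ref{fig:reflection-eq2} as a guide: after each $R$ or $K$ block the combinatorics of the wiring diagram determines the exchange matrix uniquely through the Fock--Goncharov rule, so the comparison of intermediate quivers is essentially pictorial. The final quiver on both sides coincides with the quiver dual to the rightmost wiring diagram, which is $J_{321321321}$ by inspection; and the final tropical $y$-variables on both sides are the same monomials in $y_1,\ldots,y_{12}$ by the same pictorial reasoning (dual to the 3D reflection identity of wiring diagrams).

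The main obstacle is the size of the bookkeeping: each side involves roughly a dozen individual mutations acting on twelve variables, and the intermediate quivers have many arrows (including dashed arrows of weight $\tfrac{1}{2}$) that must be tracked through the skew-symmetrizable mutation rule. To keep the argument manageable I would not write out the full $12$-tuples of $y$-variables at every step; instead I would exploit locality (each mutation block touches only the vertices of a $J_{121}$- or $J_{1212}$-subquiver and its immediate neighbors) to reduce the verification to the already-proved cases of $R$- and $K$-moves, and rely on the unchanged vertices carrying through trivially. This reduces the argument to finitely many local checks, together with the observation that the two mutation sequences end at identical wiring diagrams by the 3D reflection equation \eqref{eq:reflection-RK-eq} for wiring diagrams.
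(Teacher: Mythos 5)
The overall strategy you describe (apply the tropical mutation rule \eqref{eq:trop-mutation} step by step to both sides, reading off the tropical sign from the variable being mutated at each stage, with the $\mu_{i,j,i}$ blocks handled via \eqref{eq:K-trop-mutation}) is exactly the paper's approach, and the sign assertions would indeed follow from those checks. The genuine gap is in your final step: you claim that the two sides produce the same tropical $y$-variables ``by the same pictorial reasoning (dual to the 3D reflection identity of wiring diagrams).'' This does not follow. The coincidence of the final wiring diagrams only forces the two final \emph{exchange matrices} to agree (both dual to the diagram for $321321321$); it carries no information about the $c$-vectors. The equality of the twelve final monomials in $y_1,\ldots,y_{12}$ is precisely the nontrivial content of \eqref{eq:K-trop-id} --- it is what feeds into Theorem \ref{thm:period} to produce the quantum identity --- and the paper establishes it only by writing out both $12$-tuples explicitly and observing that both sides terminate at
$(y_1,y_2,y_3,y_4,y_{5,6},y_{3,7,11}^{-1},y_{10}^{-1},y_{3,7,8,11},y_{9,10,11},y_7,y_{2,6,7}^{-1},y_{2,6,7,10,12})$
in the shorthand $y_{i_1,\ldots,i_p}=y_{i_1}\cdots y_{i_p}$. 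To avoid that computation you would have to invoke, and independently justify, a general coherence theorem for Fock--Goncharov seeds attached to reduced words; the paper assumes no such thing, and doing so would come close to assuming the statement being proved.

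A secondary caveat: the ``locality'' reduction is weaker than you suggest. After the first few steps the variables occupying the vertices of each local $J_{121}$- or $J_{1212}$-subquiver are already nontrivial monomials (e.g.\ $y_{2,6,7,10}^{-1}$), so reusing the local $R$- and $K$-move computations still requires carrying the full $12$-tuple through every stage; and the maximal-green-sequence remark from the $A_3$ case does not transfer here, since the $C_3$ sign sequence contains minus signs (the third entry of each $(+,+,-)$). Neither point is fatal, but together with the issue above they mean the explicit tabulation of both mutation paths cannot actually be bypassed.
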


\begin{proof}
We calculate the two transformations of tropical $y$-variables using \eqref{eq:K-trop-mutation}. 
We use a shorthand $y_{i_1,i_2,\ldots,i_p}$ for $y_{i_1} y_{i_2} \cdots y_{i_p}$.
\begin{align*}
\text{LHS : ~}&(y_1,y_2,y_3,y_4,y_5,y_6,y_7,y_8,y_9,\underline{y_{10}},y_{11},y_{12})
\\ \displaybreak[0]
&\stackrel{\mu_{10}}{\mapsto} (y_1,\underline{y_2},y_3,y_4,y_5,\underline{y_{6,10}},y_7,y_8,y_{9,10},y_{10}^{-1},y_{11},y_{12})
\\ \displaybreak[0]
&\stackrel{\mu_{2,6,2}}{\mapsto}  (y_1,y_2,y_3,y_4,y_5,y_{2,6,10}^{-1},\underline{y_{2,6,7,10}},y_8,y_{9,10}, y_{6},y_{11},y_{12})
\\ \displaybreak[0]
&\stackrel{\mu_{7}}{\mapsto} (y_1,y_2,y_3,y_4,y_5,y_7,y_{2,6,7,10}^{-1},y_8,y_{9,10}, y_{6},\underline{y_{11}},y_{2,6,7,10,12})
\\ %\displaybreak[0]
&\stackrel{\mu_{11}}{\mapsto} (y_1,y_2,\underline{y_3},y_4,y_5,\underline{y_{7,11}},y_{2,6,7,10}^{-1},y_8,y_{9,10,11}, y_{6},y_{11}^{-1},y_{2,6,7,10,12})
\\ %\displaybreak[0]
&\stackrel{\mu_{3,6,3}}{\mapsto} (y_1,\underline{y_2},y_3,y_4,y_5,y_{3,7,11}^{-1},y_{2,6,7,10}^{-1},y_{3,7,8,11},y_{9,10,11}, \underline{y_{6}},y_{7},y_{2,6,7,10,12})
\\ %\displaybreak[0]
&\stackrel{\mu_{2,10,2}}{\mapsto} (y_1,y_2,y_3,y_4,y_{5,6},y_{3,7,11}^{-1},y_{2,6,7,10}^{-1},y_{3,7,8,11},y_{9,10,11}, y_{2,6}^{-1},\underline{y_{2,6,7}},y_{2,6,7,10,12})
\\ %\displaybreak[0]
&\stackrel{\mu_{11}}{\mapsto} (y_1,y_2,y_3,y_4,y_{5,6},y_{3,7,11}^{-1},y_{10}^{-1},y_{3,7,8,11},y_{9,10,11}, y_7,y_{2,6,7}^{-1},y_{2,6,7,10,12}),
\\ %\displaybreak[0]
\text{RHS : ~}&(y_1,y_2,y_3,y_4,y_5,y_6,y_7,y_8,y_9,,y_{10},\underline{y_{11}},y_{12})
\\ \displaybreak[0]
&\stackrel{\mu_{11}}{\mapsto} (y_1,y_2,\underline{y_3},y_4,y_5,y_{6},\underline{y_{7,11}},y_8,y_{9},y_{10,11},y_{11}^{-1},y_{12})
\\ \displaybreak[0]
&\stackrel{\mu_{3,7,3}}{\mapsto} (y_1,\underline{y_2},y_3,y_4,y_{5},\underline{y_{6}},y_{3,7,11}^{-1},y_{3,7,8,11},y_{9}, y_{10,11},y_{11}^{-1},y_{12})
\\ \displaybreak[0]
&\stackrel{\mu_{2,6,2}}{\mapsto} (y_1,y_2,y_3,y_4,y_{5,6},y_{2,6}^{-1},y_{3,7,11}^{-1},y_{3,7,8,11},y_{9}, y_{10,11},\underline{y_{2,6,7}},y_{12})
\\ \displaybreak[0]
&\stackrel{\mu_{11}}{\mapsto} (y_1,y_2,y_3,y_4,y_{5,6},y_{7},y_{3,7,11}^{-1},y_{3,7,8,11},y_{9}, \underline{y_{10,11}},y_{2,6,7}^{-1},y_{2,6,7,12})
\\ %\displaybreak[0]
&\stackrel{\mu_{10}}{\mapsto} (y_1,y_2,\underline{y_3},y_4,y_{5,6},\underline{y_{7,10,11}},y_{3,7,11}^{-1},y_{3,7,8,11},y_{9,10,11}, y_{10,11}^{-1},y_{2,6,7}^{-1},y_{2,6,7,12}) 
\\ \displaybreak[0]
&\stackrel{\mu_{3,6,3}}{\mapsto} (y_1,y_2,y_3,y_4,y_{5,6},y_{3,7,10,11}^{-1},\underline{y_{10}},y_{3,7,8,11},y_{9,10,11}, y_{7},y_{2,6,7}^{-1},y_{2,6,7,12})
\\ \displaybreak[0]
&\stackrel{\mu_{7}}{\mapsto} (y_1,y_2,y_3,y_4,y_{5,6},y_{3,7,11}^{-1},y_{10}^{-1},y_{3,7,8,11},y_{9,10,11}, y_{7},y_{2,6,7}^{-1},y_{2,6,7,10,12}).
\end{align*}
Thus we have proved \eqref{eq:K-trop-id}.
The tropical sign-sequences for these mutation sequences are obtained from the underlined $y$-variables.
One sees that the final quiver is $J_{321321321}$ in Figure \ref{fig:reflection-eq}.
\end{proof} 

Due to Theorem \ref{thm:period}, Theorem \ref{thm:id-mono-qdilog} and \eqref{eq:K-decomp} 
we obtain the following proposition as a corollary of Lemma \ref{lem:K-trop}.
\begin{prop}
Let $(J_{123123123},Y)$ be a quantum $y$-seed for $J_{123123123}$. 
It holds that
\begin{align}\label{eq:K-q-id}
\begin{split}
&\mu_{11} \, \mu_{2,10,2} \, \mu_{3,6,3}\, \mu_{11}\, \mu_7 \,\mu_{2,6,2}\, \mu_{10} (J_{123123123},Y) 
\\ &\qquad 
= \mu_7 \,\mu_{3,6,3} \,\mu_{10} \,\mu_{11} \,\mu_{2,6,2}\, \mu_{3,7,3} \,\mu_{11} (J_{123123123},Y).
\end{split}
\end{align}
In particular, we have identities
\begin{align}
\label{K-mono-id}
\begin{split}
&\tau_{10,+} \,\tau_{2,6,2,++-} \, \tau_{7,+} \, \tau_{11,+} \, \tau_{3,6,3, ++-} \, \tau_{2,10,2,+ + -} \, \tau_{11,+} 
\\ &\qquad 
= \tau_{11,+} \, \tau_{3,7,3,++-} \,\tau_{2,6,2,++-} \,\tau_{11,+} \,\tau_{10,+}  \,\tau_{3,6,3,++-} \,\tau_{7,+}, 
\end{split}
\end{align}
as a morphism from $\mathcal{T}(J_{321321321})$ to $\mathcal{T}(J_{123123123})$, and  
\begin{align}
\label{K-qdilog-id}
\begin{split}
&\Psi_q(\rY^{e_{10}}) 
\cdot \Psi_{q^2}(\rY^{e_{2}}) \Psi_q(\rY^{e_6+e_{10}}) \Psi_{q^2}(\rY^{e_{2}})^{-1} 
\cdot \Psi_{q}(\rY^{e_{2}+e_{6}+e_{7}+e_{10}}) \cdot \Psi_{q}(\rY^{e_{11}})
\\ &\quad 
\cdot \Psi_{q^2}(\rY^{e_{3}}) \Psi_q(\rY^{e_7+e_{11}}) \Psi_{q^2}(\rY^{e_{3}})^{-1}
\cdot \Psi_{q^2}(\rY^{e_{2}}) \Psi_q(\rY^{e_6}) \Psi_{q^2}(\rY^{e_{2}})^{-1}
\cdot \Psi_{q}(\rY^{e_2+e_6+e_7})
\\
&= \Psi_q(\rY^{e_{11}}) 
\cdot \Psi_{q^2}(\rY^{e_{3}}) \Psi_q(\rY^{e_7+e_{11}}) \Psi_{q^2}(\rY^{e_{3}})^{-1}
\cdot \Psi_{q^2}(\rY^{e_{2}}) \Psi_q(\rY^{e_6}) \Psi_{q^2}(\rY^{e_{2}})^{-1}
\\ &\quad 
\cdot \Psi_{q}(\rY^{e_{2}+e_{6}+e_{7}}) \cdot \Psi_{q}(\rY^{e_{10}+e_{11}}) 
\cdot \Psi_{q^2}(\rY^{e_{3}}) \Psi_q(\rY^{e_7+e_{10}+e_{11}}) \Psi_{q^2}(\rY^{e_{3}})^{-1} 
\cdot \Psi_{q}(\rY^{e_{10}}),
\end{split}
\end{align}
in the completion $\hat{\mathbb{A}}^{\! \times}(J_{123123123})$.
Here we have set $\tau_{i,j,i, + + -} := \tau_{i,+} \tau_{j,+} \tau_{i,-}$.
\end{prop}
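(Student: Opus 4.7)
The proof will follow the blueprint established for the tetrahedron case in the previous section, but with the extra bookkeeping required by the presence of rank-two composite mutations $\mu_{i,j,i}$ acting on vertices of distinct weights.

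My plan is to first invoke Theorem \ref{thm:period}: Lemma \ref{lem:K-trop} gives a $\sigma$-period of the tropical $y$-seed $(J_{123123123},y)$ for the mutation sequence on the left-hand side of \eqref{eq:K-q-id} composed with the inverse of the one on the right-hand side (with the identifying permutation $\sigma \in \mathfrak{S}_{12}$ read off from the labeling changes between $J_{123123123}$ and $J_{321321321}$ in Figures \ref{fig:reflection-eq} and \ref{fig:reflection-eq2}). Theorem \ref{thm:period} then lifts this to a $\sigma$-periodicity of the associated quantum $y$-seed, which is precisely \eqref{eq:K-q-id}.

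Next, to extract \eqref{K-mono-id} and \eqref{K-qdilog-id}, I would unwind each mutation according to the tropical sign data supplied by Lemma \ref{lem:K-trop}. For a simple mutation $\mu_k$ with tropical sign $+$, I use $\mu_k^\ast = \mathrm{Ad}(\Psi_{q_k}(\rY_k)) \circ \tau_{k,+}$; for each composite $\mu_{i,j,i}$ with tropical sign-sequence $(+,+,-)$, I use the decomposition \eqref{eq:K-decomp}, namely
\[
\mu^\ast_{i,j,i} = \mathrm{Ad}\bigl(\Psi_{q^2}(Y_i) \Psi_q(Y_j) \Psi_{q^2}(Y_i)^{-1}\bigr) \, \tau_{i,+} \tau_{j,+} \tau_{i,-}.
\]
Substituting these decompositions into both sides of \eqref{eq:K-q-id} and appealing to Theorem \ref{thm:id-mono-qdilog} gives two independent identities: the equality \eqref{K-mono-id} of compositions of monomial isomorphisms, and the equality \eqref{K-qdilog-id} of quantum dilogarithms in $\hat{\mathbb{A}}^\times(J_{123123123})$.

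The remaining task is to verify that the $c$-vectors appearing in the exponents of the quantum dilogarithms in \eqref{K-qdilog-id} match the exponents predicted by the tropical trajectory in the proof of Lemma \ref{lem:K-trop}. Concretely, for each mutation step at vertex $i_r$ at time $r$, the underlined tropical $y$-variable $y_{i_r}(r)$ in that computation has the form $\rY^{\alpha_r}$ with a specific $\alpha_r \in \Z_{\geq 0}^{I}$; these are exactly the vectors $e_{10}$, $e_2$, $e_6 + e_{10}$, etc.\ appearing in \eqref{K-qdilog-id}. I would simply tabulate these $\alpha_r$ from the seven steps on each side of Lemma \ref{lem:K-trop} and check that they reproduce the two sides of \eqref{K-qdilog-id} verbatim.

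The main obstacle I anticipate is purely organizational rather than conceptual: each composite $\mu_{i,j,i}$ contributes three quantum dilogarithm factors of two different bases $q_i, q_j$, and the positions of the $\Psi_{q^2}(\cdot)\,(\cdot)\,\Psi_{q^2}(\cdot)^{-1}$ conjugations must be threaded correctly through the product so that the $c$-vectors of the underlined tropical variables in Lemma \ref{lem:K-trop} align with the exponents displayed in \eqref{K-qdilog-id}. Once this bookkeeping is done, no further computation is required because Theorem \ref{thm:id-mono-qdilog} (which is stated for skew-symmetrizable $B$, precisely the case at hand since $J_{123123123}$ has vertices of weights $1$ and $2$) does the rest.
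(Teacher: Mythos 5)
Your proposal is correct and follows essentially the same route as the paper, which states this proposition as a direct corollary of Lemma \ref{lem:K-trop} via Theorem \ref{thm:period}, Theorem \ref{thm:id-mono-qdilog} and the decomposition \eqref{eq:K-decomp}, exactly as you outline. The only cosmetic difference is that here the combined sequence is an $\mathrm{id}$-period (both sides of the tropical computation land on the identical seed with no relabeling), so no nontrivial $\sigma$ needs to be read off; the remaining bookkeeping of the $c$-vectors against the underlined tropical variables is precisely what the displayed exponents in \eqref{K-qdilog-id} record.
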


Let $S := \{1,2,\ldots,9\}$ be the set of crossings  in the wiring diagrams in Figure \ref{fig:reflection-eq}.
To the crossing $i \in S$ we assign a pair of noncommuting variables $(p_i,u_i)$ satisfying 
\begin{align}\label{pu3}
[p_i,u_j] 
= \begin{cases}
2 \hbar & i=j=3,6,9, \\
\hbar & i=j=1,2,4,5,7,8, \\
0 & \text{otherwise},
\end{cases} 
\qquad 
[p_i,p_j] = [u_i,u_j] = 0.
\end{align}
Let $\mathcal{W}(C_3)$ 
be the algebra over $\C$ generated by the $q$-commuting $q$-Weyl pairs $e^{\pm p_i}, e^{\pm u_i}~(i \in S)$. In the same way as \S 4, we define a group $N(C_3)$ generated by \eqref{N(A3)gen}, and the symmetric group $\mathfrak{S}_9$ generated by permutations $\rho_{ij} ~(i,j \in S)$. The group $\mathfrak{S}_9$ acts on $N(C_3)$ by adjoint action, and a semidirect product $N(C_3) \rtimes \mathfrak{S}_9$ acts on $\mathcal{W}(C_3)$ by adjoint action. 
We use the complex parameters $\lambda_i, \kappa_i~(i \in S)$ and the notation \eqref{para}.

Corresponding to the transformation $R_{ijk}$ of the wiring diagrams, 
we let $\pi_{ijk}$ denote the isomorphism of $\mathcal{W}(C_3)$ 
given by \eqref{eq:R-pi-general}.
Similarly, the isomorphism $\pi^K_{ijkl}$ of $\mathcal{W}(C_3)$ associated with $K_{ijkl}$ is given by 
\begin{align}\label{eq:K-pi-general}
\pi^K_{ijkl}:
\begin{cases}
\;p_i \mapsto p_i+ \lambda_{jl}, 
\quad 
p_j \mapsto p_l+ 2p_i+ \lambda_{jl},
\\
\;p_k  \mapsto p_k-  \lambda_{jl},
\quad
p_l \mapsto p_j-2p_i-  \lambda_{jl},
\\
\;u_i \mapsto u_i+u_j-u_l,
\quad 
u_j  \mapsto u_l,
\\
\; u_k  \mapsto u_k,
\quad 
u_l \mapsto u_j,
\end{cases}
\end{align}
as the extension of \eqref{eq:K-pi}.
Let $P_{ijk}$ be the one defined in \eqref{eq:R-Pop-general}, which is now understood as an element of 
$N(C_3) \rtimes \mathfrak{S}_9$. 
From \eqref{K-Pop} we set 
\begin{equation}\label{eq:K-Pop-general}
\begin{split}
P^K_{ijkl} &= \rho_{jl}\, e^{\frac{1}{\hbar}p_i(u_l-u_j)} 
e^{\frac{\lambda_{jl}}{2\hbar}(2u_k-2u_i+u_l-u_j)}
\in N(C_3) \rtimes \mathfrak{S}_9.
\end{split}
\end{equation}
By combining $\phi$ \eqref{eq:phi-K} and $\phi$ \eqref{eq:phi_kappa},
we introduce an embedding $\phi: \mathcal{Y}(J_{123123123}) \hookrightarrow \mathrm{Frac}\mathcal{W}(C_3)$ by
\begin{align}\label{eq:phi-Y-C3}
\phi: 
\begin{cases}
Y_1 \mapsto \kappa_3^{-1} \,e^{p_3-u_3-2 p_2}, ~~ 
Y_2 \mapsto \kappa_3 \kappa_6^{-1} e^{p_3+u_3+p_6-u_6-2 p_5}, ~~
Y_3 \mapsto \kappa_6 \kappa_9^{-1} e^{p_6+u_6+p_9-u_9-2 p_8},
\\
Y_4 \mapsto \kappa_9 \, e^{p_9+u_9} , ~~
Y_5 \mapsto \kappa_2^{-1} e^{p_2-u_2-p_1}, ~~
Y_6 \mapsto \kappa_2 \kappa_5^{-1} e^{p_2+u_2+p_5-u_5-p_3-p_4},
\\ 
Y_7 \mapsto \kappa_5 \kappa_8^{-1} e^{p_5+u_5+p_8-u_8-p_6-p_7}, ~~ 
Y_8 \mapsto \kappa_8 \,e^{p_8+u_8-p_9}, ~~
Y_9 \mapsto \kappa_1^{-1} e^{p_1-u_1},
\\
Y_{10} \mapsto \kappa_1 \kappa_4^{-1} e^{p_1+u_1+p_4-u_4-p_2},~~
Y_{11} \mapsto \kappa_4 \kappa_7^{-1} e^{p_4+u_4+p_{7}-u_{7}-p_5}, ~~
Y_{12} \mapsto \kappa_7 \,e^{p_7+u_7-p_8}.
\end{cases}
\end{align}
Rules similar to \eqref{fig:A_3-phi} can also be formulated for this $\phi$ involving vertices with weight 2. However, due to increased complexity, we are omitting those details here and providing only the results.

In the same manner as \S 4, we let $\mathcal{W}_{\mathbb{A}}(J_{123123123}) \subset \mathcal{W}(C_3)$ be the image of $\mathbb{A}(J_{123123123})$ by $\phi$, and $\hat{\mathcal{W}}_{\mathbb{A}}(J_{123123123})$ be the completion of $\mathcal{W}_{\mathbb{A}}(J_{123123123})$ with respect to the ideal generated by $\phi(Y_i) ~(i \in I)$. 
The map $\phi$ induces the map from $\hat{\mathbb{A}}(J_{123123123})$ to  $\hat{\mathcal{W}}_{\mathbb{A}}(J_{123123123})$, and we define $\hat{\mathcal{W}}_{\mathbb{A}}^\times(J_{123123123})$ to be the image of $\hat{\mathbb{A}}^{\! \times}(J_{123123123})$ by the induced map. In the remainder of this section, we abbreviate the dependence on $J_{123123123}$. See \eqref{A-W-relation} for the relations among the noncommuting algebras. 

The following is proved by direct calculation using \eqref{eq:R-pi-general} and  \eqref{eq:K-pi-general}.
\begin{lem}\label{le:nK}
The morphisms $\pi_{ijk}$ and $\pi^K_{ijkl}$ on $\mathcal{W}(C_3)$ satisfy the 3D reflection equation:
\begin{align}\label{eq:K-pi-id}
\pi_{457} \pi^K_{4689} \pi^K_{2379} \pi_{258} \pi_{178} \pi^K_{1356} \pi_{124} 
= \pi_{124} \pi^K_{1356} \pi_{178} \pi_{258} \pi^K_{2379} \pi^K_{4689} \pi_{457}.
\end{align}
\end{lem}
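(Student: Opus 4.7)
The proof will proceed by direct verification on the generators of $\mathcal{W}(C_3)$. Both sides of \eqref{eq:K-pi-id} are compositions of seven $\C$-algebra automorphisms, each of which, according to \eqref{eq:R-pi-general} and \eqref{eq:K-pi-general}, acts as an affine-linear substitution on the exponents of $e^{p_a}$ and $e^{u_a}$, and only touches the indices appearing in its subscript. Therefore it suffices to compare the images of the eighteen generators $e^{p_i}, e^{u_i}$ ($i=1,\ldots,9$) under the two sides.

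The plan is to separate the verification into its $u$-part and its $p$-part, which do not interact under either \eqref{eq:R-pi-general} or \eqref{eq:K-pi-general}. On the $u$-side, every $\pi_{ijk}$ performs $u_i \mapsto u_i+u_j-u_k$ together with the transposition $u_j \leftrightarrow u_k$, and every $\pi^K_{ijkl}$ does $u_i \mapsto u_i+u_j-u_l$, $u_j \leftrightarrow u_l$ while fixing $u_k$; no $\lambda$'s appear, so this reduces to a purely combinatorial identity between two words of seven substitutions in the free abelian group generated by $u_1,\ldots,u_9$. On the $p$-side, each step is also $\Z$-linear in $p_1,\ldots,p_9$, with constant shifts lying in $\bigoplus \Z \lambda_i$. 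One can encode each atomic substitution by a $9 \times 9$ integer matrix together with a shift vector, so that composition becomes matrix multiplication and addition of shifts, and the desired identity becomes an equality of two explicit products.

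The conceptual guide for why such an equality must hold is the pictorial content of Figures \ref{fig:reflection-eq} and \ref{fig:reflection-eq2}: the two sides of \eqref{eq:K-pi-id} correspond to the two ways of transforming the initial wiring diagram on the left into the \emph{same} final wiring diagram on the right. The canonical pair $(p_i,u_i)$ attached to each red crossing is carried along by these moves, and coincidence of the final configurations forces the two compositions to agree on every generator. Lemma \ref{le:nK} is the algebraic shadow of this geometric observation, and Lemma \ref{lem:pi-tetra} provides the template.

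The main obstacle is purely bookkeeping: seven composed substitutions on eighteen variables produce long intermediate expressions. The cleanest strategy is to track each generator one substitution at a time, and to exploit the fact that any two $\pi$'s whose subscript sets are disjoint commute, enabling strategic reordering within either side so that matching subwords can be identified and simplified jointly. The $u$-identity then reduces to braid-type relations for the permutations $\rho_{jk}$, $\rho_{jl}$ composed with the triangle shifts, while the $p$-identity reduces to matching the two sums of $\lambda_{ab}$-shifts picked up along the two paths. Both are tractable from the explicit formulas \eqref{eq:R-pi-general}, \eqref{eq:K-pi-general}, justifying the paper's description as a ``direct calculation''.
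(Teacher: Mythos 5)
Your proposal is correct and matches the paper's treatment: the paper proves Lemma \ref{le:nK} by exactly the direct calculation you describe, composing the explicit substitutions \eqref{eq:R-pi-general} and \eqref{eq:K-pi-general} on the generators, and your observation that the $u$-part and the $p$-part (with its $\lambda$-shifts) decouple is a valid and clean way to organize that computation. One small caveat: in the specific product \eqref{eq:K-pi-id} essentially every pair of adjacent factors shares an index, so the disjoint-support commutation you propose as a reordering tool has almost nothing to act on; the generator-by-generator tracking carries the whole argument on its own.
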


Let $\mathcal{R}_{ijk}$ be the one in \eqref{eq:R-ad}.
From \eqref{eq:K-decomp} and \eqref{eq:K-Pop-general}, we define $\mathcal{K}_{ijkl}= \mathcal{K}(\lambda_i,\lambda_j,\lambda_k,\lambda_l)_{ijkl}$ as 
\begin{align}\label{eq:K-ad}
\begin{split}
\mathcal{K}_{ijkl} &= \Psi_{q^2}(e^{p_j+u_j+p_l-u_l-2p_k+ \lambda_{jl}}) 
\Psi_q(e^{p_i+u_i+p_k-u_k-p_j+\lambda_{ik}}) 
\Psi_{q^2}(e^{p_j+u_j+p_l-u_l-2p_k+\lambda_{jl}})^{-1}  
\\
& \qquad   \times  \rho_{jl} \,e^{\frac{1}{\hbar}p_i(u_l-u_j)}
e^{\frac{\lambda_{jl}}{2\hbar}(2u_k-2u_i+u_l-u_j)},
\end{split}
\end{align}
where the factor in the second line is $P^K_{ijkl}$ in \eqref{eq:K-Pop-general}. 
Note that $\mathcal{K}_{ijkl}$ depends on the parameters $\lambda_i$ 
only through their differences similarly to $\mathcal{R}_{ijk}$ \eqref{eq:R-ad}.
Now we present the main result of this section.

\begin{thm}\label{thm:K-reflection}
The operators $P_{ijk}$ \eqref{eq:R-Pop-general} and $P^K_{ijkl}$ \eqref{eq:K-Pop-general} 
satisfy the 3D reflection equation in $N(C_3) \rtimes \mathfrak{S}_9$:
\begin{align}\label{eq:K-monoad-id}
P_{457} P^K_{4689} P^K_{2379} P_{258} P_{178} P^K_{1356} P_{124} 
= P_{124} P^K_{1356} P_{178} P_{258} P^K_{2379} P^K_{4689} P_{457}.
\end{align}
The operators $\mathcal{R}_{ijk}$ \eqref{eq:R-ad} and $\mathcal{K}_{ijkl}$ \eqref{eq:K-ad} 
satisfy the 3D reflection equation in $\hat{\mathcal{W}}^{\times}_{\mathbb{A}} \rtimes G$:
\begin{align}\label{eq:RK-full-id}
\begin{split}
&\mathcal{R}(\lambda_{457})_{457} \mathcal{K}(\lambda_{4689})_{4689} \mathcal{K}(\lambda_{2379})_{2379} \mathcal{R}(\lambda_{258})_{258} \mathcal{R}(\lambda_{178})_{178} \mathcal{K}(\lambda_{1356})_{1356}\mathcal{R}(\lambda_{124})_{124} 
\\
& ~~ = \mathcal{R}(\lambda_{124})_{124} \mathcal{K}(\lambda_{1356})_{1356} \mathcal{R}(\lambda_{178})_{178} \mathcal{R}(\lambda_{258})_{258} \mathcal{K}(\lambda_{2379})_{2379} \mathcal{K}(\lambda_{4689})_{4689} \mathcal{R}(\lambda_{457})_{457}.
\end{split}
\end{align}
where $\lambda_{ijk} = (\lambda_i,\lambda_j,\lambda_k)$ and 
$\lambda_{ijkl} = (\lambda_i,\lambda_j,\lambda_k,\lambda_l)$.
\end{thm}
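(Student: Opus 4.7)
The plan is to mirror the two-step strategy of Theorem~\ref{thm:R-tetra}. First I would prove the monomial identity \eqref{eq:K-monoad-id} directly in $N(C_3)\rtimes\mathfrak{S}_9$. The method is the same as in the proof of \eqref{eq:R-monoad-id}: push every permutation $\rho_{jk}$ or $\rho_{jl}$ to the right using $\rho\,e^{X(p,u)}=e^{X(\rho(p,u))}\rho$, so that each side of \eqref{eq:K-monoad-id} assumes the shape
\[
(\text{product of exponentials in }p_a,u_b)\cdot(\text{product in }\mathfrak{S}_9).
\]
Equality of the permutation parts amounts to a braid-group calculation in $\mathfrak{S}_9$ (the same that underlies Lemma~\ref{le:nK} restricted to the action on indices). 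Equality of the exponential parts is a bookkeeping exercise with the Baker–Campbell–Hausdorff formula, heavier than in the $A_3$ case because the $P^K_{ijkl}$ carry an extra $u_k$-dependence and the $\lambda$-dependent factors have to be kept track of, but structurally identical. It suffices to check the case $\lambda_i=0$ first and then reinstate the $\lambda$'s, exactly as in Remark~\ref{re:sp}.

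Next, via the embedding $\phi$ of \eqref{eq:phi-Y-C3} I would read the dilogarithm identity \eqref{K-qdilog-id} as an identity $\Xi^{\mathrm{L}}=\Xi^{\mathrm{R}}$ in $\hat{\mathcal{W}}_{\mathbb{A}}^{\times}$, where each side is a product of seven ``blocks''---four singleton blocks $\Psi_q(\phi(\rY^{\cdots}))$ corresponding to $\mathcal{R}$-positions, and three triple blocks $\Psi_{q^2}(\cdots)\Psi_q(\cdots)\Psi_{q^2}(\cdots)^{-1}$ corresponding to $\mathcal{K}$-positions. Multiplying this with \eqref{eq:K-monoad-id} side by side gives
\[
\Xi^{\mathrm{L}}\cdot P_{457}P^{K}_{4689}P^{K}_{2379}P_{258}P_{178}P^{K}_{1356}P_{124}
=\Xi^{\mathrm{R}}\cdot P_{124}P^{K}_{1356}P_{178}P_{258}P^{K}_{2379}P^{K}_{4689}P_{457}
\]
in $\hat{\mathcal{W}}_{\mathbb{A}}^{\times}\rtimes G$.

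Finally, I would interleave the $P$-factors among the seven blocks of $\Xi^{\mathrm{L}}$, using the telescoping trick from the proof of \eqref{eq:R-full-id}: write the LHS as $\Xi^{\mathrm{L}}_{1}P_{457}\cdot(P_{457}^{-1}\Xi^{\mathrm{L}}_{2}P_{457})P^{K}_{4689}\cdot\bigl(P^{K}_{4689}{}^{-1}P_{457}^{-1}\Xi^{\mathrm{L}}_{3}P_{457}P^{K}_{4689}\bigr)P^{K}_{2379}\cdots$ and simplify each conjugation block using $\mathrm{Ad}(P_{ijk}^{-1})=\pi_{ijk}^{-1}$ from \eqref{eq:R-pi-general} and $\mathrm{Ad}((P^K_{ijkl})^{-1})=(\pi^K_{ijkl})^{-1}$ from \eqref{eq:K-pi-general}. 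The arguments of the quantum dilogarithms in \eqref{K-qdilog-id} are precisely chosen so that after this successive conjugation, each singleton block lands on the $\Psi_q$-prefactor of the corresponding $\mathcal{R}_{ijk}$ in \eqref{eq:R-ad}, and each triple block lands on the $\Psi_{q^2}\Psi_q\Psi_{q^2}^{-1}$ prefactor of the corresponding $\mathcal{K}_{ijkl}$ in \eqref{eq:K-ad}. The same rearrangement on the RHS yields the right-hand side of \eqref{eq:RK-full-id}. That each conjugate $P^{-1}\Xi^{\mathrm{L}}_{k}P$ remains in $\hat{\mathcal{W}}_{\mathbb{A}}^{\times}$ is guaranteed by $P,P^K\in G$ (definition \eqref{eq:G-A3} and its $C_3$-analogue).

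The main obstacle is this final interleaving, especially at the $\mathcal{K}$-positions. For a singleton $\Psi_q$-block the conjugation reshapes a single exponent, as already seen in the proof of Theorem~\ref{thm:R-tetra}; but for a triple block one must verify that the \emph{same} conjugation simultaneously transports each of the three arguments $p_j+u_j+p_l-u_l-2p_k+\lambda_{jl}$ and $p_i+u_i+p_k-u_k-p_j+\lambda_{ik}$ into the form demanded by the next $\mathcal{K}_{ijkl}$. That this is automatic follows from the commutativity of the diagrams in Propositions~\ref{prop:R-ad} and \ref{prop:K-ad}: the $\pi$- and $\pi^K$-actions on the $q$-Weyl side are synchronized with the cluster mutations whose dilogarithm identity is \eqref{K-qdilog-id}. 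Hence the matching reduces to the same coherence that made \eqref{K-qdilog-id} valid in the first place, and the theorem follows.
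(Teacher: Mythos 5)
Your proposal follows essentially the same route as the paper's proof: a direct BCH-plus-braid-relation verification of the monomial identity \eqref{eq:K-monoad-id} (demonstrated at $\lambda_j=0$), then side-by-side multiplication with the quantum dilogarithm identity \eqref{K-qdilog-id}, and finally the telescoping conjugation by the $P$- and $P^K$-factors to reassemble the $\mathcal{R}_{ijk}$ and $\mathcal{K}_{ijkl}$ blocks. Your added remarks on why the triple blocks transport correctly under conjugation are a reasonable elaboration of what the paper leaves implicit, but the argument is the same.
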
 

\begin{proof}
For simplicity we outline the calculation of the two sides of (\ref{eq:K-monoad-id}) when $\lambda_j=0$ for all $j$.
\begin{align*}
\begin{split}%\label{eq:KP-right}
\text{LHS}
&= \rho_{57}\, e^{\frac{1}{\hbar}p_4(u_7-u_5)}
\rho_{69}\, e^{\frac{1}{\hbar}p_4(u_9-u_6)}
\rho_{39}\, e^{\frac{1}{\hbar}p_2(u_9-u_3)}
\rho_{58}\, e^{\frac{1}{\hbar}p_2(u_8-u_5)}
\\
& \qquad \cdot 
\rho_{78}\, e^{\frac{1}{\hbar}p_1(u_8-u_7)}
\rho_{36}\, e^{\frac{1}{\hbar}p_1(u_6-u_3)}
\rho_{24}\, e^{\frac{1}{\hbar}p_1(u_4-u_2)}
\\ 
&= e^{\frac{1}{\hbar}p_4(u_5-u_7)} e^{\frac{1}{\hbar}p_4(u_6-u_9)}
 e^{\frac{1}{\hbar}p_2(u_3-u_6)} e^{\frac{1}{\hbar}p_2(u_7-u_8)}
e^{\frac{1}{\hbar}p_1(u_5-u_7)} e^{\frac{1}{\hbar}p_1(u_6-u_4)} e^{\frac{1}{\hbar}p_1(u_2-u_4)}
\\
& \qquad \cdot 
\rho_{57} \rho_{69} \rho_{39} \rho_{58} \rho_{78} \rho_{36} \rho_{24},
\end{split}
\\
\begin{split}%\label{eq:KP-left}
\text{RHS}
&= \rho_{24}\, e^{\frac{1}{\hbar}p_1(u_4-u_2)}
\rho_{36}\, e^{\frac{1}{\hbar}p_1(u_6-u_3)}
\rho_{78}\, e^{\frac{1}{\hbar}p_1(u_8-u_7)}
\rho_{58}\, e^{\frac{1}{\hbar}p_2(u_8-u_5)}
\rho_{39}\, e^{\frac{1}{\hbar}p_2(u_9-u_3)}
\\
& \qquad \cdot 
\rho_{69}\, e^{\frac{1}{\hbar}p_4(u_9-u_6)}
\rho_{57}\, e^{\frac{1}{\hbar}p_4(u_7-u_5)}
\\ 
&= e^{\frac{1}{\hbar}p_1(u_2-u_4)} e^{\frac{1}{\hbar}p_1(u_3-u_6)}
e^{\frac{1}{\hbar}p_1(u_7-u_8)} e^{\frac{1}{\hbar}p_4(u_5-u_7)}
e^{\frac{1}{\hbar}p_4(u_6-u_9)} e^{\frac{1}{\hbar}p_2(u_3-u_6)} e^{\frac{1}{\hbar}p_2(u_7-u_8)}
\\
& \qquad \cdot 
\rho_{24} \rho_{36} \rho_{78} \rho_{58}\rho_{39} \rho_{69} \rho_{57}.
\end{split}
\end{align*}
By using the braid relation and the BCH formula similarly to 
the proof of Theorem \ref{thm:R-tetra}, we get \eqref{eq:K-monoad-id}.

Multiplying \eqref{K-qdilog-id} and \eqref{eq:K-monoad-id} side by side, we have an identity in 
$\hat{\mathcal{W}}^{\times}_{\mathbb{A}} \rtimes G$:
\begin{align*}
\begin{split}
&\Psi_q(Y_{10}) 
\cdot \Psi_{q^2}(Y_2) \Psi_q(q^{-1}Y_6 Y_{10}) \Psi_{q^2}(Y_2)^{-1} 
\cdot \Psi_{q}(q^{-2} Y_2 Y_6 Y_7 Y_{10}) \cdot \Psi_{q}(Y_{11})
\\ &\quad 
\cdot \Psi_{q^2}(Y_3) \Psi_q(q^{-1} Y_7 Y_{11}) \Psi_{q^2}(Y_3)^{-1}
\cdot \Psi_{q^2}(Y_2) \Psi_q(Y_6) \Psi_{q^2}(Y_2)^{-1}
\cdot \Psi_{q}(q^{-1} Y_2 Y_6 Y_7)
\\ &\quad
\cdot P_{457} P^K_{4689} P^K_{2379} P_{258} P_{178} P^K_{1356} P_{124}
\\
&= \Psi_q(\rY^{e_{11}}) 
\cdot \Psi_{q^2}(Y_3) \Psi_q(q^{-1} Y_7 Y_{11}) \Psi_{q^2}(Y_3)^{-1}
\cdot \Psi_{q^2}(Y_2) \Psi_q(Y_6) \Psi_{q^2}(Y_2)^{-1}
\\ &\quad 
\cdot \Psi_{q}(q^{-1} Y_2 Y_6 Y_7) \cdot \Psi_{q}(q^{-1} Y_{10} Y_{11}) 
\cdot \Psi_{q^2}(Y_3) \Psi_q(q^{-2} Y_7 Y_{10} Y_{11}) \Psi_{q^2}(Y_3)^{-1} 
\cdot \Psi_{q}(Y_{10})
\\ & \quad
\cdot P_{124} P^K_{1356} P_{178} P_{258} P^K_{2379} P^K_{4689} P_{457},
\end{split}
\end{align*}
where all the $Y_i$'s are understood to be $\phi(Y_i)$. 
From this we obtain \eqref{eq:RK-full-id} by moving $P_{ijk}$ and $P^K_{ijkl}$ to 
the left appropriately in the same manner as the proof of Theorem \ref{thm:R-tetra}. 
\end{proof}

\begin{remark}\label{re:ezk}
Consider the generalizations of 
$\pi^K_{ijkl}$ (\ref{eq:K-pi}), $P^K_{ijkl}$ (\ref{eq:K-Pop-general}) and 
$\mathcal{K}_{ijkl}$ (\ref{eq:K-ad}) including the two complex parameters $\beta$ and $\gamma$ 
as follows:
\begin{align}
\pi^K_{ijkl}:&
\begin{cases}
\;p_i \mapsto p_i+ (1-\beta) \lambda_{jl}, 
\quad 
p_j \mapsto p_l+ 2p_i+(1-\gamma) \lambda_{jl},
\\
\;p_k  \mapsto p_k- (1-\beta)\lambda_{jl},
\quad
p_l \mapsto p_j-2p_i- (1-\gamma) \lambda_{jl},
\\
\;u_i \mapsto u_i+u_j-u_l+ \beta\lambda_{jl},
\quad 
u_j  \mapsto u_l + (\gamma-2\beta)\lambda_{jl},
\\
\; u_k  \mapsto u_k+ (\beta-\gamma) \lambda_{jl},
\quad 
u_l \mapsto u_j+ \gamma \lambda_{jl},
\end{cases}
\\
P^K_{ijkl} &= \rho_{jl}\, e^{\frac{1}{\hbar}p_i(u_l-u_j)} 
e^{\frac{\lambda_{jl}}{2\hbar}(2(1-\beta)(u_k-u_i)+(1-\gamma)(u_l-u_j))}
e^{\frac{\lambda_{jl}}{2\hbar}(2\beta p_i+(\gamma-2\beta)p_j+2(\beta-\gamma)p_k+\gamma p_l)},
\\
\mathcal{K}_{ijkl} &= \Psi_{q^2}(e^{p_j+u_j+p_l-u_l-2p_k+ \lambda_{jl}}) 
\Psi_q(e^{p_i+u_i+p_k-u_k-p_j+\lambda_{ik}}) 
\Psi_{q^2}(e^{p_j+u_j+p_l-u_l-2p_k+\lambda_{jl}})^{-1} P^K_{ijkl}.
\end{align}   
Then Proposition \ref{prop:K-ad}, Lemma \ref{le:nK} and Theorem \ref{thm:K-reflection} remain valid.
In particular, the 3D reflection equations hold with the companion 
$\pi_{ijk}, P_{ijk}$ and $\mathcal{R}_{ijk}$ in Remark \ref{re:Rcd} including a complex parameter $\alpha$.
\end{remark}

%%%%%%%%%%%%%%%%%%%%%%%%%%%%%%%%%%%%%%%%%%%%%%%
\section{Representation with quantum dilogarithm}
%%%%%%%%%%%%%%%%%%%%%%%%%%%%%%%%%%%%%%%%%%%%%%%

We retain the notation $q=e^\hbar$ in (\ref{para}).
Consider an infinite dimensional representation of the $q$-commuting operators $e^{p_i}$ and $e^{u_i}$ 
on the vector space $V = \oplus_{a_i \in \Z} \C |a_i \rangle$:
\begin{align}\label{eq:rep-q}
e^{p_i} |a_i \rangle = |a_i-1 \rangle, \quad 
e^{u_i} |a_i \rangle = 
\begin{cases}
q^{a_i}|a_i \rangle & \text{ if $[p_i,u_i] = \hbar$ (i.e. $i=1,3$)},
\\
q^{2 a_i}|a_i \rangle & \text{ if $[p_i,u_i] = 2 \hbar$ (i.e. $i=2,4$)}.
\end{cases}
\end{align}
This representation will be referred to as the $u$-representation.
We write $|a_1,a_2,\ldots,a_m \rangle$ for a tensor product 
$|a_1 \rangle \otimes |a_2 \rangle \otimes \cdots \otimes |a_m \rangle \in V^{\otimes m}$, 
where $p_i$ and $u_i$ act on the $i$-th component.
We write $\langle a_i |$ for the dual basis, satisfying $\langle a_i ||a_i' \rangle = \delta_{a_i,a_i'}$.

Besides the $u$-representation, we will also consider another one, 
the $p$-representation, which is defined by replacing (\ref{eq:rep-q}) with 
\begin{equation}\label{prep}
e^{p_i} |c_i \rangle = 
\begin{cases}
q^{c_i}|c_i\rangle & \text{if  $[p_i,u_i] = \hbar$ \; (i.e. $i=1,3$)},
\\
q^{2c_i}|c_i\rangle & \text{if  $[p_i,u_i] = 2 \hbar$ \;(i.e. $i=2,4$)},
\end{cases}
\qquad 
e^{u_i} |c_i \rangle = |c_i+1 \rangle. 
\end{equation}
In the following we use the fact $1/(q;q)_n = 0$ for $n <0$,
which follows from \eqref{eq:qq_n}.

\subsection{Representation of $\mathcal{R}_{123}$}

We use the parameters $\lambda_i$ and $\kappa_i$ which are related as in (\ref{para}).
Let us consider $\mathcal{R}_{123}$ in \eqref{eq:R-ad}:
\begin{align}
\mathcal{R}_{123} = \Psi_q( \kappa_1 \kappa_3^{-1} e^{p_1+u_1+p_3-u_3-p_2}) \rho_{23}\, e^{\frac{1}{\hbar}p_1(u_3-u_2)} 
e^{\frac{\lambda_{23}}{\hbar}(u_3-u_1)}.
\end{align}

\begin{prop}
In the $u$-representation, matrix elements of $\mathcal{R}_{123}$ are given by
\begin{align}\label{eq:Rkappa-rep}
\begin{split}
  R_{b_1,b_2,b_3}^{a_1,a_2,a_3}(\lambda_1,\lambda_2,\lambda_3) 
  &:= \langle a_1,a_2,a_3 | \mathcal{R}_{123} |b_1,b_2,b_3 \rangle
  \\
  &= \frac{(-q)^{a_2-b_3}}{(q^2;q^2)_{a_2-b_3}} q^{(b_1-b_3+n_{13})(a_2-b_3-n_{23}) + n_{13}n_{23}}
\, \delta_{b_1+b_2}^{a_1+a_2} \,\delta_{b_2+b_3}^{a_2+a_3},
\end{split}
\end{align}
where $n_{ij} := \frac{\lambda_{ij}}{\hbar}$.
\end{prop}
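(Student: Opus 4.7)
The plan is to apply $\mathcal{R}_{123}$ to the ket $|b_1,b_2,b_3\rangle$ factor by factor from right to left, using the explicit action (\ref{eq:rep-q}), and then read off the matrix element by pairing with $\langle a_1,a_2,a_3|$.

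First I would dispose of the three easy factors. The rightmost exponential $e^{\lambda_{23}(u_3-u_1)/\hbar}$ acts diagonally, because $u_1$ and $u_3$ are in different tensor slots and both act as scalars on basis vectors; it contributes the overall phase $q^{n_{23}(b_3-b_1)}$. Next, $e^{p_1(u_3-u_2)/\hbar}$ involves $p_1$ only, while $u_2,u_3$ commute with $p_1$ and still act as scalars $b_2\hbar$ and $b_3\hbar$; hence this operator is the shift $|b_1\rangle\mapsto|b_1+b_2-b_3\rangle$ on the first slot, producing $|b_1+b_2-b_3,b_2,b_3\rangle$. The permutation $\rho_{23}$ then swaps the 2nd and 3rd tensor components, yielding the intermediate state
\[
|\beta_1,\beta_2,\beta_3\rangle := |b_1+b_2-b_3,\,b_3,\,b_2\rangle,
\]
with an overall scalar $q^{n_{23}(b_3-b_1)}$.

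The substantive step is applying $\Psi_q(U)$ with $U=\kappa_1\kappa_3^{-1}e^{p_1+u_1+p_3-u_3-p_2}$. I would expand via (\ref{eq:q-dilog-sum}), $\Psi_q(U)=\sum_{n\ge 0}(-qU)^n/(q^2;q^2)_n$, and compute $U^n$ on the intermediate state directly. Since $p_1+u_1$, $p_3-u_3$ and $-p_2$ mutually commute, one applies BCH within each canonical pair (using $[p_1,u_1]=[p_3,u_3]=\hbar$) to obtain
\[
e^{p_1+u_1+p_3-u_3-p_2}\,|\beta_1,\beta_2,\beta_3\rangle \;=\; q^{\beta_1-\beta_3}\,|\beta_1-1,\beta_2+1,\beta_3-1\rangle .
\]
Iterating, the eigenvalue factor is constant under each step because $(\beta_1-k)-(\beta_3-k)$ does not depend on $k$, so
\[
U^n|\beta_1,\beta_2,\beta_3\rangle \;=\; q^{n(\beta_1-\beta_3)+n\,n_{13}}\,|\beta_1-n,\beta_2+n,\beta_3-n\rangle .
\]

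Finally, pairing with $\langle a_1,a_2,a_3|$ forces $a_1=\beta_1-n$, $a_2=\beta_2+n=b_3+n$, $a_3=\beta_3-n=b_2-n$, which pins down $n=a_2-b_3$ and yields the two Kronecker deltas $\delta^{a_1+a_2}_{b_1+b_2}$ and $\delta^{a_2+a_3}_{b_2+b_3}$. Collecting the three $q$-powers (the phase from step one, the factor $q^{n\,n_{13}}$ from $U^n$, and $q^{n(\beta_1-\beta_3)}=q^{(a_2-b_3)(b_1-b_3)}$) gives the exponent
\[
-(b_1-b_3)n_{23}+(a_2-b_3)n_{13}+(a_2-b_3)(b_1-b_3),
\]
which, upon adding and subtracting $n_{13}n_{23}$, rearranges exactly to $(b_1-b_3+n_{13})(a_2-b_3-n_{23})+n_{13}n_{23}$, matching (\ref{eq:Rkappa-rep}). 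The only bookkeeping hazard is the BCH reordering in $U^n$; all other steps are formal.
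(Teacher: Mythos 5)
Your proposal is correct and follows essentially the same route as the paper's proof: first evaluate the monomial part $P_{123}$ (diagonal phase $q^{n_{23}(b_3-b_1)}$, shift of the first slot, swap of slots 2 and 3), then expand $\Psi_q(U)$ by (\ref{eq:q-dilog-sum}), act on the intermediate ket, and read off $n=a_2-b_3$ from the bra; the BCH scalars from the pairs $(p_1,u_1)$ and $(p_3,-u_3)$ indeed cancel, so your formula for $U^n$ and the final exponent agree with (\ref{eq:Rkappa-rep}).
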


\begin{proof}
We first calculate the action of $P_{123}= \rho_{23}\, e^{\frac{1}{\hbar}p_1(u_3-u_2)} e^{\frac{\lambda_{23}}{\hbar}(u_3-u_1)}$:
\begin{align*}
\rho_{23}\,& e^{\frac{1}{\hbar}p_1(u_3-u_2)} e^{\frac{\lambda_{23}}{\hbar}(u_3-u_1)} |a,b,c \rangle 
=
e^{\frac{\lambda_{23}}{\hbar}(c-a)} \rho_{23}\, e^{\frac{1}{\hbar}p_1(u_3-u_2)} |a,b,c \rangle
\\
&= q^{n_{23}(c-a)} \rho_{23} |a+b-c,b,c \rangle
= q^{n_{23}(c-a)} |a+b-c,c,b \rangle,
\end{align*}
where the linear version $u_i |m_i \rangle = \hbar m_i |m_i \rangle$ 
of \eqref{eq:rep-q} has been used.
From \eqref{eq:q-dilog-sum}, the action of $\Psi_q(\phi_\kappa(Y_4))$ is calculated as
\begin{align*}
\Psi_q(\kappa_1 \kappa_3^{-1} e^{p_1+u_1+p_3-u_3-p_2}) |a,b,c \rangle
&= \sum_{n = 0}^\infty \frac{(-q)^n}{(q^2;q^2)_n} \left(\frac{\kappa_1}{\kappa_3}\right)^n e^{n p_1} e^{n u_1} e^{-np_2} e^{np_3} e^{-n u_3} |a,b,c \rangle
\\
&= \sum_{n = 0}^\infty \frac{(-q)^n}{(q^2;q^2)_n} q^{\frac{\lambda_{13}}{\hbar}n} q^{(a-c)n} |a-n,b+n,c-n \rangle. 
\end{align*}
Combining them leads to \eqref{eq:Rkappa-rep} as follows: 
\begin{align*}
&\langle a_1,a_2,a_3 | \Psi_q(\kappa_1 \kappa_3^{-1} e^{p_1+u_1+p_3-u_3-p_2}) P_{123} | b_1,b_2,b_3 \rangle
\\
& \quad = \sum_{n \geq 0} \frac{(-q)^n}{(q^2;q^2)_n} 
q^{\frac{ \lambda_{23}}{\hbar}(b_3-b_1)}
q^{\frac{ \lambda_{13}}{\hbar}n} 
q^{(b_1-b_3)n} \,\delta_{b_1+b_2-b_3-n}^{a_1} \,\delta_{b_3+n}^{a_2} \,\delta_{b_2-n}^{a_3}
\\
& \quad = \frac{(-q)^{a_2-b_3}}{(q^2;q^2)_{a_2-b_3}} 
q^{\frac{ \lambda_{23}}{\hbar}(b_3-b_1)}
q^{\frac{\lambda_{13}}{\hbar}(a_2-b_3)} 
q^{(b_1-b_3)(a_2-b_3)} \delta^{a_1+a_2}_{b_1+b_2} \, \delta^{a_2+a_3}_{b_2+b_3}. 
\end{align*}
\end{proof}

The following claim is a corollary of Theorem \ref{thm:R-tetra}. 
We provide an independent and simple proof of it, which may be of separate interest.

\begin{thm}\label{th:teu}
The matrix elements (\ref{eq:Rkappa-rep}) satisfy the tetrahedron equation:
\begin{equation}
\begin{split}
&\sum_{b_1,\ldots,b_6 \in \Z}
R^{a_1,a_2,a_4}_{b_1,b_2,b_4}(\lambda_1, \lambda_2,\lambda_4) 
R^{b_1,a_3,a_5}_{c_1,b_3,b_5}(\lambda_1, \lambda_3,\lambda_5) 
R^{b_2,b_3,a_6}_{c_2,c_3,b_6}(\lambda_2, \lambda_3,\lambda_6)
R^{b_4,b_5,b_6}_{c_4,c_5,c_6}(\lambda_4, \lambda_5,\lambda_6)
\\
=& 
\sum_{b_1,\ldots,b_6  \in \Z}
R^{a_4,a_5,a_6}_{b_4,b_5,b_6} (\lambda_4, \lambda_5,\lambda_6)
R^{a_2,a_3,b_6}_{b_2,b_3,c_6} (\lambda_2, \lambda_3,\lambda_6)
R^{a_1,b_3,b_5}_{b_1,c_3,c_5} (\lambda_1, \lambda_3,\lambda_5) 
R^{b_1,b_2,b_4}_{c_1,c_2,c_4} (\lambda_1, \lambda_2,\lambda_4),
\end{split}
\end{equation}
where $a_i, c_i  \in \Z (i=1,\ldots, 6)$ are arbitrary. 
\end{thm}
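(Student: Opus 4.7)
The plan is to verify the identity by a direct computation with the closed-form matrix elements (\ref{eq:Rkappa-rep}), treating it as an independent verification rather than appealing to Theorem \ref{thm:R-tetra}. First I would substitute the formula into both sides, separating each term into three pieces: the product of four $q$-Pochhammer prefactors of the shape $(-q)^{\ast}/(q^2;q^2)_\ast$, the product of four $q$-exponential weights (polynomial of degree two in the indices and linear in $n_{ij}=\lambda_{ij}/\hbar$), and the product of eight Kronecker deltas coming from the two conservation relations carried by each $R$-factor.

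Next I would solve the eight linear conservation equations on each side for the six summation variables $b_1,\dots,b_6$. An inspection shows that seven of the eight delta relations on either side can be used to express all of $b_1,\dots,b_6$ in terms of a single free parameter (say $b_2$ on the LHS, $b_4$ on the RHS), while the remaining delta, together with the first two redundancies, yields the three external conservation laws
\begin{equation*}
a_1+a_2+a_3=c_1+c_2+c_3,\quad a_3+a_5+a_6=c_3+c_5+c_6,\quad a_1+a_2+c_5+c_6=a_5+a_6+c_1+c_2.
\end{equation*}
Both sides manifestly vanish outside this common support, so it suffices to establish the identity on it. On this support each side collapses to a single $q$-series of the form $\sum_{n} q^{\alpha n+\beta\binom{n}{2}}\big/\big((q^2;q^2)_n(q^2;q^2)_{N-n}\big)$, the summation variable on each side being a relabeled version of the free parameter.

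The identification of the two resulting one-variable sums is then a matter of matching: (i) the total $n$-degree $N$, which is determined by the external indices via the conservation laws and agrees trivially on both sides; (ii) the coefficient of $\binom{n}{2}$, which comes purely from the quadratic part of the prefactor in the exponent; and (iii) the linear-in-$n$ coefficient $\alpha$, which mixes external indices and the spectral parameters $n_{ij}$. Once (i)--(iii) are matched, the two sums are literally equal term by term and no nontrivial $q$-hypergeometric summation is needed. The main obstacle is therefore the bookkeeping of the quadratic exponent: each of the four $R$-factors contributes a cross term of the form $(b_i-b_j+n_{kl})(b_m-b_n-n_{pq})$, and after using the conservation laws to eliminate five of the $b$'s, the surviving quadratic and spectral contributions must cancel between the two sides. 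This cancellation, which is guaranteed in the abstract by Theorem \ref{thm:R-tetra}, produces the explicit matching of exponents that underlies the duality noted in Remark \ref{re:dual}, and it is the step where any sign or shift error in the formula (\ref{eq:Rkappa-rep}) would be detected.
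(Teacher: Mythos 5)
Your overall strategy --- substitute the explicit formula, use the Kronecker deltas to collapse each side to a single sum, and compare the resulting $q$-series --- is exactly the route the paper takes. However, the plan has two concrete problems, one minor and one fatal to the argument as written.

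The minor one is the list of conservation laws: of the three relations you display, the third, $a_1+a_2+c_5+c_6=a_5+a_6+c_1+c_2$, is a consequence of the first two (each of $a_1+a_2-c_1-c_2$ and $a_5+a_6-c_5-c_6$ equals $c_3-a_3$), so you have only two independent constraints where there must be three. The missing independent relation is $-a_1+a_4+a_5+c_1-c_4-c_5=0$; note that none of your three relations involves $a_4$ or $c_4$ at all, which is the telltale sign. The serious gap is the final step. Each $R$-factor contributes a Pochhammer $(q^2;q^2)_{a_2-b_3}$ in the denominator, so after eliminating five of the six $b$'s the surviving single sum has \emph{three} $n$-dependent Pochhammer symbols in the denominator (one of the four turns out to be $n$-independent and factors out), of the shape $\sum_n (-1)^n q^{n(n+1+2s)}\big/\big((q^2;q^2)_n(q^2;q^2)_{t-n}(q^2;q^2)_{n+r}\big)$, not the two-Pochhammer form $\sum_n q^{\alpha n+\beta\binom{n}{2}}/\big((q^2;q^2)_n(q^2;q^2)_{N-n}\big)$ you propose. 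More importantly, the two sides are \emph{not} equal term by term under any relabeling of $n$: they differ by the interchange $r\leftrightarrow s$ (together with the swap of the $n$-independent prefactors $(q^2;q^2)_{s+t}\leftrightarrow(q^2;q^2)_{r+t}$), and establishing their equality is a genuine $q$-hypergeometric identity. The paper proves it by applying the $q$-binomial expansion to $(q^{2n+2r+2};q^2)_{t-n}$, which rewrites each side as a double sum manifestly symmetric in $r$ and $s$. Your claim that ``no nontrivial $q$-hypergeometric summation is needed'' is therefore where the proof would break down; matching the degree, the $\binom{n}{2}$-coefficient, and the linear coefficient is not sufficient because the Pochhammer structure of the two summands genuinely differs.
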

\begin{proof}
Upon substituting (\ref{eq:Rkappa-rep}), both sides contain eight Kronecker deltas.
By inspecting them, one finds that the two sides are vanishing unless the following conditions hold:
$a_1 + a_2 + a_3 - c_1 - c_2 - c_3= -a_1 + a_4 + a_5 + c_1 - c_4 - c_5
=a_3 + a_5 + a_6 - c_3 - c_5 - c_6=0$.
In what follows we assume these conditions and understand that
$c_3, c_5, c_6$ have been eliminated by them.
Then the LHS becomes a single sum over $b_2$, where the other $b_i$'s are related as
$b_1=a_1+a_2-b_2,b_3=a_1+a_2+a_3-b_2-c_1,b_4=a_2+a_4-b_2,
b_5=-a_1-a_2+a_5+b_2+c_1,b_6=a_6-b_2+c_2$.
Similarly, the RHS is a single sum over $b_2$, where $b_1, b_3, b_4, b_5, b_6$ are related as
$b_1=-b_2+c_1+c_2,b_3=a_2+a_3-b_2,b_4=-b_2+c_2+c_4,
b_5=a_4+a_5+b_2-c_2-c_4,b_6=-a_4+a_6-b_2+c_2+c_4$.
Note that the summation variable $b_2$ may be freely shifted, as the property 
$1/(q^2;q^2)_n=0 \,(n<0)$ automatically selects the range yielding non-vanishing contributions. 
Thus we change the summation variable $b_2$ into $n$ by setting $b_2 =  n+a_4$ in the LHS
and $b_2=n+c_4$ in the RHS.
After removing a common factor independent of $n$  (a sign and a messy power of $q$)  from both sides,
one is left to show the concise equality:
\begin{align}\label{bd}
\frac{1}{(q^2)_{s+t}}\sum_{n \in \Z} \frac{(-1)^n q^{n(n+1+2s)}}
{(q^2)_{n}(q^2)_{t-n}(q^2)_{n+r}}
= 
\frac{1}{(q^2)_{r+t}}
\sum_{n \in \Z} \frac{(-1)^n q^{n(n+1+2r)}}
{(q^2)_{n}(q^2)_{t-n}(q^2)_{n+s}},
\end{align}
where $(q^2)_n=(q^2;q^2)_n$ for short, and 
$r = -a_1 + 2 a_4 + a_5 - a_6 + c_1 - c_2 - c_4, 
s = a_4 + a_5 - a_6 - c_2, 
t = a_1 + a_2 + a_3 - a_4 - a_5 - c_1$.
In order to prove this, it suffices to show the symmetry of 
$\mathrm{LHS}\times (q^2)_{s+t}(q^2)_{r+t}$
under the interchange of $r$ and $s$.
This is made manifest by applying the $q$-binomial expansion as follows:
\begin{align*}
&\sum_{n=0}^t
\frac{ (-1)^n q^{n(n+1+2s)}(q^{2n+2r+2};q^2)_{t-n}}
{(q^2)_{n}(q^2)_{t-n}}
= \sum_{\substack{n,m\ge 0\\n+m\le t}}
\frac{(-1)^{n+m}q^{n(n+1)+m(m+1)+2nm}}{(q^2)_{n}(q^2)_{m}(q^2)_{t-n-m}}
q^{2r m+2s n}.
\end{align*}
\end{proof}

\begin{remark}\label{re:dual}
The fact that the tetrahedron equation (\ref{eq:Rkappa-rep}) can be distilled into a duality between $r$ and $s$ as shown in (\ref{bd}) is intriguing. A similar feature is also present in the upcoming proof of Theorem \ref{th:teR}.
According to [SY22, TY14], mutation sequences give rise to the partition functions of $3$-dimensional supersymmetric gauge theories, and the tetrahedron
equation should correspond to a duality of gauge theories.
It will be very interesting to develop this correspondence in
connection with gauge-theoretic interpretations
of the Yang-Baxter equation \cite{S12, TY12, Y12, Yag15, BPZ15}. 
\end{remark}

\begin{prop}
Assume that $n_{23} := \frac{\lambda_{23}}{\hbar} \in \Z$. 
In the $p$-representation,  matrix elements of $\mathcal{R}_{123}$ are given by   
\begin{align}\label{eq:Rkappa-p-rep}
\begin{split}
  R_{d_1,d_2,d_3}^{c_1,c_2,c_3}(\lambda_1,\lambda_2,\lambda_3) 
  &:= \langle c_1,c_2,c_3 | \mathcal{R}_{123} |d_1,d_2,d_3 \rangle
  \\
  &= \frac{(-q)^{c_1-d_1+n_{23}}}{(q^2;q^2)_{c_1-d_1+n_{23}}} q^{(c_1-d_1+n_{23})(d_2-c_2+n_{13})}
   \delta_{d_1+d_3}^{c_2} \,\delta_{d_2}^{c_1+c_3}.
\end{split}
\end{align}
\end{prop}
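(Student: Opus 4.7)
The plan is to mirror the derivation of \eqref{eq:Rkappa-rep} but work in the $p$-representation \eqref{prep}, processing the three factors of $\mathcal{R}_{123}$ from right to left on $|d_1,d_2,d_3\rangle$. First I would apply $e^{\frac{\lambda_{23}}{\hbar}(u_3-u_1)}$; since $u_i$ acts as a unit shift on the $p$-basis and $n_{23}=\lambda_{23}/\hbar\in\Z$ by hypothesis, this sends the state to $|d_1-n_{23},\,d_2,\,d_3+n_{23}\rangle$. Next I apply $e^{\frac{1}{\hbar}p_1(u_3-u_2)}$: because $p_1$ commutes with $u_2$ and $u_3$ and acts diagonally in the $p$-representation with eigenvalue $\hbar(d_1-n_{23})$ on the first factor, the operator reduces to the $c$-number shift $e^{(d_1-n_{23})(u_3-u_2)}$, producing $|d_1-n_{23},\,d_2-d_1+n_{23},\,d_1+d_3\rangle$. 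Applying $\rho_{23}$ then swaps the last two entries to $|d_1-n_{23},\,d_1+d_3,\,d_2-d_1+n_{23}\rangle$, from which the Kronecker delta $\delta^{c_2}_{d_1+d_3}$ in \eqref{eq:Rkappa-p-rep} will eventually emerge.

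Next I would expand the quantum dilogarithm as the series $\Psi_q(U)=\sum_{n\ge 0}\frac{(-q)^n}{(q^2;q^2)_n}U^n$ with $U=\kappa_1\kappa_3^{-1}e^{p_1+u_1+p_3-u_3-p_2}$, and compute $U^n$ by BCH. Grouping the noncommuting pairs, the factors $e^{n(p_1+u_1)}=q^{-n^2/2}e^{np_1}e^{nu_1}$ and $e^{n(p_3-u_3)}=q^{n^2/2}e^{np_3}e^{-nu_3}$ have reciprocal normalizations which cancel, yielding the clean form $U^n=\kappa_1^n\kappa_3^{-n}e^{np_1}e^{nu_1}e^{-np_2}e^{np_3}e^{-nu_3}$. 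Acting on $|d_1-n_{23},\,d_1+d_3,\,d_2-d_1+n_{23}\rangle$, the three $e^{\pm np_i}$ contribute diagonal phases, the two $e^{\pm nu_i}$ act by shifts, and $\kappa_1^n\kappa_3^{-n}=q^{nn_{13}}$. A direct calculation shows the diagonal phase collapses to $q^{n(d_2-d_1-d_3)+nn_{13}}$, with the resulting ket $|d_1-n_{23}+n,\,d_1+d_3,\,d_2-d_1+n_{23}-n\rangle$.

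Finally, I would pair against $\langle c_1,c_2,c_3|$, which kills all but one term in the sum: the first bra-ket fixes $n=c_1-d_1+n_{23}$, the second enforces $c_2=d_1+d_3$, and the third—using $c_2=d_1+d_3$—is equivalent to $c_1+c_3=d_2$. Substituting $d_2-d_1-d_3=d_2-c_2$ into the phase reproduces the exponent $(c_1-d_1+n_{23})(d_2-c_2+n_{13})$ in \eqref{eq:Rkappa-p-rep}, completing the derivation. There is no genuine obstacle: the only subtle step is the cancellation of the $q^{\pm n^2/2}$ factors in $U^n$, a piece of BCH bookkeeping already exercised in the $u$-representation proof; integrality of $n_{23}$ is needed precisely so that the intermediate shifts land on basis vectors.
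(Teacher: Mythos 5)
Your derivation is correct and is precisely the argument the paper has in mind: the authors omit the proof, stating only that it is ``derived similarly to the $u$-representation \eqref{eq:Rkappa-rep},'' and you carry out exactly that parallel computation (action of $P_{123}$ first, then the term-by-term action of the expanded $\Psi_q$, with the $q^{\pm n^2/2}$ normalizations cancelling as in the paper's own ordering $e^{np_1}e^{nu_1}e^{-np_2}e^{np_3}e^{-nu_3}$). All intermediate states, phases, and the final matching of Kronecker deltas check out against \eqref{eq:Rkappa-p-rep}.
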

This is derived similarly to the $u$-representation \eqref{eq:Rkappa-rep}, and we omit the proof.

\begin{remark}
The two formulae of $\mathcal{R}_{123}$ in the $u$-representation \eqref{eq:Rkappa-rep} and the $p$-representation \eqref{eq:Rkappa-p-rep} have similarity to the integral kernels of the modular double $R$ \eqref{Rmod} in the momentum representation \eqref{Rkerp} and the coordinate representation \eqref{Rker2} respectively. 
In comparing them we set $\xi = -\zeta = 1$ in the integral kernels. 
\end{remark}

\subsection{Representation of $\mathcal{K}_{1234}$}

In this subsection we consider $\mathcal{K}_{1234}$ \eqref{eq:K-ad} 
with $\forall \lambda_i = 0$ hence $\forall \kappa_i=1$ for simplicity:
\begin{equation}\label{ks}
\begin{split}
\mathcal{K}_{1234} &= \Psi_{q^2}(e^{p_2+u_2+p_4-u_4-2p_3}) \Psi_q(e^{p_1+u_1+p_3-u_3-p_2}) \Psi_{q^2}(e^{p_2+u_2+p_4-u_4-2p_3})^{-1} \rho_{24} \, e^{\frac{1}{\hbar}p_1(u_4-u_2)}.
\end{split}
\end{equation}

\begin{prop}
In the $u$-representation,  matrix elements of $\mathcal{K}_{1234}$ (\ref{ks}) are given by 
\begin{align}\label{eq:K-rep-u}
\begin{split}
&K^{a_1,a_2,a_3,a_4}_{b_1,b_2,b_3,b_4} :=
\langle a_1,a_2,a_3,a_4 |\mathcal{K}_{1234} | b_1,b_2,b_3,b_4 \rangle
\\
& \qquad = q^{(\ast)} (-1)^{a_2-b_4} \frac{(q^{-4(a_2+b_2-a_4-b_4)};q^4)_{b_2-a_4}}{(q^4;q^4)_{b_2-a_4}(q^2;q^2)_{a_2+b_2-a_4-b_4}}
\delta_{b_2+b_3+b_4}^{a_2+a_3+a_4}\,\delta_{b_1+2b_2+b_3}^{a_1+2a_2+a_3},
\end{split}
\end{align} 
where $(\ast) = (a_2+b_2-a_4-b_4)(b_1+2b_2-b_3-2 b_4 +1) + 2(b_2-a_4)(a_2-a_4+1)$.
\end{prop}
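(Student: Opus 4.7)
The plan is to act $\mathcal{K}_{1234}$ on $|b_1,b_2,b_3,b_4\rangle$ from right to left and read off the coefficient of $|a_1,a_2,a_3,a_4\rangle$. First I compute the monomial part: since $u_2,u_4$ are diagonal on the $2$nd and $4$th factors with eigenvalues $2\hbar b_2,2\hbar b_4$, one has
$$e^{\frac{1}{\hbar}p_1(u_4-u_2)}|b_1,b_2,b_3,b_4\rangle = |b_1+2b_2-2b_4,b_2,b_3,b_4\rangle,$$
and then $\rho_{24}$ swaps slots $2$ and $4$, giving $|b_1+2b_2-2b_4,b_4,b_3,b_2\rangle$.

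Next I simplify the conjugation $\Psi_{q^2}(A)\Psi_q(B)\Psi_{q^2}(A)^{-1}$ with $A:=e^{p_2+u_2+p_4-u_4-2p_3}$ and $B:=e^{p_1+u_1+p_3-u_3-p_2}$. A direct computation from \eqref{pu2} gives $[\log A,\log B]=4\hbar$, so $AB=q^4 BA$. Applying \eqref{eq:reculsion} with $q\to q^2$ yields $\Psi_{q^2}(A)B=B(1+q^2A)\Psi_{q^2}(A)$, and iterating with $(1+q^{4k+2}A)B=B(1+q^{4k+6}A)$ gives
$$\Psi_{q^2}(A)B^n\Psi_{q^2}(A)^{-1}=B^n(-q^2A;q^4)_n,\qquad \Psi_{q^2}(A)\Psi_q(B)\Psi_{q^2}(A)^{-1}=\sum_{n\ge 0}\tfrac{(-q)^n}{(q^2;q^2)_n}B^n(-q^2A;q^4)_n.$$

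Now I apply this operator to $|b_1+2b_2-2b_4,b_4,b_3,b_2\rangle$. Using \eqref{eq:rep-q} and straightforward BCH in factored form, one checks
$$A^j|c\rangle=q^{2j(c_2-c_4)}|c_1,c_2-j,c_3+2j,c_4-j\rangle,\qquad B^n|c\rangle=q^{n(c_1-c_3)}|c_1-n,c_2+n,c_3-n,c_4\rangle,$$
and the finite $q$-binomial expansion $(-q^2A;q^4)_n=\sum_j q^{2j^2}\binom{n}{j}_{q^4}A^j$ produces a double sum indexed by $n,j$ with final state $|b_1+2b_2-2b_4-n,b_4+n-j,b_3+2j-n,b_2-j\rangle$ and an explicit scalar. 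Pairing with $\langle a|$ turns the four index equalities into (i) the two conservation laws $a_2+a_3+a_4=b_2+b_3+b_4$ and $a_1+2a_2+a_3=b_1+2b_2+b_3$, producing the stated Kronecker deltas, and (ii) $n=a_2+b_2-a_4-b_4=:N$ and $j=b_2-a_4$, which eliminate both sums and leave a single monomial in $q$.

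Finally I consolidate the $q$-factors. Pulling $-q^{-4(n-l)}$ out of each factor of $(q^{-4n};q^4)_j$ gives $\binom{n}{j}_{q^4}=(-1)^jq^{4nj-2j(j-1)}(q^{-4n};q^4)_j/(q^4;q^4)_j$, which rewrites the result as
$$\frac{(-1)^{n+j}q^{E}(q^{-4N};q^4)_{b_2-a_4}}{(q^2;q^2)_N (q^4;q^4)_{b_2-a_4}}$$
times the deltas, where $E$ is the total $q$-exponent. Since $n+j=N+b_2-a_4\equiv a_2-b_4\pmod 2$, the sign is $(-1)^{a_2-b_4}$; and elementary regrouping of $E$ yields
$$E=N(b_1+2b_2-b_3-2b_4+1)+2(b_2-a_4)(a_2-a_4+1),$$
which is exactly $(*)$. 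The main obstacle is this last bookkeeping step, where every $q$-exponent from the three series, the $q$-binomial identity, and the conjugation must be tracked precisely; the rest of the argument is a direct use of \eqref{eq:q-dilog-sum}--\eqref{eq:reculsion} and \eqref{eq:rep-q}.
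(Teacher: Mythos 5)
Your proof is correct and reproduces the stated formula, including the sign $(-1)^{a_2-b_4}$ and the exponent $(\ast)$, but it organizes the computation differently from the paper. The paper expands all three quantum dilogarithms in \eqref{ks} as separate series (indices $n_1,n_2,n_3$), acts on the ket to obtain a triple sum with four Kronecker deltas, reduces to a single residual sum over $n_1$, and only then evaluates that sum by the $q$-binomial formula to produce $(q^{-4(a_2+b_2-a_4-b_4)};q^4)_{b_2-a_4}/(q^4;q^4)_{b_2-a_4}$. You instead simplify the conjugation at the operator level first: from $AB=q^4BA$ and the recursion \eqref{eq:reculsion} you get $\Psi_{q^2}(A)\Psi_q(B)\Psi_{q^2}(A)^{-1}=\sum_{n\ge0}\frac{(-q)^n}{(q^2;q^2)_n}B^n(-q^2A;q^4)_n$, and the finite $q$-binomial expansion of $(-q^2A;q^4)_n$ then yields a double sum that collapses completely under the delta constraints, leaving no summation to evaluate. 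The two routes invoke the same $q$-binomial identity, merely at different stages, but yours has the advantage that the conjugated operator is obtained in closed form before acting on states (which also makes transparent why only the combination $(-q^2A;q^4)_n$ of the two $\Psi_{q^2}$ factors survives), at the cost of the commutator check $[\log A,\log B]=4\hbar$ and the slightly heavier exponent bookkeeping you acknowledge; both computations I have verified agree with \eqref{eq:K-rep-u}.
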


\begin{proof}
The actions of the constituent operators on $V^{\otimes 4}$ are computed as follows:
\begin{align}
\label{eq:K-actionP}
&\rho_{24} \, e^{\frac{1}{\hbar}p_1(u_4-u_2)} | a,b,c,d \rangle = |a+2b-2d,d,c,b \rangle,
\\
\begin{split}
&\Psi_q(e^{p_1+u_1+p_3-u_3-p_2})| a,b,c,d \rangle
= \sum_{n = 0}^\infty \frac{(-q)^n}{(q^2;q^2)_n} e^{n p_1} e^{n u_1} e^{-np_2} e^{np_3} e^{-n u_3} |a,b,c,d \rangle
\\
&\hspace{3cm}= \sum_{n = 0}^\infty \frac{(-q)^n q^{n(a-c)}}{(q^2;q^2)_n}|a-n,b+n,c-n,d \rangle, 
\end{split}
 \\
\begin{split}
&\Psi_{q^2}(e^{p_2+u_2+p_4-u_4-2p_3})| a,b,c,d \rangle
= \sum_{n = 0}^\infty \frac{(-q^2)^n}{(q^4;q^4)_n} e^{n p_2} e^{n u_2} e^{-2np_3} e^{np_4} e^{-n u_4} |a,b,c,d \rangle
\\
&\hspace{3cm}= \sum_{n = 0}^\infty \frac{(-q^2)^{n} q^{2n(b-d)}}{(q^4;q^4)_n}|a,b-n,c+2n,d-n \rangle.
\end{split} 
\\
\begin{split}
&\Psi_{q^2}(e^{p_2+u_2+p_4-u_4-2p_3})^{-1}| a,b,c,d \rangle
= \sum_{n = 0}^\infty \frac{q^{2n^2}}{(q^4;q^4)_n} e^{n p_2} e^{n u_2} e^{-2np_3} e^{np_4} e^{-n u_4} |a,b,c,d \rangle
\\
&\hspace{3.3cm}
= \sum_{n = 0}^\infty \frac{q^{2n^2+2n(b-d)}}{(q^4;q^4)_n} |a,b-n,c+2n,d-n \rangle.
\end{split}
\end{align}
By combining them, we get
\begin{align*}
&\langle a_1,a_2,a_3,a_4 |\mathcal{K}_{1234}| b_1,b_2,b_3,b_4 \rangle
\\
& \quad =
\sum_{n_1 \geq 0} \sum_{n_2 \geq 0} \sum_{n_3 \geq 0}
\frac{(-q)^{n_2}(-q^2)^{n_3} q^{(\#)}}{(q^4;q^4)_{n_1}(q^2;q^2)_{n_2}(q^4;q^4)_{n_3}} 
\\
& \qquad \qquad \times \delta^{a_1}_{b_1+2b_2-2b_4-n_2} \delta^{a_2}_{b_4-n_1+n_2-n_3}
\delta^{a_3}_{b_3+2n_1-n_2+2n_3} \delta^{a_4}_{b_2-n_1-n_3}, 
\end{align*}
where $(\#) = 2n_1^2 + 2n_1(b_4-b_2)+n_2(b_1+2b_2-b_3-2b_4-2n_1)+2n_3(b_4-b_2+n_2)$. This is reduced to
\begin{align*}
%\theta(a_2+b_2-a_4-b_4)\,& 
\frac{(-1)^{a_2-b_4} q^{(\ast)}}{(q^2;q^2)_{a_2+b_2-a_4-b_4}} \, \delta_{b_2+b_3+b_4}^{a_2+a_3+a_4}\,\delta_{b_1+b_2-b_4}^{a_1+a_2-a_4}
%\\ 
%& \qquad \cdot 
\sum_{n_1=0}^{b_2-a_4} \frac{q^{2n_1^2-4n_1(a_2+b_2-a_4-b_4)}(-q^2)^{-n_1}}{(q^4;q^4)_{n_1}(q^4;q^4)_{b_2-a_4-n_1}}.
\end{align*}
The sum here can be taken by means of the $q$-binomial formula, leading to 
$$ 
\frac{(q^{-4(a_2+b_2-a_4-b_4)};q^4)_{b_2-a_4}}{(q^4;q^4)_{b_2-a_4}},
$$
and \eqref{eq:K-rep-u} is obtained.
\end{proof}

By a similar computation,
the following proposition for the $p$-representation is proved. 

\begin{prop}
In the $p$-representation,  matrix elements of $\mathcal{K}_{1234}$ (\ref{ks}) are given by  
\begin{align}\label{eq:K-rep-p}
\begin{split}
&K^{c_1,c_2,c_3,c_4}_{d_1,d_2,d_3,d_4} :=
\langle c_1,c_2,c_3,c_4 |\mathcal{K}_{1234}| d_1,d_2,d_3,d_4 \rangle
\\
&\qquad = q^{(\ast \ast)} (-1)^{c_1+c_2-d_4} \frac{(q^{-4(c_1-d_1)};q^4)_{c_2-d_1-d_4}}{(q^4;q^4)_{c_2-d_1-d_4}(q^2;q^2)_{c_1-d_1}}
\delta_{d_1+d_3}^{c_1+c_3}\,\delta_{d_2+d_4}^{c_2+c_4},
\end{split}
\end{align} 
where $(\ast \ast) = (c_1-d_1)(-d_1+d_3-2d_4+1) + 2(c_2-d_1-d_4)(d_2+d_4-c_3+1)$.
\end{prop}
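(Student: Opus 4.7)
The plan is to mirror the strategy of the preceding proposition (the $u$-representation of $\mathcal{K}_{1234}$): compute the action of each of the four factors in (\ref{ks}) on a basis vector $|d_1,d_2,d_3,d_4\rangle$ of the $p$-representation, assemble a triple sum, and reduce it to closed form using a $q$-binomial identity. The only structural difference from the $u$-representation case is that now $e^{p_i}$ acts multiplicatively while $e^{u_i}$ shifts the index, so every factor of the exponentials contributes either a scalar $q^{\alpha}$ or a shift of the label $d_i$.

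Concretely, I would first note that $P^K_{1234} = \rho_{24}\, e^{\frac{1}{\hbar}p_1(u_4-u_2)}$ acts on $|d_1,d_2,d_3,d_4\rangle$ by the pure shift
\begin{equation*}
|d_1,d_2,d_3,d_4\rangle \longmapsto |d_1,\, d_2+d_1,\, d_3,\, d_4-d_1\rangle
\end{equation*}
followed by the transposition of the second and fourth entries, as $e^{\frac{1}{\hbar}p_1 u_j}$ restricted to the eigenspace $p_1 = \hbar d_1$ is simply the $d_1$-th power of the shift $e^{u_j}$. Then I would expand each quantum dilogarithm using (\ref{eq:q-dilog-sum}), introducing summation indices $n_1,n_2,n_3 \ge 0$ for $\Psi_{q^2}(\cdot)^{-1}$, $\Psi_q(\cdot)$ and $\Psi_{q^2}(\cdot)$ respectively. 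For each such factor I would normal-order the exponential (the BCH corrections cancel exactly as in the preceding proof), compute the $q$-scalar it produces by evaluating $e^{p_i}$ on the current label, and record the shift it induces on the $d_i$.

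Taking the inner product with $\langle c_1,c_2,c_3,c_4|$ then forces $n_2 = c_1-d_1$, $n_1+n_3 = c_2-d_1-d_4$, and yields the conservation laws $d_1+d_3=c_1+c_3$ and $d_2+d_4=c_2+c_4$, reproducing the two Kronecker deltas. What remains is a single sum over $n_1$ with $n_3 = N - n_1$, where $N := c_2-d_1-d_4$ and $M := c_1-d_1$. After pulling out the $n_1$-independent factors, the sum reduces to
\begin{equation*}
\sum_{n_1=0}^{N} \binom{N}{n_1}_{\!q^4} (-1)^{n_1}\, q^{4\binom{n_1}{2}} \bigl(q^{-4M}\bigr)^{n_1},
\end{equation*}
which is exactly the $q^4$-binomial theorem and evaluates to $(q^{-4M};q^4)_N$. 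Combining this with the prefactors $(-q)^{n_2}$, $(-q^2)^{n_3}$ and the remaining $q^{(\ast\ast)}$-power produces (\ref{eq:K-rep-p}) after using the delta constraint $d_3 = c_1+c_3-d_1$ to convert $-d_3$ into $-c_3$ (this generates the extra $2NM$ term that cleanly absorbs into the exponent).

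The bookkeeping of exponents is the main nuisance: one must track which of the factors $q^{d_i}$ or $q^{2d_i}$ appears (depending on whether $[p_i,u_i] = \hbar$ or $2\hbar$, cf.\ (\ref{pu2})), and respect the order in which shifts are applied so that each $e^{p_j}$ evaluates on the already-shifted label. Once this is done carefully, the recognition of the $q^4$-binomial sum is immediate and mirrors precisely the analogous step in the proof of the preceding proposition; hence the entire argument can in fact be summarized as ``by the same calculation with the $u$- and $p$-representations exchanged.''
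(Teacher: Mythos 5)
Your strategy is exactly the one the paper intends: the paper gives no separate proof for the $p$-representation, stating only that it follows by a similar computation to the $u$-representation case, and your outline — expand the three dilogarithm factors via (\ref{eq:q-dilog-sum}), track the scalar $q$-powers produced by $e^{p_i}$ and the index shifts produced by $e^{u_i}$, read off the constraints from the bra, and close the remaining single sum with the $q^4$-binomial theorem — is precisely that computation. The normal-ordering step is also fine: $[p_2+p_4-2p_3,\,u_2-u_4]=2\hbar-2\hbar=0$ and $[p_1+p_3-p_2,\,u_1-u_3]=\hbar-\hbar=0$, so there is no net BCH correction, as you assert.

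The one concrete error is the displayed action of $P^K_{1234}$. On the eigenspace $p_1=\hbar d_1$ the operator $e^{\frac{1}{\hbar}p_1(u_4-u_2)}$ equals $(e^{u_4})^{d_1}(e^{u_2})^{-d_1}$, and since $e^{u_i}|d_i\rangle=|d_i+1\rangle$ this sends $|d_1,d_2,d_3,d_4\rangle$ to $|d_1,\,d_2-d_1,\,d_3,\,d_4+d_1\rangle$ — the opposite signs from what you wrote — so after $\rho_{24}$ the state is $|d_1,\,d_4+d_1,\,d_3,\,d_2-d_1\rangle$. This matters: matching the second slot against $c_2$ gives $n_1+n_3=c_2-(d_4+d_1)=c_2-d_1-d_4$, which is the constraint you correctly state two lines later and which fixes the length $N=c_2-d_1-d_4$ of the Pochhammer symbol $(q^{-4(c_1-d_1)};q^4)_{c_2-d_1-d_4}$; your displayed shift would instead give $n_1+n_3=c_2-d_4+d_1$ and hence the wrong length. (The two Kronecker deltas are insensitive to the slip, since the second and fourth labels sum to $d_2+d_4$ either way, so the error only shows up in $N$.) With the sign corrected, the rest of your outline — $n_2=c_1-d_1$ producing the $(q^2;q^2)_{c_1-d_1}$ denominator, the sum over $n_1$ collapsing via the $q^4$-binomial theorem to $(q^{-4M};q^4)_N/(q^4;q^4)_N$ with $M=c_1-d_1$, and the delta constraint $d_3=c_1+c_3-d_1$ converting the exponent into the stated $(\ast\ast)$ — reproduces \eqref{eq:K-rep-p}.
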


%%%%%%%%%%%%%%%%%%%%%%%%%%%%%%%%%%%%%%%%%%%%%%%%%%%%%%%%%%%%%
\section{Representation with noncompact quantum dilogarithm}
%%%%%%%%%%%%%%%%%%%%%%%%%%%%%%%%%%%%%%%%%%%%%%%%%%%%%%%%%%%%%

In this section we study the representation of $\mathcal{R}_{123}$ \eqref{eq:R-ad} 
and $\mathcal{K}_{1234}$ \eqref{eq:K-ad} involving noncompact quantum dilogarithm function.
We set $i = \sqrt{-1}$ and use the parameters $\bb$ and $\ell_i$ related to 
$q=e^\hbar$ and $\kappa_j = e^{\lambda_j}$ in (\ref{para}) as follows:
\begin{align}\label{para2}
\hbar = i\pi \bb^2, \quad 
\lambda_i = \pi \bb \ell_i,\quad \ell_{ij} = \ell_i-\ell_j.
\end{align} 

\subsection{Modular double $R$} 

For pairs of noncommuting variables $(p_k,u_k)$ for 
$k=1,\ldots,6$ satisfying $[p_k,u_j] = \hbar \delta_{kj}$,
define $\hat{x}_k$ and $\hat{p}_k$ by 
\begin{align}\label{pub}
p_k =  \pi \bb \hat{x}_k,\quad
u_k = \pi \bb \hat{p}_k.
\end{align}
They satisfy the canonical commutation relation
\begin{align*}
[\hat{x}_j, \hat{p}_k] = \frac{i}{\pi} \delta_{j,k}.
\end{align*}
Let $\varphi(z)$ be the noncompact quantum dilogarithm function \eqref{eq:noncomp-dilog}. In view of 
\begin{align}\label{recc}
\frac{\Psi_q(e^{2\pi \bb(z+ i\bb/2)})}
{\Psi_q(e^{2\pi \bb(z- i\bb/2)})}
= 1+e^{2\pi \bb z}
= \frac{\varphi(z-i \bb/2)}{\varphi(z+i \bb/2)},
\end{align}
we introduce the modular double $R$, $\hat{\mathcal{R}}_{123} = \hat{\mathcal{R}}(\ell_1,\ell_2,\ell_3)_{123}$, by formally 
replacing $\Psi_q(e^{\pi \bb\hat{z}})$ by 
$\varphi(\tfrac{1}{2}\hat{z})^{-1}$  in \eqref{eq:R-ad} as
\begin{align}\label{Rmod}
\hat{\mathcal{R}}_{123} = \varphi \bigl(\tfrac{1}{2}(\hat{x}_1+\hat{p}_1+\hat{x}_3-\hat{p}_3-\hat{x}_2+\ell_{13})\bigr)^{-1}\rho_{23}
e^{\pi i \hat{x}_1(\hat{p}_2-\hat{p}_3)}
e^{\pi i \ell_{23}(\hat{p}_1-\hat{p}_3)}.
\end{align}

\subsection{Coordinate representation of $\hat{\mathcal{R}}_{123}$}\label{ss:crR}
Let us regard the modular double $R$ (\ref{Rmod}) as an operator 
on the Hilbert space $L^2(\R^3)$ of functions with variables $x_1,x_2,x_3$,  
where $\hat{x}_k$ acts as the multiplication by $x_k$
and $\hat{p}_k$ as a differential operator 
\begin{align*}
\hat{p}_k = -\frac{i}{\pi} \frac{\partial}{\partial x_k}.
\end{align*}
Formally we write the space as $|x_1,x_2,x_3 \rangle = |x_1 \rangle \otimes |x_2 \rangle \otimes |x_3 \rangle$ where $\hat{x}_k |x_k \rangle = x_k |x_k \rangle$ and 
$\langle x_k | | x_k' \rangle = \delta(x_k-x_k')$.
The integral kernel of $\hat{\mathcal{R}}_{123}$ is given by 
\begin{align}
\begin{split}
&\langle x_1,x_2, x_3| \hat{\mathcal{R}}_{123} |x'_1, x'_2, x'_3\rangle 
\\
&= 
\langle x_1,x_2, x_3| 
\varphi \bigl(\tfrac{1}{2}(\hat{x}_1+\hat{p}_1+\hat{x}_3-\hat{p}_3-\hat{x}_2+\ell_{13})\bigr)^{-1}
\rho_{23}
e^{\pi i \hat{x}_1(\hat{p}_2-\hat{p}_3)}
|x'_1-\ell_{23}, x'_2, x'_3+\ell_{23}\rangle 
\\
&= 
\langle x_1,x_2, x_3| 
\varphi \bigl(\tfrac{1}{2}(\hat{x}_1+\hat{p}_1+\hat{x}_3-\hat{p}_3-\hat{x}_2+\ell_{13})\bigr)^{-1}
\rho_{23}
|x'_1-\ell_{23}, x'_2-x'_1+\ell_{23}, x'_3+x'_1\rangle 
\\
&= 
\langle x_1,x_2, x_3| 
\varphi \bigl(\tfrac{1}{2}(\hat{x}_1+\hat{p}_1+\hat{x}_3-\hat{p}_3-\hat{x}_2+\ell_{13})\bigr)^{-1}
|x'_1-\ell_{23}, x'_3+x'_1, x'_2-x'_1+\ell_{23}\rangle 
\\
&= 
\delta(x_2-x'_1-x'_3)
\langle x_1,x_3| 
\varphi \bigl(\tfrac{1}{2}(\hat{x}_1+\hat{p}_1+\hat{x}_3-\hat{p}_3+\ell_{13}
-x'_1-x'_3)\bigr)^{-1}
|x'_1-\ell_{23}, x'_2-x'_1+\ell_{23}\rangle.
\end{split}
\label{tochu1}
\end{align}
To evaluate (\ref{tochu1}), for a canonical pair $(\hat{x},\hat{p})$ we introduce the diagonalizing kets 
\begin{align*}
\begin{split}
&(\hat{p} + \hat{x}) |\lambda \rangle\!\rangle = \lambda |\lambda\rangle\!\rangle,
\quad
|\lambda \rangle\!\rangle= \int f(\lambda, x)|x\rangle dx, 
\quad f(\lambda, x) =  e^{\pi i(\lambda x - x^2/2)}/\sqrt{2},
\\
&(\hat{p} - \hat{x}) |\mu \rangle\!\rangle = \mu |\mu\rangle\!\rangle,
\quad
|\mu \rangle\!\rangle= \int g(\mu,  x)|x\rangle dx, 
\quad g(\mu, x) = e^{\pi i(\mu x + x^2/2)}/\sqrt{2},
\end{split}
\end{align*}
where $\lambda, \mu \in \R$.
Bra vectors  are similarly defined by
$\langle\!\langle \lambda | = \int dx f(\lambda,x)^\ast \langle x |$
and 
$\langle\!\langle \mu | = \int dx g(\mu,x)^\ast \langle x |$.
Note that 
$\langle\!\langle \lambda | |\lambda' \rangle\!\rangle= \delta(\lambda-\lambda')$
and 
$\langle\!\langle \mu | |\mu' \rangle\!\rangle= \delta(\mu-\mu')$.
Thus inserting 
$1= \int d\lambda |\lambda\rangle\!\rangle \langle\!\langle \lambda |
= \int d\mu |\mu\rangle\!\rangle \langle\!\langle \mu |$, we have 
\begin{align*}
\begin{split}
&\langle x_1,x_3| 
\varphi \bigl((\tfrac{1}{2}(\hat{x}_1+\hat{p}_1+\hat{x}_3-\hat{p}_3+s)\bigr)^{-1}
|z_1, z_3\rangle
\\
&= \int d\lambda d\mu \varphi\bigl(\tfrac{1}{2}(\lambda - \mu+s)\bigr)^{-1}
f(\lambda,z_1)^\ast f(\lambda, x_1)g(\mu,z_3)^\ast g(\mu,x_3)
\\
&=\tfrac{1}{4}\int d\nu \varphi(\nu+\tfrac{s}{2})^{-1}e^{2\pi i \nu (x_1-z_1)}
\int d\mu e^{\pi i\mu (x_1-z_1+x_3-z_3)+\tfrac{\pi i}{2}(-x_1^2+z_1^2+x_3^2-z_3^2)}
\\
&= \tfrac{1}{2K}\delta(x_1+x_3-z_1-z_3)  \varphi(x_1-z_1+i\eta)
e^{-2\pi i(x_1-z_1)(\tfrac{s}{2}+i\eta)+\tfrac{\pi i}{2}(-x_1^2+z_1^2+x_3^2-z_3^2)},
\end{split}
\end{align*}
where the last step is due to (\ref{ram1}).
Applying this result to (\ref{tochu1}), we obtain the following.

\begin{prop}
In the coordinate representation, the integral kernel of $\hat{\mathcal{R}}_{123}$ is 
\begin{equation}\label{Rker}
\begin{split}
&\langle x_1,x_2, x_3| \hat{\mathcal{R}}_{123} |x'_1, x'_2, x'_3\rangle 
\\
&= \varrho_x \delta(x_2-x'_1-x'_3)\delta(x'_2-x_1-x_3) 
\varphi(x_1-x'_1+\ell_{23}+i\eta) e^{\pi i (x_1-x'_1+\ell_{23})(x_2-x'_2-\ell_{13}-2i\eta)},
\end{split}
\end{equation}
where $\varrho_x = (2K)^{-1}$ with $K$ defined after (\ref{rama2}).
\end{prop}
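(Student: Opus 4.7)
The plan is to act with the constituent operators of $\hat{\mathcal{R}}_{123}$ on the ket $|x'_1,x'_2,x'_3\rangle$ from right to left, thereby reducing the problem to a single matrix element of $\varphi(\cdot)^{-1}$ acting on two canonical slots.

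First, I would use the fact that, since $[\hat x_k,\hat p_k]=i/\pi$ from (\ref{pub}), one has $e^{\pi i \alpha \hat p_k}|x_k\rangle = |x_k-\alpha\rangle$. This lets me apply the rightmost shift $e^{\pi i \ell_{23}(\hat p_1-\hat p_3)}$, translating the arguments $x'_1\mapsto x'_1-\ell_{23}$ and $x'_3\mapsto x'_3+\ell_{23}$. Next, because the first slot is now in a $\hat x_1$-eigenstate with eigenvalue $x'_1-\ell_{23}$, the operator $e^{\pi i \hat x_1(\hat p_2-\hat p_3)}$ acts as a further c-number translation in slots 2 and 3, and the permutation $\rho_{23}$ swaps those two slots. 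Pairing with the bra $\langle x_1,x_2,x_3|$ pins slot 2 to $x'_1+x'_3$, producing the first delta function $\delta(x_2-x'_1-x'_3)$ and simultaneously replacing $\hat x_2$ inside $\varphi$ by $x'_1+x'_3$. All of this is exactly the chain of equalities already assembled in (\ref{tochu1}).

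The remaining task is to evaluate the reduced kernel of $\varphi(\tfrac12(\hat x_1+\hat p_1+\hat x_3-\hat p_3+s))^{-1}$ with $s=\ell_{13}-x'_1-x'_3$ between position eigenstates. To diagonalize the argument, I would insert the completeness relations built from the eigenkets $|\lambda\rangle\!\rangle$ of $\hat p_1+\hat x_1$ (slot 1) and $|\mu\rangle\!\rangle$ of $\hat p_3-\hat x_3$ (slot 3) defined in the excerpt; on these states the operator is scalar and equal to $\tfrac12(\lambda-\mu+s)$. The overlaps $f(\lambda,\cdot)$ and $g(\mu,\cdot)$ are Gaussians, so after a convenient change of variables (e.g.\ $\nu=\lambda/2$ with $\mu$ retained) the $\mu$-integral collapses to $\delta(x_1-z_1+x_3-z_3)$, which after substituting the values of $z_1,z_3$ coming from Steps 1–2 becomes the second delta function $\delta(x'_2-x_1-x_3)$. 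The surviving one-dimensional integral has the shape $\int d\nu\,\varphi(\nu+\tfrac{s}{2})^{-1}e^{2\pi i \nu(x_1-z_1)}$ and is evaluated by Ramanujan's formula (\ref{ram1}) from the appendix, which produces the factor $\varphi(x_1-z_1+i\eta)$ together with an exponential linear in $s$.

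The main obstacle is the bookkeeping of Gaussian phases. One must collect the quadratic prefactors $e^{\pi i(-x_1^2+z_1^2+x_3^2-z_3^2)/2}$ coming from $f^\ast f$ and $g^\ast g$, the linear shift $-2\pi i(x_1-z_1)(\tfrac{s}{2}+i\eta)$ from (\ref{ram1}), and the translations $z_1=x'_1-\ell_{23}$, $z_3=x'_2-x'_1+\ell_{23}$ from Steps 1–2, and then use the two delta functions to simplify $x'_1+x'_3\to x_2$ and $x_1+x_3\to x'_2$. Upon reorganizing, the quadratic piece collapses and the remaining linear terms in $s=\ell_{13}-x_2$ should reassemble into the single exponential $e^{\pi i(x_1-x'_1+\ell_{23})(x_2-x'_2-\ell_{13}-2i\eta)}$. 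The overall constant $\varrho_x=(2K)^{-1}$ is tracked through the Jacobian of the change of variables and the prefactor in (\ref{ram1}). Writing the final identity in the asserted form then amounts to matching coefficients of the four delta-constrained independent variables, which I would verify by direct substitution.
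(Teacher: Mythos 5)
Your proposal follows the paper's own proof essentially step for step — the same reduction chain as in \eqref{tochu1}, the same diagonalization via the kets $|\lambda\rangle\!\rangle$ of $\hat p_1+\hat x_1$ and $|\mu\rangle\!\rangle$ of $\hat p_3-\hat x_3$, the same use of \eqref{ram1}, and the same phase bookkeeping — so it is the same approach and is correct in substance. The one slip is your parenthetical change of variables $\nu=\lambda/2$: since $\varphi$ depends on $\lambda-\mu$, that choice leaves $\mu$ inside the $\varphi$ factor and the $\mu$-integral would not collapse to $\delta(x_1-z_1+x_3-z_3)$; the substitution actually needed (and used in the paper) is $\nu=\tfrac{1}{2}(\lambda-\mu)$, after which everything you describe goes through.
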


The following claim is a corollary of Theorem \ref{thm:R-tetra} and \cite[Theorem 4.5]{KN11}.
%The following claim is a corollary of the general construction \eqref{eq:R-full-id} in \S \ref{sec:tetra}.
We include it as an independent  confirmation in the coordinate representation of the 
modular double $R$.
\begin{thm}\label{th:teR}
The modular double $R$ 
in the coordinate representation satisfies the tetrahedron equation:
\begin{equation}\label{TElam}
\begin{split}
&\hat{\mathcal{R}}(\ell_1, \ell_2, \ell_4)_{124}
\hat{\mathcal{R}}(\ell_1, \ell_3, \ell_5)_{135}
\hat{\mathcal{R}}(\ell_2, \ell_3, \ell_6)_{236}
\hat{\mathcal{R}}(\ell_4, \ell_5, \ell_6)_{456} 
\\
&= 
\hat{\mathcal{R}}(\ell_4, \ell_5, \ell_6)_{456}
\hat{\mathcal{R}}(\ell_2, \ell_3, \ell_6)_{236}
\hat{\mathcal{R}}(\ell_1, \ell_3, \ell_5)_{135} 
\hat{\mathcal{R}}(\ell_1, \ell_2, \ell_4)_{124}.
\end{split}
\end{equation}
\end{thm}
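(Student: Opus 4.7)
The plan is to verify (\ref{TElam}) by directly computing both sides as compositions of integral kernels on $L^2(\R^6)$, in parallel with the proof of Theorem \ref{th:teu} in the $u$-representation. The building block is (\ref{Rker}), and composing four copies produces a multi-dimensional integral with eight delta functions, four $\varphi$-factors and four exponential phases.

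First, I would compute
\[
\langle x_1,\ldots,x_6|\hat{\mathcal{R}}(\ell_1,\ell_2,\ell_4)_{124}\hat{\mathcal{R}}(\ell_1,\ell_3,\ell_5)_{135}\hat{\mathcal{R}}(\ell_2,\ell_3,\ell_6)_{236}\hat{\mathcal{R}}(\ell_4,\ell_5,\ell_6)_{456}|x'_1,\ldots,x'_6\rangle
\]
by inserting three resolutions of the identity $\int \prod_i dy_i\,|y_1,\ldots,y_6\rangle\langle y_1,\ldots,y_6|$ between consecutive factors and writing out each kernel via (\ref{Rker}). Analogously for the RHS. Each $\hat{\mathcal{R}}_{ijk}$ acts nontrivially only on components $i,j,k$, so most coordinates pass through untouched and the composition reduces to an integral over the $y_i$ indexed by the vertices that are actually ``mutated.''

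Next, I would use the eight $\delta$-functions to kill all but one of the integration variables. Mirroring the analysis following (\ref{eq:Rkappa-rep}) in the $u$-representation, the deltas force three linear relations among the external coordinates $\{x_i\}, \{x'_i\}$; any violation makes both sides vanish. After imposing these relations, each side reduces to a one-dimensional integral of the shape
\[
\int_\R d\nu\;\prod_{k=1}^{4}\varphi(\nu+\alpha_k+i\eta)\cdot e^{\pi i Q(\nu)},
\]
with $\alpha_k$ and the quadratic $Q$ built from $x_i, x'_i, \ell_i$. I would then show the two resulting single integrals agree. Following the duality indicated in Remark \ref{re:dual}, I expect the equality to reduce, after a shift of $\nu$ and cancellation of common exponential prefactors, to an identity manifestly symmetric under the interchange of two parameters $r,s$ built from $\ell_i,x_i,x'_i$, precisely the continuous analogue of (\ref{bd}). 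The analytic tool will be the Ramanujan-type / beta integral for $\varphi$ from Appendix \ref{s:ncq} (already invoked to pass from (\ref{tochu1}) to (\ref{Rker})), used to Fourier-expand one $\varphi$-factor and exhibit the $r\leftrightarrow s$ symmetry.

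The main obstacle will be the bookkeeping in the second step: tracking the four Gaussian phases through the delta-constrained integrations and assembling them into a quadratic $Q(\nu)$ whose coefficients combine correctly with the shifts of the $\varphi$-arguments on each side. Once the single-integral reductions are obtained, the remaining identity is the continuous counterpart of the $q$-binomial manipulation that closed the proof of Theorem \ref{th:teu}, and should follow cleanly from the formulas for $\varphi$ recalled in Appendix \ref{s:ncq}. An entirely alternative route would be to deduce (\ref{TElam}) from Theorem \ref{thm:R-tetra} by realizing the $q$-Weyl algebra in the coordinate representation via (\ref{pub}), but the direct calculation is what makes the duality of Remark \ref{re:dual} manifest and hence is the version worth recording here.
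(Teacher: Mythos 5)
Your plan follows essentially the same route as the paper's proof: compose the kernels (\ref{Rker}), use the eight delta functions to collapse each side to a single one-dimensional integral, note the duality of Remark \ref{re:dual} between the two reduced integrals, and close with the integral identities of Appendix \ref{s:ncq} (specifically the Heine-type transformation (\ref{heine1})). The one detail your anticipated integrand shape misses is that after reduction one of the four $\varphi$-factors is independent of the integration variable and another must be inverted via (\ref{pinv}), which cancels the Gaussian $e^{-\pi i y_1^2}$ and linearizes the exponent — only in that form, $\varphi(t+a)\varphi(t+b)\varphi(t+c)^{-1}e^{2\pi i t d}$, does (\ref{heine1}) apply.
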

\begin{proof}
We show that the kernel corresponding to the two sides of the tetrahedron equation coincide.
For the LHS, it is given by 
\begin{align}
\langle  x_1,\ldots, x_6 | 
\hat{\mathcal{R}}_{124}\hat{\mathcal{R}}_{135}\hat{\mathcal{R}}_{236}\hat{\mathcal{R}}_{456} 
|z_1,\ldots, z_6\rangle 
= 
\int R^{x_1 x_2 x_4}_{y_1 y_2 y_4} R^{y_1 x_3 x_5}_{z_1 y_3 y_5}
R^{y_2 y_3 x_6}_{z_2 z_3 y_6} R^{y_4 y_5 y_6}_{z_4 z_5 z_6} dy_1 \cdots dy_6.
\end{align}
Here the dependence on $\ell_i$'s is suppressed 
and (\ref{Rker}) is denoted by $R^{x_1x_2x_3}_{x'_1x'_2x'_3}$ to save the space.
Substitution of $(\ref{Rker})/\varrho_x$ reduces the six-fold integral into 
the {\em single} one due to the delta functions as
\begin{equation*}
\begin{split}
&\delta(x_1+x_4+x_6-z_3)\delta(x_2+x_5-z_2-z_5)\delta(x_3-z_1-z_4-z_6)
\varphi(-x_6-z_2+z_3+\ell_{36}+i\eta) e^{2\pi i \alpha_0}
\\
&\times
\int dy_1 \varphi(x_1-y_1+\ell_{24}+i\eta)
\varphi(y_1-z_1+\ell_{35}+i\eta) \varphi(x_2-y_1-z_4+\ell_{56}+i\eta)
e^{-\pi i y_1^2-2\pi i y_1\alpha_1},
\end{split}
\end{equation*}
where $\alpha_1 = x_6-z_1+z_2-z_3-\ell_{56}-i \eta$ and 
the explicit form of the $y_1$-independent power $\alpha_0$ is omitted.
By inverting $\varphi(y_1-z_1+ \ell_{35}+i\eta)$ by (\ref{pinv}) and setting $y_1=-t$,  
it is rewritten as
\begin{equation}\label{tint1}
\begin{split}
&\delta(x_1+x_4+x_6-z_3)\delta(x_2+x_5-z_2-z_5)\delta(x_3-z_1-z_4-z_6)
\varphi(-z_2+z_3-x_6+\ell_{36}+i\eta) e^{2\pi i \beta_1}
\\
&\times
\int dt \frac{\varphi(t+x_2-z_4+\ell_{56}+i\eta)\varphi(t+x_1+\ell_{24}+i\eta)}
{\varphi(t+z_1-\ell_{35}-i\eta)}
e^{2\pi it(z_2-z_3+x_6-\ell_{36}-2i\eta)},
\end{split}
\end{equation}
where the explicit form of $\beta_1$ is omitted.
A similar calculation of the kernel for the RHS of \eqref{eq:R-full-id}  gives
\begin{equation}\label{tint2}
\begin{split}
&\delta(x_1+x_4+x_6-z_3)\delta(x_2+x_5-z_2-z_5)\delta(x_3-z_1-z_4-z_6)
\varphi(x_2-x_3+z_6+\ell_{36}+i\eta) e^{2\pi i \beta_2}
\\
&\times
\int dt \frac{\varphi(t-z_2+x_4+\ell_{56}+i\eta)\varphi(t-z_1+\ell_{24}+i\eta)}
{\varphi(t-x_1-\ell_{35}-i\eta)}
e^{-2\pi it(x_2-x_3+z_6+\ell_{36}+2i\eta)}
\end{split}
\end{equation}
for some $\beta_2$.
We remark a duality that (\ref{tint1}) and (\ref{tint2}) are exactly transformed to each other 
by the exchange $x_i \leftrightarrow -z_i$ for $i=1,\ldots, 6$.
In particular, the delta functions are the same.
By applying (\ref{heine1}), they are transformed into the identical form
\begin{equation*}
\begin{split}
&\delta(x_1+x_4+x_6-z_3)\delta(x_2+x_5-z_2-z_5)\delta(x_3-z_1-z_4-z_6)
e^{2\pi i \nu}
\\
&\times
\int dt \frac{\varphi(-z_2+z_3-x_6+\ell_{36}+i\eta)
\varphi(x_2-x_3+z_6+\ell_{36}+i\eta)
\varphi(t+i\eta) e^{2\pi i t(x_1-z_1+\ell_{24}+\ell_{35})}}
{\varphi(t-z_2+z_3-x_6+\ell_{36}+i\eta)
\varphi(t+x_2-x_3+z_6+\ell_{36}+i\eta)}
\end{split}
\end{equation*}
for some $\nu$.
\end{proof}

\begin{remark}
The same argument as in the proof of Theorem \ref{th:teR} shows that 
the tetrahedron equation (\ref{TElam}) is actually valid for a slightly more general kernel
\begin{equation}\label{Rker2}
\varrho_x \delta(x_2-x'_1-x'_3)\delta(x'_2-x_1-x_3) 
\varphi(x_1-x'_1+\tfrac{1}{2}(\xi-\zeta)\ell_{23}+i\eta) e^{\pi i (x_1-x'_1+
\xi \ell_{23})(x_2-x'_2+ \zeta\ell_{13}-2i\eta)}
\end{equation}
for any complex numbers $\xi$ and $\zeta$, 
which essentially includes (\ref{Rker}) as the special case 
$\xi+\zeta=0$.
\end{remark}

\subsection{Momentum representation of $\hat{\mathcal{R}}_{123}$}
One may also regard the modular double $R$ as an operator 
on the Hilbert space $L^2(\R^3)$ of functions with variables $p_1,p_2,p_3$,  
where $\hat{p}_k$ acts as the multiplication by $p_k$ and  
\begin{align*}
\hat{x}_k = \frac{i}{\pi} \frac{\partial}{\partial p_k}.
\end{align*}
We write the space as $|p_1,p_2,p_3 \rangle$ in the same manner as the coordinate representation. 
The integral kernel for $\hat{\mathcal{R}}_{123}$ in the momentum representation is just the Fourier transformation 
of (\ref{Rker})  which is a six-fold integral:
\begin{align*}
\begin{split}
&\langle p_1,p_2, p_3| \hat{\mathcal{R}}_{123} |p'_1, p'_2, p'_3\rangle 
\\ 
&\quad = 
\int dx_1\cdots dx'_3 e^{\pi i(p'_1x'_1+p'_2x'_2+p'_3x'_3
-p_1x_1-p_2x_2-p_3x_3)}
\langle x_1,x_2, x_3| \hat{\mathcal{R}}_{123} |x'_1, x'_2, x'_3\rangle.
\end{split}
\end{align*}
A straightforward calculation with the kernel (\ref{Rker2}) 
leads to the following proposition.

\begin{prop}
In the momentum representation, the integral kernel of $\hat{\mathcal{R}}_{123}$ is obtained as  
\begin{equation}\label{Rkerp}
\begin{split}
&\langle p_1,p_2, p_3| \hat{\mathcal{R}}_{123} |p'_1, p'_2, p'_3\rangle 
\\
&= \varrho_p \delta(p_1+p_2-p'_1-p'_2)\delta(p_2+p_3-p'_2-p'_3)
\varphi(p_2-p'_3-\tfrac{1}{2}(\xi+\zeta)\ell_{23}+i \eta)
\\
& \hspace{7cm} \cdot 
e^{\pi i(p_2-p'_3-\xi\ell_{23})(p_3-p_1+ \zeta\ell_{13}-2i\eta)},
\end{split}
\end{equation}
where $\varrho_p = 8\varrho_x e^{\pi i\xi\ell_{23}(\zeta\ell_{13}-2i\eta)}$.
\end{prop}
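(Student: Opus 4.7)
The plan is to obtain the formula by direct evaluation of the six-fold Fourier transform of the coordinate kernel (\ref{Rker2}), reducing it step by step using the two delta functions already present in (\ref{Rker2}) and three additional Gaussian (in fact linear-phase) integrations. Throughout I will track the constant prefactors coming from the $\pi i$ convention in the Fourier phase, since each integral of the form $\int e^{\pi i c\alpha}\,dc$ produces $2\delta(\alpha)$; the product of three such integrals will account for the factor $8$ in $\varrho_p$.

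First I would use $\delta(x_2-x'_1-x'_3)$ and $\delta(x'_2-x_1-x_3)$ to eliminate $x_1$ and $x'_1$, so that only the four variables $x_2,x_3,x'_2,x'_3$ remain. Next I would introduce the convenient change of variables $s = x_2-x'_2$, $t = x_3-x'_3$ (with $x'_2,x'_3$ as the other two variables); this exposes $w := -(s+t)$ as the argument of $\varphi$ in (\ref{Rker2}), while $x'_2,x'_3$ appear only in the Fourier phase.

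Then the $x'_2$ and $x'_3$ integrals are pure linear-phase integrals; collecting coefficients of $x'_2$ and $x'_3$ in the exponent produces exactly the two delta functions $\delta(p_1+p_2-p'_1-p'_2)\delta(p_2+p_3-p'_2-p'_3)$ (after using the first constraint inside the second), together with two factors of $2$. After this, only the $(s,t)$-integral remains. Changing variables once more to $(s,v)$ with $v=s+t$, the $s$-dependence in the remaining exponent is linear, so $\int ds\,e^{\pi i s(\cdots)}$ yields a third factor of $2$ and a delta function that fixes $v$ at $v = \xi\ell_{23} - (p_2-p'_3)$. Substituting this value into $\varphi(-v+\tfrac12(\xi-\zeta)\ell_{23}+i\eta)$ gives $\varphi(p_2-p'_3-\tfrac12(\xi+\zeta)\ell_{23}+i\eta)$, which is the $\varphi$ factor in (\ref{Rkerp}).

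The main technical step — and the only place where care is genuinely needed — is the rearrangement of the residual exponent into the form $\pi i(p_2-p'_3-\xi\ell_{23})(p_3-p_1+\zeta\ell_{13}-2i\eta)$ together with the $\ell_{23}\ell_{13}$-cross term absorbed into $\varrho_p$. Denoting $A := p_2-p'_3$, one expands $(A-\xi\ell_{23})(p_3-p_1+\zeta\ell_{13}-2i\eta)$ and compares with the exponent coming from $v = \xi\ell_{23}-A$ substituted into $(-v+\xi\ell_{23})(\zeta\ell_{13}-2i\eta)+v(p_1-p_3)$; the discrepancy is precisely $-\pi i \xi\ell_{23}(\zeta\ell_{13}-2i\eta)$, which is exactly the extra phase absorbed into the definition $\varrho_p = 8\varrho_x e^{\pi i\xi\ell_{23}(\zeta\ell_{13}-2i\eta)}$. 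Combining this with the factor $2^3=8$ from the three delta-producing integrals finishes the derivation.
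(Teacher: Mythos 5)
Your proposal is correct and is exactly the calculation the paper has in mind: the paper offers no details beyond ``a straightforward calculation with the kernel (\ref{Rker2})'', and your reduction of the six-fold Fourier integral via the two delta functions, the change of variables to $(s,t)$ and then $(s,v)$, and the three linear-phase integrations reproduces the two momentum deltas, the $\varphi$ factor, the residual exponent, and the prefactor $\varrho_p=8\varrho_x e^{\pi i\xi\ell_{23}(\zeta\ell_{13}-2i\eta)}$ correctly. I verified the key identifications (in particular $v=\xi\ell_{23}-(p_2-p'_3)$ using $p_1-p'_1=p_3-p'_3$ on the support of the deltas, and the sign of the absorbed cross term), and they all check out.
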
 

The validity of the tetrahedron equation for the above kernel 
is a corollary of Theorem \ref{th:teR}.

\subsection{Modular double $K$}

We keep the parameterization $q=e^\hbar$ with  $\hbar = i \pi \bb^2$. 
See (\ref{para2}). For simplicity we set $\kappa_i=1$ for all $i =1,2,3,4$.
Let us temporarily write $\eta$ in (\ref{qq}) as $\eta_\bb$ and $\varphi(z)$ as $\varphi_\bb(z)$ to signify the 
dependence on the parameter $\bb$.
Here we will also use 
\begin{align*}
\tilde{\eta} = \sqrt{2} \eta_{\sqrt{2}\bb} = \frac{2\bb+\bb^{-1}}{2},
\quad
{\tilde \varphi}(z) = \varphi_{\sqrt{2}\bb}(\frac{z}{\sqrt{2}}).
\end{align*}
We employ the same rescaling as (\ref{pub}):
\begin{align*}
p_k =  \pi \bb \hat{x}_k,\quad
u_k = \pi \bb \hat{p}_k  \quad (k=1,\ldots, 4).
\end{align*}
From (\ref{pu2}), they satisfy the canonical commutation relation
\begin{align*}
[\hat{x}_j, \hat{p}_k] = \begin{cases}
\frac{i}{\pi} \delta_{j,k} & j,k=1,3,
\\
\frac{2i}{\pi} \delta_{j,k} & j,k=2,4.
\end{cases}
\end{align*}
In view of (\ref{recc}) with $\bb\rightarrow \sqrt{2}\bb$ and $z\rightarrow z/\sqrt{2}$,
we introduce the modular double $K$ by formally making the replacement
$\Psi_q(e^{\pi \bb\hat{z}}) \rightarrow \varphi(\tfrac{1}{2}\hat{z})^{-1}$
and 
$\Psi_{q^2}(e^{\pi \bb\hat{z}}) \rightarrow {\tilde \varphi}(\tfrac{1}{2}\hat{z})^{-1}$
in $\mathcal{K}_{1234}$ in (\ref{ks}):
\begin{equation}\label{modK}
\begin{split}
\hat{\mathcal K}_{1234} &= 
\tilde{\varphi}\bigl(\tfrac{1}{2}(\hat{x}_2+\hat{p}_2+\hat{x}_4-\hat{p}_4-2\hat{x}_3)\bigr)^{-1}
\varphi\bigl(\tfrac{1}{2}(\hat{x}_1+\hat{p}_1+\hat{x}_3-\hat{p}_3-\hat{x}_2)\bigr)^{-1}
\\
&\qquad \times \tilde{\varphi}\bigl(\tfrac{1}{2}(\hat{x}_2+\hat{p}_2+\hat{x}_4-\hat{p}_4-2\hat{x}_3)\bigr)
\rho_{24}e^{i\pi \hat{x}_1(\hat{p}_2-\hat{p}_4)}.
\end{split}
\end{equation}

\subsection{Coordinate representation of $\hat{\mathcal K}_{1234}$}
We regard $\hat{\mathcal K}_{1234}$ as an operator in the Hilbert space $L^2(\R^4)$ of functions of 
variables $x_1, \ldots, x_4$, where $\hat{x}_k$ acts as the multiplication by $x_k$ and 
\begin{align*}
\hat{p}_k = \begin{cases} 
-\frac{i}{\pi}\frac{\partial}{\partial x_k} & k=1,3,
\\
-\frac{2i}{\pi}\frac{\partial}{\partial x_k} & k=2,4.
\end{cases}
\end{align*}
By a calculation similar to \S \ref{ss:crR}, we obtain the following result.

\begin{prop}
The integral kernel of $\hat{\mathcal K}_{1234}$ in the coordinate representation is given as 
\begin{align}
&\langle x_1,x_2,x_3, x_4|\hat{\mathcal K}_{1234}|x'_1,x'_2,x'_3,x'_4\rangle
\nonumber
\\
& \quad=\varrho'_x \delta(x_1+x_3-x'_1-x'_3)\delta(x_2+x_4-x'_2-x'_4)
\nonumber \\
& \qquad \quad \times \frac{e^{i \pi \alpha}\varphi(x_1-x'_1+i\eta){\tilde \varphi}(-2x_1+2x'_1-i{\tilde \eta})}
{{\tilde \varphi}(-2x_1+x_2-x'_4-i{\tilde \eta}){\tilde \varphi}(x_4+2x'_1-x'_2-i{\tilde \eta})},
\\
&\alpha =(x_2+x_3+x_4)x'_1-x_1x'_3+\tfrac{1}{2}(x_4x'_4-x_2x'_2)+(x_1-x'_1)(2x'_1+x'_4-2i\eta)
\nonumber \\
&\qquad +\tfrac{1}{2}(x_2-2x'_1-x'_4)(x_2+2x_3-2x'_1-x'_4).
\nonumber
\end{align}
Here, $\varrho'_x$ is a factor that depends on $\bb$ and is independent of $x_k$ and $x'_k$.
\end{prop}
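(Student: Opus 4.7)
The strategy mirrors the derivation of the kernel of $\hat{\mathcal{R}}_{123}$ in \S\ref{ss:crR}. First apply the rightmost factor $\rho_{24}\,e^{i\pi\hat{x}_1(\hat{p}_2-\hat{p}_4)}$ to $|x'_1, x'_2, x'_3, x'_4\rangle$. Because $[\hat{x}_j,\hat{p}_j]=2i/\pi$ for $j=2,4$, the translation shifts $x'_2\mapsto x'_2-2x'_1$ and $x'_4\mapsto x'_4+2x'_1$, and $\rho_{24}$ then swaps the $2$ and $4$ slots, producing $|x'_1, x'_4+2x'_1, x'_3, x'_2-2x'_1\rangle$. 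The problem therefore reduces to computing the matrix element
\begin{equation*}
\langle x_1,x_2,x_3,x_4|\,\tilde{\varphi}(A)^{-1}\varphi(B)^{-1}\tilde{\varphi}(A)\,|x'_1, x'_4+2x'_1, x'_3, x'_2-2x'_1\rangle
\end{equation*}
with $A=\tfrac{1}{2}(\hat{x}_2+\hat{p}_2+\hat{x}_4-\hat{p}_4-2\hat{x}_3)$ and $B=\tfrac{1}{2}(\hat{x}_1+\hat{p}_1+\hat{x}_3-\hat{p}_3-\hat{x}_2)$, for which a direct computation gives $[A,B]=i/\pi$.

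Next, diagonalize $A$ and $B$ using generalized eigenkets analogous to the $|\lambda\rangle\!\rangle,|\mu\rangle\!\rangle$ of \S\ref{ss:crR}, but adapted to the multi-mode operators and to the $\tilde{\bb}$-rescaled commutators present in $A$. Inserting the corresponding resolutions of the identity on either side of $\varphi(B)^{-1}$ turns $\tilde{\varphi}(A)^{\pm 1}$ into scalar multiplications $\tilde{\varphi}(\lambda)^{\pm 1}$ and $\varphi(B)^{-1}$ into $\varphi(\mu)^{-1}$, reducing the sandwich to a multi-fold integral whose integrand is a product of Gaussians in the Fourier variables together with $\tilde{\varphi}^{\pm 1}$ and $\varphi^{-1}$ factors. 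The directions orthogonal to $A$ and $B$ are traced out by pure Gaussian integration, producing the two conservation deltas $\delta(x_1+x_3-x'_1-x'_3)$ and $\delta(x_2+x_4-x'_2-x'_4)$.

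The remaining one-variable integral (over the spectral parameter of the central mode coupling $A$ and $B$) has the standard shape that is evaluated in closed form by the Ramanujan-type formula \eqref{ram1} of Appendix~\ref{s:ncq}, after first inverting one of the $\tilde{\varphi}$ factors using \eqref{pinv} so that the arguments line up. This step is what produces, from the three input dilogarithms $\tilde{\varphi}^{-1}\varphi^{-1}\tilde{\varphi}$, the four-factor answer appearing in the proposition: one $\varphi$ numerator, two $\tilde{\varphi}$ denominators, and an additional $\tilde{\varphi}$ numerator $\tilde{\varphi}(-2x_1+2x'_1-i\tilde{\eta})$ arising from the Ramanujan kernel.

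The main obstacle is the bookkeeping of Gaussian prefactors collapsing to the phase $e^{i\pi\alpha}$. Several sources contribute quadratic terms in the $x_k$ and $x'_k$: the Gaussian wave-functions of the eigenkets of $A$ and $B$, the Gaussian factor emerging from \eqref{ram1}, and the shifts produced in the first step. Because $A$ involves the rescaled canonical pairs at sites $2$ and $4$, one must use the $\tilde{\bb}$-version of \eqref{ram1} with $\eta\to\tilde{\eta}$ and $K\to\tilde{K}$; this is what forces the precise $\pm i\tilde{\eta}$ shifts of the $\tilde{\varphi}$ arguments in the final formula, and their mismatch with the $+i\eta$ shift in $\varphi(x_1-x'_1+i\eta)$. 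Keeping track of the correct branch of each noncompact dilogarithm and assembling all the quadratic contributions into the explicit $\alpha$ stated in the proposition is the only nontrivial book-keeping step.
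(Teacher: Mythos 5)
Your outline follows the route the paper itself intends: the paper's proof of this proposition is literally the one-line assertion that the kernel is obtained by a calculation similar to \S\ref{ss:crR}, and your ingredients are the correct ones — the monomial part sends $|x'_1,x'_2,x'_3,x'_4\rangle$ to $|x'_1,x'_4+2x'_1,x'_3,x'_2-2x'_1\rangle$, one checks $[A,B]=i/\pi$, the conserved operators $\hat x_1+\hat x_3$ and $\hat x_2+\hat x_4$ (which commute with $A$, $B$ and $P^K_{1234}$) account for the two delta functions, and the sandwich is evaluated by eigenket insertions and Ramanujan-type integrals with the $\sqrt2\bb$ modulus for the $\tilde\varphi$ factors. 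Two caveats: the three-$\tilde\varphi$ structure of the answer (one numerator, two denominators, whose arguments sum to $i\tilde\eta$ on the support of the deltas) requires the full Ramanujan formula \eqref{rama1}/\eqref{rama2} in the $\sqrt2\bb$ variables rather than the one-dilogarithm limit \eqref{ram1} that you cite, and — like the paper — you defer the Gaussian bookkeeping that actually fixes the phase $\alpha$ and the precise $\pm i\tilde\eta$ shifts, so the explicit formula is not yet verified by the argument as written.
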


The power $\alpha$ may be simplified further by using the conditions implied by the delta functions.

\subsection{Momentum representation of $\hat{\mathcal K}_{1234}$}
We regard $\hat{\mathcal K}_{1234}$ as an operator in the Hilbert space $L^2(\R^4)$ of functions of 
variables $p_1, \ldots, p_4$, where $\hat{p}_k$ acts as the multiplication by $p_k$ and 
\begin{align*}
\hat{x}_k = \begin{cases} 
\frac{i}{\pi}\frac{\partial}{\partial p_k} & k=1,3,
\\
\frac{2i}{\pi}\frac{\partial}{\partial p_k} & k=2,4.
\end{cases}
\end{align*}

\begin{prop}
The integral kernel of $\hat{\mathcal K}_{1234}$ in the momentum representation  is given as follows:
\begin{align}
&\langle p_1,p_2,p_3, p_4|\hat{\mathcal K}_{1234}|p'_1,p'_2,p'_3,p'_4\rangle
\nonumber \\
& \quad = \varrho'_p
\delta(p_1+p_2+p_3-p'_1-p'_2-p'_3)\delta(p_2+2p_3+p_4-p'_2-2p'_3-p'_4)
\nonumber \\
&\qquad \times
\frac{e^{i \pi \beta}\varphi(p_2+p_3-p'_3-p'_4+i\eta){\tilde
\varphi}(-p_2+p_4-p'_2+p'_4-i{\tilde \eta})}
{{\tilde \varphi}(p_4-p'_2-i{\tilde \eta}){\tilde
\varphi}(p'_4-p_2-i{\tilde \eta})},
\\
&\beta =
\tfrac{1}{4}(p_2^2+p_4^2-5(p'_2)^2-5(p'_4)^2)-p_1^2+p'_1(3p_1-2p'_1)
+p'_2(3p_1-p_4-4p'_1)
\nonumber \\
&\qquad \quad +p'_3(p_2+p_3-p'_3)+p'_4(p_1+3p_2+3p_3-4p'_3)+
2(p_1-p'_1-p'_2+p'_4)i\eta.
\nonumber
\end{align}
Here, $\varrho'_p$ is a factor that depends on $\bb$ and is independent of
$p_k$ and $p'_k$.
\end{prop}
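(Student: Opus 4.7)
The plan is to obtain the momentum kernel by Fourier-transforming the coordinate-representation kernel of the previous proposition. Because of the two distinct commutation relations, the required Fourier factors are $e^{i\pi p_k x_k}$ for $k=1,3$ and $e^{i\pi p_k x_k/2}$ for $k=2,4$, so I must evaluate the eight-fold integral
\begin{align*}
\langle p|\hat{\mathcal K}_{1234}|p'\rangle \;\propto\; \int \prod_{k=1}^{4} dx_k\,dx'_k\; e^{i\pi\sum_{k}\epsilon_k(p'_k x'_k - p_k x_k)}\,\langle x|\hat{\mathcal K}_{1234}|x'\rangle,
\end{align*}
where $\epsilon_k=1$ for $k=1,3$ and $\epsilon_k=\tfrac{1}{2}$ for $k=2,4$. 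First, I would use the two Dirac deltas $\delta(x_1+x_3-x'_1-x'_3)$ and $\delta(x_2+x_4-x'_2-x'_4)$ in the coordinate kernel to eliminate the $x'_3$- and $x'_4$-integrations via $x'_3=x_1+x_3-x'_1$ and $x'_4=x_2+x_4-x'_2$, reducing the integral to six dimensions.

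The key structural observation is that after these substitutions the variables $x_2$ and $x_3$ enter the phase $\alpha$ only linearly and do not appear in any of the four $\varphi$ or $\tilde{\varphi}$ factors. Consequently, the $x_2$- and $x_3$-integrations become elementary Fourier integrals that produce two delta functions in the momenta; I expect these to be precisely the conservation laws $p_1+p_2+p_3=p'_1+p'_2+p'_3$ and $p_2+2p_3+p_4=p'_2+2p'_3+p'_4$ asserted in the statement, the coefficient $2$ of $p_3$ arising naturally from the $\epsilon_k=1/2$ weights at the $k=2,4$ positions together with the relations eliminating $x'_3,x'_4$.

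What remains is a four-dimensional integral over $x_1,x'_1,x_4,x'_2$ with a Gaussian exponent (the surviving part of $\alpha$) multiplying the four $\varphi/\tilde{\varphi}$ factors of the coordinate kernel. Changing variables to combinations such as $u=x_1-x'_1$ and $v=x_4+2x'_1-x'_2$ (the natural arguments of the dilogarithm factors) decouples the Gaussian, and two further Gaussian integrations can be performed. The final two-dimensional integral should be of exactly Ramanujan type: applying the noncompact-dilogarithm identity \eqref{ram1} in both its $\mathsf{b}$- and $\sqrt{2}\mathsf{b}$-variants (or, equivalently, the Heine-type identity \eqref{heine1}) will rearrange the four $\varphi/\tilde{\varphi}$ factors into the ratio $\varphi(p_2+p_3-p'_3-p'_4+i\eta)\tilde{\varphi}(-p_2+p_4-p'_2+p'_4-i\tilde{\eta})/(\tilde{\varphi}(p_4-p'_2-i\tilde{\eta})\tilde{\varphi}(p'_4-p_2-i\tilde{\eta}))$ displayed in the statement, and the residual quadratic form will collapse to the phase $e^{i\pi\beta}$.

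The main obstacle will be purely combinatorial: tracking the eight-variable quadratic polynomial $\alpha$ through the substitutions, the two momentum-delta integrations, and the further Gaussians, while ensuring that the coefficients match the precise integer pattern appearing in $\beta$. A secondary bookkeeping obstacle is assembling the overall normalization $\varrho'_p$ from the various Gaussian prefactors together with the $\mathsf{b}\leftrightarrow\sqrt{2}\mathsf{b}$ normalization that relates $\varphi$ to $\tilde{\varphi}$; the structure of the computation guarantees that this constant is independent of $p_k,p'_k$, which is the only feature needed for subsequent applications.
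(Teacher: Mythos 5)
Your overall strategy — Fourier transforming the coordinate kernel with weights $e^{i\pi p_kx_k}$ for $k=1,3$ and $e^{i\pi p_kx_k/2}$ for $k=2,4$, eliminating $x'_3,x'_4$ by the two position deltas, and then integrating out $x_2,x_3$ first — is exactly the route the paper takes (it is the same ``straightforward Fourier transformation'' used for the momentum kernel of $\hat{\mathcal R}_{123}$), and it does succeed. However, two of your predictions about the intermediate mechanism are off. First, the $x_2$- and $x_3$-integrations do \emph{not} directly produce the momentum-conservation deltas: after substituting $x'_4=x_2+x_4-x'_2$ the phase is linear in $x_2,x_3$ with coefficients involving the remaining \emph{positions}, so these integrations yield $\delta(2x_1+x_4-x'_2-(p_2-p'_4))$ and $\delta(x_1+x'_1+x_4-x'_2-(p'_3-p_3))$, i.e.\ they pin the two combinations $u=x_1-x'_1$ and $w=2x_1+x_4-x'_2$ on which \emph{all four} dilogarithm arguments depend ($x_1-x'_1=u$, $-2x_1+2x'_1=-2u$, $-2x_1+x_2-x'_4=-w$, $x_4+2x'_1-x'_2=w-2u$). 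Second, as a consequence there is no residual two-dimensional integral of Ramanujan/Heine type at all: the four $\varphi/\tilde\varphi$ factors are simply evaluated at $u=p_2+p_3-p'_3-p'_4$ and $w=p_2-p'_4$ and map one-to-one onto the four factors in the stated kernel, while the quadratic form in the two leftover directions degenerates identically on the constraint surface, so the final $x_1$- and $x_4$-integrations are pure linear-phase integrals producing precisely $\delta(p_1+p_2+p_3-p'_1-p'_2-p'_3)$ and $\delta(p_2+2p_3+p_4-p'_2-2p'_3-p'_4)$. So the computation is strictly easier than you anticipate — neither \eqref{ram1} nor \eqref{heine1} is invoked here (in contrast to the coordinate-representation derivation in \S\ref{ss:crR}) — and the only genuinely laborious part is the bookkeeping of the quadratic phase that assembles $\beta$ and the constant $\varrho'_p$, as you correctly identify.
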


The power $\beta$ may be simplified further by using the conditions implied
by the delta functions.

\appendix

\section{Noncompact quantum dilogarithm}\label{s:ncq}

In this section we summarize the definition 
and basics of the noncompact dilogarithm function. Set the parameters as
\begin{align}\label{qq}
q= e^{i \pi \bb^2}, \quad \bar{q} = e^{-i \pi \bb^{-2}},
\quad \eta = \frac{\bb+\bb^{-1}}{2}.
\end{align}
The noncompact dilogarithm function is defined as 
\begin{align}\label{eq:noncomp-dilog}
\varphi(z) = \exp\left(
\frac{1}{4}\int \frac{e^{-2izw}}{\sinh(w \bb)\sinh(w/\bb)}\frac{dw}{w}\right)
= \frac{(e^{2\pi(z+i\eta)\bb};q^2)_\infty}{(e^{2\pi(z-i\eta)\bb^{-1}};\bar{q}^2)_\infty},
\end{align}
where the infinite product formula is valid in the so-called strong coupling regime $0<\eta < 1$.
It satisfies the relations:
\begin{align}
\varphi(z)\varphi(-z) = e^{i \pi z^2-i \pi (1-2\eta^2)/6},
\label{pinv}\\
\frac{\varphi(z-i \bb^{\pm 1}/2)}{\varphi(z+i \bb^{\pm 1}/2)}
= 1+e^{2\pi z \bb^{\pm1}}.
\label{prec}
\end{align}
The following is known as a modular double analogue of the Ramanujan ${}_1\Psi_1$-sum:
\begin{align}
\int dt \frac{\varphi(t+u)}{\varphi(t+v)} e^{2\pi i wx}
&= \frac{\varphi(u-v-i\eta)\varphi(w+i\eta)}{K\varphi(u-v+w-i\eta)}
e^{-2\pi i w(v+i\eta)}
\label{rama1}\\
&= \frac{K\varphi(v-u-w+i\eta)}{\varphi(v-u+i\eta)\varphi(-w-i\eta)}
e^{-2\pi i w(u-i\eta)},
\label{rama2}
\end{align}
where $K = e^{-i\pi(4\eta^2+1)/12}$.
From $\varphi(u)\vert_{u \rightarrow -\infty} \rightarrow 1$,
their limit $u,v \rightarrow -\infty$ reduces to
\begin{align}
\int dt \frac{e^{2\pi i w t}}{\varphi(t+v)} &= \frac{\varphi(w+i\eta)}{K}e^{-2\pi iw(v+i\eta)},
\label{ram1}
\\
\int dt \varphi(t+u)e^{2\pi i w t}
&= \frac{K}{\varphi(-w-i\eta)} e^{-2\pi i w(u-i\eta)}.
\label{ram2}
\end{align}

The following is a  modular double analogue of the simplest Heine transformation:
\begin{lem}
\begin{align}
\begin{split}
&\int dt \frac{\varphi(t+a)\varphi(t+b)e^{2\pi i td}}{\varphi(t+c)} 
\\
& \quad = 
e^{-2\pi i(b-i\eta)d}\varphi(a-c-i\eta) 
\int dt \frac{\varphi(t+i\eta)e^{2\pi i t(b-c-2i\eta)}}{\varphi(t-d-i\eta)\varphi(t+a-c-i\eta)}.
\end{split}
\label{heine1}
\end{align}
\end{lem}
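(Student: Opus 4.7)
The plan is to reduce \eqref{heine1} to the Ramanujan-type evaluation \eqref{rama1} by Fourier-expanding one of the $\varphi$-factors in the numerator of the LHS. Inverting \eqref{ram2} gives the representation
$$\varphi(t+b) = K\!\int\! dw\, \frac{e^{-2\pi i w(b-i\eta)}}{\varphi(-w-i\eta)}\,e^{-2\pi i w t},$$
which I would substitute into the LHS of \eqref{heine1} and (formally) exchange the order of the $t$- and $w$-integrations. The inner integral then becomes $\int dt\, \varphi(t+a)\varphi(t+c)^{-1}e^{2\pi i t(d-w)}$, which is precisely of the form covered by \eqref{rama1} with $(u,v)=(a,c)$ and Fourier parameter $d-w$; it evaluates to
$$\frac{\varphi(a-c-i\eta)\,\varphi(d-w+i\eta)}{K\,\varphi(a-c+d-w-i\eta)}\,e^{-2\pi i(d-w)(c+i\eta)}.$$

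The remaining single integral over $w$ is brought into the RHS of \eqref{heine1} by the change of variables $w = d-t$. Under this substitution the three $\varphi$-arguments turn into $\varphi(t+i\eta)$, $\varphi(t-d-i\eta)$ and $\varphi(t+a-c-i\eta)$, matching the RHS integrand term by term. For the exponential one has $-2\pi i w(b-i\eta) - 2\pi i(d-w)(c+i\eta) = -2\pi i d(c+i\eta) + 2\pi i w(c-b+2i\eta)$; setting $w=d-t$, the $w$-independent and $d$-linear parts combine as $-2\pi i d(c+i\eta) + 2\pi i d(c-b+2i\eta) = -2\pi i d(b-i\eta)$, reproducing the prefactor of the RHS, while the $t$-dependent part gives $2\pi i t(b-c-2i\eta)$, matching the exponent in the RHS integrand. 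The global constant $\varphi(a-c-i\eta)$ also appears unchanged, so the identity follows.

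The main obstacle is the analytic justification of the two formal steps --- the Fourier inversion of $\varphi(t+b)$ and the interchange of order of integration. Since $\varphi(t+b)\to 1$ as $t\to-\infty$ and grows like $e^{i\pi(t+b)^{2}}$ as $t\to+\infty$, the inversion must be read either distributionally or after a small displacement of the $w$-contour into a strip where the poles of $\varphi(-w-i\eta)^{-1}$ lie on one side and those of $\varphi(a-c+d-w-i\eta)^{-1}$ produced by \eqref{rama1} lie on the other; in the strong-coupling regime $0<\eta<1$ the reflection relation \eqref{pinv} and the known asymptotics of $\varphi$ make it possible to fix such a contour uniformly in the parameters so that Fubini applies and the algebraic manipulation above is legitimate. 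Alternatively, one may verify \eqref{heine1} against a dense family of test functions in the parameter $d$, bypassing the contour analysis entirely; in either route the proof is then completed by the purely algebraic bookkeeping described above.
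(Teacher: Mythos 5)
Your proof is correct and takes essentially the same route as the paper: there, $\varphi(t+b)$ is likewise Fourier-expanded against $1/\varphi$ (via \eqref{ram1} with $w=t+b-i\eta$, $v=-d-i\eta$, which after the shift $z=d-w$ is exactly your integral representation from inverting \eqref{ram2}), the integrals are swapped, the $t$-integral is evaluated by \eqref{rama1}, and the remaining variable is relabelled. The paper's proof is just as formal about the inversion and the interchange of integrals, so your closing discussion of contours and Fubini only adds care beyond what is recorded there.
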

\begin{proof}
In the LHS,  rewrite 
$\varphi(t+b)$ as an integral using (\ref{ram1}) with $w=t+b-i\eta$ and 
$v= -d-i\eta$. The result reads
\begin{align*}
K e^{-2\pi i(b-i\eta)d}
\int dt \frac{\varphi(t+a)}{\varphi(t+c)} 
\int dz \frac{e^{2\pi iz(t+b-i\eta)}}{\varphi(z-d-i\eta)}.
\end{align*}
Taking the integral over $t$ by means of (\ref{rama1}), we get (\ref{heine1}).
\end{proof}

%%%%%%%%%%%%%%%%%%%%%%%%%%%

\end{document}